\providecommand{\cref}[1]{\zcref{#1}}
\providecommand{\Cref}[1]{\zcref{#1}}
\providecommand{\crefrange}[2]{\zcref{#1,#2}}
\NewDocumentCommand{\NewTheoremWithZref}{ m o m m }{\IfNoValueTF{#2}{\newtheorem{#1}{#3}\zcRefTypeSetup{#1}{name-sg=#3, Name-sg=#3, name-pl=#4, Name-pl=#4}}{\newaliascnt{#1}{#2}\newtheorem{#1}[#1]{#3}\aliascntresetthe{#1}\zcRefTypeSetup{#1}{name-sg=#3, Name-sg=#3, name-pl=#4, Name-pl=#4}}}
\DeclareMathAlphabet{\mathbbm}{U}{bbm}{m}{n}
\newif\ifanindex
\newcommand{\notation}{\nomenclature}
\begin{document}
\newtheoremstyle{all}{11pt}{11pt}{\slshape}{}{\bfseries}{}{.5em}{}

\theoremstyle{all}

\newtheorem{itheorem}{Theorem}
\zcRefTypeSetup{itheorem}{
  name-sg=Theorem,
  Name-sg=Theorem,
  name-pl=Theorems,
  Name-pl=Theorems
}
\newtheorem{theorem}{Theorem}[section]
\newtheorem*{theoremfourfive}{Theorem 4.5}
\newtheorem*{proposition*}{Proposition}
\NewTheoremWithZref{proposition}[theorem]{Proposition}{Propositions}
\NewTheoremWithZref{corollary}[theorem]{Corollary}{Corollaries}
\NewTheoremWithZref{lemma}[theorem]{Lemma}{Lemmas}
\NewTheoremWithZref{assumption}[theorem]{Assumption}{Assumptions}
\NewTheoremWithZref{definition}[theorem]{Definition}{Definitions}
\NewTheoremWithZref{ques}[theorem]{Question}{Questions}
\NewTheoremWithZref{conj}[theorem]{Conjecture}{Conjectures}

\theoremstyle{remark}
\NewTheoremWithZref{remark}[theorem]{Remark}{Remarks}
\newtheorem{example}{Example}[section]
\zcRefTypeSetup{example}{
  name-sg=Example,
  Name-sg=Example,
  name-pl=Examples,
  Name-pl=Examples
}
\renewcommand{\theexample}{{\arabic{section}.\roman{example}}}
\newcommand{\nc}{\newcommand}
\newcommand{\renc}{\renewcommand}
\numberwithin{equation}{section}
\renc{\theequation}{\arabic{section}.\arabic{equation}}

\newcounter{subeqn}
\renewcommand{\thesubeqn}{\theequation\alph{subeqn}}
\makeatletter
\newcommand{\subeqn}{\refstepcounter{subeqn}\protected@edef\subeqn@tag{\thesubeqn}\expandafter\tag\expandafter{\subeqn@tag}}
\@addtoreset{subeqn}{equation}
\newcommand{\newseq}{\refstepcounter{equation}}
  \nc{\kac}{\kappa^C}
\nc{\alg}{T}
\nc{\Lco}{L_{\la}}
\nc{\qD}{q^{\nicefrac 1D}}
\nc{\ocL}{M_{\la}}
\nc{\excise}[1]{}
\nc{\Dbe}{D^{\uparrow}}
\nc{\Dfg}{D^{\mathsf{fg}}}
\nc{\coset}{\EuScript{W}}
\nc{\zero}{o}
\nc{\defr}{\operatorname{def}}
\nc{\op}{\operatorname{op}}
\nc{\Sym}{\operatorname{Sym}}
\nc{\Symt}{S}
\nc{\hatD}{\widehat{\Delta}}
\nc{\tr}{\operatorname{tr}}
\newcommand{\Mirkovic}{Mirkovi\'c\xspace}
\nc{\tla}{\mathsf{t}_\la}
\nc{\llrr}{\langle\la,\rho\rangle}
\nc{\lllr}{\langle\la,\la\rangle}
\nc{\K}{\Bbbk}
\nc{\Stosic}{Sto{\v{s}}i{\'c}\xspace}
\nc{\cd}{\mathcal{D}}
\nc{\cT}{\mathcal{T}}
\nc{\vd}{\mathbb{D}}
\nc{\lift}{\gamma}
\nc{\cox}{h}
\nc{\Aut}{\operatorname{Aut}}
\nc{\R}{\mathbb{R}}
\nc{\Lam}[3]{\La^{#1}_{#2,#3}}
  \nc{\Lab}[2]{\La^{#1}_{#2}}
  \nc{\Lamvwy}{\Lam\Bv\Bw\By}
  \nc{\Labwv}{\Lab\Bw\Bv}
  \nc{\nak}[3]{\mathcal{N}(#1,#2,#3)}
  \nc{\hw}{highest weight\xspace}
  \nc{\al}{\alpha}

\newcommand{\dgmod}{\operatorname{-dg-mod}}
\newcommand{\gmod}{\operatorname{-gmod}}  
\nc{\be}{\beta}
  \nc{\bM}{\mathbf{m}}
  \nc{\Bu}{\mathbf{u}}

  \nc{\bkh}{\backslash}
  \nc{\Bi}{\mathbf{i}}
  \nc{\Bm}{\mathbf{m}}
  \nc{\Bj}{\mathbf{j}}
 \nc{\Bk}{\mathbf{k}}
\newcommand{\bS}{\mathbb{S}}
\newcommand{\bT}{\mathbb{T}}
\newcommand{\bt}{\mathbbm{t}}

\nc{\second}{\tau}
\nc{\D}{\mathcal{D}}
\nc{\mmod}{\operatorname{-mod}}  
\newcommand{\red}{\mathfrak{r}}

\nc{\RAA}{R^\A_A}
  \nc{\Bv}{\mathbf{v}}
  \nc{\Bw}{\mathbf{w}}
\nc{\Id}{\operatorname{Id}}
\nc{\Cth}{S_h}
\nc{\Cft}{S_1}
\def\MHM{{\operatorname{MHM}}}
\def\MTM{{\operatorname{MTM}}}

\newcommand{\cM}{\mathcal{M}}
\newcommand{\cD}{\mathcal{D}}
  \nc{\By}{\mathbf{y}}
  \nc{\BY}{\mathbf{Y}}
\nc{\eE}{\EuScript{E}}
  \nc{\Bz}{\mathbf{z}}
  \nc{\coker}{\mathrm{coker}\,}
  \nc{\C}{\mathbb{C}}
\nc{\ab}{{\operatorname{ab}}}
\nc{\wall}{\mathbbm{w}}
  \nc{\ch}{\mathrm{ch}}
  \nc{\de}{\delta}
  \nc{\ep}{\epsilon}
  \nc{\Rep}[2]{\mathsf{Rep}_{#1}^{#2}}
  \nc{\Ev}[2]{E_{#1}^{#2}}
  \nc{\fr}[1]{\mathfrak{#1}}
  \nc{\fp}{\fr p}
  \nc{\fq}{\fr q}
  \nc{\fl}{\fr l}
  \nc{\fgl}{\fr{gl}}
\nc{\rad}{\operatorname{rad}}
\nc{\ind}{\operatorname{ind}}
  \nc{\GL}{\mathrm{GL}}
\newcommand{\arxiv}[1]{\href{http://arxiv.org/abs/#1}{\tt arXiv:\nolinkurl{#1}}}
  \nc{\Hom}{\mathrm{Hom}}
  \nc{\im}{\mathrm{im}\,}
  \nc{\La}{\Lambda}
  \nc{\la}{\lambda}
  \nc{\mult}{b^{\mu}_{\la_0}\!}
  \nc{\mc}[1]{\mathcal{#1}}
  \nc{\om}{\omega}
\nc{\gl}{\mathfrak{gl}}
  \nc{\cF}{\mathcal{F}}
\nc{\cC}{\mathcal{C}}
  \nc{\Mor}{\mathsf{Mor}}
  \nc{\HOM}{\operatorname{HOM}}
  \nc{\Ob}{\mathsf{Ob}}
  \nc{\Vect}{\mathsf{Vect}}
\nc{\gVect}{\mathsf{gVect}}
  \nc{\modu}{\mathsf{-mod}}
\nc{\pmodu}{\mathsf{-pmod}}
  \nc{\qvw}[1]{\La(#1 \Bv,\Bw)}
  \nc{\Rperp}{R^\vee(X_0)^{\perp}}
  \nc{\si}{\sigma}
\nc{\sgns}{{\boldsymbol{\sigma}}}
  \nc{\croot}[1]{\al^\vee_{#1}}
\nc{\di}{\mathbf{d}}
  \nc{\SL}[1]{\mathrm{SL}_{#1}}
  \nc{\Th}{\theta}
  \nc{\vp}{\varphi}
  \nc{\wt}{\mathrm{wt}}
\nc{\te}{\tilde{e}}
\nc{\tf}{\tilde{f}}
\nc{\hwo}{\mathbb{V}}
\nc{\soc}{\operatorname{soc}}
\nc{\cosoc}{\operatorname{cosoc}}
 \nc{\Q}{\mathbb{Q}}
\nc{\LPC}{\mathsf{LPC}}
  \nc{\Z}{\mathbb{Z}}
  \nc{\Znn}{\Z_{\geq 0}}
  \nc{\ver}{\EuScript{V}}
  \nc{\Res}{\operatorname{Res}}
  \nc{\Ind}{\operatorname{Ind}}
  \nc{\edge}{\EuScript{E}}
  \nc{\Spec}{\mathrm{Spec}}
  \nc{\tie}{\EuScript{T}}
  \nc{\ml}[1]{\mathbb{D}^{#1}}
  \nc{\fQ}{\mathfrak{Q}}
        \nc{\fg}{\mathfrak{g}}
        \nc{\ft}{\mathfrak{t}}
                \nc{\ftone}{\mathfrak{t}}
  \nc{\Uq}{U_q(\fg)}
        \nc{\bom}{\boldsymbol{\omega}}
\nc{\bla}{{\underline{\boldsymbol{\la}}}}
\nc{\bmu}{{\underline{\boldsymbol{\mu}}}}
\nc{\bal}{{\boldsymbol{\al}}}
\nc{\bet}{{\boldsymbol{\eta}}}
\nc{\rola}{X}
\nc{\wela}{Y}
\nc{\fM}{\mathfrak{M}}
\nc{\fX}{\mathfrak{X}}
\nc{\fH}{\mathfrak{H}}
\nc{\fE}{\mathfrak{E}}
\nc{\fF}{\mathfrak{F}}
\nc{\fI}{\mathfrak{I}}
\nc{\qui}[2]{\fM_{#1}^{#2}}
\nc{\cL}{\mathcal{L}}
\nc{\ca}[2]{\fQ_{#1}^{#2}}
\nc{\cat}{\mathcal{V}}
\nc{\cata}{\mathfrak{V}}
\nc{\catf}{\mathscr{V}}
\nc{\hl}{\mathcal{X}}
\nc{\hld}{\EuScript{X}}
\nc{\hldbK}{\EuScript{X}^{\bla}_{\bar{\mathbb{K}}}}
\nc{\Iwahori}{\EuScript{I}}
\nc{\WC}{\EuScript{C}}

\nc{\pil}{{\boldsymbol{\pi}}^L}
\nc{\pir}{{\boldsymbol{\pi}}^R}
\nc{\cO}{\mathcal{O}}
\nc{\Ko}{\text{\Denarius}}
\nc{\Ei}{\fE_i}
\nc{\Fi}{\fF_i}
\nc{\fil}{\mathcal{H}}
\nc{\brr}[2]{\beta^R_{#1,#2}}
\nc{\brl}[2]{\beta^L_{#1,#2}}
\nc{\so}[2]{\EuScript{Q}^{#1}_{#2}}
\nc{\EW}{\mathbf{W}}
\nc{\rma}[2]{\mathbf{R}_{#1,#2}}
\nc{\Dif}{\EuScript{D}}\nc{\MDif}{\EuScript{E}}
\renc{\mod}{\mathsf{mod}}
\nc{\modg}{\mathsf{mod}^g}
\nc{\fmod}{\mathsf{mod}^{fd}}
\nc{\id}{\operatorname{id}}
\nc{\compat}{\EuScript{K}}
\nc{\DR}{\mathbf{DR}}
\nc{\End}{\operatorname{End}}
\nc{\Fun}{\operatorname{Fun}}
\nc{\Ext}{\operatorname{Ext}}
\nc{\A}{\EuScript{A}}
\nc{\Loc}{\mathsf{Loc}}
\nc{\eF}{\EuScript{F}}
\nc{\LAA}{\Loc^{\A}_{A}}
\nc{\perv}{\mathsf{Perv}}
\nc{\gfq}[2]{B_{#1}^{#2}}
\nc{\qgf}[1]{A_{#1}}
\nc{\qgr}{\qgf\rho}
\nc{\tqgf}{\tilde A}
\nc{\Tr}{\operatorname{Tr}}
\nc{\Tor}{\operatorname{Tor}}
\nc{\cQ}{\mathcal{Q}}
\nc{\st}[1]{\Delta(#1)}
\nc{\cst}[1]{\nabla(#1)}
\nc{\ei}{\mathbf{e}_i}
\nc{\Be}{\mathbf{e}}
\nc{\Hck}{\mathfrak{H}}
\renc{\P}{\mathbb{P}}
\nc{\bbB}{\mathbb{B}}
\nc{\ssy}{\mathsf{y}}
\nc{\cI}{\mathcal{I}}
\nc{\cG}{\mathcal{G}}
\nc{\cH}{\mathcal{H}}
\nc{\coe}{\mathfrak{K}}
\nc{\pr}{\operatorname{pr}}
\nc{\bra}{\mathfrak{B}}
\nc{\rcl}{\rho^\vee(\la)}
\nc{\tU}{\mathcal{U}}
\nc{\dU}{{\stackon[8pt]{\tU}{\cdot}}}
\nc{\dT}{{\stackon[8pt]{\cT}{\cdot}}}

\nc{\RHom}{\mathrm{RHom}}
\nc{\tcO}{\tilde{\cO}}
\nc{\Yon}{\mathscr{Y}}
\nc{\sI}{{\mathsf{I}}}
\nc{\sptc}{\ft_{1,\Z}}
\nc{\spt}{\ft_{1,\R}}
\nc{\Bpsi}{u}
\nc{\acham}{\eta}
\nc{\ACmap}{\kappa}
\nc{\hyper}{\mathsf{H}}
\nc{\AF}{\EuScript{F}\ell}
\nc{\VB}{\EuScript{X}}
\nc{\OHiggs}{\cO_{\operatorname{Higgs}}}
\nc{\OCoulomb}{\cO_{\operatorname{Coulomb}}}
\nc{\tOHiggs}{\tilde\cO_{\operatorname{Higgs}}}
\nc{\tOCoulomb}{\tilde\cO_{\operatorname{Coulomb}}}
\nc{\indx}{\mathcal{I}}
\nc{\redu}{\mathfrak{r}}
\nc{\redsign}{\Sigma}
\nc{\lifts}{\mathbb{L}}
\nc{\Ba}{\mathbf{a}}
\nc{\Bb}{\mathbf{b}}
\nc{\Lotimes}{\overset{L}{\otimes}}
\nc{\AC}{C}
\nc{\rAC}{r\AC}
\nc{\rACs}{r\ACs}
\nc{\rACmap}{r\ACmap}
\nc{\rI}{rI}
\nc{\rIp}{rI'}
\nc{\fk}{\mathfrak{k}}
\nc{\class}[1]{\overline{#1}}
\nc{\relev}{\operatorname{rel}_{\rho}}
\nc{\ideal}{\mathscr{I}}
\nc{\ACs}{\mathscr{C}}
\nc{\Stein}{\mathscr{X}}
\newcommand{\cOg}{\mathcal{O}_{\!\operatorname{g}}}
\newcommand{\tcOg}{\mathcal{\tilde O}_{\!\operatorname{g}}}
\newcommand{\cOa}{\mathcal{O}_{\!\operatorname{a}}}
\newcommand{\dOg}{D_{\cOg}}
\newcommand{\preO}{p\cOg}
\newcommand{\dpreO}{D_{p\cOg}}
\nc{\No}{H}
\nc{\To}{Q}
\nc{\tG}{\tilde{G}}
\nc{\tNo}{\tilde{H}}
\nc{\tTo}{\tilde{Q}}
\nc{\flav}{\phi}
\nc{\rop}[1]{#1^{\circ}}
\nc{\rhoR}{\rho_{\R}}
\nc{\tF}{\tilde{F}}
\nc{\Wei}{\EuScript{W}}
\nc{\plusA}{{}^+A}
\nc{\MOS}{\mathsf{MOS}}
\nc{\Ttrfc}{\mathsf{Ttrfc}}
\nc{\gaugeG}{G}
\nc{\weylW}{W}
\nc{\matterV}{V}
\nc{\rootsD}{\Delta_0}
\nc{\Xsgns}{X}
\nc{\Cw}{C}
\nc{\cw}{c}
\nc{\vpss}{\varphi}
\nc{\vpsss}{\varphi}
\nc{\Vsgns}{V}
\nc{\efA}{\EuScript{A}}
\nc{\algA}{A}
\nc{\Higgs}{\fM}
\nc{\Coulomb}{\fM}
\nc{\varphip}{\varphi}
\nc{\varphim}{\varphi}
\nc{\varphimid}{\varphi}
\nc{\cs}{c}
\nc{\Cs}{C}
\nc{\bbX}{\mathbb{X}}
\nc{\matter}{H}
\nc{\wallpi}{\wall}
\nc{\Asph}{\EuScript{A}^{\operatorname{sph}}}
\nc{\fttau}{\ft_{\tau}}
\nc{\fttauc}[1]{\ft_{\tau,#1}}
\nc{\Ueta}{U}
\nc{\dVB}{\VB}
\nc{\scrB}{\mathscr{B}}

\nc{\scrBhat} {\mathscr{\widehat{B}}}
\nc{\scrBhatup} {\mathscr{\widehat{B}}}
\nc{\scrAhatup} {\mathscr{\widehat{A}}}
\nc{\yw}{y}
\nc{\Bdefr}{\mathscr{B}^{\defr}}
\nc{\scrT}{ \mathscr{T}}
\nc{\Twist}{ T}
\nc{\Gammazphi}{\Gamma}
\nc{\Cx}{\C^{\times}}
\nc{\vertex}{\EuScript{V}}
\nc{\Ifflav}[1]{I(#1)}
\nc{\Ipflav}[1]{I'(#1)}

\nc{\KNstrat}{\EuScript{KN}}
\nc{\Dg}{\mathfrak{D}_g}
\nc{\DVG}{\mathcal{D}}
\nc{\uns}{\mathsf{uns}}
\nc{\ured}{\mathsf{ured}}
\nc{\AHiggs}{\mathbf{A}}
\newcommand{\bd}{\mathsf{bd}}
\newcommand{\gr}{\operatorname{gr}}

\setcounter{tocdepth}{2}
\newcommand{\thetitle}{Koszul duality between Higgs and Coulomb categories $\cO$}

\renc{\theitheorem}{\Alph{itheorem}}

\excise{
\newenvironment{block}
\newenvironment{frame}
\newenvironment{tikzpicture}
\newenvironment{equation*}
}

\baselineskip=1.1\baselineskip

 \usetikzlibrary{decorations.pathreplacing,backgrounds,decorations.markings,shapes.geometric,fit}
\tikzset{wei/.style={draw=red,double=red!40!white,double distance=1.5pt,thin}}
\tikzset{awei/.style={draw=blue,double=blue!40!white,double distance=1.5pt,thin}}
\tikzset{bdot/.style={fill,circle,color=blue,inner sep=3pt,outer
    sep=0}}
\tikzset{dir/.style={postaction={decorate,decoration={markings,
    mark=at position .8 with {\arrow[scale=1.3]{>}}}}}}
\tikzset{rdir/.style={postaction={decorate,decoration={markings,
    mark=at position .8 with {\arrow[scale=1.3]{<}}}}}}
\tikzset{edir/.style={postaction={decorate,decoration={markings,
    mark=at position .2 with {\arrow[scale=1.3]{<}}}}}}\begin{center}
\noindent {\large  \bf \thetitle}
\medskip

\noindent {\sc Ben Webster}\footnote{Supported by the NSF under Grant
  DMS-1151473, the Alfred P. Sloan Foundation and an NSERC Discovery Grant. This research was supported in part by Perimeter Institute for Theoretical Physics. Research at Perimeter Institute is supported in part by the Government of Canada through the Department of Innovation, Science and Economic Development Canada and by the Province of Ontario through the Ministry of Colleges and Universities.
}\\  
Department of Pure Mathematics\\ University of Waterloo \&\\
Perimeter Institute for Theoretical Physics\\
Waterloo, ON, Canada\\
Email: {\tt ben.webster@uwaterloo.ca}
\end{center}
\bigskip
{\small
\begin{quote}
\noindent {\em Abstract.}
We prove a Koszul duality theorem between the category of weight
modules over the quantized Coulomb branch (as defined by Braverman,
Finkelberg, and Nakajima) attached to a group $G$ and representation
$V$, and a category of $G$-equivariant D-modules on the vector space
$V$. We prove this by relating both categories to an explicit,
combinatorially presented category.

These categories are related to generalized categories $\cO$ for
symplectic singularities.  Letting
$\mathcal{O}_{\operatorname{Coulomb}}$ and
$\mathcal{O}_{\operatorname{Higgs}}$ be these categories for the
Coulomb and Higgs branches associated to $V$ and $G$, we obtain a
functor
$\mathcal{O}_{\operatorname{Coulomb}}^!\to
\mathcal{O}_{\operatorname{Higgs}}$ from the Koszul dual of one to the
other. This functor is an equivalence in the special cases where the
hyperk\"ahler quotient of $T^*V$ by $G$ is an ADE or affine type A Nakajima quiver variety
or a smooth hypertoric variety. This includes as special cases the
parabolic-singular Koszul duality of category $\cO$ in type A, the
categorified rank-level duality proposed by Chuang and Miyachi and
proven by Shan, Vasserot, and Varagnolo, and the hypertoric Koszul
duality proven by Braden, Licata, Proudfoot, and the author.

We also show that this equivalence intertwines twisting and shuffling functors. Together with the Koszul duality above, this confirms the most important components of the symplectic duality conjecture of Braden, Licata, Proudfoot, and the author in this case.
\end{quote}
}
\section{Introduction}
\label{sec:introduction}

Let $\matterV$ be a complex vector space, and let $\gaugeG$ be a connected reductive
algebraic group with a fixed linear action on $V$.  Attached
to these data are two spaces that physicists call
the {\bf Higgs} and {\bf Coulomb} branches (of the associated three-dimensional
$\mathcal{N}=4$ supersymmetric gauge theory):
\begin{itemize}
\item The Higgs branch is well-known to mathematicians; it is the
  algebraic symplectic reduction of the cotangent bundle $T^*V$.
  That is, we have 
\[\Higgs_{H}:=\mu^{-1}(0)/\!\!/ G=\Spec(\C[\mu^{-1}(0)]^G)\] 
\notation{$\Higgs_{H}$}{The Higgs branch $\mu^{-1}(0)/\!\!/ G=\Spec(\C[\mu^{-1}(0)]^G)$.}
where $\mu\colon T^*V\to \fg^*$ is the
moment map.
\item The Coulomb branch $\Coulomb_C$\notation{$\Coulomb_C$}{The Coulomb branch $\Coulomb_C=\Spec(\Asph_0)$.} has only recently been precisely defined in a series of papers by Nakajima, Braverman, and Finkelberg \cite{NaCoulomb,BFN}.   It is defined as the spectrum of a ring, which we denote by $\Asph_0$ in Section \ref{sec:coulomb}, constructed as a convolution algebra in the homology of the affine Grassmannian.  The choice of representation $V$ is incorporated via certain ``quantum corrections'' to convolution in homology, which are kept track of by an auxiliary vector bundle.  Readers who prefer to avoid the terms above can note that, in order to prove our results, we will use what we believe to be the first purely algebraic description of the Coulomb branch for general $\gaugeG$ and $\matterV$; the geometric description given above will only be used to show that this algebraic presentation is correct, so readers can safely set the affine Grassmannian to one side if they desire.
\end{itemize}
Both of these spaces have Poisson structures.  
Bellamy has recently confirmed that Coulomb branches have symplectic singularities
\cite{bellamyCoulombBranches2023}, and they always possess a $\Cx$-action with respect to which the Poisson structure has degree $-2$.  The pair $(G,V)$ is called {\bf good} if this action is conical, that is, only the constant functions have degree $\leq 0$. As discussed in \cite[\S 2(iii)]{BFN}, whether a theory is good is easily tested combinatorially. Many pairs are good, but pairs that are not good are common as well; in general, larger representations are more likely to be good.

There are many interesting cases where the Higgs branch defines a conical symplectic singularity.  In contrast to the Coulomb case, the Higgs branch is always conical, but its scheme structure may be badly behaved: for example, the ring $ \C[\mu^{-1}(0)]^G$ may not be radical. 

Because both are often conical symplectic singularities, a conjecture of Braden, Licata, Proudfoot, and the author suggests a
surprising relationship between these spaces: they should be {\it
  symplectic dual} \cite[\S 10.1]{BLPWgco}.   
  The construction of Higgs and Coulomb branches discussed above covers most of the examples discussed in \cite[\S 10.2]{BLPWgco}, with the important exception of the nilcones (and more generally, S3 varieties of \cite[\S 9.2]{BLPWgco}) of simple Lie algebras which are not type A.

  This conjecture requires several
geometric and representation-theoretic properties, the most important
of which is a Koszul duality between generalizations of the category $\cO$
over quantizations of these varieties.  The existence of such a
duality has been proved in several special cases (see
\cite[\S 10.2]{BLPWgco}), but in this paper, we give a uniform construction. Since we are working in somewhat different generality from \cite{BLPWgco}, we will not perfectly match the conjecture given there---as indicated, there are symplectic dual pairs which are not expected to arise from a representation $(G,V)$ as above.   

First, let us be more precise about what we mean by Koszul
duality.  
For each algebra $\algA$ over a field $\K$ graded by the non-negative integers
with $\algA_0$ finite-dimensional and semi-simple, we define a Koszul
dual $A^!$, which is a quadratic algebra with the same properties (see Section \ref{sec:Koszul}).  By
\cite[Thm. 30]{MOS}, the Koszul complex yields an equivalence between the derived categories of these algebras if and only if $A$ is Koszul
in the usual sense.  For a graded category $\mathcal{C}$ equivalent to
$A\operatorname{-gmod}$ for $A$ as above, the category
$\mathcal{C}^!\cong A^!\operatorname{-gmod}$ only depends on
$\mathcal{C}$ up to canonical equivalence.

\notation{$\fM_{H,\xi}$}{The GIT quotient $\mu^{-1}(0)/\!\!/_{\xi} G$ at a stability parameter $\xi$.}
In order to construct category $\cO$'s, we need to choose auxiliary
data, which determine finiteness conditions: we must choose a flavor
$\flav$ (a $\Cx$-action on $\Higgs_{H}$ with weight $1$ on the symplectic
form and commuting with the $\Cx$-action induced by scaling on $T^*V$), and a
stability parameter $\xi\in (\fg^*)^G$.  Note that the choice of $\xi$
allows us to define the GIT quotient $\fM_{H,\xi}=\mu^{-1}(0)/\!\!/_{\xi} G$ with $\xi$ as the
stability condition.  Taking the unique closed orbit in the closure of a semi-stable orbit defines a map $\fM_{H,\xi}\to \fM_{H}$.  There are interesting cases where this is a resolution of singularities, although $\fM_{H,\xi}$ may not be smooth, and even if smooth, its map to $\fM_{H}$ may not be dominant.  

We define a geometric category $\Dg$, which should be considered a version of modules over a natural quantization, constructed via Hamiltonian reduction of microlocal differential operators on $T^*V$ as in Kashiwara and Rouquier \cite[\S 2.5]{KR07}.  
This category follows the approach of McGerty and Nevins \cite{mcgertyDerivedEquivalence2014}: we define $\Dg$ as a quotient of the category of strongly $G$-equivariant $D$-modules on $V$, which we can then identify with a category of DQ-modules over a quantization in favorable cases.  
See Section \ref{sec:quantum-hamiltonian} for more details.  

Associated to the data $(\gaugeG,\matterV,\flav,\xi)$, we have two
versions of category $\cO$:
\begin{enumerate}
\item We let $\OHiggs$ be the geometric category
  $\cO$ inside the category $\Dg$, associated to the reflected flavor $\rop{\flav}$ of \eqref{eq:op}.  See Definition \ref{def:Og}.\notation{$\OHiggs$}{The geometric category $\cO$ inside $\Dg$ (\cref{def:Og}).}
\item We let $\OCoulomb$ be the algebraic category $\cO$ for the quantization of $\fM_C$ defined by the flavor $\flav$ with integral weights.\notation{$\OCoulomb$}{The algebraic category $\cO$ over $\fM_C$ (\cref{def:Coulomb-O}).}
The element $\xi$ induces an inner grading on this algebra that we use to define the category $\cO$.   See Definition \ref{def:Coulomb-O}.
\end{enumerate}

There is a small asymmetry here, since one of these categories is
a category of sheaves and the other a category of modules, but the difference is smaller than it may appear.  
We can compare algebraic and geometric categories $\cO$ by taking sections of sheaves, and in favorable circumstances, the global sections functor is an equivalence, generalizing the theorem of Beilinson and Bernstein \cite{beilinsonLocalisationmathfrakgmodules1981}.  We discuss this in more detail at the end of Section \ref{sec:quantum-hamiltonian}.

The
category $\OHiggs$ has an intrinsically defined graded lift $\tOHiggs$, which
is defined using the category of mixed Hodge modules on $V$ (Def. \ref{def:tOg}); the category
$\OCoulomb$ has a graded lift, for which we give an explicit algebraic
definition (Def. \ref{def:tO-Coulomb}).

Two special cases are of particular interest to us:  
\begin{enumerate}
	\item The group $G$ and representation $V$ correspond to a finite ADE or affine type A quiver gauge theory, so $\fM_{H,\xi}$ is a Nakajima quiver variety.
	\item The group $G$ is a torus and the representation $V$ corresponds to a unimodular hyperplane arrangement, so $\fM_{H,\xi}$ is a smooth hypertoric variety.
\end{enumerate}
\begin{itheorem}\label{th:A}
  There is a fully faithful functor $\tOCoulomb^!\to \tOHiggs$.
 In cases (1) and (2) above, if the variety $\fM_{H,\xi}$ is smooth,
 then this functor is an equivalence for every choice of $\phi$.
\end{itheorem}
 
There is a general geometric property \hyperlink{dagger}{$(\dagger)$}
which ensures the equivalences above.  We expect this to hold in all
cases where $\fM_H$ is smooth; it is proven in cases (1) and (2)
in \cite{Webqui} (see the discussion in Section \ref{sec:character-sheaves}), but at the moment, we lack the
tools to prove it in full generality.
For hypertoric varieties, Theorem \ref{th:A} is proved in \cite{BLPWtorico}.
For the quiver cases, the connection to Coulomb branches was only recently
made precise, so this version of the theorem was not proved before,
but the results of \cite{SVV,Webqui} were very suggestive for the
affine type A case; in particular, our work gives a new proof of the
Chuang-Miyachi conjecture on Koszul duality between blocks of category
$\cO$ on Cherednik algebras.  Since the case of finite-type quiver varieties is
the most novel and interesting case of this result, it is covered in more detail in \cite{kamnitzerLieAlgebra2024}.  It is also closely related to the study
of shifted Yangians as in \cite{KWWY14,KTWWY}. In particular, it
leads to a proof of the main conjectures of the latter paper \cite{KTWWYO}.

In Theorem \ref{th:A} above, we have striven to make the simplest possible statement of this result, but we can weaken its hypotheses in various ways (see Theorem \ref{thm:Koszul-duality}).
\begin{itemize}
  \item In certain other cases, such as non-smooth hypertoric varieties, this functor is an equivalence onto a block of $\tOHiggs$, but there are other blocks. 
  \item There is also a version of this theorem in which we weaken the integrality hypothesis on $\phi$; the quantizations of $\fM_C$ are parameterized by an affine space in which the cocharacters $\phi$ form a lattice.  For more generic quantizations, we have an analogous
functor from $\OCoulomb^!$ to the category $\cO$ attached to a Higgs branch, but one associated
to a subspace of $V$ as a representation over a Levi of $G$.  This
phenomenon is a generalization of the theorem proved in
\cite{WebRou,Webalt} relating blocks of the Cherednik category $\cO$
to KLRW algebras.  The details of this theory for quiver gauge theories are worked out in more detail by the author and collaborators in \cite[Th. 1.4]{kamnitzerLieAlgebra2024}.  
\end{itemize} 
Theorem \ref{th:A} depends on explicit calculations.  For arbitrary
$(G,V,\phi,\xi)$, we give two presentations of the
endomorphisms of the projective generators in $\OCoulomb$.  This in turn depends on a presentation of the algebra quantizing $\fM_C$ itself.  This presentation is simpler and easier to see if we consider the Coulomb branch in the context of a larger ``extended category.''  This category is natural from the geometric perspective of the affine Grassmannian and also seems to have a physical interpretation as a specific category of line defects in the associated gauge theory, with the original BFN algebra corresponding to a trivial defect.

Using a standard representation-theoretic construction, this presentation of the extended category gives a presentation of the endomorphisms of projective generators in $\OCoulomb$.  However, we focus on a second presentation that has
the advantage of being graded, allowing us to define the
graded lift $\tOCoulomb$.  After this paper circulated as a preprint,
H. Nakajima pointed out to us that the connection between these presentations has a
geometric explanation, using the concentration map to the fixed points of a
complexified cocharacter. This generalizes the work of Varagnolo and Vasserot
\cite[\S 2]{varagnoloDoubleAffine2010}, which concerns the case of the adjoint
representation in connection with double affine Hecke algebras.  This
will be explained in more detail in forthcoming work \cite{NaPP}.

This second presentation also appears
naturally in the Ext algebra of certain semi-simple $G$-equivariant
D-modules on $V$, which makes the functor $\tOCoulomb^!\to \tOHiggs$
manifest.   Instead of category $\cO$, we may consider the category
$\mathscr{W}_{\operatorname{Coulomb}}$ of all integral weight modules,
which has a graded lift $\mathscr{\tilde W}_{\operatorname{Coulomb}}$,
defined using the same presentation.  We obtain a fully faithful
functor
$\mathscr{\tilde W}_{\operatorname{Coulomb}}^!\to
\DVG\mmod$
to the category of strongly $G$-equivariant D-modules on $V$,
independent of any properties of $V$ or $G$.  The functor
$\tOCoulomb^!\to \tOHiggs$ is induced by $\mathscr{\tilde W}_{\operatorname{Coulomb}}^!$ and the
hypertoric or quiver hypothesis is only needed to ensure that the quotient
functor from $\mathcal{D}( V/G)\mmod$ to modules over the quantization
of $\fM_{H,\xi}$ has the correct properties.

This geometric interpretation shows a remarkable positivity in the
structure of weight representations of the Coulomb branch, based on the Hodge theory of the D-modules discussed above.  In particular, using the standard relation between mixed categories and canonical bases axiomatized in \cite{websterCanonicalBases2015}, we obtain:
\begin{itheorem}[Cor. \ref{cor:dual-canonical}]
    The classes of simple modules in $\mathscr{\tilde W}_{\operatorname{Coulomb}}$ correspond to a dual canonical basis in its Grothendieck group.
\end{itheorem}
This result generalizes the Kazhdan-Lusztig conjecture in type A and
Rouquier's conjecture for category $\cO$ over the Cherednik algebra \cite[\S
6.5]{RouqSchur}. More precisely, it extends these results to the categories of Gelfand-Tsetlin modules and Dunkl-Opdam modules, respectively, which correspond to weight modules under the isomorphisms of $U(\mathfrak{gl}_n)$ and $\mathsf{H}^{\operatorname{sph}}(G(\ell,1,n))$ to Coulomb branches shown in \cite{WWY} and \cite{KoNa}.  Since making these connections carefully is a calculation of non-trivial length, we leave discussion of these connections to other work \cite{KTWWYO,WebGT,Webalt}.

Thus, Theorem \ref{th:A} can be strengthened to give not just an equivalence between these categories, but also a combinatorial description of both of them.  
The algebras that appear are an interesting generalization of KLRW algebras. Given the richness of the theory developed around KLR algebras, there is reason to expect that these new algebras will also prove quite interesting from the perspective of combinatorial representation theory.  

In this paper, we mainly limit our attention to studying the representation theory of these algebras in characteristic 0, but a similar approach can be applied over a field of characteristic $p$.  In that case, there is a natural relationship between quantizations, tilting bundles, and coherent sheaves, which we consider in more
detail in \cite{WebcohI,WebcohII}.

Because of the nature of our proof of Theorem \ref{th:A}, it extends easily to show that
these equivalences are compatible with certain natural
autoequivalences of derived categories, called shuffling and twisting
functors. See \cite[\S 8]{BLPWgco} for more on these functors.
\begin{itheorem}\label{th:B}
Under the hypothesis \hyperlink{dagger}{$(\dagger)$}, the functor of
Theorem \ref{th:A} induces an equivalence of graded derived categories
$D^b(\tOCoulomb)\to D^b(\tOHiggs)$ which intertwines
twisting functors with shuffling functors and vice versa.
\end{itheorem}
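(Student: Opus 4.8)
The plan is to deduce Theorem~\ref{th:B} from the construction underlying Theorem~\ref{th:A} rather than to reprove anything about the categories $\tOCoulomb$ and $\tOHiggs$ directly. Recall that Theorem~\ref{th:A} is obtained by identifying both $\tOCoulomb$ and $\tOHiggs$ with (blocks of) module categories over an explicit combinatorial algebra $\mathbf{A}$ and its Koszul dual $\mathbf{A}^!$, via the intermediate category of $G$-equivariant D-modules on $V$. The key point is therefore to show that twisting and shuffling functors each admit a description \emph{purely in terms of this combinatorial presentation}, and that the bijection between the generating functors on the two sides matches up under the Koszul-duality swap of gradings. So the first step is to give algebraic models: on the Coulomb side, twisting functors are defined (as in \cite[\S 8]{BLPWgco}) using the partial Fourier/Hamiltonian structure — they are given by derived tensor product with a complex of bimodules built from the algebra, one for each choice of a deformation direction (equivalently, a chamber wall for the flavor $\phi$ or a one-parameter subgroup of the torus acting on $\fM_C$); likewise shuffling functors on the Higgs side come from varying the stability parameter $\xi$ across a wall and are realized by a shuffling bimodule complex. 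The goal is to present both families as explicit complexes of $\mathbf{A}$-bimodules (resp. $\mathbf{A}^!$-bimodules) indexed by the same combinatorial data, namely the walls in the relevant hyperplane arrangement.

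Next I would invoke the standard formalism relating Koszul duality to the interchange of two kinds of "linear" functors: over a Koszul algebra, derived tensoring with a linear-complex bimodule on the $A$-side corresponds, under the equivalence $D^b(A\operatorname{-gmod})\cong D^b(A^!\operatorname{-gmod})$, to derived tensoring with the Koszul-dual bimodule on the $A^!$-side, and grading shifts are traded for homological shifts. This is the mechanism by which, in the classical category-$\cO$ story (Beilinson--Ginzburg--Soergel, and \cite[\S 8]{BLPWgco} in the symplectic-duality setting), twisting functors — which shift gradings but are exact — go to shuffling functors — which are derived but grading-preserving — and conversely. Concretely, the step is: (i) check that the twisting bimodule complex for a wall $\wall$ is a \emph{linear} complex of graded $\mathbf{A}$-bimodules (its $i$-th term generated in degree $i$, up to shift), hence its Koszul dual is again a bounded complex of $\mathbf{A}^!$-bimodules; (ii) identify that Koszul dual with the shuffling bimodule complex for the corresponding wall on the Higgs side; (iii) observe that the index sets match because the hyperplane arrangement governing twisting on the Coulomb branch is identified, via the combinatorial presentation, with the arrangement governing shuffling on the Higgs branch — this is exactly the symplectic-duality exchange of the "Gale-dual" arrangements already visible in the hypertoric case \cite{BLPWtorico} and the quiver case \cite{Webqui}. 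The hypothesis \hyperlink{dagger}{$(\dagger)$} enters only to guarantee, as in Theorem~\ref{th:A}, that the quotient functor from $\cD(V/G)\mmod$ to modules over the quantization of $\fM_{H,\xi}$ is nice enough that the bimodule complexes descend correctly and that the resulting functor on $D^b(\tOHiggs)$ is the geometrically-defined shuffling functor rather than merely an abstract autoequivalence.

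The steps in order: (1) write down the twisting and shuffling functors on $\tOCoulomb$ and $\tOHiggs$ as derived tensor products with explicit bimodule complexes, using \cite[\S 8]{BLPWgco} and the combinatorial presentation from the body of this paper; (2) verify that under the combinatorial identification $\tOCoulomb\simeq \mathbf{A}\operatorname{-gmod}$, $\tOHiggs\simeq \mathbf{A}^!\operatorname{-gmod}$ of Theorem~\ref{th:A}, each twisting bimodule is a linear complex and each shuffling bimodule is its quadratic-dual companion; (3) invoke the Koszul-duality compatibility (linear-complex tensoring $\leftrightarrow$ dual tensoring with grading$\,\leftrightarrow\,$homological shift) to conclude that the equivalence $D^b(\tOCoulomb)\to D^b(\tOHiggs)$ intertwines the two families; (4) match the indexing of walls on the two sides so that the intertwining is "vice versa" as claimed, i.e. the $\wall$-twist goes to the $\wall$-shuffle and conversely, with a possible shift by the longest element / an involution of the arrangement. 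I expect the main obstacle to be step (2)–(3) made precise at the level of bimodule complexes rather than merely objects: one must check that the \emph{composition} structure — the relations among successive twists realizing braid-type relations — is preserved, which amounts to showing the relevant bimodule complexes are not just termwise linear but that their differentials are Koszul-dual to the shuffling differentials. In the cases where Theorem~\ref{th:A} is a genuine equivalence (quiver or smooth hypertoric), this can be pinned down by reducing to the known hypertoric computation of \cite{BLPWtorico} and the quiver computation of \cite{Webqui}, where the braid relations among twisting functors and among shuffling functors are already established; the general statement then follows by the functoriality of the combinatorial presentation in $(G,V,\phi,\xi)$.
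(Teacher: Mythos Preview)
Your approach is valid in outline but takes a longer route than the paper does, and the extra steps you anticipate as obstacles are precisely the ones the paper avoids.

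The paper does not transport bimodule complexes across Koszul duality at all. Instead it uses the fact that, under \hyperlink{dagger}{$(\dagger)$}, Theorem~\ref{th:D-ff} already gives a derived equivalence $D^b(\tOHiggs)\simeq A^{\xi}_{I_\phi}\dgmod$ (not merely the abelian statement $\tOHiggs\simeq \LCP(A^{\xi}_{I_\phi})$). So \emph{both} derived categories are identified with derived categories of graded modules over the \emph{same} algebra $A^{\xi}_{I_\phi}$, and the task reduces to showing that the two families of functors are realized by the \emph{same} bimodule tensor products on that common category. Concretely: Higgs twisting $\red^{\xi'}\circ\red^{\xi}_*$ and Coulomb shuffling $i^{\xi'}_!\circ i^{\xi}$ both become $A^{\xi'}_{I_\phi}\overset{L}{\otimes}_{A_{I_\phi}}-$; Higgs shuffling $i^{\phi+\nu}_*\circ i^{\phi}$ and Coulomb twisting ${}_{\phi+\nu}T_{\phi}\otimes_{\EuScript{A}_\phi}-$ both become ${}_{I_{\phi+\nu}}A^{\xi}_{I_{\phi}}\overset{L}{\otimes}_{A^{\xi}_{I_\phi}}-$, the latter match coming from Corollary~\ref{cor:bimodule-iso}. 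The guiding slogan is the one from \cite[8.24]{BLPWtorico}: shuffling is inclusion into and projection from a subcategory, twisting is projection to and inflation from a quotient category, and these interchange under Koszul duality; but in practice the argument never needs to Koszul-dualize any bimodule.

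What this buys: your steps (2)--(3), verifying that twisting bimodules are linear complexes and computing their Koszul-dual companions, are bypassed entirely, as is the reduction to known hypertoric and quiver cases you propose for pinning down the differentials. The paper's argument is uniform in $(G,V,\phi,\xi)$ under $(\dagger)$ and needs only the bimodule identifications above. Your approach would work if carried out, but the linearity check you flag as the main obstacle is genuinely nontrivial and unnecessary here.
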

This verifies two of the most important predictions of the conjecture
that the Higgs and Coulomb branches of a single theory are symplectic dual to
each other in the sense of \cite[Def. 10.1]{BLPWgco}; it remains to
confirm the more geometric aspects of this duality, such as a
bijection between special strata.

The paper is organized as follows.  \Cref{sec:higgs, sec:coulomb} discuss the Higgs and Coulomb sides of the story, respectively.  In both cases, the most important work we do is giving a combinatorial presentation of the relevant category.  On the Higgs side, we call this the Steinberg category $\Stein$, and we present it in \cref{sec:pres-steinb-categ}.  On the Coulomb side, this is supplied by the extended BFN category $\scrB$, which we define in \cref{sec:extended-category} and present in \cref{sec:pres-extend-categ}.   The two halves meet in \cref{sec:higgs-coulomb},
where \cref{main-iso} matches the two presentations, \cref{thm:Koszul-duality} deduces
\cref{th:A}, and \cref{sec:twist-shuffl-funct} proves \cref{th:B}. 

\subsection*{Acknowledgements}
\label{sec:acknowledgements}

We would like to thank Hiraku Nakajima for pointing out the connection
to Varagnolo and Vasserot's past work, as well as his forthcoming work.
Many thanks to Tom Braden, Alexander Braverman, Kevin Costello, Tudor
Dimofte, Joel Kamnitzer, Anthony Licata, Nick Proudfoot, Alex Weekes,
and Oded Yacobi for many useful discussions on these topics.

\section{The Higgs side}
\label{sec:higgs}

\notation{$\gaugeG$}{The gauge group.}
\notation{$\matterV$}{The matter representation.}

Let us begin by setting up the basic data we use to define and analyze the Higgs and Coulomb sides of our construction.  
Let $\matterV$ be a complex vector space and let $\gaugeG$ be a connected
reductive algebraic group with a fixed linear action on $V$; we will call $G$ the {\bf gauge group} and
$V$ the {\bf matter representation} following the standard practice in
physics.  Throughout, we will let $d$ denote the dimension of
$V$.  Let $\rootsD$ be the set of roots of the corresponding Lie
algebra.

\notation{$\No$}{The extension of $\gaugeG$ by flavor symmetries.}
Let $\No$ be a connected reductive algebraic group containing $\gaugeG$ as a normal subgroup, with a compatible action on $\matterV$.  
We assume that $\No$ surjects to the normalizer of the image of $\gaugeG$ in $GL(V)$; for example, we could take $\No = G \times \Aut_G(V)$.
There is a natural map $\No\to \Aut(G)$ given by conjugation.  The kernel $C$ of this action has the property that the product $GC$ is precisely the preimage of $\operatorname{Inn}(G)$, the group of inner automorphisms.  This shows that
$\No/GC$ injects into the outer
automorphism group of $G$.  Since $\No/GC$ is connected and the
latter group is finite, this shows that $\No=GC$.  Note that $C$ has a natural action
on each multiplicity space $M_{\la}=\Hom(V_{\la},V)$ where $V_\la$
ranges over representatives of $\hat{G}$, the isomorphism classes of finite-dimensional $G$-irreps. This induces
a surjective map $C\to\prod_{\la\in \hat{G}}GL(M_\la)$.
We let $F=\No/G$, and call this the {\bf flavor group.}\notation{$F$}{The flavor group $\No/G$.}

We choose a compact real form
 $\No_{\R}$ of $\No$.  This induces compatible real structures $G_{\R}$ and $C_{\R}$.  The representation $\matterV$ carries a
$\No_{\R}$-invariant inner product.  If we fix a
$G_{\R}$-invariant inner product on $V_{\la}$ for each $\la$, this is equivalent to
choosing a $C_{\R}$-invariant inner product on each multiplicity space; these are
related by requiring
$V\cong\oplus_{\la}M_{\la}\otimes
V_{\la}$ to be an isometry of Hilbert spaces.  

We let $\second\colon \Cx\to GL(T^*V)$ be the cocharacter that acts trivially on $V$ and with weight $-1$ on each cotangent fiber $T^*_vV$. 
This induces an action of the product $\tNo=\No\times\Cx$ on $T^*V$, with the first factors acting by the
unique symplectic action extending their action on $V$.  Let $\tF=\tNo/G$. Note that the
usual symplectic form $\Omega$ transforms under the character $\nu\colon  \No\times \Cx \to
\Cx$ given by $(g,t)\cdot \Omega=\nu(g,t)\Omega=t\Omega$.

\notation{$T_{\dagger}$}{The maximal torus of the group $\dagger\in \{\gaugeG, \No,\To, F,\dots\}$ induced by the choice of $T_{\tNo}$.}
We let $T$ be a fixed maximal torus of $G$, and let $T_{{\tNo}}$
be a maximal torus of $\tNo$ which contains $T$; this induces tori
$T_{\tF}, T_{\No}, T_C$, etc. of the other groups we
have considered.  Letting $\dagger$ stand for one of these groups
$G,\No, F$, etc., we let $\ft_{\dagger}$ be the corresponding Lie
algebra, with $\ft_{\dagger,\Z}$ denoting the derivatives of cocharacters of
$T_{\dagger}$ and $\ft_{\dagger,\Q}, \ft_{\dagger,\R}, \ft_{\dagger,\C}$
the span of these over the corresponding fields.  

We will also want to consider the subgroup $\To\subset \No$ generated by $G$ and the maximal torus $T_{\No}$ fixed earlier.  This is the same as the preimage in $\No$ of the maximal torus $T_F\subset F$.  More explicitly, we only consider elements of $\No$ that preserve a particular decomposition of $V$ into simples, given by the weight spaces of $T_C$.  
This group is denoted $\tilde{G}'$ in \cite[\S 3(ix)]{BFN}.
\notation{$\To$}{The subgroup of $\No$ generated by $G$ and the maximal torus $T_{\No}$.}

We fix a homomorphism $\flav\colon \Cx\cong
\bT\to T_{\tF}$ such that $\flav(t)\cdot \Omega=t\Omega$; of
course, any such cocharacter $\bT\to \tF$ can be conjugated to have image in $T_{\tF}$.  Let
\[\tG=\{(h,t)\in \tNo\times \Cx \mid hG=\flav(t)\in \tilde{F}\}.\] 
\notation{$\flav$,$\flav_0$}{The flavor cocharacter $\flav=(\flav_0(t),t)\colon \Cx\cong
\bT\to T_{\tF}$ satisfying $\flav(t)\cdot \Omega=t\Omega$.}
\notation{$\tG$}{The pullback group $\tG=\{(h,t)\in \tNo\times \Cx \mid hG=\flav(t)\in \tilde{F}\}$.}
Note that 
$\flav(t)=(\flav_0(t),t)$ for some $\flav_0\colon \Cx\to T_{F}$.
However, we prefer not to think of these components separately, to
emphasize that this splitting induced by the isomorphism $\tNo\cong
\No\times \Cx$ is not unique.  For example, Braverman-Finkelberg-Nakajima
\cite{BFN} prefer to use a different splitting, which is only defined
on the level of Lie algebras; they use the splitting
$\flav_0(t)\mapsto (t^{1/2}\flav_0(t),t)$, so it becomes symplectic.
This becomes a well-defined map of groups only if we pass to a double cover, which is part of why we prefer to avoid it, along with the desire to avoid a profusion of $1/2$'s appearing in formulas.  However, our less symmetric choice will generate combinatorial complications of its own.
  
It would be more faithful to our original inspiration in physics to view $T^*V$ as a symplectic representation, without necessarily fixing an invariant Lagrangian subspace.  Indeed, if we choose representations $V\ncong V'$ such that $T^*V\cong T^*V'$, then our constructions will generally be equivalent via Fourier transform equivalences.

\subsection{Lifts and chambers}
\label{sec:lifts-chambers}

In this section, we make combinatorial definitions needed to
understand this category $\cO$.

The cocharacter $\flav$ has target $\tF$; thus it naturally acts on
any quotient of a $\tNo$-space by $\gaugeG$, but not on the vector space $V$ itself. In
order to act on $\matterV$, we must choose a cocharacter into $\tNo$ whose
composition with the projection is $\flav$.  For simplicity, we
only consider such lifts to the torus $T_{\tNo}$, since any other lift
will be conjugate to one of these.
The set \[\ftone_{1,\Z}=\{\gamma\colon\Cx\to T_{\tNo} \mid \pi_{\tNo\to \tilde{F}}( \gamma)=\flav\} \] of such lifts is a torsor for the cocharacter lattice $X_*(T)$. 
    \notation{$\ftone_{1,\dagger}$}{The space of elements of $\ft_{\tNo,\dagger}$ with coefficients in $\dagger=\Z,\Q,\R,\C$ which agree with $\flav$ under projection of $\ft_{\tilde{F}}$.}

By differentiating, we can view these lifts as elements of the Lie
algebra $\ft_{\tNo}$ satisfying an integrality condition, as the notation above suggests; this set is the intersection of the cocharacter
lattice $\ft_{\tNo,\Z}$ with the preimage of the derivative of
$\flav$.  If we consider affine linear combinations of these with
coefficients in $\C,\R,\Q$, etc., we call these complex, real,
rational, etc. lifts of $\flav$, and use $\ftone_{1,\dagger},$
etc. to denote the space of these lifts with $\dagger=\Z,\Q,\R,\C$, etc.

\notation{$V_i$}{A 1-dimensional space of $V$, compatible with the weight decomposition.}
\notation{$\varphi_i$}{The weight of the subspace $V_i$.}
Consider the action of $T_{\No}$ on $V$ and choose a decomposition $V\cong
\bigoplus_{i=1}^d V_{i}$ such that $d=\dim V$, $\dim V_i=1$ and each $V_i$ is invariant under
$T_{\No}$.  
Let $\varphi_i$ be the weight
of $V_i$ over $T_{\No}$.   Note that each weight space for the subtorus $T_C$ on $V$ is an
irreducible representation of $G$; we define an equivalence relation
on indices by $i\sim j$ if $V_i$ and $V_j$ have the same weight under
$T_C$.   
Note that the Weyl group $\weylW$ of $G$ acts naturally on the set of weights; we can upgrade this to a group action of $W$ on the set $[1,d]$ such that $\varphi_{w\cdot i} =w\cdot \varphi_i$; this action is not canonical (for example, it cannot come from choosing the subspaces $V_i$ so that $w\cdot V_i=V_{w\cdot i},$ as the adjoint representation of $\mathfrak{sl}_n$ shows).  

\begin{example}\label{ex:running}
  We use the example of $\gaugeG=GL_2$ with $\matterV\cong \C^2\oplus \C^2$ as our standard example throughout.  Let $\gamma_1,\gamma_2$ be the weights of the defining representation of $GL_2$ on $\C^2$.  In this case, $d=4$, with
  $\varphi_1=\varphi_3=\gamma_1$ and
  $\varphi_2=\varphi_4=\gamma_2$.  The centralizer of $GL_2$ is another copy of $GL_2$, which acts by the vector representation on the multiplicity space $\Hom_{GL_2}(\C^2,V)$.  The normalizer $\No$ is thus the image of $GL_2\times GL_2$ acting on the tensor product $\C^2\otimes \C^2$. This is the same as the conformal orthogonal group acting on $\C^4\cong \C^2\otimes \C^2$, where $\C^2$ is endowed with a symmetric bilinear form making the coordinate vectors isotropic.
  
 Choosing the usual torus in $F\cong PGL_2$, we obtain the relation $1\sim 2$ and $3\sim 4$.  
We let $\flav$ be the cocharacter with weight $-1$ on the spaces
$V_{\varphi_1}$ and $V_{\varphi_2}$ and weight $1$ on the spaces
$V_{\varphi_3}$ and $V_{\varphi_4}$.    
The condition that $\flav(t)\cdot \Omega=t\Omega$ forces
it to have weight $0$ and $-2$ on the duals of these spaces. Thus, in this basis,
  $\widetilde{GL_2}\cong GL_2\times \bT$ acts on $V$ by the matrices
\[
\left[
\begin{array}{c|c}
t^{-1}A & 0\\\hline
0 & tA\\
\end{array}
\right] \qquad \qquad A\in GL_2, t\in \bT\cong \Cx
\]
\end{example}

\notation{$V_{\sgns}$}{The sum of the subspaces $V_{i}$ with $\sigma_i=+$.}

Our basic building blocks for category $\OHiggs$ will be D-modules on $V$, to which we can apply Hamiltonian reduction.   The construction of these begins with considering the D-modules given by pushforward of functions from subspaces.  Of course, we must choose these subspaces carefully to ultimately arrive at objects in category $\cO$.  We will always want to consider subspaces of the following form: 
\begin{definition}
  For a sign sequence $\sgns\in \{+,0,-\}^{d}$, we let $\Vsgns_{\sgns}$ be
  the sum of the subspaces $V_{i}$ with $\sigma_i=+$.  We let $(T^*V)_{\sgns}$ be the sum of $V_{i}$ with $\sigma_i=+$ and $V_{i}^*$ with $\sigma_i=-$.
\end{definition}
If we have any other set $\indx$ equipped with a map
$\iota\colon \indx\to \{+,0,-\}^d$, then we can denote
$V_{x}=V_{\iota(x)}$ and $(T^*V)_{x}=(T^*V)_{\iota(x)}$
for each $x\in \indx$.  This notation leaves $\iota$ implicit, but in all the
examples we will consider, this map will be unambiguous.

\begin{definition}
  We call $\sgns$
compatible with a Borel $\tilde{B}\subset \tG$ containing $T_{\tG}$ if $
(T^*V)_{\sgns}$ is $\tilde{B}$-invariant.  We let $\compat$ be the set
of pairs of sign vectors in $\{+,0,-\}^d$ and compatible Borels.  
\notation{$\compat$}{The set
of pairs of sign vectors in $\{+,0,-\}^d$ and compatible Borels.} 
\end{definition}
If we fix a preferred Weyl chamber $\WC$ of $\tilde{G}$ (and thus a standard
Borel $\tilde{B}$), we have a bijection
of all
the Weyl chambers with the Weyl group $W$ of $G$ (which is also the Weyl
group of $\tilde{G}$), and thus can think of $\compat$ as a subset of
$\{+,0,-\}^d\times W$.

\begin{example}
  In our running example, if $\tilde{B}$ is the standard Borel, then the non-compatible sign vectors are of the
  form \[(-,+,*,*)\quad (-,0,*,*)\quad (0,+,*,*)\quad (*,*,-,+)\quad
  (*,*,-,0)\quad (*,*,0,+).\]  If we consider the opposite Borel (the
  only other), then $+$'s and $-$'s exchange places. 
\end{example}

Now, we let
\begin{equation}
\varphip_i^+=\varphi_i\qquad\varphim_i^-=-\varphi_i-\nu\qquad
\varphimid^{\operatorname{mid}}_i=
\frac{1}{2}(\varphi_i^+-\varphi_i^-)=\varphi_i+\frac{1}{2}\nu.\label{eq:varphi-mid}
\end{equation}
\notation{$\varphimid^{\operatorname{mid}}_i$}{$\varphimid^{\operatorname{mid}}_i=\frac{1}{2}(\varphi_i^+-\varphi_i^-)=\varphi_i+\frac{1}{2}\nu$}
Together, $\varphi^{\pm}_i$  give the weights of
$\ft_{\tNo}$ acting on $T^*V$.  As we mentioned before, the papers of
Braverman-Finkelberg-Nakajima \cite{NaCoulomb,BFN} use the action on
$V$ with weights
$\varphi^{\operatorname{mid}}_i$.

\begin{example}
In our example, $\ft_{\tNo}$ is 4-dimensional and is identified with the diagonal matrices of the form $\operatorname{diag}(a+s,b+s,a+t,b+t)$;  passing to $\ft_1$ means considering these with $s=-1, t=1$.  The weights $\varphi_i^+$ are the entries of this diagonal matrix; the weights $\varphi_i^-$ are the weights on $V^*$, which are the negatives of these weights minus $\nu$.
\end{example}
\begin{definition}\label{def:chambers}
  For a sign sequence $\sgns\in \{+,-\}^d$, we let
  \begin{align*}
\cs_{\sgns}&=\{ \gamma\in \ftone_{1,\Z}\mid \varphi_i^{\sigma_i}(\gamma)\geq
  0 \} \\
  \Cs_{\sgns}&=\{ \gamma\in \ftone_{1,\R}\mid
  \sigma_i\varphi_i^{\operatorname{mid}}(\gamma)\geq 0 \}
\end{align*}
  That is, these are the subsets of $\ftone_{1,\Z}$ and $\ftone_{1,\R}$ respectively where the sign of $\varphi_i^{\operatorname{mid}}(\gamma)$ corresponds to $\sigma_i$.  
  We let $\Cw_{\sgns,w}=C_{\sgns}\cap w\cdot \WC$ be the intersection of $C_{\sgns}$ with the open Weyl chamber attached to $w$, and similarly $\cw_{\sgns,w}=c_{\sgns}\cap w\cdot \WC$.  Note that if
  $C_{\sgns,w}\neq \emptyset$, then ${\sgns}$ is compatible with
  $wBw^{-1}$.
  \notation{$\cs_{\sgns},\Cs_{\sgns},C_{\sgns,w}$}{The integral and real chambers where $\varphi_i^{\operatorname{mid}}$ has the sign of $\sigma_i$, and their intersections $\Cw_{\sgns,w},\cw_{\sgns,w}$ with a Weyl chamber.}
\end{definition}

We can extend this notation to sequences in $\{+,0,-\}^d$ by requiring
$\varphi_i^{\pm}(\gamma)\in (-\nu(\phi),0)$ if $\sigma_i=0$.
Note that the chambers $ \Cs_{\sgns}$ for $\sgns\in \{+,-\}^d$ always cover the whole space $\ft_{1,\R}$, with the only overlap between two chambers being a polytope in the intersection of the hyperplanes on which they have different sign since $\pm \varphi_i^{\operatorname{mid}}(\gamma)\geq 0$ can only hold for both signs on the vanishing hyperplane.  

On the other hand, if $\nu(\phi)>0$, then the regions cut out by $\varphi_i^{\sigma_i}(\gamma)\geq 0$ behave differently;  this is illustrated in the picture below.  There will be a gap between regions, as though they are separated by a fat hyperplane; the sign vectors with some $\sigma_i=0$ label the gaps.
If $\nu(\phi)<0$, then these regions will overlap, but we will not be interested in this case.

\begin{example}
  
Thus, if we use $a$ and $b$ as our coordinates on $\ftone_{1,\R}$, we obtain
the hyperplane arrangement:
\[
\begin{tikzpicture}[very thick,scale=1.5]
\foreach \x in {-2.5,-2.4,...,2.4} \draw (1.5,\x) -- (1.55,\x+.05);
\foreach \x in {-2.5,-2.4,...,2.4} \draw (-.5,\x) -- (-.45,\x+.05);
\foreach \x in {-2.4,-2.3,...,2.5} \draw (.5,\x) -- (.45,\x-.05);
\foreach \x in {-2.4,-2.3,...,2.5} \draw (-1.5,\x) -- (-1.55,\x-.05);
\foreach \x in {-2.5,-2.4,...,2.4} \draw (\x,1.5) -- (\x+.05,1.55);
\foreach \x in {-2.5,-2.4,...,2.4} \draw (\x,-.5) -- (\x+.05,-.45);
\foreach \x in {-2.4,-2.3,...,2.5} \draw (\x,.5) -- (\x-.05,.45);
\foreach \x in {-2.4,-2.3,...,2.5} \draw (\x,-1.5) -- (\x-.05,-1.55);
\draw (1.5,2.5)-- (1.5,-2.5) node[at
start,above]{$\varphi_1^+=0$};\draw (.5,2.5)-- (.5,-2.5) node[at
end,below]{$\varphi_1^-=0$}; \draw (-.5,2.5)-- (-.5,-2.5) node[at
start,above]{$\varphi_3^+=0$};\draw (-1.5,2.5)-- (-1.5,-2.5) node[at
end,below]{$\varphi_3^-=0$}; \draw (2.5,1.5)-- (-2.5,1.5) node[at
start,right]{$\varphi_2^+=0$}; \draw (2.5,.5)-- (-2.5,.5) node[at
start,right]{$\varphi_2^-=0$}; \draw (2.5,-.5)-- (-2.5,-.5) node[at
start,right]{$\varphi_4^+=0$}; \draw (2.5,-1.5)-- (-2.5,-1.5) node[at
start,right]{$\varphi_4^-=0$}; \node[scale=.8] at (2,2)
{$c_{+,+,+,+}$}; \node[scale=.8] at (1,2)
{$c_{0,+,+,+}$}; \node[scale=.8] at (0,2)
{$c_{-,+,+,+}$}; \node[scale=.8] at (-1,2)
{$c_{-,+,0,+}$}; \node[scale=.8] at (-2,2)
{$c_{-,+,-,+}$}; \node[scale=.8] at (2,1)
{$c_{+,0,+,+}$}; \node[scale=.8] at (1,1)
{$c_{0,0,+,+}$}; \node[scale=.8] at (0,1)
{$c_{-,0,+,+}$}; \node[scale=.8] at (-1,1)
{$c_{-,0,0,+}$}; \node[scale=.8] at (-2,1)
{$c_{-,0,-,+}$}; \node[scale=.8] at (2,0)
{$c_{+,-,+,+}$}; \node[scale=.8] at (1,0)
{$c_{0,-,+,+}$}; \node[scale=.8] at (0,0)
{$c_{-,-,+,+}$}; \node[scale=.8] at (-1,0)
{$c_{-,-,0,+}$}; \node[scale=.8] at (-2,0)
{$c_{-,-,-,+}$}; \node[scale=.8] at (2,-1)
{$c_{+,-,+,0}$}; \node[scale=.8] at (1,-1)
{$c_{0,-,+,0}$}; \node[scale=.8] at (0,-1)
{$c_{-,-,+,0}$}; \node[scale=.8] at (-1,-1)
{$c_{-,-,0,0}$}; \node[scale=.8] at (-2,-1)
{$c_{-,-,-,0}$}; \node[scale=.8] at (2,-2)
{$c_{+,-,+,-}$}; \node[scale=.8] at (1,-2)
{$c_{0,-,+,-}$}; \node[scale=.8] at (0,-2)
{$c_{-,-,+,-}$}; \node[scale=.8] at (-1,-2)
{$c_{-,-,0,-}$}; \node[scale=.8] at (-2,-2) {$c_{-,-,-,-}$};
\end{tikzpicture}
\]
The side of a hyperplane carrying a fringe indicates the non-negative side
(which thus includes the hyperplane itself).
\end{example}

  Given a cocharacter $\gamma\in \ft_{\tNo,\Z}$, we define
  \[V_\gamma=\{x\in V\mid \lim_{t\to 0}\gamma(t)\cdot x\text{ exists}\}\qquad (T^*V)_\gamma=\{x\in T^*V\mid \lim_{t\to 0}\gamma(t)\cdot x\text{ exists}\} \]
  be the sum of the non-negative weight spaces for $\gamma$.  Written this second way, the definition only refers to the signs of the weights $\varphi_i(\gamma)$, so we use it verbatim for a real cocharacter $\gamma\in \ft_{\tNo,\R}$ as well, and similarly for the fixed spaces $V^{\gamma}=\oplus_{\varphi_i(\gamma)=0}V_i$ and the subgroups $G_{\gamma},P_{\gamma}$ of \S\ref{sec:KN}: all of these depend on $\gamma$ only through which of the $\varphi_i$ and which roots are positive, negative or zero on it.  
Using the
notation above, we have $V_\gamma=V_{\sgns}$ for all $\gamma\in c_{\sgns}$.
\notation{$V_{\sgns},V_{\gamma}$}{The subspaces $V_{\sgns}\cong \oplus_{\sigma_i=+}V_{i}$ and $V_{\gamma}=\{x\in V\mid \lim_{t\to 0}\gamma(t)\cdot x\text{ exists}\}$.}

The space
$(T^*V)_\lift$ is Lagrangian and thus the conormal to $V_\lift$ for
any (integral) lift with $\nu(\lift)=1$; for a real or rational lift with $\nu(\lift)>0$, this space may be isotropic
(and not Lagrangian) if $\gamma\notin c_{\sgns}$ for all $\sgns\in \{+,-\}^d$.
Furthermore, $(T^*V)_{\gamma}=(T^*V)_{\gamma'}$ for lifts $\gamma$ and
$\gamma'$ if and only if both lie in $c_{\sgns}$ for some
$\sgns\in \{+,0,-\}^d$, in which case, both are equal to $(T^*V)_{\sgns}$.

In the diagram below, we have marked the chambers corresponding to sign
vectors in $\{+,-\}^d$ with $V_\sgns$ (represented in terms of which
coordinates are non-zero).  
\[\tikz[very thick,scale=1.5]{
\foreach \x in {-2.5,-2.4,...,2.4} {\draw (1.5,\x) -- (1.55,\x+.05);}
\foreach \x in {-2.5,-2.4,...,2.4} {\draw (-.5,\x) -- (-.45,\x+.05);}
\foreach \x in {-2.4,-2.3,...,2.5} {\draw (.5,\x) -- (.45,\x-.05);}
\foreach \x in {-2.4,-2.3,...,2.5} {\draw (-1.5,\x) --
  (-1.55,\x-.05);}
\foreach \x in {-2.5,-2.4,...,2.4} {\draw (\x,1.5) -- (\x+.05,1.55);}
\foreach \x in {-2.5,-2.4,...,2.4} {\draw (\x,-.5) -- (\x+.05,-.45);}
\foreach \x in {-2.4,-2.3,...,2.5} {\draw (\x,.5) -- (\x-.05,.45);}
\foreach \x in {-2.4,-2.3,...,2.5} {\draw (\x,-1.5) -- (\x-.05,-1.55);}
\draw (1.5,2.5)-- (1.5,-2.5) node[at start,above]{$\varphi_1^+=0$};\draw (.5,2.5)--
(.5,-2.5) node[at end,below]{$\varphi_1^-=0$}; \draw (-.5,2.5)--
(-.5,-2.5) node[at start,above]{$\varphi_3^+=0$};\draw (-1.5,2.5)--
(-1.5,-2.5) node[at end,below]{$\varphi_3^-=0$};
\draw (2.5,1.5)-- (-2.5,1.5) node[at start,right]{$\varphi_2^+=0$}; \draw (2.5,.5)--
(-2.5,.5)  node[at start,right]{$\varphi_2^-=0$}; \draw (2.5,-.5)--
(-2.5,-.5) node[at start,right]{$\varphi_4^+=0$}; \draw (2.5,-1.5)--
(-2.5,-1.5)  node[at start,right]{$\varphi_4^-=0$};
\node[scale=.7] at (2,2) {$(*,*,*,*)$}; \node[scale=.7] at (0,2) {$(0,*,*,*)$};
\node[scale=.7] at (-2,2)
{$(0,*,0,*)$};
\node[scale=.7] at (2,0) {$(*,0,*,*)$}; \node[scale=.7] at (0,0) {$(0,0,*,*)$};
\node[scale=.7] at (-2,0)
{$(0,0,0,*)$};
\node[scale=.7] at (2,-2) {$(*,0,*,0)$}; \node[scale=.7] at (0,-2)
{$(0,0,*,0)$}; \node[scale=.7] at (-2,-2) {$(0,0,0,0)$};
}\]

An interesting special case to consider is when $\varphi_i$ is trivial on $T_{G}$; it is never trivial on $T_H$, since this group contains the scalar multiplication on $V$.  In this case, the functions $\varphi_i^+,\varphi_i^-, \varphimid^{\operatorname{mid}}_i$ have constant values on $\ftone_{1,\R}$, so the sets $\cs_{\sgns},\Cs_{\sgns}$ can only be non-empty for $\sigma_i=+$ if the weight of $\flav$ on $V_i$ is $\geq 0$, and $\sigma_i=-$ if this weight is $<0$.  Thus, in either case, the weight $0$ will not contribute to our arrangement.

\begin{example}
For example, if we consider the adjoint representation $V=\mathfrak{sl}_2$ of $G=SL_2$ with basis $\{E,H,F\}$, then we have $\No=\To=SL_2\times \Cx$, with the second factor acting by scalar multiplication.
Note that this is the first case we have met where $\No$ is a proper cover of the normalizer of the image of $\gaugeG$: the adjoint action of $SL_2$ is not faithful, and the image of $\No$ in $GL(V)$ is $PGL_2\times \Cx$.
If we take $\phi_0$ to be the identity map $\Cx\to \Cx$, then we can identify $\ft_{1,\R}\cong \R$ sending $a\in \R$ to the matrix $\operatorname{diag}(1+2a,1,1-2a)$.  Thus, we have \[\varphimid^{\operatorname{mid}}_1=\frac{3}{2}+2a\qquad\varphimid^{\operatorname{mid}}_2=\frac{3}{2}\qquad \varphimid^{\operatorname{mid}}_3=\frac{3}{2}-2a  \] and  the non-empty chambers $\Cs_{\sigma}$ correspond to the sign vectors:
\begin{align*}
	\Cs_{(+,+,-)}&=\big\{a\mid a\geq \frac{3}{4}\big\} & V_{\sgns}&=\operatorname{span}(E,H)\\
	\Cs_{(+,+,+)}&=\big\{a\mid -\frac{3}{4}\leq a\leq \frac{3}{4}\big\} & V_{\sgns}&=\mathfrak{sl}_2\\
	\Cs_{(-,+,+)}&=\big\{a\mid a\leq -\frac{3}{4}\big \} & V_{\sgns}&=\operatorname{span}(H,F)\\
\end{align*}

On the other hand if we take $\phi_0$ to be the inverse map, then 
\[\varphimid^{\operatorname{mid}}_1=-\frac{1}{2}+2a\qquad\varphimid^{\operatorname{mid}}_2=-\frac{1}{2}\qquad \varphimid^{\operatorname{mid}}_3=-\frac{1}{2}-2a  \]
and  the non-empty chambers $\Cs_{\sigma}$ correspond to the sign vectors:
\begin{align*}
	\Cs_{(+,-,-)}&=\big\{a\mid a\geq \frac{1}{4}\big\} & V_{\sgns}&=\operatorname{span}(E)\\
	\Cs_{(-,-,-)}&=\big\{a\mid -\frac{1}{4}\leq a\leq \frac{1}{4}\big\} & V_{\sgns}&=\{0\}\\
	\Cs_{(-,-,+)}&=\big\{a\mid a\leq -\frac{1}{4}\big\} & V_{\sgns}&=\operatorname{span}(F)\\
\end{align*}
\end{example}

\subsection{The Steinberg algebra and category}
\label{sec:steinberg-algebra}
\notation{$\mathbb{H}\times^{\mathbb{G}}X$}{The bundle with fiber $X$ associated to a principal $\mathbb{G}$-bundle.}
Throughout the rest of the paper, we will frequently use induction of group actions: given groups $\mathbb{G}\subset \mathbb{H}$, and a left $\mathbb{G}$-space $X$, we let $\mathbb{H}\times^{\mathbb{G}}X$ denote the quotient $(\mathbb{H}\times X)/\mathbb{G}$ by the action $g\cdot (h,x)=(hg^{-1},gx)$.  This is a fiber bundle with fiber $X$ over $\mathbb{H}/\mathbb{G}$.  If we can $\mathbb{G}$-equivariantly embed $X$ inside a $\mathbb{H}$-space $Y$, then we have a bundle embedding 
\begin{equation*}
		\mathbb{H}\times^{\mathbb{G}}X\to \mathbb{H}/\mathbb{G}\times Y\qquad \qquad (h,x)\mapsto (h\mathbb{G}, hx)
\end{equation*}
which is an isomorphism to the subbundle $\{(h\mathbb{G},x')\mid h^{-1}x'\in X\}\subset \mathbb{H}/\mathbb{G}\times Y$.

For each pair $(\sgns, w)\in  \compat$, we have
an attached $G$-space  \begin{equation}
	\Xsgns_{\sgns,w}=G\times^{wBw^{-1}}V_{\sgns}=\{(gwBw^{-1},v)\in G/wBw^{-1}\times V \mid  g^{-1}v\in V_{\sgns}\} \label{eq:X}
\end{equation}
with the shorthand $X_{\sgns}=X_{\sgns,1}$, with the induced map
$p_{\sgns,w}\colon X_{\sgns,w}\to V$ induced by projection to the second factor in the description of \eqref{eq:X}.  
\notation{$\Xsgns_{\sgns,w},\Xsgns_{\sgns}$}{The vector bundle $G\times^{wBw^{-1}}V_{\sgns}$.}

For each collection of these pairs $I\subset \compat$, we can define 
a Steinberg variety by taking the fiber product of each pair
of them over $V$:
\notation{$\bbX_I$}{The Steinberg variety \eqref{eq:SteinI} for the index set $I\subset \compat$.}
\begin{equation}\label{eq:SteinI}
\bbX_I:=\bigsqcup_{\substack{(\sgns,w)\in I\\
(\sgns',w')\in I}}  X_{\sgns,w}\times_V X_{\sgns',w'}
\end{equation}
with a natural $G$ action.  As in the introduction, fix a field $\K$; we assume for simplicity that its characteristic is large enough that $H^*_G(pt;\K)$ is torsion free\footnote{This will hold for any field if $G=\prod_i GL_{n_i}$ and for fields of characteristic $\notin \{2,3,5\}$ for all $G$.}.
Throughout, we'll let $H_*^{BM}$ denote the Borel-Moore homology of a space with coefficients in $\K$.\notation{$\K$}{The base field.}
\begin{definition}
  The $G$-equivariant Borel-Moore homology $H_*^{BM, G}(\mathbb{X}_I)$
  equipped with its convolution multiplication
  is called the {\bf Steinberg algebra} in \cite{sauterGeneralizedQuiver2013}.
\end{definition}
\notation{$\Stein_I$}{The Steinberg category associated to the set $I$.}
Equivalently, we can think of the {\bf Steinberg category} $\Stein_I$ whose
objects are elements of $I$ and where morphisms  $(\sgns',w')\to
(\sgns,w)$ are given by $ H_*^{BM, G}(X_{\sgns,w}\times_V
X_{\sgns',w'})$, with composition given by convolution. The Steinberg
algebra is simply the sum of all the morphisms in this category;
modules  over the Steinberg algebra are naturally equivalent to
the category of modules over the category $\Stein_I$ (that is,
functors from this category to the category of $\K$-vector spaces).

The homology $H_*^{BM, G}(\mathbb{X}_I)$ has its natural geometric grading, but the convolution multiplication is not homogeneous in the grading. Instead, we consider the grading where $H_i^{BM, G}(X_{\sgns,w}\times_V
X_{\sgns',w'})$ has degree $-i+\frac{\dim X_{\sgns,w}+\dim X_{\sgns',w'}}{2}$.  An easy way to remember this convention is that if $\sgns=\sgns',w=w'$, then this will place the fundamental class of the diagonal in degree 0.  

\notation{$\Symt,\Symt_{\No},\Symt_{F}$}{The polynomial rings $\Symt=\Sym(\ft^*_\K)\cong
H^*_G(G/B;\K),S_{\To}=\Sym(\ft^*_{\To};\K)$ and $S_F=\Sym(\ft^*_{F};\K)$.}
Let  $\Symt=\Sym(\ft^*_\K)=H^*_G(G/B;\K)\cong H^*_T(pt;\K)$.  Recall that $H^*_G(pt;\K)=\Symt^W$.  We also want to consider the analogous rings $S_{\To}=\Sym(\ft^*_{\To};\K), S_F=\Sym(\ft^*_{F};\K)$.

The space $\Xsgns_{\sgns,w}$ also has an action of $Q$, since if $(\sigma,w)\in \compat$, then $V_{\sgns}$ is invariant under $wB_{\To}w^{-1}$ where $B_{\To}=BT_{\To}$ and 
\[ \Xsgns_{\sgns,w}=Q\times^{wB_{\To}w^{-1}}V_{\sgns}.\] 
We can also construct a {\bf deformed Steinberg category} $\Stein_I^{\To}$ on the same object set and morphisms $H_*^{BM, \To}(X_{\sgns,w}\times_V
X_{\sgns',w'})$.   The ring $H_*^{BM, \To}(\mathbb{X}_I)$ is a flat deformation of $H_*^{BM, G}(\mathbb{X}_I)$ over the base ring $\Symt_F$.

\begin{example}
	If $V=0$, then there is only the empty sign vector, and $X_{\emptyset}\cong G/B$.  Thus, $\algA_{\{\emptyset\}}=H_*^{G}(G/B\times G/B)$.  This is isomorphic to $ \Hom_{\Symt^W}(\Symt,\Symt)\cong M_{\# W\times \#W}(\Symt^W)$ via the convolution action on $\Symt$, which is a free module of rank $\# W$ over $\Symt^W$.  
\end{example}

Let us state some of the basic facts we will need about this category.  For a fixed pair $(\sgns,w), (\sgns',w')$, let $\mathbb{X}(v)$ be the subset of $X_{\sgns,w}\times_V
X_{\sgns',w'}$ where the two flags have relative position $v$. 
\begin{lemma}\label{lem:Stein-facts} All the results below hold in $\Stein_I$, and in its deformation over $\Symt_F$ with $\Symt$ replaced by $\Symt_{\No}$.  
\begin{enumerate}
	\item Every object carries an action of the cohomology ring $\Symt$.  The action of $\Symt^W$ is central.  
	\item The subspace $ \mathbb{X}(v)$ is isomorphic to a vector bundle over $G/B$.  Thus, as a $\Symt$-module under left or right multiplication, $H_*^{BM, G}(\mathbb{X}(v))$ is free of rank 1 generated by the fundamental class.  
	\item Thus, the homology $H_*^{BM, G}(X_{\sgns,w}\times_V
X_{\sgns',w'})$ is a free $\Symt$-module of rank $\#W$ and $X_{\sgns,w}\times_V
X_{\sgns',w'}$ is equivariantly formal.  
	\item Let $\Stein_I^{\circ}$ be the category obtained after tensor product with the fraction field $K$ of $\Symt^W$.  In $\Stein_I^{\circ}$, the  objects $(\sgns,w), (\sgns',w')$ are isomorphic if $V_{\sgns}\cap V^T=V_{\sgns'}\cap V^T$, and the endomorphisms of every object are isomorphic to 
	\[K\otimes_{\Symt^W}H_*^{G}(G/B\times G/B)\cong M_{\# W\times \#W}(K)\]
	In particular, in $\Stein_I^{\To,\circ}$, all objects are isomorphic after base extension to the fraction field $K_{\No}$ of $\Symt_{\No}$.
	\item There is a representation of $\Stein_I$ given by $Y_{X}(\sgns,w)=H_*^{BM,G}(X_{\sgns,w})\cong \Symt$, with the action by convolution product.  If $I$ has the property that $V_{\sgns}^T=V_{\sgns'}^{T}$ for all $\sgns,\sgns'\in I$, then this representation is faithful.  Again, note that this property is automatic in $\Stein^{\To}_I$.
	\end{enumerate}
\end{lemma}
\begin{proof}
For simplicity in the proofs, we assume that $w=w'=1$.  
\begin{enumerate}[wide]
	\item This action is given by the homology of the diagonal \[H_*^{BM, G}(X_{\sgns,w})\subset H_*^{BM, G}(X_{\sgns,w}\times_V
X_{\sgns,w}).\]  
\item This follows from \cite[Lem. 11(a)]{sauterGeneralizedQuiver2013}.
\item This follows from \cite[Lem. 12]{sauterGeneralizedQuiver2013}.
\item This follows because in these cases, the quotient vector bundle $G\times^{B}(V_{\sgns}/(V_{\sgns}\cap V_{\sgns'}))$ has non-zero Euler class, which is thus invertible in $K$.  The pushforward $G\times^{B}(V_{\sgns}\cap V_{\sgns'})\to X_{\sgns}$ is thus an isomorphism on homology after tensor product with $K$, and similarly with $\sgns$ and $\sgns'$ reversed.  
In $\Stein^{\To}_I$, we just use the fact that $V^{T_{\No}}=0$. 

  Finally, we need to calculate the endomorphism algebra.  We have a unique $B$-equivariant projection map $V_{\sgns}\to V_{\sgns}^{T}$, and this induces a map $X_{\sgns}\to G/B \times (V_{\sgns}^T)$; pushforward by this map is an isomorphism after tensoring with $K$, and \[H_*^{BM, G}(G/B \times G/B \times (V_{\sgns}^T))\cong H_*^{BM, G}(G/B \times G/B )\]
   as algebras by the Thom isomorphism.  Thus, we can reduce to the case where $V=0$.  
  \item This representation follows from the general construction of convolution products \cite[2.7.5]{CG97}, with \[M_1=X_{\sgns,w}\qquad M_2=X_{\sgns',w'}\qquad M_3=pt\qquad  Z_{12}=X_{\sgns,w}\times_V
X_{\sgns',w'}\qquad Z_{23}=X_{\sgns',w'}.\]
It is enough to prove faithfulness after tensor product with $K$; since all objects in $\Stein^{\circ}_I$ are isomorphic, this reduces to the fact that $ H_*^{G}(G/B\times G/B)$ acts faithfully on $\Symt$, which is clear from the isomorphism $H_*^{G}(G/B\times G/B)\cong \Hom_{\Symt^W}(\Symt,\Symt)$.\qedhere
\end{enumerate}
\end{proof}

This category has a sheaf-theoretic interpretation as well.  By \cite[Thm.
8.6.7]{CG97}, we have 
\[H_*^{BM, G}(X_{\sgns,w}\times_V X_{\sgns',w'})\cong
\Ext^\bullet((p_{\sgns,w})_*\K_{X_{\sgns,w}},(p_{\sgns',w'})_*\K_{X_{\sgns',w'}})\]
with convolution product matching Yoneda product.  The argument in
\cite{CG97} shows that this can be enhanced to a
dg-functor $\Stein_I\to D_{\operatorname{dg}}^b(V)$, where $\Stein_I$
is made into a dg-category by replacing $H_*^{BM,
  G}(X_{\sgns,w}\times_V X_{\sgns',w'})$ with the Borel-Moore chain
complex on $X_{\sgns,w}\times_V X_{\sgns',w'}$.

\begin{example}
	If we return to our usual example of $GL_2$ acting on $\C^2\times \C^2$, then the sheaves $(p_{\sgns,w})_*\K_{X_{\sgns,w}}$ from different chambers depend on the structure of the corresponding subspace:
	\begin{enumerate}
		\item The subspaces $(*,*,*,*),(0,0,*,*),(0,0,0,0)$ are $GL_2$ subrepresentations, and so the bundles $X_{\sgns,w}$ are trivial in these cases, and the pushforward is the sum of two copies of the functions on the subspace.
    \item The subspaces $(0,*,*,*), (*,0,*,*), (0,0,*,0), (0,0,0,*)$ 
      are not $GL_2$-invariant, and the map from $X_{\sgns,w}$ to its image is modeled on the map from the total space of $\mathfrak{S}(-1)$ on $\mathbb{P}^1$, mapping to $\C^2$.  The pushforward sheaf $(p_{\sgns,w})_*\K_{X_{\sgns,w}}$ is the sum of the constant sheaf on the image (which is the subspace $(*,*,*,*)$ or $(0,0,*,*)$) and the constant sheaf on the image of the exceptional locus (which is $(0,0,*,*)$ or $(0,0,0,0)$).  
    \item Finally, the cases $(0,*,0,*),(*,0,*,0)$ both have image $\{(x,y,z,w) \mid xw-yz=0\}$ and exceptional locus $(0,0,0,0)$.  Unlike the cases above, the pushforward is simple, since the map $p_{\sgns,w}$ is small.  
	\end{enumerate}
	Note that these correspond to the 3 cases in \cite[Cor. 3.8 \& 4.13]{WebGT}.  
\end{example}

Of course, we can define the same space, algebra, or category when $I$
is a set with a map to $\compat$.  The Steinberg category $\Stein_I$ attached to
a set with such a map is equivalent to the category attached to its
image (so the corresponding algebras are Morita equivalent).
Furthermore, the spaces $X_{\sgns,1}$ and $X_{w\cdot \sgns,w}$ are
isomorphic through the action of any lift of $w$ to $\tilde{G}$, so the graph of this isomorphism provides an isomorphism between the objects $(\sgns,1)$ and
$(w\cdot \sgns,w)$  in the Steinberg category.

\begin{definition}
  We let $f\mapsto f^\star$ denote the equivalence of the Steinberg category to its opposite induced by flipping the order of the tensor factors.
\end{definition}  

\subsection{A presentation of the Steinberg category}
\label{sec:pres-steinb-categ}

We will give an explicit presentation of Steinberg algebras for certain sets which generalize both the KLR algebras of \cite{KLI,Rou2KM} and the hypertoric algebras of \cite{GDKD,BLPWtorico}.  

Consider the space $\ftone_{1,\R}$ of real lifts of $\flav$.  We
can think of the weights $\vp_i$ and roots $\al$ as affine functions
on this affine space, and so their vanishing loci are hyperplanes.
\begin{definition}
The {\bf matter hyperplane} $\matter_i$ for  $\vp_i$ a weight
  as before, and {\bf Coxeter hyperplane} $\cox_\al$ for $\al\in \rootsD$ are defined
  by 
  \[H_{i}=\{\gamma\mid \varphimid_i^{\operatorname{mid}}(\gamma) =0 \}\qquad \cox_{\al}=\{\gamma\mid \al(\gamma) =0 \}.\] 
\end{definition}
We draw matter hyperplanes with solid lines
and Coxeter hyperplanes with dotted lines in diagrams.  

  \begin{figure}[b]
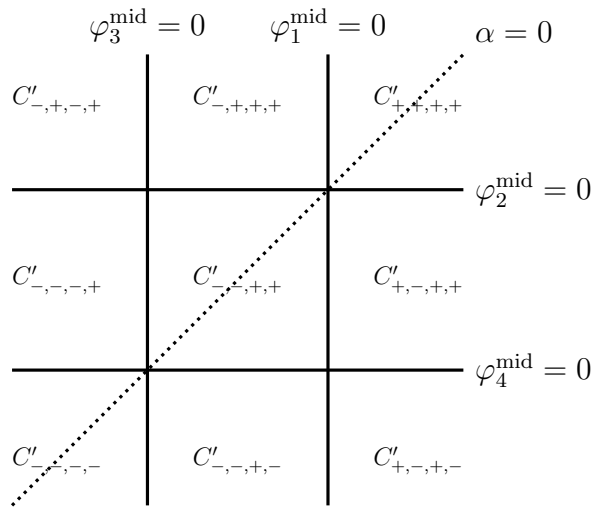

    \centering
    \[\tikz[very thick,scale=1.2]{
\draw (1,2.5)-- (1,-2.5) node[at start,above]{$\varphi_1^{\operatorname{mid}}=0$}; \draw (-1,2.5)--
(-1,-2.5) node[at start,above]{$\varphi_3^{\operatorname{mid}}=0$};
\draw (2.5,1)-- (-2.5,1) node[at start,right]{$\varphi_2^{\operatorname{mid}}=0$}; \draw (2.5,-1)--
(-2.5,-1) node[at start,right]{$\varphi_4^{\operatorname{mid}}=0$}; 
\node[scale=.8] at (2,2) {$C_{+,+,+,+}$}; \node[scale=.8] at (0,2) {$C_{-,+,+,+}$};
\node[scale=.8] at (-2,2)
{$C_{-,+,-,+}$};
\node[scale=.8] at (2,0) {$C_{+,-,+,+}$};\node[scale=.8] at (0,0) {$C_{-,-,+,+}$};
\node[scale=.8] at (-2,0)
{$C_{-,-,-,+}$};
\node[scale=.8] at (2,-2) {$C_{+,-,+,-}$}; \node[scale=.8] at (0,-2)
{$C_{-,-,+,-}$}; \node[scale=.8] at (-2,-2) {$C_{-,-,-,-}$};
 \draw[dotted] (-2.5,-2.5) -- node[above right,at end]{$\alpha=0$}(2.5,2.5); 
}\]
    \caption{In our running example, the resulting arrangement of
  matter and Coxeter hyperplanes is given above. }
    \label{fig:arrangement}
  \end{figure}

\notation{$I,I'$}{The sign vectors such that $\cs_{\sgns,1}\neq \emptyset$ (resp. $\Cw_{\sgns,1}\neq \emptyset$) for some flavor.}
\notation{$\Ifflav{\flav},\Ipflav{\flav}$}{The sign vectors such that $\cs_{\sgns,1}\neq \emptyset$ (resp. $\Cw_{\sgns,1}\neq \emptyset$) for a fixed flavor $\flav$.}
\begin{definition}\label{def:I}
  We let $I$ (resp. $I'$) be the set of sign vectors $\sgns\in \{+,-\}^{d}$ such that
  there exists a choice of flavor $\flav$ such that $\cs_{\sgns,1}\neq \emptyset$
  (resp. $\Cw_{\sgns,1}\neq \emptyset$); note that $I'\supseteq I$.  For
  a fixed flavor $\flav$, we denote the corresponding sets $\Ifflav{\phi}$ and
  $\Ipflav{\phi}$.
\end{definition}
\begin{example}
  In our running example (\cref{ex:running}), where $\gaugeG=GL_2$ acts on
  $\matterV\cong \C^2\oplus \C^2$ and $\flav$ has weight $-1$ on
  $V_{\varphi_1},V_{\varphi_2}$ and weight $1$ on
  $V_{\varphi_3},V_{\varphi_4}$, we have
\begin{multline*}
  \Ipflav{\phi}=\Ifflav{\phi}=\{ (+,+,+,+), (+,-,+,+), (+,-,+,-),\\ (-,-,+,+), (-,-,+,-), (-,-,-,-)\}.
\end{multline*}
These sets are not always equal; for example, consider a polytope defined by equations with integral coefficients but containing no lattice points.  If $\Cx$ acts on $\C^2$ with weights 2 and 3, and $\phi$ is the usual scalar multiplication, then $\Ifflav{\phi}=\{(+,+), (-,-)\}$, but $\Ipflav{\phi}=\{(+,+),(+,-),(-,-)\}$.
\end{example}

If $C_\sgns\neq \emptyset$, then there is a unique sign vector
$w\sgns$ such that $C_{w\sgns}=w\cdot C_{\sgns}$.  This is the unique
permutation of $\sgns$ such that each $\varphi_i$ is switched with
$\varphi_j=w\varphi_i$ such that $i\sim j$.  This is well-defined
since if $\varphi_i=\varphi_k$ and $i\sim k$, then these have the same
sign (since $C_\sgns\neq \emptyset$).  In particular, if $\sgns\in
I'$, the translate $w\sgns$ is well-defined.

\notation{$\vpss(\sgns,\sgns' )$}{Product of the weights such that $\sgns_i=+$ and
  $\sgns_i'=-$.}
\notation{$\vpsss (\sgns,\sgns',\sgns'')$}{Product of the weights
 such that $\sgns_i=\sgns_i''=-\sgns_i'$.}
\begin{definition}
  Given a pair $(\sgns,\sgns' )$, we let $\vpss(\sgns,\sgns' )$ be the product of the weights $\varphi_i$ such that $\sgns_i=+$ and $\sgns_i'=-$.  Given a triple $(\sgns,\sgns',\sgns'')$, we let $\vpsss (\sgns,\sgns',\sgns'')$ be the product of the weights $\varphi_i$ such that $\sgns_i=\sgns_i''=-\sgns_i'$.
\end{definition}
\begin{remark}
  It is possible that $\varphi_i=0$ upon restriction to $\ft_{1,\R}$.  However, note that in this case, if $\sgns,\sgns'\in \Ipflav{\flav}$, then $\sigma_i=\sigma_i'$ is the sign of the weight of $\flav_0$ on $V_i$.   Thus, $\varphi_i=0$ never appears as a factor in $  \vpss(\sgns,\sgns' )$ in this case.

	However, there will be situations where we want to consider $\sgns\in \Ipflav{\flav}$ and $\sgns'\in \Ipflav{\flav'}$, where $\flav_0$ and $\flav_0'$ have opposite signs on $V_i$. In this case, we can have $ \vpss(\sgns,\sgns' )=0$.  This will manifest in some of the proofs below as needing to consider the deformed category $\Stein_{I'}^{\To}$, since the action of $\To$ on $V$ will never have non-trivial weight vectors of weight 0.  Note that exactly the same phenomenon appears in \cite[\S 3.4]{telemanRoleCoulomb2021}.
\end{remark}

Note that for $\sgns,\sgns',\sgns''\in \{+,-\}^{d}$, these expressions are related by the equation
\[\varphi(\sgns, \sgns',\sgns'' )\varphi(\sgns,\sgns'')=\varphi(\sgns,\sgns' ) \varphi(\sgns',\sgns''
  ). \]

Let $\partial_\al(f)=\frac{s_\al\cdot f-f}{\al}$ be the usual BGG-Demazure
operator on $\Symt$.

\begin{definition}\label{def:A}
We let $\algA_{I'}$ denote the category with objects given by the sign
vectors $\sgns\in I'$, and morphisms freely generated by 
\begin{itemize}[wide]
	\item The identity morphism $e(\sgns)$ on each object $\sgns$.  \notation{$e(\sgns)$}{The identity morphism $e(\sgns)$ on each object $\sgns\in I'$; we can also consider this as an idempotent in $\algA_{I'}$.  }
  \item A morphism $f\colon \sgns\to \sgns$ for each $f\in \Symt$ and each object $\sgns$, which satisfy the usual relations of product in $\Symt$.
  \item Wall-crossing elements $\wall(\sgns;\sgns')\colon \sgns'\to \sgns$.\notation{$\wall(\sgns;\sgns'),\wallpi_{\pi}$}{The wall-crossing morphism $\sgns'\to \sgns$ of \cref{def:A}, and its extension to a path $\pi$.}
  \item Elements $\psi_\al(\sgns)\colon \sgns \to \sgns$ for roots $\al$ such that $ s_\al\cdot \sgns=\sgns$.\notation{$\psi_{\al}(\sgns)$}{The morphism $\sgns\to\sgns$ attached to a root $\al$ with $s_{\al}\cdot\sgns=\sgns$.}
  \end{itemize}
  subject to the ``codimension 1'' relations: \newseq
  \begin{align*}
\wall(\sgns'',\sgns')    \wall(\sgns',\sgns)&=\vpsss(\sgns,\sgns',\sgns'')\wall(\sgns'',\sgns)&&\colon \sgns\to \sgns'' \label{wall}\subeqn\\
    \mu \wall(\sgns',\sgns)& = \wall(\sgns',\sgns)\mu &&\colon \sgns\to \sgns' \label{dot-commute}\subeqn\\
    \psi_\al(\sgns)^2&=0 && \colon \sgns\to \sgns\label{demazure1}\subeqn\\
    \psi_\al(\sgns) \cdot \mu-(s_\al \mu) \cdot \psi_\al(\sgns)&=\partial_\al(\mu) &&\colon \sgns\to \sgns\label{demazure2}\subeqn
  \end{align*}
  with $\sgns,\sgns',\sgns''\in I', \mu\in \ft^*_\K$ and $\al\in \rootsD$ a root with $s_\al\cdot C_{\sgns}=C_{\sgns}$ and the ``codimension 2'' relations (\ref{coxeter}--\ref{GDKD}) below.  We get one of these for every codimension 2 intersection of hyperplanes that forms a face of $C_{\sgns,1}$.  There are three possible types of these intersections, depending on whether the system of roots vanishing on the face is of rank 2, 1, or 0.
  \begin{enumerate}[wide=0pt, widest=99,leftmargin=*, labelsep=0pt]
  \item \emph{The codimension 2 subspace is the intersection of 2 Coxeter hyperplanes $\cox_\al$ and $\cox_{\beta}$ which form the simple roots of a rank 2 subsystem: } No matter hyperplane
  contains this intersection, since if $\matter_i$ did, then
  $\varphi_i$ would have to lie in the span of $\al$ and  $\beta$, so
  $\varphi_i^\pm$ would also vanish on this subspace.  It follows that
  the value of $\varphi_i^{\operatorname{mid}}$ lies in $\Z+\nicefrac{1}{2}$ and thus is certainly not 0.    \medskip

In this case, for every chamber $C_{\sgns,1}$ adjacent to these hyperplanes, we have the usual Coxeter relations for $m$ is the order of $s_\al s_\beta$ in the Weyl group:
    \begin{equation*}
      \begin{tikzpicture}[very thick]
        \draw[dotted] (-1,-1) -- node[above,at
        end]{$\beta$}(1,1); \draw[dotted] (-1,1) --(1,-1);
        \draw[dotted] (-1,0) --(1,0); \draw[dotted] (0,-1) --
        node[above,at end]{$\al$} (0,1); \node at (.26,.64){$\sgns$};
\end{tikzpicture}
    \end{equation*}
    \begin{equation*}
      \underbrace{\psi_\al(\sgns) \psi_\beta(\sgns)
        \psi_\al(\sgns)\cdots }_{m\text{ times}} =
      \underbrace{\psi_\beta(\sgns) \psi_\al(\sgns)
        \psi_\beta(\sgns) \cdots}_{m\text{ times}}\label{coxeter}\subeqn
    \end{equation*}
  \item \emph{The codimension 2 subspace lies in a single Coxeter hyperplane $\cox_\al$:}  In this case, the codimension 2 subspace lies in some number of matter hyperplanes $H_{i_1},\dots, H_{i_{m-1}}$, ordered cyclically; note that some of these weights will be equal or multiples of each other, since the representation $V$ might have weights of multiplicity $>1$. These are closed under the action of $s_\al$, and this action reverses the cyclic ordering so we have $s_\al\cdot H_{i_j}=H_{i_{m-j}}$. We label the chamber between $H_{i_{j-1}}$ and $H_{i_{j}}$ on the positive side of $\cox_\al$ by $\sgns_j$ (with the convention that $H_{i_0}=H_{i_m}=\cox_\al$).  Note that some of these may be empty.
   \begin{equation*}
      \begin{tikzpicture}[very thick]
        \draw[dotted] (-2,0) -- node [at
        start,left]{$\al$}(2,0); \draw (2,-1)-- node[below right, at
        start]{$H_{i_{m-1}}$} (-2,1); \draw (-2,-1)-- node[below left, at
        start]{$H_{i_1}$} (2,1); 
\draw (1,-2)-- node[below, at
        start]{$H_{i_{m-2}}$} (-1,2); \draw (-1,-2)-- node[below, at
        start]{$H_{i_2}$} (1,2); 
\node at (1.7,-.5)
        {${\sgns_1}$}; \node at (-1.7,-.5)
        {${\sgns_{m}}$};
\node at (1.3,-1.3)
        {${\sgns_2}$}; \node at (-1.3,-1.3)
        {${\sgns_{m-1}}$};
\node at (0,-1.3){$\cdots$}; 
\node at (0,1.3){$\cdots$};
      \end{tikzpicture}
    \end{equation*}
    
    Going from $\sgns_j$ to $s_{\al}\cdot \sgns_{m-j+1}$, there are two minimal-length paths that go around the codimension 2 locus in opposite ways.  These do not necessarily agree, but they differ by ``lower order terms.''
    \begin{multline*} \subeqn \label{triple3}
      \wall(\sgns_{m-j+1},\sgns_1)\psi_\al (\sgns_1)
      \wall(\sgns_1,\sgns_j)-\wall(\sgns_{m-j+1},\sgns_m)\psi_\al
      (\sgns_m) \wall(\sgns_m,\sgns_j)\\
=\partial_{\al}(\varphi_1\cdot
      s_\al( \varphi_2)) \wall(\sgns_{m-j+1},\sgns_j)
    \end{multline*}
    where
    \begin{align*}
\varphi_1&=\vpss( \sgns_1,\sgns_j)=\vpss (s_\al\cdot
    \sgns_{m-j+1},s_\al\cdot \sgns_{m})\\
 \varphi_2&=\vpss ( \sgns_m,\sgns_{m-j+1})=\vpss (s_\al\cdot \sgns_{m},s_\al\cdot \sgns_{1}).
  \end{align*}
  \item \emph{The codimension 2 subspace does not lie in a root hyperplane, and thus is the intersection of some number of matter hyperplanes.}
The resulting relation here is a consequence of (\ref{wall}), but we include it for completeness. The relation (\ref{wall}) says that the two paths joining the chambers opposite the codimension 2 subspace that go around it in the two opposite ways are equal.  \medskip
    
If there are exactly two hyperplanes, we label the adjacent chambers $\boldsymbol{\pi},\boldsymbol{\rho},\sgns, \boldsymbol{\tau}$ as shown. 
\begin{equation*}
      \begin{tikzpicture}[very thick]
        \draw (1,-1)-- node[below, at
        start]{$j$} (-1,1); \draw (-1,-1)-- node[below, at
        start]{$i$} (1,1); \node at (0,-.7)
        {$\sgns$}; \node at (.7,0)
        {$\boldsymbol{\rho}$}; \node at (-.7,0)
        {$\boldsymbol{\tau}$};\node at (0,.7) {$\boldsymbol{\pi}$};
      \end{tikzpicture}
    \end{equation*} We then have the relation
    \begin{equation*}
      \wall(\boldsymbol{\pi},\boldsymbol{\rho}) \wall(\boldsymbol{\rho},\sgns)=  \wall(\boldsymbol{\pi},\boldsymbol{\tau}) \wall(\boldsymbol{\tau},\sgns)\label{GDKD}\subeqn
    \end{equation*}
  \end{enumerate}
\end{definition}
We let $\algA_P$ for $P$ a set equipped with a map $\iota \colon P\to I'$ be the category with the morphisms \[\Hom_{\algA_{P}}(p,p'):=\Hom_{\algA_{I'}}(\iota(p),\iota(p')).\]
One particularly interesting case is when $P=I'_\phi$. 
\notation{$\algA_{I'},\algA_P$}{The combinatorial Steinberg category (\cref{def:A}) and its variation with object set $P$.}
\begin{definition}
Let $\algA_{I'}^{\To}$ be the deformed version of this category; this is the category where we replace $\Symt$ with the larger polynomial algebra $\Symt_{\To}$, and keep all other morphisms and relations the same.  This is a $\Symt_F$-algebra, and after base change to $\C$, we recover $\algA_{I'}$.
\end{definition}
\begin{remark}
  Just as with the Steinberg algebra and category, we can equally well think of $\algA$ as the algebra given by the sum of its Hom spaces, and we will sometimes refer to it as an algebra. For example, we use $\algA_Ie(\sgns)$ to denote the principal left module $\sgns'\mapsto \Hom(\sgns,\sgns')$.  
\end{remark}

The categories $\algA_I$ and $\algA_I^{\To}$ have a grading with respect to which the relations (\ref{wall}--\ref{GDKD}) are homogeneous:
\begin{align*}
	\deg \mu &=2 \qquad\qquad  (\mu\in \ft^*_{\K}\text{ or }\ft^*_{\No}) & \deg e(\sgns)&=0 \\
	\deg \wall(\sgns;\sgns') & = \deg \varphi(\sgns,\sgns') + \deg \varphi(\sgns',\sgns) & \deg \psi_{\al}(\sgns) &=-2
\end{align*}
Thus, $\Symt$ or $\Symt_{\No}$ is given twice its usual grading, which matches the homological grading on $H^*_{T}(pt)$.   Note that $\deg \wall(\sgns;\sgns')$ is simply twice the number of weights with opposite signs in $\sgns$ and $\sgns'$.  

\begin{example}\label{ex:KLR}
  In our running example, the resulting category is well known: we can represent the positive Weyl chamber as a pair of points on the real line giving the coordinates $(a,b)$.  Since we are in the positive Weyl chamber $a>b$, there is no ambiguity.  We cross a hyperplane when these points meet or when they cross $x=1$ or $x=-1$.  Thus, if we add red points at $x\in \{1,-1\}$, we obtain a bijection between chambers in $I'_\phi$ and configurations of points up to isotopy leaving the red points in place.
  \begin{figure}[h]
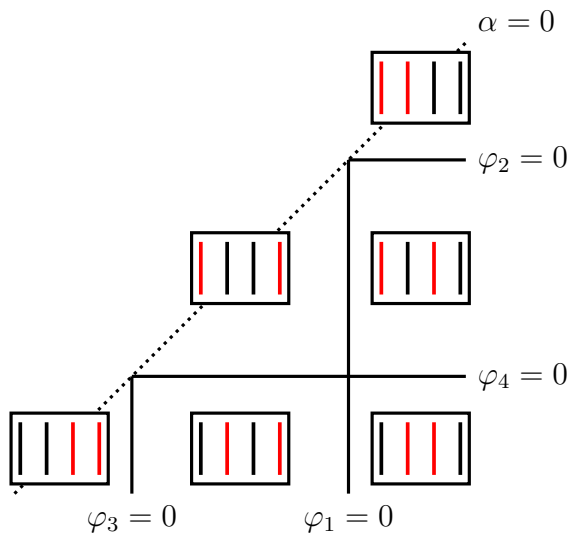

    \centering
    \[\tikz[very thick,scale=1.2]{
\draw (1.2,1.2)-- (1.2,-2.5) node[at end,below]{$\varphi_1^{\operatorname{mid}}=0$}; \draw (-1.2,-1.2)--
(-1.2,-2.5) node[at end,below]{$\varphi_3^{\operatorname{mid}}=0$};
\draw (2.5,1.2)-- (1.2,1.2) node[at start,right]{$\varphi_2^{\operatorname{mid}}=0$}; \draw (2.5,-1.2)--
(-1.2,-1.2) node[at start,right]{$\varphi_4^{\operatorname{mid}}=0$};
 \draw[dotted] (-2.5,-2.5) -- node[above right,at
        end]{$\alpha=0$}(2.5,2.5); 
\node[draw,fill=white,scale=.7] at (2,-2) {\tikz[line
      width=2pt]{\draw (-.5,0)--(-.5,1);\draw[red] (0,0)--(0,1);
        \draw[red] (.5,0) --(.5,1);\draw(1,0)--(1,1);
      }};
\node[draw,fill=white,scale=.7] at (0,-2)
    {\tikz[line
      width=2pt]{\draw (-.5,0)--(-.5,1);\draw[red] (0,0)--(0,1);
        \draw (.5,0) --(.5,1);\draw[red](1,0)--(1,1);
      }};
\node[draw,fill=white,scale=.7] at (2,0)
    {\tikz[line
      width=2pt]{\draw[red] (-.5,0)--(-.5,1);\draw (0,0)--(0,1);
        \draw[red] (.5,0) --(.5,1);\draw(1,0)--(1,1);
      }}; 
\node[draw,fill=white,scale=.7] at (-2,-2)
    {\tikz[line
      width=2pt]{\draw (-.5,0)--(-.5,1);\draw (0,0)--(0,1);
        \draw[red] (.5,0) --(.5,1);\draw [red](1,0)--(1,1);
      }};
\node[draw,fill=white,scale=.7] at (0,0)
    {\tikz[line
      width=2pt]{\draw [red](-.5,0)--(-.5,1);\draw (0,0)--(0,1);
        \draw (.5,0) --(.5,1);\draw[red](1,0)--(1,1);
      }};
\node[draw,fill=white,scale=.7] at (2,2)
    {\tikz[line
      width=2pt]{\draw[red] (-.5,0)--(-.5,1);\draw[red] (0,0)--(0,1);
        \draw (.5,0) --(.5,1);\draw(1,0)--(1,1);
      }}; 
}\]
    \caption{The correspondence between chambers and Stendhal diagrams}
    \label{fig:Stendhal}
  \end{figure}

We'll represent morphisms $\sgns\to \sgns'$ by Stendhal diagrams (as
defined in \cite[\S 4]{Webmerged}) that match
$\sgns$ at the bottom and $\sgns'$ at the top (with composition
given by stacking, using isotopies to match the top and bottom if
possible).  We send the
\begin{itemize}
\item identity on $\sgns$ to a diagram with all strands vertical, 
\item the
  action of $\K[\gamma_1,\gamma_2]$ to placing dots on the two
  strands,
\item $\wall(\sgns;\sgns')$ to a diagram with straight lines
  interpolating between the top and bottom
\item $\psi_\al(\sgns) $ is only well-defined if there is no red line
  separating the two black lines; we send this to a crossing of the
  two black strands.
\end{itemize}
\begin{equation*}\tikz[baseline]{
      \node[label=below:{$\gamma_i$}] at (0,0){ 
        \tikz[very thick]{
           \draw (-.5,-.5)-- (-.5,.5);
          \draw (.5,-.5)-- (.5,.5) node [midway,fill=black,circle,inner
          sep=2pt]{} ;
          \draw (1.5,-.5)-- (1.5,.5);
          \node at (1,0){$\cdots$};
          \node at (0,0){$\cdots$};
        }
      };
      \node[label=below:{$\psi_i(\sgns)$}] at (3,0){ 
        \tikz[very thick]{
          \draw (-.5,-.5)-- (-.5,.5);
          \draw (.1,-.5)-- (.9,.5);
          \draw (.9,-.5)-- (.1,.5);
          \draw (1.5,-.5)-- (1.5,.5);
          \node at (1.1,0){$\cdots$};
          \node at (-.1,0){$\cdots$};
        }
      };
      \node[label=below:{$\wall(\sgns;\sgns')$}] at (6,-0){ 
        \tikz[very thick]{
          \draw (-.5,-.5)-- (-.5,.5);
          \draw[red] (.1,-.5)-- (.9,.5);
          \draw (.9,-.5)-- (.1,.5);
          \draw (1.5,-.5)-- (1.5,.5);
          \node at (1.1,0){$\cdots$};
          \node at (-.1,0){$\cdots$};
        }
      };
      \node[label=below:{$\wall(\sgns';\sgns)$}] at (9,-0){ 
        \tikz[very thick]{
          \draw (-.5,-.5)-- (-.5,.5);
          \draw (.1,-.5)-- (.9,.5);
          \draw[red] (.9,-.5)-- (.1,.5);
          \draw (1.5,-.5)-- (1.5,.5);
          \node at (1.1,0){$\cdots$};
          \node at (-.1,0){$\cdots$};
        }
      };
    }\end{equation*}
The relations (\ref{wall}--\ref{GDKD}) exactly match those of
$\tilde{T}^{2}_{-2}$ as defined in \cite[Def. 2.3]{WebTGK} (a special
case of the algebras defined in \cite[\S 4]{Webmerged}).  Note the notation mismatch with our usual notation of $T$
for a maximal torus of $\gaugeG$. Thus,
${A}_{\Ifflav{\phi}}$ is equivalent to the category of projective modules over
this category.  This is a
special case of a much more general result, which we will discuss in
Section \ref{sec:examples}.
\end{example}

Given a simple root $\al_i$, let $P_i$ be the unique minimal parabolic
containing the Borel $B$ and the root $SL_2$ for $\al_i$.  
Let $G_i\subset G/B\times G/B$ be the preimage of the diagonal in
$G/P_i\times G/P_i$.  Given a $P_i$-representation $Q$, we let
$\cL_{P_i}(Q)$ be the pullback of the associated bundle on $G/P_i$ to $G_i$, and if $Q$ is a representation
of the Borel, then let $\cL(Q)$ be the associated vector bundle on the
diagonal inside $G/B \times G/B$.  Note that a sign
vector $\sgns\in I'$ satisfies $s_{\al_i}\cdot \sgns=\sgns$ if and
only if  $V_\sgns$ is a subrepresentation for $P_i$.  
\begin{theorem}\label{th:Steinberg}
  We have natural equivalences  $\algA_{I'}\cong \Stein_{I'}$ and $\algA^{\To}_{I'}\cong \Stein^{\To}_{I'}$ which
  match objects in the obvious way, and send
  \begin{enumerate}
  \item $\mu \colon \sgns\to \sgns$ to the Euler class $e(\cL(\mu))$ of the associated bundle
  on the diagonal copy of $\Xsgns_{\sgns}$ in $\bbX_I$.
\item  $\wall(\sgns,\sgns')$ to the fundamental class of the  variety $[\cL(\Vsgns_{\sgns}\cap
  \Vsgns_{\sgns'})]$ embedded naturally as
  \[\cL(\Vsgns_{\sgns}\cap
  \Vsgns_{\sgns'})\cong \{(gB,gB,v)\in G/B\times G/B\times V\mid g^{-1}v\in \Vsgns_{\sgns}\cap
  \Vsgns_{\sgns'}\}\subset X_{\sgns}\times_{V}
  X_{\sgns'}.\]
\item    $\psi_{\al_i}(\sgns)$ to the fundamental class of the associated variety
  $[\cL_{P_i}(V_{\sgns})]$ embedded naturally in $X_{\sgns}\times_{V}
  X_{\sgns}\subset G/B\times G/B\times V$.
  \end{enumerate}
  This isomorphism intertwines the anti-automorphism $\star$ to the anti-isomorphism:
  \[\wall(\sgns,\sgns')^\star=\wall(\sgns',\sgns)\qquad \psi(\sgns)^\star=\psi(\sgns)\qquad \mu^\star=\mu.\]
\end{theorem}
We will prove this theorem below, once we have developed some of the
theory of these categories.

\begin{lemma}\label{lem:Y-action}
  The category $\algA_{I'}$ has a natural representation $Y$ which sends each object $\sgns$ to the polynomial ring $\Symt$. The action is
  defined by the formulae:
\newseq
  \begin{align*}
 \subeqn\label{Y-action1}   \wall(\sgns,\sgns')\cdot f &=\vpss (\sgns,\sgns') f \\
 \subeqn   \psi_\al(\sgns)\cdot f &=\partial_\al(f) \label{Y-action2}\\
\subeqn\label{Y-action3}  \mu \cdot  f &=\mu  f 
  \end{align*}

\end{lemma}
\begin{proof}
The codimension 1 relations are simple to check.  For the codimension 2 relations, (\ref{coxeter}) is a standard relation between BGG-Demazure operators.   Applying the twisted Leibniz rule
\[\partial_{\alpha}(gf)=s_{\alpha}(g)\partial_{\alpha}(f)+\partial_{\alpha}(g)f,\] the relation  (\ref{triple3}) follows from:  
\begin{multline*}
      \varphi(\sgns_{m-j+1},\sgns_1)\partial_\al(
      \varphi(\sgns_1,\sgns_j)f)-\varphi(\sgns_{m-j+1},\sgns_m)\partial_\al(\varphi(\sgns_m,\sgns_j)f)\\=      (\varphi_2\partial_\al(\varphi_1)\varphi_3-\varphi_1\varphi_3\partial_\al
      (\varphi_2))f
=\partial_{\al}(\varphi_1\cdot
      s_\al( \varphi_2)) \varphi(\sgns_{m-j+1},\sgns_j)f
    \end{multline*}
    where, by definition:
    \begin{align*}
\varphi_1&=\varphi( \sgns_1,\sgns_j)=\varphi(s_\al\cdot
    \sgns_{m-j+1},s_\al\cdot \sgns_{m})\\
 \varphi_2&=\varphi( \sgns_m,\sgns_{m-j+1})=\varphi(s_\al\cdot \sgns_{m},s_\al\cdot \sgns_{1}),\\
 \varphi_3&=\varphi(\sgns_{m-j+1},\sgns_j).\qedhere
 \end{align*}
\end{proof}

For each pair $(\sgns,\sgns')\in I'\times I'$, and $w\in W$, we fix a minimal-length path (that is, crossing a minimal number of hyperplanes) from $C_{\sgns',1}$ to $C_{w\sgns,w}$.  Now, fold this
path so that it lies in the positive Weyl chamber: the first time it
crosses a root hyperplane, apply the corresponding simple reflection
to what remains of the path.  Then follow this new path until it
strikes another wall, and apply that simple reflection to the
remaining path, etc.  The result is a sequence $\beta_1,\beta_2,\dots, \beta_p$ of simple root hyperplanes and sign vectors $\sgns_1,\dots, \sgns_p$ corresponding to the chambers where we reflect.  Now, consider the morphism $\tilde{\wall}(\sgns,\sgns',w)\colon \sgns'\to \sgns$ defined by
the product:

\notation{$\tilde{\wall}(\sgns,\sgns',w)$}{The morphism defined in \eqref{eq:tilde-wall}.}[eq:tilde-wall]
\begin{equation}
\tilde{\wall}(\sgns,\sgns',w)=\wall(\sgns,\sgns_{p})\psi_{\beta_p}(\sgns_{p})
\wall(\sgns_{p},\sgns_{p-1}) \psi_{\beta_{p-1}}(\sgns_{p-1})\cdots
\psi_{\beta_1}(\sgns_{1}) \wall(\sgns_1,\sgns').\label{eq:tilde-wall}
\end{equation}
\begin{example}\label{ex:paths}
  In our running example, this is given by the diagrams without dots
  which join the black strands with no crossing if $w=1$ and with a
  crossing if $w=s_{\al}$, using a minimal number of red/black crossings.

In Figure \ref{fig:reflection}, we show one possible path $\sgns'=(+,-,+,-)\to
s_{\al}\sgns=(-,+,+,+)$, and its reflection.
\begin{figure}
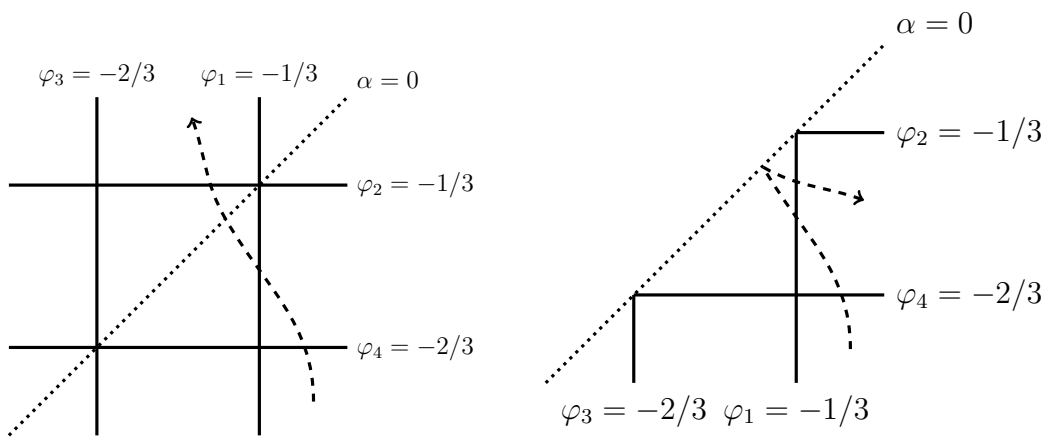

\[ \tikz[very thick,scale=.9]{
\draw (1.2,2.5)-- (1.2,-2.5) node[at start,above,scale=.8]{$\varphi_1=-1/3$}; \draw (-1.2,2.5)--
(-1.2,-2.5) node[at start,above,scale=.8]{$\varphi_3=-2/3$};
\draw (2.5,1.2)-- (-2.5,1.2) node[at start,right,scale=.8]{$\varphi_2=-1/3$}; \draw (2.5,-1.2)--
(-2.5,-1.2) node[at start,right,scale=.8]{$\varphi_4=-2/3$}; 
 \draw[dotted] (-2.5,-2.5) -- node[above right,at
        end,scale=.8]{$\alpha=0$}(2.5,2.5); 
\draw[->,dashed] (2,-2) to[in=-60,out=90] (.7,.7)  to[in=-70,out=120] (.2,2.2);
}\qquad \tikz[very thick,scale=.9]{
\draw (1.2,1.2)-- (1.2,-2.5) node[at end,below]{$\varphi_1=-1/3$}; \draw (-1.2,-1.2)--
(-1.2,-2.5) node[at end,below]{$\varphi_3=-2/3$};
\draw (2.5,1.2)-- (1.2,1.2) node[at start,right]{$\varphi_2=-1/3$}; \draw (2.5,-1.2)--
(-1.2,-1.2) node[at start,right]{$\varphi_4=-2/3$}; 
 \draw[dotted] (-2.5,-2.5) -- node[above right,at
        end]{$\alpha=0$}(2.5,2.5); 
\draw[->,dashed] (2,-2) to[in=-60,out=90] (.71,.69)  to[in=160,out=-30] (2.2,.2);
}
\]
\caption{The paths from Example \ref{ex:paths}}\label{fig:reflection}
\end{figure}
The resulting element $\tilde{\wall}((+,-,+,+),(+,-,+,-),s_\al)$ is
given by:
\begin{equation*}
 \wall((+,-,+,+),(-,-,+,+))\psi_{\al}((-,-,+,+))
\wall((-,-,+,+),(+,-,+,-))  \end{equation*}
and represented by the diagram
 \begin{equation*}\tikz[baseline,very thick,scale=1.5]{
          \draw (-.5,-.5)to[in=-150,out=30] (1.5,.5);
          \draw (1.5,-.5)to[in=-70,out=130] (.5,.5);          \draw[wei] (0,-.5)-- (0,.5);
          \draw[wei] (1,-.5)-- (1,.5);
        }.
      \end{equation*}
\end{example}
\begin{theorem}\label{thm:basis}
  The elements $\tilde{\wall}(\sgns,\sgns',w)$ are a basis of the
  morphisms in $\algA_{I'}$
  as a right module over $\Symt$.
\end{theorem}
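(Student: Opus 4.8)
\emph{Proof plan.} I would prove spanning and linear independence separately. Spanning is the combinatorial core of the statement; independence is a short computation inside the representation $Y$.

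\emph{Spanning.} I want to show every morphism $\sgns'\to\sgns$ in $A_{I'}$ lies in $\sum_{w\in W}\tilde{\wall}(\sgns,\sgns',w)\cdot\Symt$, so it suffices to straighten a single composite of generators. Using \eqref{dot-commute} and \eqref{demazure2} I first move every polynomial factor to the far right; dragging a dot $\mu$ past a $\psi_\al$ produces, besides the transported term $(s_\al\mu)\psi_\al(\sgns)$, a term with one fewer $\psi$ and an extra polynomial $\partial_\al(\mu)$, which will count as strictly lower below. What is left is an alternating word in the $\wall$'s and $\psi$'s, i.e.\ a \emph{gallery} that steps across matter hyperplanes (via $\wall$) and reflects off Coxeter hyperplanes (via $\psi$), beginning at $C_{\sgns',1}'$. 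I then run a straightening algorithm: \eqref{wall} fuses two consecutive wall-steps (at the cost of the polynomial $\varphi(\sgns,\sgns',\sgns'')$), \eqref{demazure1} cancels a repeated reflection, and \eqref{coxeter}, \eqref{triple1}--\eqref{triple3}, \eqref{GDKD} are precisely the moves relating the two ways a gallery can traverse a codimension-two face. Inducting on the pair (number of hyperplane crossings, total degree of accumulated dots) in lexicographic order, any gallery is brought to a minimal-length gallery from $C_{\sgns',1}'$ to $C_{w\sgns,w}'$ for a unique $w\in W$, modulo strictly lower terms; and since any two minimal galleries with the same endpoints are connected by codimension-two moves, such a gallery equals the fixed $\tilde{\wall}(\sgns,\sgns',w)$ modulo lower terms. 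Hence the $\tilde{\wall}(\sgns,\sgns',w)$, $w\in W$, span $\Hom_{A_{I'}}(\sgns',\sgns)$ as a right $\Symt$-module, and the span is independent of the chosen minimal paths.

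\emph{Linear independence.} Suppose $\sum_{w\in W}\tilde{\wall}(\sgns,\sgns',w)\,a_w=0$ with $a_w\in\Symt$, and apply $Y$. By \eqref{Y-action}, $Y(\tilde{\wall}(\sgns,\sgns',w))$ is an alternating composite of multiplications by the nonzero polynomials $\varphi(\cdot,\cdot)$ and BGG-Demazure operators $\partial_{\beta_i}$, hence an element of the nil-Hecke algebra $NH_W$ acting on $\Symt$; the $\beta_i$ are the simple reflections produced by folding the chosen minimal gallery, and since a minimal gallery from the dominant chamber to the $w$-chamber crosses exactly the $\ell(w)$ inversion hyperplanes of $w$, the word $s_{\beta_p}\cdots s_{\beta_1}$ is reduced. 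Expanding via the twisted Leibniz rule, $Y(\tilde{\wall}(\sgns,\sgns',w))=c_w\,\partial_{w'}+(\text{terms in }\partial_u\text{ with }\ell(u)<\ell(w))$, where $c_w\in\Symt$ is nonzero and $w\mapsto w'$ is a length-preserving bijection of $W$ (in fact $w'=w^{-1}$). Post-composing with multiplication by $a_w$ and using $\partial_{w'}\circ(\text{mult by }a)\equiv(\text{mult by }w'(a))\circ\partial_{w'}$ modulo operators with fewer divided differences, the vanishing relation read off in the $\partial_{w_0'}$-component, for $w_0$ of maximal length with $a_{w_0}\ne0$, gives $c_{w_0}\,w_0'(a_{w_0})=0$; since $c_{w_0}\ne0$ and $w_0'$ acts invertibly on $\Symt$, $a_{w_0}=0$, a contradiction. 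So all $a_w=0$, and together with spanning this proves the theorem. (As a sanity check, this rank $|W|$ is exactly the rank of $\Hom_{\Stein_{I'}}(\sgns',\sgns)$ over $\Symt$ predicted by the paving of $\mathbb{X}_{I'}$ by relative-position strata, which is what makes the functor of Theorem~\ref{th:Steinberg} an equivalence.)

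\emph{Main obstacle.} The genuine work is the straightening algorithm in the spanning step: one must verify that the codimension-one and codimension-two relations really do suffice both to move an arbitrary gallery to a minimal one and to pass between minimal galleries, with every error term --- those from \eqref{wall} and \eqref{demazure2}, and especially the correction $-\partial_\al(\varphi_j^w)e(\sgns)$ in \eqref{triple3} --- strictly smaller in the chosen complexity order, so that the induction terminates. Pinning down the right statistic, checking that every non-minimal gallery contains a ``bigon'' removable by \eqref{demazure1} or \eqref{wall}, and that any two minimal galleries are linked by codimension-two moves, is the delicate point; the independence half, by contrast, is essentially the faithfulness of the nil-Hecke algebra on $\Symt$ plus a leading-term bookkeeping.
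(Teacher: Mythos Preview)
Your proposal is correct, and the linear-independence half is essentially the paper's argument in a different basis: the paper localizes $Y$ over the fraction field of $\Symt$ and notes that $\tilde{\wall}(\sgns,\sgns',w)$ acts as a nonzero rational multiple of $w$ plus Bruhat-shorter terms, so the $\#W$ elements span the twisted group algebra of $W$ over the fraction field and hence must be independent over $\Symt$. Your nil-Hecke leading-term computation is the same triangularity, read in the $\{\partial_w\}$ basis rather than the $\{w\}$ basis.

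The spanning half is where you and the paper genuinely diverge. You propose a gallery-straightening algorithm using all of the codimension-$1$ and codimension-$2$ relations together with an induction on a complexity statistic, and you correctly flag this as the delicate step. The paper bypasses all of it: it simply checks that the right $\Symt$-span of the $\tilde{\wall}(\sgns,\sgns',w)$ contains the identity on each object (namely $\tilde{\wall}(\sgns,\sgns,1)$) and is closed under right multiplication by each generator, via the two formulas
\[
\tilde{\wall}(\sgns,\sgns',w)\,\psi_\al
=\begin{cases}\tilde{\wall}(\sgns,\sgns',ws_\al)& ws_\al>w,\\ 0 & ws_\al<w,\end{cases}
\qquad
\tilde{\wall}(\sgns,\sgns',w)\,\wall(\sgns',\sgns'')=\tilde{\wall}(\sgns,\sgns'',w)\,\varphi(\sgns_p,\sgns',\sgns'').
\]
Appending a single generator on the right either extends the underlying minimal path by one step (still minimal, with a new endpoint) or creates an immediate backtrack, which \eqref{wall} or \eqref{demazure1} kills on the spot. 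There is no need to locate and remove bigons deep inside a long word, nor to verify that any two minimal galleries are connected by codimension-$2$ moves. Your approach would work and is what one expects by analogy with KLR basis theorems, but the paper's observation that closure under \emph{one} generator at a time suffices makes the ``main obstacle'' you identified simply evaporate.
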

\begin{proof}
{\bf These elements span:} To show that they span, it suffices to show that their span contains the identity of each object, which is $\tilde{\wall}(\sgns,\sgns,1)$ and is closed under right multiplication by the generators $\wall(-,-)$ and $\psi(-)$.
Note that
\begin{align*}
\tilde{\wall}(\sgns,\sgns',w) \psi_{\al}(\sgns') &=
\begin{cases}
  \tilde{\wall}(\sgns, \sgns', w s_\al) & w s_\al>w\\
0& w s_\al<w
\end{cases}\\
\tilde{\wall}(\sgns,\sgns',w) \wall(\sgns',\sgns'')&=\tilde{\wall}(\sgns,\sgns'',w) \varphi (\sgns_p,\sgns',\sgns'')
\end{align*}
 so this shows that these vectors span.

{\bf Linear independence:} Now, consider the action of these morphisms in the representation $Y$
localized over the fraction field $K$ of $\Symt$. 
By construction, the image of every morphism $\sgns\to \sgns'$ lies in the subalgebra of $\K$-linear endomorphisms of $K$ generated by the action of elements of $W$ and multiplication by elements of $K$.  This is a copy of the twisted group algebra of $W$ over the field $K$.   
 The action of
$\tilde{\wall}(\sgns,\sgns',w) $ is given by the element $w$, times a
non-zero rational function, plus elements which are shorter in Bruhat
order.  Thus, the morphisms $\sgns\to \sgns'$ have the same $K$-span 
as the elements of $W$.  Since this span is a vector
space of dimension $\# W$ over $K$, this is only
possible if the elements $ \tilde{\wall}(\sgns,\sgns',w)$ are linearly
independent over $\Symt$.
\end{proof}

We can think about this proof a little differently if we think a bit
more explicitly about paths.  
Let $\pi\colon [0,1]\to \ft_{1,\R}$ be a generic path between generic
points (i.e. any point where this path meets a hyperplane is distinct from
all other hyperplanes, and it is transverse at these points), and let
$\pi'$ be the path obtained when we reflect in the Coxeter hyperplanes
that $\pi$ reaches in order so that we stay in the Weyl chamber $\WC$.  
\begin{definition}Let
$\wallpi_{\pi}$ be the product of the morphisms $   \wall(\sgns,\sgns')$
when $\pi'$ crosses a matter hyperplane passing from the chamber $\Cs_{\sgns'}$ to $\Cs_\sgns$ and
$\psi_{\al}(\sgns)$ when $\pi'$ strikes and reflects off the
corresponding Coxeter hyperplane, in the order we reach them along
$\pi'$. 
\end{definition}
Note that we can extend this definition to paths bearing coupons with
elements of $\Symt$ on them at generic points, where this simply means that
we insert multiplication by that element of $\Symt$ into the product
above.  In these terms, we can rewrite the relations of Definition
\ref{def:A} in geometric terms: 
(\ref{wall}) and (\ref{dot-commute}) around the hyperplane $\varphimid_i^{\operatorname{mid}}=0$ become \newseq
\begin{equation*}\subeqn \label{eq:graphical1} \tikz[baseline,very thick ]{\draw (-1,0) --(1,0); 
\draw[dashed,->] (-1,-.5) to [out=0,in=180] (0,.5) to [out=0,in=180]
(1,-.5);}=\tikz[baseline,very thick]{\draw (-1,0) --(1,0); 
\draw[dashed,->] (-1,-.5) to [out=0,in=180] node[at
end,fill=white,draw, thick,solid, rounded corners=3pt]{$\varphi_i$}(0,-.5) to [out=0,in=180]
(1,-.5);}
\qquad  \tikz[baseline,very thick ]{\draw (-1,0) --(1,0); 
\draw[dashed,->] (-.9,1) to  node[pos=.25,fill=white,draw, thick,solid, rounded corners=3pt]{$f$}
(.9,-1);}=\tikz[baseline,very thick]{\draw (-1,0) --(1,0); 
\draw[dashed,->] (-.9,1) to node[pos=.75,fill=white,draw, thick,solid, rounded corners=3pt]{$f$}
(.9,-1);}\end{equation*}
while 
(\ref{demazure1}) and (\ref{demazure2}), with the dotted line given by the Coxeter hyperplane $\al=0$, become
\begin{equation*}\subeqn \label{eq:graphical2}\tikz[baseline,very thick ]{\draw[dotted] (-1,0) --(1,0); 
\draw[dashed,->] (-1,-.5) to [out=0,in=-90] (-.4,0) to
[out=-90,in=180] (0,-.4) to[out=0,in=-90] (.4,0) to [out=-90,in=180]
(1,-.5);}=0\qquad  \tikz[baseline,very thick,scale=1.5 ]{\draw[dotted] (-1,0) --(1,0); \draw[dashed,->] (-1,-.8) to [out=0,in=-90] node[pos=.5, fill=white,draw, thick,solid, rounded corners=3pt,scale=.8]{$f$} (0,0) to
[out=-90,in=180]
(1,-.8);}-\tikz[baseline,very thick ,scale=1.5 ]{\draw[dotted] (-1,0) --(1,0); \draw[dashed,->] (-1,-.8) to [out=0,in=-90] (0,0) to
[out=-90,in=180] node[pos=.5, fill=white,draw, thick,solid, rounded corners=3pt,scale=.8]{$s_\al f$}
(1,-.8);}= \tikz[baseline,very thick ,scale=1.5]{\draw[dotted] (-1,0) --(1,0); \draw[dashed,->] (-1,-.8) to [out=0,in=180] node[at end, fill=white,draw, thick,solid, rounded corners=3pt,scale=.8]{$\partial_{\al}(f)$} (0,-.6) to
[out=0,in=180]
(1,-.8);}\end{equation*}
We have similar forms of the codimension 2 relations;  the only one which is not a straightforward isotopy without corrections is  (\ref{triple3}) which in the generic case where $m=3$ becomes
\begin{equation*} \label{eq:graphical3}\subeqn
 \tikz[baseline,very thick,scale=1.26 ]{\draw[dotted] (-1,0) --(1,0); \draw (-1,1) --node [at
    start,above,scale=.8]{$s_\al\cdot\varphi_i^{\operatorname{mid}}=0$}(1,-1); \draw (1,1) --node [at
    start,above,scale=.8]{$\varphi_i^{\operatorname{mid}}= 0 $}(-1,-1); 
\draw[dashed,->] (-.6,-1) to [out=45,in=-90] (-.3,0) to
[out=-90,in=135]
(.6,-1);}-\tikz[baseline,very thick ,scale=1.26 ]{\draw[dotted] (-1,0) --(1,0); \draw (-1,1) --node [at
    start,above,scale=.8]{$s_\al\cdot\varphi_i^{\operatorname{mid}}= 0 $}(1,-1); \draw (1,1) --node [at
    start,above,scale=.8]{$\varphi_i^{\operatorname{mid}}= 0 $}(-1,-1); 
\draw[dashed,->] (-.6,-1) to [out=45,in=-90] (.3,0) to
[out=-90,in=135]
(.6,-1);}= \tikz[baseline,very thick ,scale=1.26]{\draw[dotted] (-1,0) --(1,0); \draw (-1,1) --node [at
    start,above,scale=.8]{$s_\al\cdot\varphi_i^{\operatorname{mid}}= 0 $}(1,-1); \draw (1,1) --node [at
    start,above,scale=.8]{$\varphi_i^{\operatorname{mid}}=0$}(-1,-1); 
\draw[dashed,->] (-.6,-1) to [out=45,in=180] node[at end, fill=white,draw, thick,solid, rounded corners=3pt,scale=.65]{$\partial_{\al}(\varphi_i)$} (0,-.6) to
[out=0,in=135]
(.6,-1);} \end{equation*}

As usual, any two such paths with the same endpoints can be joined by an
isotopy that preserves genericity except at finitely many points, where it moves only through
``subgeneric'' configurations.  By ``subgeneric,'' we mean
configurations where we have a single
tangency of the path and a hyperplane, which allows us to create or
cancel a pair of intersection points, or the path goes through a
generic point in the intersection of two hyperplanes, which allows us to
relate the paths around this codimension 2 subspace in the two
different ways.  

The relations (\ref{wall}) and (\ref{demazure1}) show that if a path
has intersection points which we can cancel, then we can write $\tilde{\wall}_\pi$
in terms of shorter paths, and if two paths differ by passing through
a codimension 2 locus, the ``codimension 2'' relations
(\ref{coxeter}--\ref{GDKD}) show that the corresponding $\tilde{\wall}_\pi$'s
differ by an element in the span of shorter paths.  You can visualize these moves using the graphical rewriting of these relations.

The argument above can be rephrased as noting that, ranging over all generic paths $\pi$, the elements $\tilde{\wall}_\pi$ span, and the relations above are enough to write every path in terms of a single homotopy representative of the minimal paths between each pair of endpoints, given by $\tilde{\wall}(\sgns,\sgns',w)$.

\begin{proof}[Proof of Theorem \ref{th:Steinberg}]
{\bf There is a faithful, essentially surjective functor $\algA_{I'}^{\To}\to \Stein_{I'}^{\To}$}: We will first prove this for the deformed category; the statement for the undeformed category holds by base change.
The functor described in the statement matches the action of $\algA_{I'}^{\To}$ on $Y$ with the representation $Y_X^{\To}$ of $\Stein_I^{\To}$ sending $(\sgns,w)\mapsto H_*^{BM, G}(\Xsgns_{\sgns,w})$ described in Lemma \ref{lem:Stein-facts}(5), as simple computations with
pushforward and pullback confirm (for example, as in \cite[Prop. 2.23]{varagnoloCanonicalBases2011}):
\begin{itemize}
\item The equation (\ref{Y-action1}) is simply the fact that the
  pushforward in question gives multiplication by the Euler class of the normal bundle to
  $\Vsgns_{\sgns}\cap \Vsgns_{\sgns'}$ inside $V_{\sgns}$, which is the sum
  of the weight spaces with $\sgns_i=+$ and $\sgns'_i=-$.  Thus, the Euler class of this normal bundle is the product of these weights.  
\item The equation (\ref{Y-action2}) is simply the formula for integration over a $\mathbb{P}^1$ bundle by Atiyah-Bott.
\item The equation (\ref{Y-action3}) is clear from the definition.
\end{itemize}
 The action on $Y_X^{\To}$ is
faithful by Lemma \ref{lem:Stein-facts}(5), so this shows that we
have a faithful functor $\algA_{I'}^{\To}\to \Stein_{I'}^{\To}$, which is essentially surjective by construction.

{\bf This functor is full:} 
Let $\mathbb{X}(\sgns,\sgns', w)$ be the subset of the space
$X_{\sgns'}\times_VX_{\sgns}$ where the
relative position of the two flags is $w\in W$, and $\mathbb{X}(\sgns,\sgns', <w),\mathbb{X}(\sgns,\sgns',\leq w)$ where it is $<w$ or $\leq w$ in Bruhat order.  By Lemma \ref{lem:Stein-facts}(2), the relative Borel-Moore homology $H_*^{BM,G}(\mathbb{X}(\sgns,\sgns',\leq w),\mathbb{X}(\sgns,\sgns',<w))$ is a free left or right $\Sym\ft^*$-module generated by the fundamental class $[\mathbb{X}(\sgns,\sgns', w)]$.  The homology class $\tilde{\wall}(\sgns,\sgns',w)$ is the pushforward under the projection to $\mathbb{X}(\sgns,\sgns',\leq w)$ of the fundamental class of the fiber product \begin{equation}
\mathbb{X}(\sgns,\sgns_{p},\leq s_{\beta_{p}})\times_{X_{\sgns_p}}\cdots \times _{X_{\sgns_1}}
\mathbb{X}(\sgns_1,\sgns',\leq s_{\beta_1})\label{eq:tilde-wall-geo}
\end{equation}
where $\beta_i,\sgns_i$ are as in \eqref{eq:tilde-wall}.  This projection map is an isomorphism over $\mathbb{X}(\sgns,\sgns', w)$, so in $H_*^{BM,G}(\mathbb{X}(\sgns,\sgns',\leq w),\mathbb{X}(\sgns,\sgns',<w))$, the class $\tilde{\wall}(\sgns,\sgns',w)$ is precisely the fundamental class.  By Lemma \ref{lem:Stein-facts}(3), ranging over $w$, these give a basis as desired.  This shows that the functor is full.

{\bf Compatibility with duality:} The compatibility with duality is manifest from the definitions of the homology classes in \cref{th:Steinberg}(1-3).
\end{proof}

One slightly annoying aspect of the structure of the category $\algA_{I'}$
is that it is not immediately apparent how to index its simple
modules, or equivalently, its indecomposable projectives.  We can consider the identity morphism $e(\sgns)$ as an idempotent element of $\algA_{I'}$, and study the projective $\algA_{I'}e(\sgns)$.  If $G$ is non-abelian, this is typically not indecomposable.  

In general, finding the set of simple modules might be quite challenging, but we can at
least reduce it to studying a subset of the chambers $I'$.    Choose a $W$-invariant inner product on $\ft$; we'll use the distance function $d(-,-)$ induced by this inner product below.
\begin{definition}
The {\bf defect} of a chamber in $I'$ is
\[\operatorname{def}(\sgns)=\min\{ d(x,p) \mid x\in \Cs_{\sgns},p\in \ft_{1}^W\}.\]
In particular, the defect of a chamber is 0 if and only if it contains a fixed point.  
\end{definition}
Let $W_\sgns$ be the stabilizer of $\sgns\in I'$; by convexity, this is a reflection subgroup of $W$.  The
projective module $\algA_{I'}e(\sgns)$ has an action of $W_\sgns$ and an
action of the nilHecke algebra of $W_\sgns$ by right multiplication.
Since the nilHecke algebra of $W_\sgns$ is a matrix algebra of rank
$\# W_\sgns$, we have that
\[\algA_{I'}e(\sgns)\cong P_{\sgns}^{\oplus \# W_{\sgns}}\] where
$P_{\sgns}=(\algA_{I'}e(\sgns))^{W_\sgns}$ is the invariants of this
action.

We call an indecomposable summand of $P_\sgns$ {\bf novel} if it is
not isomorphic to a summand of $P_{\sgns'}$ with lower defect or with $\# W_{\sgns'}> \# W_{\sgns}$, and
{\bf boring} if it is. 
\begin{proposition}\label{prop:at-most-one}
  For each sign vector $\sgns\in I'$, the projective $P_{\sgns}$ has at most one novel indecomposable summand.
\end{proposition}
Building on this observation, we call $\sgns$ itself {\bf novel} if
$P_\sgns$ has a novel summand and {\bf boring} if it does not.  
\begin{proof}
{\bf Reduction to locality:} Let $n$ be the defect of $\sgns$ and $m=\#W_{\sgns}$.  
Since $\algA_{I'}\operatorname{-gmod}$ is a Krull-Schmidt category, the
summands of $P_{\sgns}$ are controlled by the quotient of $\End(P_{\sgns})$ modulo its Jacobson radical, which will be a sum of matrix algebras, one for each indecomposable summand.  Furthermore, the summand is isomorphic to a summand of $P_{\sgns'}$ with lower defect if the corresponding matrix algebra lies in the two-sided ideal $J_{<n,m}$ of morphisms factoring through $P_{\sgns'}$ with defect $<n$ or with $\# W_{\sgns'}>m$.  Thus, the result will follow if we show that $\End(P_{\sgns})/J_{< n,m}$ is a local ring when it is non-trivial.

{\bf Preliminaries to locality:} Now, fix $\sgns$ with $\Cs_{\sgns}\neq \emptyset$.  Since both $\Cs_{\sgns}$ and $\ft_1^W$ are convex, closed and non-empty, we can choose points $x_0\in \Cs_{\sgns},p_0\in \ft_1^W$ which minimize the distance $d(x_0,p_0)$.  Let $x$ be generic in the interior of $\Cs_{\sgns}$ in a ball of radius $\epsilon$ around $x_0$.

The point $x_0$ must lie on a face of $\Cs_{\sgns}$ parallel to $\ft_1^W$, and thus parallel to all root hyperplanes.  The stabilizer is constant along this face, and given by the reflections in the root hyperplanes which contain it.  By our genericity assumptions, this face can lie in at most 1 root hyperplane.  Thus, we have that $W_{x_0}$ is either trivial or generated by a single reflection $s_\alpha$.

{\bf $\tilde{\wall}(\sgns,\sgns,w)\in J_{<n,m}$
 if $w\sgns\neq \sgns$}: Note that $\End_{\algA_{I'}}(\algA_{I'}e(\sgns))=e(\sgns)\algA_{I'}e(\sgns)$ is spanned over $S$ by the elements $\tilde{\wall}(\sgns,\sgns,w)$.   We wish to show that whenever $w\sgns\neq \sgns$, we have $\tilde{\wall}(\sgns,\sgns,w)\in J_{<n,m}$.
Constructing this element depends on a choice of a path joining $x$ to the point  $wx\in\Cs_{w\sgns}$.  

First, consider the case where $x_0\neq w x_0$.  Of course, the distance function $d(-,p_0)$ on the straight line that joins $x_0$ to $wx_0$ decreases from $x_0$ to the midpoint and then increases.  Choosing our path sufficiently close to this straight line, we can guarantee that it passes through a chamber with lower defect than $\sgns$.  Thus $\tilde{\wall}(\sgns,\sgns,w)\in J_{<n,m}$ in this case.

Next, we need to consider the case where $W_{\sgns}=\{1\},W_{x_0}=\{1,s_{\alpha}\}$, and $w=s_{\alpha}$. In this case, we have $m=1$.   Since we have chosen $x$ generically, the midpoint of the straight line from $x$ to $s_{\alpha} x$ will lie in another chamber $\sgns'$ with $s_{\alpha}\in W_{\sgns'}$.  This means that $\tilde{\wall}(\sgns,\sgns,s_{\alpha})$ factors through this chamber with $\#W_{\sgns'}>1$, and so $\tilde{\wall}(\sgns,\sgns,s_{\alpha})\in  J_{<n,m}$.

{\bf Completing the proof}: Thus, we see that $\End(\algA_{I'}e(\sgns))/J_{< n,m}$ is spanned over $S$ by $\tilde{\wall}(\sgns,\sgns,w)$ for $w\in W_{\sgns}$.  Since we pass to $W_{\sgns}$-invariants, this means that it is a graded quotient of $S^{W_{\sgns}}$.  In particular, it has no degree 0 elements which are not scalars, so it has a unique indecomposable graded projective module if it is non-trivial.  
\end{proof}

\subsection{Variations}
\label{sec:variations}

The set $I'$ is only one of the natural ways to label the objects of
our category.  For different parts of the paper, it will be convenient to have a number of different bookkeeping schemas: one that remembers which Weyl chamber a lift
lies in, one that deliberately forgets it, and two adapted to the
affine arrangement that appears on the Coulomb side.  Typically, these will result in equivalent categories, so we introduce all of
them here at once, and summarize them in \cref{tab:variations}.

As mentioned above, we can generalize these categories by taking any set $P$
with a map $\iota\colon P\to I'$, and considering the category $\Stein_{P}$
with objects given by $P$ and morphisms defined by
\[\Hom_{\Stein_{P}}(p,p'):=\Hom_{\Stein_{I'}}(\iota(p),\iota(p'));\]
the combinatorial category $\algA_{P}$ of \cref{def:A} is defined in
the same way.
\begin{lemma}\label{lem:variation-equiv}
  The assignment $p\mapsto \iota(p)$ induces an equivalence of
  $\Stein_{P}$ with the full subcategory of $\Stein_{I'}$ on the
  objects $\iota(P)$.  In particular, $\Stein_{P}$ depends up to
  equivalence only on the image $\iota(P)$, and if $\iota$ is
  surjective, then $\algA_{P}$ and $\algA_{I'}$ are Morita equivalent.
\end{lemma}
Thus, we will not change the underlying representation theory; these different index sets simply relabel the objects, possibly with repetitions,
in a way that makes some later construction more transparent. 

We keep the convention of \cref{def:I} throughout: an unprimed index
set corresponds to integral objects, and the corresponding primed set to
its real analogue.  Since
$\cs_{\sgns}\subset \Cs_{\sgns}$, an unprimed set is always contained
in its primed counterpart.

\subsubsection{The set $J$: remembering the Weyl chamber}
\label{sec:set-j}

 Our first variation considers chambers of the
full arrangement of matter and Coxeter hyperplanes in an arbitrary Weyl chamber, rather than just the dominant one.  

\notation{$J,J'$}{Subset of $\compat$ such that $\cw_{\sgns,w}\neq \emptyset$ (resp. $\Cw_{\sgns,w}\neq \emptyset$).}
\begin{definition}\label{def:J}
  We let $J$ (resp. $J'$) be the subset of $\compat$ such that
  $\cw_{\sgns,w}\neq \emptyset$ (resp. $\Cw_{\sgns,w}\neq \emptyset$); note that
  $J'\supseteq J$.
\end{definition}
In this case, the map $J'\to I'$ is given by
$(\sgns,w)\mapsto (w^{-1}\sgns,1)$.   This map is surjective, since
$(\sgns,1)\in J'$ for every $\sgns\in I'$, so the category
$\Stein_{J'}$ is Morita equivalent to $\Stein_{I'}$ by
\cref{lem:variation-equiv}. However, it
is a convenient framework for understanding this category, because we
can define certain special elements of it.   We let $w\colon
(\sgns,w')\to (w\sgns,ww')$ be the image of the identity on
$((w')^{-1}\sgns,1)$ under the isomorphism \[\Hom_{\Stein_{J'}}( (w\sgns,ww'),
(\sgns,w')):=\Hom_{\Stein_{I'}}(((w')^{-1}\sgns,1),((w')^{-1}\sgns,1)).\]
These obviously satisfy the relations of $W$.  It is more natural to
think of the $\Symt$ action on $(\sgns,w)$ as the conjugate by
$w$ of that on $(w^{-1}\sgns,1)$.  For each pair of pairs $(\sgns,w)$ and $(\sgns',w')$, we have a well-defined morphism
${\wall}(\sgns',w';\sgns,w) $ defined using the folding of a minimal
path from $C'_{w^{-1}\sgns,1}$ to $C'_{w^{-1}\sgns',w^{-1}w'}$ (using
the same notation as \eqref{eq:tilde-wall}) by \[{\wall}(\sgns',w';\sgns,w)=w'\wall(\sgns',\sgns_{p})s_{\beta_p}
\wall(\sgns_{p},\sgns_{p-1})\cdots
s_{\beta_1}\wall(\sgns_1,\sgns)w^{-1}.\]  

When we extend the polynomial representation $Y$ to this category, we
thus still send every object to a copy of $\Symt$ with the action given
by 
\begin{align}
\wall(\sgns,w;\sgns',w')\cdot f &=\varphi(\sgns,\sgns') f 
                                          \nonumber\\
\psi_\al(\sgns',w')\cdot f &=\partial_\al(f)
                                          \label{Y'-action1}\\
w\cdot f &=f^w  \nonumber\\
\mu \cdot  f &=\mu  f  \nonumber
\end{align}

\subsubsection{The set $\redsign$: forgetting the Weyl chamber}
\label{sec:set-redsign}

Our second variation goes the other way, and remembers only the sign
vector.  
Since $\Cs_{\sgns}$ is convex, the set-wise stabilizer of $\Cs_{\sgns}$ is a reflection subgroup of $W$ generated by reflection in the root hyperplanes which pass through it. That is, the set of $w'$ such that $(\sgns,w')\in J'$ is a left coset $wW_{S}$ of a standard parabolic subgroup of $W$, where $w$ is any element mapping the dominant Weyl chamber to a chamber with non-trivial intersection with $\Cs_{\sgns}$.  This means that $w^{-1}\sgns$ is compatible with $1\in W$, and the unique Weyl translate of $\sgns$ with this property.  
Note that if a sign vector $\sgns$ is compatible with $wBw^{-1}$ and $w'B(w')^{-1}$,
then $\wall(\sgns,w;\sgns,w')$ gives an isomorphism between the
objects $(\sgns,w)$ and $(\sgns,w')$ of $\Stein_{J'}$.   

Thus, we can reduce the size of our category by only choosing one
object per sign vector $\sgns$, and identifying it with any others via the
elements $\wall(\sgns,w;\sgns,w')$.
\notation{$\redsign,\redsign'$}{The sign vectors $\sgns$ with $\cs_{\sgns}\neq \emptyset$ (resp. $\Cs_{\sgns}\neq \emptyset$) for some flavor.}
\begin{definition}\label{def:redsign}
  We let $\redsign$ (resp. $\redsign'$) be the set of sign vectors $\sgns$ such that
  $\cs_{\sgns}\neq \emptyset$ (resp. $\Cs_{\sgns}\neq \emptyset$) for some choice of
  flavor $\phi$; note that $\redsign'\supseteq \redsign$.
\end{definition}
This is the category $\Stein_{\redsign}$ attached to $\redsign$, with the map to $I'$ sending a sign vector to the unique Weyl translate $w^{-1}\sgns$ which is compatible with $1\in W$.  
Note that in this
category, if $s_\al\sgns=\sgns$, then $s_\al$ is an endomorphism of
this object. A calculation in the polynomial representation confirms
the relation $s_\al=\al\psi_\al+1$.

In $\Stein_{\redsign}$, we have morphisms
$\wall(\sgns,\sgns'),\psi_\al(\sgns),w\in W,\mu\in \ft^*_{\K}$ as above,
labeled by sign vectors $\sgns,\sgns'\in \redsign$, and these act as in
\eqref{Y'-action1}.  This category contains the subcategory $\Stein^{\ab}_{\redsign}$,
the category attached to the
representation $V$ and the torus $T_{\tG}\subset \tG$.  This
is generated over
$\Symt$ by the elements $\wall(\sgns,\sgns')$.

\subsubsection{The set $\ACs$: the affine arrangement}
\label{sec:ACs}

The last two variations are the ones we will use to compare with the
Coulomb side in \cref{main-iso}.  We consider the extended arrangement on $\ftone_{1,\R}$ defined by the
hyperplanes $\varphimid_i^{\operatorname{mid}}(\xi)=n$ for $n\in \Z$.  
The chambers
of this arrangement are defined not by sign vectors, but rather by integer
vectors: associated to $\mathbf{a}=(a_1,\dots, a_d)$ we have the
chamber
\begin{equation}
\AC_{\Ba}=\{\xi \in \ft_{1,\R}\mid a_i<\varphi_i^{\operatorname{mid}}(\xi)<a_i+1\text{
  for all $i$}\}.\label{eq:aff-cham}
\end{equation}
As usual, we call $\Ba$ feasible if this set is non-empty. 
\begin{definition}\label{def:ACs}
	Let $\ACs$ be the set of feasible elements of $\Z^d$, identified with chambers as in \eqref{eq:aff-cham}.
\end{definition}
\notation{$\ACs$}{The set of chambers of the arrangement $\varphimid_i^{\operatorname{mid}}(\xi)=n$ for $n\in \Z$ (\cref{def:ACs}).}
Considering the inclusion of chambers induces a map $\ACmap\colon\ACs \to \redsign'$,
which gives us a category $\Stein_{\ACs}$.
Since the map $\ACs\to \redsign'$ is surjective, $\Stein_{\ACs}$ is
equivalent to $\Stein_{I'}$ by \cref{lem:variation-equiv}, but it will be useful to have this
category for comparison to the Coulomb case.  As before, we can
generate the morphisms of this category with morphisms $\wall(\Ba,\Ba')$ and copies of $\K[W]$ and $\Symt$.  These act
in the polynomial representation by 
\newseq
\begin{align*}
 \subeqn \label{Y'-action2a} \wall(\Ba;\Ba')\cdot f &=\vpss(\ACmap(\Ba),\ACmap(\Ba')) f \\
\psi_\al(\Ba)\cdot f &=\partial_\al(f) \subeqn   \label{Y'-action2b}\\
w\cdot f &=f^w  \subeqn   \label{Y'-action2c}\\
\mu \cdot  f &=\mu  f.  \subeqn   \label{Y'-action2d}
\end{align*}

\begin{table}[htb]
  \centering
  \small
  \renewcommand{\arraystretch}{1.25}
  \begin{tabularx}{\textwidth}{@{}p{1.6cm}p{2.2cm}p{2.4cm}>{\raggedright\arraybackslash}X@{}}
    \hline
    Index set & Objects & Condition & Map to $I'$, purpose\\
    \hline
    $I,I'$ \newline (Def.~\zcref[noname]{def:I})
    & $\sgns\in \{+,-\}^d$
    & $\cw_{\sgns,1}, \Cw_{\sgns,1}\neq \emptyset$ 
    & The default index set, used for the presentation
      of \cref{def:A}.\\[5pt]
    $J,J'$ \newline (Def.~\zcref[noname]{def:J})
    & $(\sgns,w)\in \compat$
    & $\cw_{\sgns,w},\Cw_{\sgns,w}\neq \emptyset$ 
    & $(\sgns,w)\mapsto (w^{-1}\sgns,1)$.  Has room for the elements
      $w\in W$ and for wall-crossings between different Weyl
      chambers.\\[5pt]
    $\redsign,\redsign'$ \newline (Def.~\zcref[noname]{def:redsign})
    & $\sgns\in \{+,-\}^d$
    & $\cs_{\sgns},\Cs_{\sgns}\neq \emptyset$
    & $\sgns\mapsto w^{-1}\sgns$ for any compatible $w$.  One object
      per sign vector; here $s_\al$ becomes an endomorphism, and the
      abelian subcategory $\Stein^{\ab}_{\redsign}$ is visible.\\[5pt]
    $\ACs$ \newline (Def.~\zcref[noname]{def:ACs})
    & $\Ba\in \Z^d$
    & $\AC_{\Ba}\neq \emptyset$
    & $\ACmap$, as above.  Indexes the chambers of the affine
      arrangement; used to compare with the Coulomb side in
      \cref{main-iso}.\\
\hline
  \end{tabularx}
  \medskip
  \caption{The index sets of \cref{sec:variations}.  The number below each name is where that set is defined.}
  \label{tab:variations}
\end{table}

\subsection{The quiver and hypertoric cases}
\label{sec:examples}

  If $G$ is abelian, then no relations involving $\psi_{\al}$ are relevant, since there are no Coxeter hyperplanes.  We are left with the relations (\ref{wall}, \ref{dot-commute}, \ref{GDKD}), which appeared in \cite{GDKD,BLPWtorico}.  The result is the algebra $A^!_{\operatorname{pol}}(\flav,-)$ from \cite[\S 8.6]{BLPWtorico}.

Now we fix a quiver $\Gamma$ (possibly with edge loops) with vertex set $\vertex$ and consider $V=\oplus_{i\to j}\Hom(\C^{v_i},\C^{v_j})$ as a module over $G=GL_\Bv=\prod GL_{v_i}$ as usual.  In this case, we obtain the relations of a weighted KLR algebra as defined in \cite{WebwKLR}. Let us explain this connection in a bit more detail.  Note that the action of $G$ on $V$ is not faithful in this case, since the diagonal copy of $\Cx$ in $\prod GL_{v_i}$ acts trivially.  

Of course, we can replace $G$ with the quotient by this diagonal copy of $\Cx$, which is the group $PGL_\Bv$.  We can also add a framing to the quiver, where we choose a new dimension vector $\Bw$ and consider the representation $V_{\Bw}=V\oplus \bigoplus_{i\in \vertex}\Hom(\C^{w_i},\C^{v_i})$. By the ``Crawley-Boevey trick,'' this is equivalent to adding a new vertex $\infty$ to the quiver with $w_i$ edges from $\infty$ to $i$, and considering the representation $V$ of this new quiver with dimension vector $(\Bv,1)$.  In this case, the quotient $PGL_{(\Bv,1)}$ is isomorphic to $GL_\Bv$ by the induced action on $\C^{v_i}$, and $V_{\Bw}$ is the space $V$ for this larger quiver.

We take $\No=\gaugeG\times \prod_{i,j}
GL(\C^{\chi_{i,j}})$ where $\chi_{i,j}$ is the number of edges
$i\to j$, acting by taking linear combinations of the maps along these
edges, that is, via the isomorphism
\[\bigoplus_{i\to j}\Hom(\C^{ {v_i}},\C^{ {v_j}})\cong \bigoplus_{(i,j)\in {\vertex}\times \vertex}
  \Hom(\C^{ {v_i}},\C^{ {v_j}})\otimes
  \C^{\chi_{i,j}}.\]  
The subgroup $\To$ is the product of $G$ with the diagonal matrices in $GL(\C^{\chi_{i,j}})$, that is, those that act by rescaling the maps along the edges.  In particular, a flavor $\flav_0$ can be thought of as assigning an integer $\flav_e$ to each edge.  Two such cocharacters define the same homomorphism to $F$ if and only if they are cohomologous.  

The weight spaces of the representation $V$ are given by the matrix coefficients of the map along each edge $e$.  That is, they are indexed by an edge $e\colon i\to j$ and indices $k\leq v_{i}$ and $m\leq v_{j}$. 

Letting $z_{i,1},\dots, z_{i,v_i}$ be the usual coordinates on the diagonal matrices in $\mathfrak{gl}(\C^{ {v_i}})$, we can transfer these to coordinates on $\ft_1$ so that the weight of the matrix coefficient for $(e;k,m)$ is $\varphi_{e;k,m}=z_{j,m}+\flav_e-z_{i,k}$.  
  
In
this case, the chambers $C_\sgns$ are thus defined by inequalities of the form
\begin{align*}
z_{i,k}&\leq z_{j,m}+\flav_e+\frac{1}{2}& \sigma_{e;k,m}&=+\\
z_{i,k}&\geq z_{j,m}+\flav_e+\frac{1}2& \sigma_{e;k,m}&=-.
\end{align*}
We can interpret the coordinates $z_{i,k}$ as giving us a loading (\cite{WebwKLR}), that is, a finite subset of $\R$, labeled with elements of $\vertex$.  Considered this way, the chambers $C_{\sgns,1}$ are precisely the equivalence classes of loadings for the KLR
algebra with the weighting $\vartheta_e=\flav_e +\frac{1}2$ by \cite[2.12]{WebwKLR}.
We can choose one representative generic point in the interior of each chamber $C_{\sgns,1}$.  

Thus, we can consider the weighted KLR algebra $W^{\vartheta}_{I'_{\phi}}$ for the quiver $\Gamma$, weighting $\vartheta$, and loading set $I'_{\phi}$, that is, the collection of weighted KLR diagrams (\cite[Def. 2.3]{WebwKLR}) whose top and bottom match one of the representative points in each chamber.  We use the convention that the polynomial $Q_e(u,v)$ which appears in the relations of the weighted KLR algebra assigned to every edge is $Q_{e}(u,v)=u-v$.  

Furthermore, we can visualize a path $\pi\colon [0,1]\to \ft_1$ by superimposing the plots of the path $t\mapsto (z_{i,k}(t),t)$  for each pair $(i,k)$ in $\R\times [0,1]$.  We can consider this collection of curves as a weighted KLR diagram by labeling the curve for each $(i,k)$ with the node $i$. In addition, for each edge $e$ and each $k\leq v_{h(e)}$, where $h(e)$ denotes the head of $e$, we add a ghost strand on the curve $(z_{h(e),k}(t)+\vartheta_e,t)$.  We call this diagram $\mathbb{D}_\pi$.  

Finally, for each of these loadings, we can identify the ring $\Symt=\K[z_{i,k}]_{i\in \vertex,k\in[1,v_i]}$ with the polynomial ring of dots on the strands of this loading by sending $z_{i,k}$ to the dot $y_p$ where the $p$th strand from the left (counting those with all labels) is the $k$th strand from the left with label $i$.  This defines an isomorphism $p_{\sgns}\colon S\to \K[\By]$.  
\begin{theorem}\label{thm:wKLR}
	There is an isomorphism $\algA_{I'_{\phi}}\to W^{\vartheta}_{I'_{\phi}}$ sending $\wallpi_{\pi}\mapsto \mathbb{D}_\pi$ and $S$ to the polynomial ring $\K[y_1,\dots, y_n]$ via $p_{\sgns}$.
\end{theorem}
If, instead, we worked with the deformed category $\algA_{I'_{\phi}}^{\To}$, this corresponds to the deformations introduced in \cite[\S 2.7]{WebwKLR}.\notation{$W^{\vartheta}_{I'_{\phi}},\mathbb{D}_{\pi}$}{The weighted KLR algebra of \cite{WebwKLR} for the weighting $\vartheta_e=\flav_e+\frac12$, and the diagram of a path $\pi$.}
\begin{proof}
  Let us study the polynomial representation given in Lemma \ref{lem:Y-action} for $\algA_{I'_{\phi}}$ and \cite[Prop. 2.7]{WebwKLR} for  $W^{\vartheta}_{I'_{\phi}}$.  It is enough to check that associating the action of $\mathbb{D}_\pi$ to the action of $\wallpi_{\pi}$ on this representation defines an action of $W^{\vartheta}_{I'_{\phi}}$.  It is more convenient if, for each $\sgns$, we apply the corresponding isomorphism $p_{\sgns}$ to transfer this action to a sum of some number of copies of $\K[\BY]$.
	\begin{enumerate}[wide]
		\item 
	The idempotent $e(\sgns)$ corresponds to a constant path at some loading, which is sent to the corresponding idempotent in $W^{\vartheta}_{I'_{\phi}}$, the diagram with straight vertical strands.  Thus, this idempotent acts by projection to the corresponding copy of $\K[\BY]$.
	\item The endomorphisms $S$ act on the appropriate copy of $S$ by left multiplication, and thus the same is true of the dots $\K[\By]$ on the corresponding copy of $\K[\BY]$.
	\item A simple root $\alpha$ of $G$ corresponds to $i\in \vertex$ and consecutive indices $k,k+1$; we take it to be $\alpha=z_{i,k+1}-z_{i,k}$.  That is, we orient the dominant chamber of each factor $GL_{v_i}$ as $z_{i,1}\leq \cdots \leq z_{i,v_i}$, so that the strands of a loading with label $i$ are listed from left to right in order of their index, as the identification $p_{\sgns}$ already presumes.  This is the standard convention in the KLR literature; note that it is opposite to the one we used for the running example in \cref{ex:KLR}, where the positive Weyl chamber of $GL_2$ is $a>b$.
  With this choice, the morphism $\psi_{\alpha}$ acts by $f\mapsto \frac{f^{s_{\alpha}}-f}{\alpha}$.  This corresponds to the path that goes straight from our fixed point to the wall $\alpha=0$, ``bounces off'' and then returns.  Now, let us consider the corresponding diagram $\mathbb{D}_{\pi}$.
  Assume that the $k$ and $k+1$th strands with label $i$ are the $r$th and $s$th from the left.  The diagram $\mathbb{D}_{\pi}$ crosses these strands, while leaving all other strands straight vertical.  Precomposing with $p_{\sgns}^{-1}$ and postcomposing with $p_{\sgns}$, we find that the induced action is: 
	\newseq
	\begin{equation*}
		\psi_{\alpha}\cdot f(\BY)= \frac{f(\dots,Y_s\dots, Y_r,\dots)-f(\BY)}{Y_{s}-Y_r}\subeqn \label{eq:psi-y-act}
	\end{equation*}
	\item The wall-crossing element $\wall(\sgns;\sgns')$ for two adjacent chambers changes the sign of one weight $\varphi_{e;k,m}=z_{j,m}+\flav_e-z_{i,k}$.  The action of this element is given by 
	\[\wall(\sgns;\sgns')\cdot f=
	\begin{cases}
	f &\sgns'_{e;k,m}=+, \sgns_{e;k,m}=-\\
	(z_{j,m}-z_{i,k})f & \sgns'_{e;k,m}=-, \sgns_{e;k,m}=+
	\end{cases}\]
  The corresponding diagram $\mathbb{D}_{\pi}$ has a single crossing of a strand with some label $i$ over the ghost for an edge $e\colon i\to j$ and some number of crossings of solid strands with different labels, but none of strands with the same label.   In fact, since $\varphi_{e;k,m}$ is the weight which changes sign, the strand with label $i$ is the $k$th from the left with this label, and the strand with label $j$ is the $m$th from the left with this label.  Considering strands with all labels, assume that these strands are the $r$th and $s$th from the left, respectively.
		Let $w$ denote the permutation of solid strands induced by this diagram.   
		
	Note that $\varphi_{e;k,m}$ is positive if the solid strand is left of the ghost and negative if it is to the right.
	Precomposing with $p_{\sgns'}^{-1}$ and postcomposing with $p_{\sgns}$, we find that the induced action is: 
	
	\begin{equation*}f(Y_1,\dots, Y_n) \mapsto f(Y_{w(1)},\dots, Y_{w(n)})\cdot (Y_{s}-Y_{r})\subeqn \label{eq:w-y-act1}\end{equation*}
	if the solid strand goes from right to left of the ghost and by 
	
	\begin{equation*}f(Y_1,\dots, Y_n) \mapsto f(Y_{w(1)},\dots, Y_{w(n)})\subeqn \label{eq:w-y-act2}\end{equation*}	if the solid strand goes from left to right of the ghost.
	\end{enumerate}
The formulas (\ref{eq:psi-y-act}--\ref{eq:w-y-act2}) almost match the action of \cite[Prop. 2.7]{WebwKLR}, but the conventions of these papers do not quite match.  The representation we desire results if we switch ``left'' and ``right'' in the first and second bullet points of that definition.  Equivalently, we would obtain a representation of $A_{I'_{\phi}}$ matching that of \cite[Prop. 2.7]{WebwKLR} if we swapped $+$ and $-$ in the definition of $\varphi(\sgns,\sgns')$. The proof of Lemma \ref{lem:Y-action} is easily modified to show that this is a faithful representation. 
	
	By faithfulness, this shows that we have an injective homomorphism $\algA_{I'_{\phi}}\to W^{\vartheta}_{I'_{\phi}}$.  This map is surjective, since, by definition, the diagrams $\mathbb{D}_{\pi}$ and $\K[\By]$ generate $W^{\vartheta}_{I'_{\phi}}$.  This completes the proof.
	\end{proof}

\subsection{Kirwan-Ness characters}
\label{sec:KN}
\nc{\KN}{\mathsf{KN}}
\nc{\stand}{\mathbbm{s}}

Throughout this section, we fix a flavor $\flav$ and stability parameter $\xi$. We'll need for some results later in the paper that the category $\algA_{I'}$ has an additional structure: a graded triangular basis in the sense of Brundan \cite{brundanGradedTriangular2025}.  We can also view this as a purely algebraic counterpart of the categorical Kirwan-Ness stratification of McGerty and Nevins \cite{mcgertyMorseDecomposition2014}, and indeed its combinatorics matches that of the Kirwan-Ness stratification on $\mu^{-1}(0)\subset T^*V$ which measures how ``unstable'' a given point is.  
 
We begin with the combinatorics of chambers.   We call a chamber $C_{\sgns}$ {\bf weakly $\xi$-bounded} if $\xi$ attains a maximum on $C_{\sgns}$ and {\bf strongly $\xi$-bounded} if the subset on which the maximum is attained is itself bounded; of course, these are equivalent if $\xi$ is a generic linear function, but we cannot always assume this.  In linear programming, ``bounded'' is usually used to mean in the weak sense, but both will be relevant for us at different times.  Let us state a result we will use later:
  \begin{lemma}\label{weakly-vs-strongly}
  	If $\xi$ does not lie in any proper subspace spanned by a subset of the weights $\varphi_i$, then every weakly $\xi$-bounded chamber $C_{\sgns}$ is strongly $\xi$-bounded.

  	In particular, if $(\ft^*)^W$ does not lie in any proper subspace spanned by $\varphi_i$'s, then there is a choice of $\xi\in (\ft^*)^W$ for which the notions of weakly and strongly $\xi$-bounded coincide.
  \end{lemma}
  \begin{proof}
  	Assume that there is a $C_{\sgns}$ which is weakly $\xi$-bounded but not strongly $\xi$-bounded, and let $A$ be the subset of points on which $\xi$ attains a maximum.
  	The asymptotic cone $A_{\circ}$ of $A$ is the set of points on which the corresponding homogeneous linear program attains a maximum;  that is, it is exactly the set of vectors $x\in \ft$ such that $\sigma_i\varphi_i(x)\geq 0$ and $\xi(x)=0$.  Since $A$ is a non-empty polyhedron which is not bounded, we can choose a non-zero $x\in A_{\circ}$.
  	Since $C_{\sgns}$ is weakly bounded and non-empty, Farkas' lemma implies that $-\xi=\sum a_i\sigma_i\varphi_i$ with $a_i\in \R_{\geq0}$.
Applying this to $x$, we obtain $0=-\xi(x)=\sum a_i\sigma_i\varphi_i(x)$, a sum of non-negative terms; thus $a_i=0$ whenever $\varphi_i(x)\neq 0$.  That is, $-\xi$ lies in the span of the weights $\varphi_i$ such that $\varphi_i(x)=0$, which is contained in the perpendicular to $x$ and thus proper, since $x\neq 0$.  This contradicts our hypothesis on $\xi$, and proves the first statement.

Since there are finitely many weights, only finitely many subspaces arise as the span of a subset of them, and a real vector space is not the union of finitely many proper subspaces.  Thus, if $(\ft^*)^W$ lies in none of these subspaces, we can choose $\xi\in (\ft^*)^W$ avoiding all of them.
  	\end{proof}

Note that if $(G,V)$ is a quiver theory, then the span $U$ of any set of weights induces an equivalence relation on the set of coordinates $\Omega=\{(i,k)\mid i\in \vertex,k\in [1,v_i]\}$ where $(i,k)\sim (j,\ell)$ if $z_{i,k}-z_{j,\ell}$ is in the span.  We can divide $\Omega$ into two subsets $\Omega=\Omega_0\cup \Omega_1$, where $\Omega_0=\{(i,k)\mid z_{i,k}\in U\}$ and $\Omega_1$ is the union of all the other equivalence classes.  One can easily check that $U$ will be exactly the vectors such that the sum of the coordinates on each equivalence class in $\Omega_1$ is 0. Thus, $U$ cannot contain a point all of whose coordinates are positive unless $U=\ft^*$.

 Thus,  in the cases of greatest interest to us, there will always be a choice of $\xi$ where the notion of strongly and weakly $\xi$-bounded coincide:
  \begin{corollary}\hfill
  	\begin{enumerate}
  		\item If $G$ is abelian, then $W=1$, so $\ft^*$ cannot lie in a proper subspace, and any generic $\xi$ will have the notion of strongly and weakly $\xi$-bounded coincide.
  		\item If $(G,V)$ is a quiver theory, then any $\xi$ whose coordinates are all positive will have the notion of strongly and weakly $\xi$-bounded coincide.
    	\end{enumerate}
  \end{corollary}
  
If $C_{\sgns}$ is not weakly $\xi$-bounded, we call it $\xi$-unbounded.  This means that all the level sets of $\xi$ on $C_{\sgns}$ will be bounded if $C_{\sgns}$ is strongly $\xi$-bounded.

Choose a $\weylW$-invariant inner product on $\ft$.  This allows us to define cocharacters $\xi^{\vee},\varphi_i^{\vee}\in \ft $ such that $\langle \varphi_i^{\vee}, \gamma\rangle=\varphi_i(\gamma)$ for all cocharacters $\gamma\in \ft$.  Consider the convex subset:
\begin{equation}
	\label{eq:KN} \KN_{\sgns}=\{\xi^{\vee}+\sum_{i=1}^d a_i\sigma_i\varphi_i^{\vee} \mid a_i\in \R_{\geq 0}\}\subset \ft
\end{equation}
Since the set $\KN_{\sgns}$ is convex, there is a unique point in this set with minimal norm which we denote $\beta_{\sgns}$.    This is a measure of ``how far from being stable'' a generic point in $(T^*V)_{\sgns}$ is, or equivalently, how ``far'' from  weakly $\xi$-bounded $C_{\sgns}$ is.  
\begin{lemma}
The chamber $C_{\sgns}$ is weakly $\xi$-bounded iff $\beta_{\sgns}=0$. 
\end{lemma}
\begin{proof}
	This is Farkas' lemma again: The equation $\sum_{i=1}^d a_i\sigma_i\varphi_i^{\vee}=-\xi^{\vee}$ has a solution with $a_i\geq 0$ if and only if there is no element $X\in \ft$ such that $\sigma_i\varphi_i(X)\geq 0$ for all $i$ and $\xi(X)>0$.  Of course,  any ray contained in the chamber $C_{\sgns}$ is parallel to such an $X$, and the condition $\xi(X)\leq 0$ ensures that $\xi$ is bounded above on this ray. Since $C_{ \sgns}$ is a polyhedron, it is weakly $\xi$-bounded if and only if $\xi$ is bounded above on each ray in the chamber.
\end{proof}

\notation{$\beta_{\sgns}$}{The cocharacter which measures the $\xi$-unboundedness of the chamber $C_{\sgns}$.}
\notation{$\KNstrat$}{The set of cocharacters of the form $\beta_{\sgns}$ for $\sgns\in I'_\phi.$} 
Let $\KNstrat=\{\beta_{\sgns} \mid \sgns\in I'_\phi \}.$ We endow the set $I'_\phi$ with a pre-order by taking the transitive closure of the relation $\sgns\leq \sgns'$ if $\beta_{\sgns'}\in \KN_{\sgns}$.  We can define a corresponding partial order on $\KNstrat$ coarsening this one by declaring $\beta\leq \beta'$ if $\beta=\beta'$ or $\langle \beta,\beta\rangle< \langle \beta',\beta'\rangle$; that is, we compare elements by their norms, with distinct elements of equal norm incomparable.  

If $a_i\neq 0$, the point $\beta_{\sgns}=\xi^{\vee}+\sum_{i=1}^d a_i\sigma_i\varphi_i^{\vee}$ must be a critical point of $|\xi^{\vee}+\sum_{i=1}^d a_i\sigma_i\varphi^{\vee}_i|$ with respect to varying $a_i$.  Thus, if $a_i\neq 0$, then we must have that $\langle \varphi_i^{\vee},\beta_{\sgns}\rangle =0$.  In particular, the orthogonal projection of $\xi^{\vee}$ to the span of $\beta_{\sgns}$ is $\beta_{\sgns}$ itself.  
 One can easily check that:
\begin{lemma}
  For any $x\in C_{\sgns}$, we have $x+a\beta_{\sgns}\in C_{\sgns}$ for $a\in \R_{\geq 0}$.  
\end{lemma}

Note that for any sign vector $\sgns$ and any $\beta$, there is a new sign vector $\sgns_{+,\beta}$ defined by 
\begin{equation}\label{eq:plus-beta}
(\sigma_{+,\beta})_i=\begin{cases}
	+ & \varphi_i(\beta)>0\\
	- & \varphi_i(\beta)<0\\
	\sigma_i & \varphi_i(\beta)=0
\end{cases}
\end{equation}
This sign vector is uniquely characterized by the property that $x+a\beta\in C_{\sgns_{+,\beta}}$ for $a\gg 0$ and $x\in C_{\sgns}$.  Note that for $x\in C_{\sgns,1}$ dominant, we might have that $x+a\beta$ is no longer dominant, if $\beta$ is not.  However, there is some $w$ of minimal length so that $\sgns_{++,\beta}=w^{-1}\sgns_{+,\beta}$ is dominant.  
 We define the {\bf standard morphism} 
\begin{equation}
	\stand(\sgns,\beta)=\tilde{\wall}(\sgns_{++,\beta},\sgns,w)
\end{equation}
 for $\sgns,\beta$.  In the formalism that associates paths to morphisms, this corresponds to starting at a generic point of $C_{\sgns}$ and traveling a large distance in the direction of $\beta$, and then folding the path to stay in the dominant Weyl chamber.  
 If $x\in C_{\sgns,1}\neq \emptyset$, then $\beta_{\sgns}$ must be dominant, so the same is true of $x+a\beta_{\sgns}$, and so $ \stand(\sgns,\beta_{\sgns})=e(\sgns)$ in this case.
 \notation{$\stand(\sgns,\beta)$}{The standard morphism $\stand(\sgns,\beta)=\tilde{\wall}(\sgns_{++,\beta},\sgns,w)$.}

\notation{$W_\beta,G_\beta,\mathbb{V}_\beta$}{The stabilizer of $\beta$ in $W$, the corresponding Levi, and the fixed space $V^{\beta(\Cx)}$ of $\beta$.}
Let $W_{\beta}$ be the stabilizer of $\beta$, $G_{\beta}$ the corresponding Levi subgroup and \[\mathbb{V}_{\beta}=\operatorname{span}\{V_i\mid \varphi_i(\beta)=0\};\]
note that $\mathbb{V}_{\beta}$ is a $G_{\beta}$-module.  We'll consistently use underline to denote objects corresponding to this choice of gauge group and matter representation.  Thus, we let $\underline{\sgns}$ denote the restriction of the sign vector $\sgns$ to the indices $i$ with $\varphi_i(\beta)=0$, and $e(\underline{\sgns})$ denote the corresponding idempotent.  Furthermore, we let $\underline{A}$ denote the category $A$ constructed with respect to the group and representation $(G_{\beta},\mathbb{V}_{\beta})$.

This restriction has a canonical section, which we denote with an overline: given a sign vector $\boldsymbol{\tau}$ for $(G_{\beta},\mathbb{V}_{\beta})$, that is, a sign vector indexed by the $i$ with $\varphi_i(\beta)=0$, let $\overline{\boldsymbol{\tau}}$ be the sign vector for $(G,V)$ given by
\[(\overline{\tau})_i=\begin{cases}
	+ & \varphi_i(\beta)>0\\
	- & \varphi_i(\beta)<0\\
	\tau_i & \varphi_i(\beta)=0.
\end{cases}\]
Comparing with (\ref{eq:plus-beta}), we have $\overline{\boldsymbol{\tau}}=\sgns_{+,\beta}$ for any $\sgns$ with $\underline{\sgns}=\boldsymbol{\tau}$; equivalently, $\overline{\boldsymbol{\tau}}$ is the unique sign vector $\sgns$ satisfying $\underline{\sgns}=\boldsymbol{\tau}$ and $\sgns_{+,\beta}=\sgns$.  Thus $\boldsymbol{\tau}\mapsto \overline{\boldsymbol{\tau}}$ is a bijection from sign vectors for $(G_{\beta},\mathbb{V}_{\beta})$ to the sign vectors for $(G,V)$ with $\sgns=\sgns_{+,\beta}$, with inverse $\sgns \mapsto \underline{\sgns}$.
\notation{$\underline{\sgns},\overline{\boldsymbol{\tau}}$}{The restriction of $\sgns$ to the indices with $\varphi_i(\beta)=0$, and the extension of $\boldsymbol\tau$ by the signs of $\varphi_i(\beta)$.}

Let $\xi_{\beta}=\xi^{\vee}-\beta$ and $\xi_{\sgns}=\xi_{\beta_{\sgns}}$; by definition, we have that $\xi_{\sgns}=-\sum_{\varphi_i(\beta_{\sgns})=0} a_i\sigma_i\varphi^{\vee}_i$ with $a_i\geq 0$.  Thus, the chamber $\underline{C}_{\underline{\sgns}}$ of the matter arrangement for $\mathbb{V}_{\beta}$ is weakly $\xi_{\beta}$-bounded.

More generally, keeping $\sgns$ and $\beta=\beta_{\sgns}$ fixed as above, we can consider an arbitrary $\sgns'$, and the corresponding restricted sign vector $\underline{\sgns'}$.  We can now analyze the Kirwan-Ness character for $\underline{\sgns'}$ for $\xi_{\beta}=\xi_{\sgns}$ above. 
\begin{lemma}\label{lem:beta-unbounded}
The chamber $\underline{C}_{\underline{\sgns'}}$ is weakly $\xi_{\beta}$-bounded if and only if $\beta_{\sgns'_{+,\beta}}\leq \beta$.  If $\beta_{\sgns'_{+,\beta}}=\beta$  then $\beta_{\sgns'}\leq \beta.$
\end{lemma}
\begin{proof}
Let $\sgns''=\sgns'_{+,\beta}$.
By assumption, we have that $\sigma''_i\varphi_i(\beta)\geq 0$, so for $x=\xi^{\vee}+\sum_{i=1}^d a_i\sigma''_i\varphi_i^{\vee}\in \KN_{\sgns''}$, we have
\[\langle x,\beta\rangle= \langle \beta,\beta\rangle+\sum_{i=1}^da_i\sigma''_i\varphi_i(\beta)\geq \langle \beta,\beta\rangle. \]
Thus, we have $\beta_{\sgns''}\geq \beta$.

We've already discussed the fact that if $\beta=\beta_{\sgns''}$ then $C_{\underline{\sgns'}}$ is weakly $\xi_{\beta}$-bounded.  On the other hand, if $C_{\underline{\sgns'}}=C_{\underline{\sgns''}}$ is weakly $\xi_{\beta}$-bounded, then we must have $\xi_{\beta}=\sum_{\varphi_i(\beta)=0}-a_i\sigma'_i\varphi^{\vee}_i$ for some $a_i\geq 0$, which implies that $\beta\in \KN_{\sgns''}$ so $\beta_{\sgns''}\leq \beta$. This implies $\beta_{\sgns''}=\beta$.  

If $\beta_{\sgns''}=\beta$, then we have an expression $\beta=\xi^{\vee} +\sum_{\varphi_i(\beta)=0} a_i\sigma_i''\varphi^{\vee}_i$.   For any $i$ such that $\varphi_i(\beta)=0$, we have $\sigma_i'=\sigma_i''$, so $\beta\in \KN_{\sgns'}$ and $\beta_{\sgns'}\leq \beta$.
\end{proof}

For a dominant $\beta\in \KNstrat$,  let $\ideal_{\beta}$ be the ideal of $A_{I'}$ generated by $e(\sgns)$ for $\beta_{\sgns}=\beta$.  Let $\ideal_{\geq\beta}=\sum_{\beta'\geq\beta}\ideal_{\beta'}$ and $\ideal_{>\beta}=\sum_{\beta'>\beta}\ideal_{\beta'}$.  

\begin{lemma}\label{lem:beta-map}
	If $\sgns$ and $\sgns'$ satisfy $\beta=\beta_{\sgns}=\beta_{\sgns'}$, then we have an injective homomorphism 
 \[\iota_{\beta}\colon e(\underline{\sgns})\underline{A}e(\underline{\sgns}')\to e({\sgns}){A}e({\sgns}')\]
	sending $\wallpi_{\pi}$ to $\wallpi_{\pi+a\beta}$ for $a\gg 0$.
\end{lemma}
\begin{proof}
Since $\beta=\beta_{\sgns}=\beta_{\sgns'}$, the sign vectors $\sgns$ and $\sgns'$ only differ for indices $i$ such that $\varphi_i(\beta)=0$.  Furthermore, the path $\wallpi_{\pi+a\beta}$ will only cross matter hyperplanes of this type for $a$ sufficiently large.  In particular, this shows that when we take the relations of (\ref{eq:graphical1}--\ref{eq:graphical3}) in $\underline{A}$ and add $a\beta$ to them, we obtain the relations in $A$, and thus we have a homomorphism.

On the other hand, if we have an element of the kernel, then we can assume it has been written as a sum of taut paths $\sum_i c_i\wallpi_{\pi_i}$ for $c_i\in \Symt$.  If this is in the kernel, then we have $\sum_i c_i\wallpi_{\pi_i+a\beta}=0$ in $A$.   By \cref{thm:basis}, this implies that $\sum_i c_i\wallpi_{\pi_i}=0$ in $\underline{A}$, so the element was already 0.
\end{proof}
Let $\hat{A}^{(\beta)}$ be the completion of $A_{I'_{\phi}}$ at the ideal in $\Symt^W$ defined by the orbit of $\beta$.  Each idempotent $e(\sgns)$ in this completion carries an action of the corresponding completion of $\Symt$.   
This is a semi-local commutative ring, with maximal ideals corresponding to the elements of $W\cdot\beta$.  

Let $e(\sgns, w\beta)$ denote the projection to the stable kernel of the maximal ideal $\mathfrak{m}_{w\beta}$ for $w\beta$, that is, the unique element of the completion congruent to $1$ modulo any power of $\mathfrak{m}_{w\beta}$ and congruent to 0 modulo any power of another maximal ideal.  Note that $e(\sgns)=\sum_{\beta'\in W\cdot\beta}e(\sgns, \beta')$.    

Let $\underline{\hat{A}}^{(0)}$ be the completion of the corresponding category $\underline{A}_{\underline{I}'_{\phi}}$ for $(G_{\beta},\mathbb{V}_{\beta})$ at the origin.

\begin{lemma}\label{lem:hat-Morita}
The categories $\underline{\hat{A}}^{(0)}$ and $\hat{A}^{(\beta)}$ are Morita equivalent.
\end{lemma}
\begin{proof}
Let $\underline{e}=\sum_{\boldsymbol{\tau} \in \underline{I}'_{\phi}}e(\overline{\boldsymbol{\tau}},\beta)$, where $\overline{\boldsymbol{\tau}}$ is the extension of $\boldsymbol{\tau}$ defined in \S\ref{sec:KN}.

{\bf There is an isomorphism $\underline{e}\hat{A}^{(\beta)}\underline{e}\cong \underline{\hat{A}}^{(0)}$}:
Note that for any element of the image of $\iota_{\beta}$, we have $\underline{e}\iota_{\beta}(a)=\iota_{\beta}(a)\underline{e}$.  This shows that $a\mapsto \underline{e}\iota_{\beta}(a)\underline{e}$ is an algebra homomorphism.  Precomposing with the algebra homomorphism that maps $\mu \mapsto \mu-\langle \mu,\beta\rangle$ results in a map that is continuous in the appropriate ideal-adic topologies, and thus induces a homomorphism $\hat{\iota}\colon \underline{\hat{A}}^{(0)}\to \underline{e}\hat{A}^{(\beta)}\underline{e}$.    

The space $\hat{A}^{(\beta)}e(\overline{\boldsymbol{\tau}},\beta)$ is the completion of $Ae(\overline{\boldsymbol{\tau}})$ with respect to the maximal ideal for $\beta$ in the right action of $\Symt$.  
The action on the left side decomposes into subspaces $e(\sgns,w\beta)\hat{A}^{(\beta)}e(\overline{\boldsymbol{\tau}},\beta)$  for $\beta'$ ranging over $W\beta$ and $\sgns$ ranging over $I'_{\phi}$.  
Furthermore, from the relations of Definition \ref{def:A}, we can see that $\tilde{\wall}(\sgns,\overline{\boldsymbol{\tau}},w)$ lies in the span of $e(\sgns,w'\beta)\hat{A}^{(\beta)}e(\overline{\boldsymbol{\tau}},\beta)$ for $w'\leq w$ in Bruhat order, with non-trivial image in the projection to $ e(\sgns,w\beta)\hat{A}^{(\beta)}e(\overline{\boldsymbol{\tau}},\beta)$.  
This is only possible if the elements $\tilde{\wall}(\sgns,\overline{\boldsymbol{\tau}},w'')$ for $w''\in wW_{\beta}$ project to a basis of $ e(\sgns,w\beta)\hat{A}^{(\beta)}e(\overline{\boldsymbol{\tau}},\beta)$, and in particular $e(\sgns,\beta)\hat{A}^{(\beta)}e(\overline{\boldsymbol{\tau}},\beta)$ has a basis given by $\tilde{\wall}(\sgns,\overline{\boldsymbol{\tau}},w'')$ for $w''\in W_{\beta}$, which are precisely the image of the same basis for $\underline{\hat{A}}^{(0)}$ in the image of $\hat{\iota}$.   
This shows that $\hat{\iota}$ induces an isomorphism $\underline{e}\hat{A}^{(\beta)}\underline{e}\cong \underline{\hat{A}}^{(0)}$ as desired.

{\bf The proof of Morita equivalence:} Morita equivalence will hold if $\underline{e}$ generates the algebra $\hat{A}^{(\beta)}$ as a 2-sided ideal, that is, if $e(\sgns,w\beta)\in \hat{A}^{(\beta)}\underline{e} \hat{A}^{(\beta)}$ for all $\sgns\in \Ipflav{\flav},w\in W$.

Consider the morphism $\wall(\sgns_{+,w\beta},\sgns)$. By (\ref{wall}), we see that we have an equality \[\wall(\sgns,\sgns_{+,w\beta})\wall(\sgns_{+,w\beta},\sgns)=\varphi(\sgns,\sgns_{+,w\beta},\sgns).\] The right-hand side is the product of the weights on which $\sgns$ and $ \sgns_{+,w\beta}$ have different signs.  Of course, this is only possible if this weight is non-zero on $w\beta$, so this product is invertible in the completion of $\Symt$.  This shows that
\[e(\sgns,w\beta)=\frac{1}{\varphi(\sgns,\sgns_{+,w\beta},\sgns)}\wall(\sgns,\sgns_{+,w\beta})e(\sgns_{+,w\beta},w\beta) \wall(\sgns_{+,w\beta},\sgns),\] so $e(\sgns,w\beta)$ is in the 2-sided ideal generated by $e(\sgns_{+,w\beta},w\beta)$.  

On the other hand $e(\sgns_{+,w\beta},w\beta)=we(\sgns'_{+,\beta},\beta)w^{-1}$ where $\sgns'=w^{-1}\sgns$.  If $W_{\beta}$ is non-trivial, then $\sgns'$ is only unique up to the action of $W_{\beta}$, and we can choose the unique element of this orbit such that $\underline{\sgns}'$ is dominant, so $e(\sgns'_{+,\beta},\beta)$ is a summand of $\underline{e}$.  This shows that $e(\sgns,w\beta)$ is in the ideal generated by $\underline{e}$, and thus the desired Morita equivalence.  
\end{proof}

\begin{lemma}\label{lem:beta-factor}
The subspace of morphisms $\sgns'\to \sgns$ which lie in the ideal $\ideal_{\beta}$ is generated as a left $\Symt$-module by elements of the form $\stand(\sgns,w\beta)^{\star} \wall_{\pi} \stand(\sgns',w\beta)$ for $\pi$ a path from $x'\in C_{\sgns'_{++,\beta},1}$ to $x\in C_{\sgns_{++,\beta},1}$ in the image of $\iota_{\beta}$.
\end{lemma}
\begin{proof}
By Theorem \ref{thm:basis}, any element of $\ideal_{\beta}$ is in the $\Symt$-span of morphisms of the form $\tilde{\wall}(\sgns, \sgns'',w)\tilde{\wall}(\sgns'',\sgns',w')$ for different choices of $w,w'$.  

Furthermore, we have some freedom in the construction of these morphisms, since they depend on a choice of a minimal-length path from $z'$ to $w'z''$ to $w'wz$ for any $z\in  C_{\sgns,1}, z'\in  C_{\sgns',1},z''\in  C_{\sgns'',1}$.  We can freely replace $z''$ with $z''+a\beta$ for $a\geq 0$.    If we choose $a$ sufficiently large, then the straight-line path from $z'$ to $w'z''$ must pass through $C_{\sgns'_{+,w'\beta},w_1}$ for some $w_1$; let $w_2=w_1^{-1}w'$. We can thus write \[\tilde{\wall}(\sgns'',\sgns',w')=\tilde{\wall}(\sgns'',\sgns'_{++,w'\beta},w_2)\stand(\sgns',w'\beta).\]  Similarly, we can write \[\tilde{\wall}(\sgns,\sgns'',w)=\stand(\sgns,w\beta)^{\star}\tilde{\wall}(\sgns_{++,w\beta},\sgns'',w_3)\] for some $w_3$, and we can take \[\wall_{\pi}=\tilde{\wall}(\sgns'',\sgns_{++,w\beta},w_3)\tilde{\wall}(\sgns'_{++,w'\beta},\sgns'', w_2).\]

The element $\wall_{\pi}$ comes from folding a portion of the path $z'$ to $w'z''$ to $w'wz$ which by construction only passes through chambers for $\boldsymbol{\rho}$ with $\rho_{+,w'\beta}=\rho$.  This folding is unchanged by applying an element of the Weyl group, so we can instead consider a portion of the path $(w')^{-1}z'$ to $z''$ to $wz$.  The corresponding morphism is thus unchanged by translating by a large positive multiple of $\beta$.  In particular, this shows that it only crosses the walls for roots such that $\alpha(\beta)=0$.  This shows that it is in the image of $\iota_{\beta}$.   
\end{proof}

Note that any $\beta\in \KNstrat$ will be dominant, since $C_{\sgns,1}$ must be non-empty for some $\sgns$ with $\beta=\beta_{\sgns}$.  Thus, we will often want to consider the saturation of this set under $W$.  For any $\beta'=w\beta$ and $\sgns\in I'_{\phi}$, we can consider the chamber $\sgns_{++,\beta'}$ and the morphism $\stand(\sgns, \beta')\colon \sgns \to \sgns_{++,w\beta}$.  By construction $\beta_{\sgns_{++,\beta'}}=\beta$, so we can consider the image $\iota_{\beta}(\underline{A})$ and multiply this on the right with $\stand(\sgns, \beta')$ and on the left with a mirror image $\stand(\sgns', \beta')^\star$ for some other chamber $\sgns'$. 

Let $\underline{A}_0$ be a vector space complement to $\underline{\ideal}_{>0}$ inside $\underline{A}$.  

As mentioned previously, we want to prove that we have a graded triangular basis in the sense of \cite[Def. 1.1]{brundanGradedTriangular2025}.  The data of this basis are:
\begin{itemize}
  \item The set $\mathbf{S}$ of special idempotents and the set of distinguished idempotents is just the set $e(\sgns)$ for $\sgns\in \Ipflav{\flav}$.
	\item The poset $(\Lambda,\leq)$ is the poset $\KNstrat$, with the function $\Ipflav{\flav}\to \KNstrat$ given by $\sgns\mapsto \beta_{\sgns}$.
	\item The subset $H(\sgns,\sgns')$ for $\beta=\beta_{\sgns}=\beta_{\sgns'}$ is a fixed basis of $\iota_{\beta}(\underline{A}_0).$ 
	\item The subset $Y(\sgns', \sgns)$ is the set $\stand(\sgns, \beta')$ for $\beta'\in W\cdot \beta_{\sgns'}$ such that $\sgns'=\sgns_{++,\beta'}$, and $X(\sgns,\sgns')=Y(\sgns', \sgns)^\star$.  
\end{itemize}

For each $\beta \in \KNstrat$, let $E_\beta$ be the span of $\stand(\sgns', \beta')^\star\iota_{\beta}(\underline{A}_0)\stand(\sgns, \beta')$ for all pairs $\sgns,\sgns'\in I'_\phi$ and all $\beta'\in W\beta$.  Note that this is the same as the space of the products $X(\sgns,\boldsymbol{\tau})H(\boldsymbol{\tau},\boldsymbol{\tau}')Y(\boldsymbol{\tau}', \sgns')$ for $\sgns,\sgns'$ arbitrary and $\beta=\beta_{\boldsymbol{\tau}}=\beta_{\boldsymbol{\tau}'}$.

\begin{lemma}\label{lem:xi-maximal}
  The cocharacter $\xi^{\vee}$ lies in $\KNstrat$, and it is the unique maximal
  element: $|\beta_{\sgns}|\leq |\xi^{\vee}|$ for every $\sgns\in I'_{\phi}$, with
  equality only if $\beta_{\sgns}=\xi^{\vee}$.
\end{lemma}
\begin{proof}
	Since $\xi^{\vee}\in \KN_{\sgns}$ for every $\sgns$ (take all $a_i=0$ in (\ref{eq:KN})), we have $|\beta_{\sgns}|\leq |\xi^{\vee}|$ for all $\sgns$, with equality only if $\beta_{\sgns}=\xi^{\vee}$, by the uniqueness of the minimal-norm point.

	Now, let us show that this maximum is attained, so that $\xi^{\vee}\in \KNstrat$.  Fix any $\sgns\in I'_{\phi}$ and consider $\sgns_{+,\xi^{\vee}}$.  For $a\gg 0$, the point $x+a\xi^{\vee}$ remains dominant and lies in $C_{\sgns_{+,\xi^{\vee}}}$; this is a ray along which $\xi$ increases.  Thus $C_{\sgns_{+,\xi^{\vee}},1}\neq \emptyset$, that is, $\sgns_{+,\xi^{\vee}}\in I'_{\phi}$.  Writing $\sgns'=\sgns_{+,\xi^{\vee}}$, we have $\sigma_i'\varphi_i(\xi^{\vee})\geq 0$ for every $i$ by (\ref{eq:plus-beta}), so for any point $y=\xi^{\vee}+\sum_i a_i\sigma_i'\varphi_i^{\vee}$ of $\KN_{\sgns'}$,
	\[\langle y,y\rangle =\langle \xi^{\vee},\xi^{\vee}\rangle+2\sum_i a_i\sigma_i'\varphi_i(\xi^{\vee})+\Big\langle \sum_i a_i\sigma_i'\varphi_i^{\vee},\sum_i a_i\sigma_i'\varphi_i^{\vee}\Big\rangle\geq \langle \xi^{\vee},\xi^{\vee}\rangle,\]
	since every $a_i\geq 0$.  Thus $\xi^{\vee}$ is the minimal-norm point of $\KN_{\sgns'}$, i.e.\ $\beta_{\sgns'}=\xi^{\vee}$.
\end{proof}

\begin{lemma}\label{lem:E-filtration}
The data above define a graded triangular basis.
In particular, as a vector space, the algebra $A$ is the direct sum of the subspaces $E_\beta$ for $\beta \in \KNstrat.$
\end{lemma}
\begin{proof}
	{\bf The subspaces $E_\beta$ span:} 	We'll prove this by induction.  Our inductive hypothesis will be that:
	\begin{itemize}
		\item[$(a_{\gamma})$] The ideal $\ideal_{\geq \gamma}$ is the span of the subspaces  $E_\beta$  with $\beta\geq \gamma$.  
	\end{itemize}
	We'll prove that $(a_{\gamma})$ holds if $(a_{\beta})$ holds for all $\beta>\gamma$.  The base case of our induction is $\gamma=\xi^{\vee}$, which by \cref{lem:xi-maximal} is the unique maximal element of $\KNstrat$.

	As $\xi^{\vee}$ is maximal, $\ideal_{\geq \xi^{\vee}}=\ideal_{\xi^{\vee}}=E_{\xi^{\vee}}$ and the result holds in this case.
	  
  By Lemma \ref{lem:beta-factor}, the ideal $\ideal_{\geq \gamma}$ is spanned by $\stand(\sgns', \beta')^\star\iota_{\beta}(\underline{A})\stand(\sgns, \beta')$ for $\beta\geq \gamma$ and $\sgns,\sgns'\in I'_\phi$ and $\beta'\in W\beta$ as before.   On the other hand, by Lemma \ref{lem:beta-unbounded}, we have that
 \[\stand(\sgns', \beta')^\star\iota_{\beta}(\ideal_{>0})\stand(\sgns, \beta')\subset \ideal_{> \beta}.\]
 Thus,  $E_\gamma$ and $\ideal_{>\gamma}$ span $\ideal_{\geq \gamma}$.   Since $\ideal_{>\gamma}$ is spanned by $\ideal_{\geq \beta}$ for $\beta>\gamma$, applying the inductive hypothesis $(a_{\beta})$ for all such $\beta$, we obtain the conclusion $(a_{\gamma})$.  
	
	The algebra $A$ follows by the case where $\beta=0$.  
	
	{\bf The subspaces are independent:} Given a linear dependence between these subspaces, we can assume that $\beta$ is chosen to have minimal norm amongst the terms appearing.  That is, we can assume that $x$ is an element of $E_\beta$ which also lies in the sum of $E_\gamma$ for $\gamma\neq \beta$ and $|\gamma|\geq |\beta|$.  By Lemma \ref{lem:beta-unbounded}, for any $\sgns'$ with such $\gamma=\beta_{\sgns'}$, the corresponding chamber $\underline{\sgns}'$ in the arrangement $(G_\beta,\mathbb{V}_\beta)$ is $\xi_{\beta}$-unbounded.  
	
Consider the image of these elements in the completion $ \hat{A}^{(\beta)}$; this map is injective, so $x$ remains non-zero.  The subspaces $E_{\gamma}$ have image in the ideal corresponding to $\underline{\ideal}_{>0}$ in $\hat{\underline{A}}^{(0)}$ by the unboundedness discussed above, thus, the same is true of $x$.    On the other hand, we can check that the element $e(\sgns_{++,w\beta}, \beta)\stand(\sgns, \beta')e(\sgns,w\beta)$ is an isomorphism between these idempotents, since $\stand(\sgns, \beta')$ spans the elements that act on the polynomial ring by a linear combination of elements $\leq w$ in Bruhat order, modulo those which are a linear combination of elements $<w$.  In fact, one can explicitly check that the rational function in front of $w$ in the expression for the action of $\stand(\sgns, \beta')$ in the polynomial representation is invertible in this completion.  This means that we can reduce to the case where $x$ is in the image of $\iota_{\beta}(\underline{A}_0)$, but this is impossible, since $\underline{A}_0$ is complementary to $\underline{\ideal}_{>0}$ and they cannot have any non-trivial vectors in common.
	
This shows that the products $X(\sgns,\boldsymbol{\tau})H(\boldsymbol{\tau},\boldsymbol{\tau}')Y(\boldsymbol{\tau}', \sgns')$ for arbitrary $\sgns,\sgns',\boldsymbol{\tau},\boldsymbol{\tau}'$ give a basis of $A_{I'}$.
	
{\bf The other conditions:} We need that $X(\sgns,\sgns)=\{e(\sgns)\}=Y(\sgns,\sgns)$.  This follows immediately from \cref{lem:beta-unbounded}, since if $\sgns=\sgns_{++,\beta'}$, we must have $\beta'=\beta_{\sgns}$, so $\stand(\sgns, \beta')=e(\sgns)$.  Similarly, \cref{lem:beta-unbounded} shows that $X(\sgns,\boldsymbol{\tau})=Y(\boldsymbol{\tau},\sgns)$ can only be non-empty if $\beta_{\boldsymbol{\tau}}\geq \beta_{\sgns}$, and $H(\sgns,\sgns')$ by definition is only non-empty if $\beta_{\sgns}=\beta_{\sgns'}$. The finiteness condition is automatic since all the sets involved are finite.
\end{proof}

We call the modules $\Delta(\sgns)=Ae(\sgns)/A\ideal_{>\beta_{\sgns}}e(\sgns)$ the {\bf standard modules} over $A$.  The results above show that the modules $\Delta(\sgns)$ are a stratifying system.  That is:
\begin{lemma}\hfill
	\begin{enumerate}
		\item If there is a non-zero homomorphism $Ae(\sgns)\to \Delta(\sgns')$, then we must have $\beta_{\sgns}=\beta_{\sgns'}$ or $|\beta_{\sgns}|<|\beta_{\sgns'}|$.  
		\item Every simple module is a quotient of $\Delta(\sgns)$ for some $\sgns$.
		\item The kernel of the map $Ae(\sgns)\to \Delta(\sgns)$ has a filtration with each subquotient equal to $\Delta(\sgns')$ with $|\beta_{\sgns'}|>|\beta_{\sgns}|$.  
	\end{enumerate}
\end{lemma}
\begin{proof}
	\begin{enumerate}[wide]
		\item The space of such homomorphisms is $e(\sgns)\Delta(\sgns')$, which is spanned by the image of $E_{\beta_{\sgns'}}$.  This can only be non-zero if the restriction of the sign vector $\sgns$ with respect to  $\beta_{\sgns'}$ is weakly $\xi_{\beta}$-bounded.  Lemma \ref{lem:beta-unbounded} shows that this can only happen if $\beta_{\sgns}=\beta_{\sgns'}$ or $|\beta_{\sgns}|<|\beta_{\sgns'}|$.  
		\item For any simple module $M$, there must be a sign vector $\sgns$ such that $e(\sgns)M\neq 0$, and $|\beta_{\sgns}|$ is maximal with respect to this property.  This shows that $\ideal_{>\beta}$ acts trivially on $M$ and so we have an induced map $\Delta(\sgns)\to M$, which is surjective by simplicity.
		\item Filter $Ae(\sgns)$ by the submodules $\ideal_{\geq \beta}e(\sgns)$.  In the subquotient $\ideal_{\geq \beta}e(\sgns)/\ideal_{> \beta}e(\sgns)$, the vectors $\stand(\sgns,\beta')$ for $\beta'\in W\beta$ each generate a copy of $\Delta(\sgns_{++,\beta'})$, and $\ideal_{\geq \beta}e(\sgns)/\ideal_{> \beta}e(\sgns)$ is a direct sum of these submodules.\qedhere
 	\end{enumerate}
\end{proof}
If $M,N$ are $A/\ideal_{>\beta}$-modules, then we can consider $\Ext$ between these modules over $A/\ideal_{>\beta}$ or by inflating them to modules over $A$.  This inflation functor is denoted $i$ in \cite[\S 7]{brundanGradedTriangular2025}.  In general, these can be quite different, but in this case, a standard argument (see \cite[Lem. 7.4]{brundanGradedTriangular2025}) shows that:
\begin{lemma}\label{lem:stratification}
$\Ext^i_{A/\ideal_{>\beta}}(M,N)\cong \Ext^i_{A}(M,N)$.  
\end{lemma}
This stratification property will be important for us below.

\subsection{(Dual) canonical bases}
\label{sec:dual-canonical-bases}

In this section, we assume that $\K=\C$.  In this case,
the Steinberg algebra has an interpretation in terms of
$G$-equivariant D-modules on $V$.  This corresponds to the sheaf-theoretic interpretation discussed before by the Riemann--Hilbert
correspondence.  

Let $D_V$ be the ring of differential operators on $V$, and $\mu_q\colon U(\fg)\to D_V$ the ring homomorphism sending $X$ to the corresponding vector field on $V$.  A {\bf strongly equivariant D-module} $M$ on $V$ is one where the action of $\fg$ on $M$ via the homomorphism $\mu_q$ integrates to a $G$-equivariant structure on $M$.  A strongly equivariant D-module is the same thing as a D-module on the Artin stack $V/G$, and more generally, if for some character $\xi\colon \fg\to \C$, we have that the Lie algebra action by $\mu_q-\xi$ integrates to our equivariant structure, then this is a twisted D-module on $V/G$, with twist specified by the image of $\xi$ under the Kirwan map.  It will be important for us that we work in the derived category of D-modules on the quotient.  This has the effect that anywhere cohomology appears in the calculation of Hom spaces between D-modules, equivariant cohomology appears instead.

\notation{$L_{\sgns},L$}{The pushforward D-module $L_\sgns=p_*\mathfrak{S}_{X_{\sgns}}$ and its sum $L$.}
Consider the union $X_{I'}=\sqcup_{\sgns\in I'} \Xsgns_{\sgns}$  and let $p\colon X_{I'}\to V$ be the
projection to the second factor.
Let $L=p_*\mathfrak{S}_{X_{I'}}$ be the pushforward of the structure sheaf on $X_{I'}$ considered as a D-module by this proper map
and $L_\sgns=p_*\mathfrak{S}_{X_{\sgns}}$, considered as a complex of D-modules on
$V/G$, that is an object in the derived category $\DVG$ of strongly $G$-equivariant D-modules on $V$, which we consider with its usual dg-enhancement.\notation{$\DVG$}{The category of strongly $G$-equivariant D-modules on $V$.}  We will sometimes want to consider a set $P$ equipped with a map $\iota\colon P\to I'$, and the corresponding object $L_P=\oplus_{p\in P}L_{\iota(p)}$.  

As discussed earlier, the isomorphism \cite[Thm.
8.6.7]{CG97} together with the Riemann--Hilbert correspondence shows that:
\begin{proposition}\label{prop:ext-algebra}
 We have an isomorphism of dg-algebras  $\algA_{P}\cong \Ext^\bullet
 (L_P,L_P)^{\op}$ where the left-hand side is thought of as a dg-algebra with
 trivial differential.
\end{proposition}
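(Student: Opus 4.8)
The plan is to leverage the formality statement already in hand and the classical geometric input from \cite{CG97}, passing between the Borel--Moore homology picture and the D-module picture via Riemann--Hilbert. First I would recall that in Section \ref{sec:pres-steinb-categ} the dg-category $\Stein_I$ was given a dg-enhancement by replacing each morphism space $H_*^{BM,G}(X_{\sgns,w}\times_V X_{\sgns',w'})$ with the Borel--Moore chain complex, and that this enhancement was shown to be formal (citing \cite[Prop. 2.19]{Webqui}). So the content to be proven is: (i) the Borel--Moore chain complex of $X_{\sgns,1}\times_V X_{\sgns',1}$ computes $\Ext^\bullet(L_{\sgns},L_{\sgns'})$ in the derived category of $G$-equivariant D-modules on $V$, compatibly with composition, and (ii) the resulting dg-algebra is quasi-isomorphic to $A$ (not merely isomorphic on cohomology).

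For (i), the key step is the dg-version of \cite[Thm. 8.6.7]{CG97}. Over a field of characteristic zero, the Riemann--Hilbert correspondence identifies $p_*\mathfrak{S}_{X_{\sgns}}$ (the D-module pushforward of the structure sheaf, which is the D-module incarnation of $\mathbb{Q}_{X_{\sgns}}[\dim]$ since $X_{\sgns}$ is smooth and $p_{\sgns}$ is proper) with the constructible complex $(p_{\sgns})_*\K_{X_{\sgns}}$ appearing in \cite{CG97}; this is where properness of each $p_{\sgns,w}\colon X_{\sgns,w}\to V$ is used. The cited theorem of Chriss--Ginzburg then gives $\Ext^\bullet((p_{\sgns})_*\K_{X_{\sgns}},(p_{\sgns'})_*\K_{X_{\sgns'}})\cong H_*^{BM,G}(X_{\sgns}\times_V X_{\sgns'})$ with Yoneda product matching convolution. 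I would invoke the refinement already quoted in the excerpt — that the Chriss--Ginzburg argument lifts to a dg-functor $\Stein_I\to D^b_{\mathrm{dg}}(V)$ — so that the quasi-isomorphism is at the chain level, not just on cohomology. The $G$-equivariant refinement is routine given that we are working with $G$-equivariant sheaves / strongly equivariant D-modules throughout.

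Then for (ii), I would combine this with the formality of $\Stein_I$: since the dg-category $\Stein_I$ with its Borel--Moore chain enhancement is formal, it is quasi-isomorphic to its cohomology, which by Theorem \ref{th:Steinberg} is $A\cong \Stein_{I'}$ (here using the index set $Q\to I'$ with $A:=A_Q$, and that $\Stein_P$ for a set mapping to $I'$ is defined by pulling back Hom-spaces, so the D-module side $L=\sqcup_{\sgns} L_\sgns$ with its full Ext-algebra matches). Chaining the quasi-isomorphisms — $A \xleftarrow{\ \sim\ } (\text{chain-level }\Stein_I) \xrightarrow{\ \sim\ } \Ext^\bullet(L,L)$, the latter viewed as a dg-algebra with trivial differential because it is the Ext-algebra of a semisimple-ish object, or rather because the source is formal — yields the claimed quasi-isomorphism $A\cong \Ext^\bullet(L,L)$.

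The main obstacle is bookkeeping rather than a deep new idea: one must check that the Riemann--Hilbert correspondence carries the D-module pushforward $p_*\mathfrak{S}_{X_{\sgns}}$ precisely to the (shifted, self-dual) constructible complex used in \cite{CG97}, including getting the cohomological shifts and the equivariant structure right, so that the Ext-algebras genuinely agree on the nose and not just up to regrading. Secondarily, one should be careful that ``dg-algebra with trivial differential'' on the right-hand side is justified — this is exactly where the formality input from \cite[Prop. 2.19]{Webqui} / \cite[Prop. 2.19]{Webqui} is essential, since a priori $\Ext^\bullet(L,L)$ only inherits an $A_\infty$-structure, and formality is what collapses it. Once these compatibilities are in place, the proposition follows formally by composing the two quasi-isomorphisms.
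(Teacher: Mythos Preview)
Your proposal is correct and matches the paper's approach exactly: the paper does not give a separate proof for this proposition but simply states that it follows from the earlier discussion, namely \cite[Thm.~8.6.7]{CG97} together with the Riemann--Hilbert correspondence, the dg-enhancement of $\Stein_I$ coming from the Chriss--Ginzburg argument, and the formality result \cite[Prop.~2.19]{Webqui}. You have spelled out precisely these ingredients and how they chain together, so there is nothing to add.
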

\begin{remark}
	It's also true that $\algA_{P}\cong \Ext^\bullet
 (L_P,L_P)$ by reversing the two factors of the fiber product, or from a sheaf-theoretic perspective, applying Verdier duality.  We prefer to think of it as an opposite algebra, so $L$ is a right module object over $\algA_{P}$.  
\end{remark}
Since $L$ is a sum of shifts of simple D-modules by the Decomposition Theorem \cite{BBD}, this shows that:
\begin{corollary}\label{cor:pos-grading}
  The graded algebra $\algA_{P}$ is graded Morita equivalent to an algebra $\plusA$ which is semi-simple commutative in degree 0, and non-negatively graded.
\end{corollary}

In fact, \cite[Lem. 1.18]{websterCanonicalBases2015} implies that graded projective $\algA_{P}$-modules are a {\bf mixed humorous category} in the sense of \cite[Def. 1.2 \& 1.11]{websterCanonicalBases2015}.  Furthermore, Proposition \ref{prop:at-most-one} shows that the set of novel sign vectors $\sgns$ supply a collection of objects satisfying \cite[Lemma 1.6(1)]{websterCanonicalBases2015}.  

Thus by \cite[Lemma 1.13]{websterCanonicalBases2015}, we have that:
\begin{corollary}[\mbox{\cite[Lem. 1.13 \& Cor. 2.4]{websterCanonicalBases2015}}]\mbox{}
  \begin{enumerate}
  \item The classes of indecomposable projectives over $\algA_{P}$ form a canonical basis in the Grothendieck group of graded projective
finitely generated $\algA_{P}$-modules $K^0(\algA_{P}\operatorname{-gpmod})$; in particular, they are uniquely characterized by being bar-invariant, almost orthogonal, and having positive virtual
    dimension. 
   \item  The classes of simple modules over $\algA_{P}$ form the dual canonical basis of the Grothendieck group of finite-dimensional graded $\algA_{P}$-modules $K^0(\algA_{P}\operatorname{-fgmod})$; that is, they are uniquely characterized by the same properties, but for the dual pre-canonical structure.
  \end{enumerate}
\end{corollary}

In particular, this means that the graded Cartan matrix of the algebra
$\algA_{P}$ can be found using a generalization of the Gram-Schmidt
algorithm: just as you can orthonormalize a collection of vectors
``one at a time,'' you can similarly make a collection of vectors in a
$\Z[q,q^{-1}]$-module with a bar involution almost orthogonal and bar-invariant.

\subsection{Consequences of Hodge theory}
\label{sec:cons-hodge-theory}

We can also interpret $\algA_{P}$-modules in terms of D-modules.  We have a dg-functor
\[L_P\Lotimes_{\algA_{P}}-\colon \algA_{P}\dgmod\to \DVG.\]
This functor is fully faithful, since it induces an isomorphism \[\Ext^{\bullet}_{\algA_{P}}(\algA_{P},\algA_{P})\cong \algA_{P}\cong \Ext^{\bullet}_{\DVG}(L_P,L_P)^{\op}.\]  Its image is, by definition, the subcategory $\langle L\rangle \subset \DVG$ generated by $L$.

We can strengthen this result by incorporating Hodge theory. The object $L$ has a unique pure Hodge structure of weight 0
induced by that for the D-module of functions on $X$, defined by the
usual variation of Hodge structure on the trivial line bundle. This induces a Hodge structure on $\algA_{P}=\Ext^\bullet
 (L,L)^{\op}\cong H^{BM,G}_*(\bbX_P)$ which agrees with the geometric Hodge structure on $H^{BM,G}_*(\bbX_P)$, using the identification of $H^{BM}_k(Y)\cong H^{2n-k}(Y',Y'\setminus Y)$ where $Y\hookrightarrow Y'$ is an inclusion into a smooth variety $Y'$ of complex dimension $n$.  
\begin{lemma}
	The induced Hodge structure on $\algA_{P}=\Ext^\bullet
 (L,L)^{\op}$ is Tate and pure of weight 0.  
\end{lemma}
\begin{proof}
	Consider the singular variety $X_{\sgns}\times_V X_{\sgns'}$.   This has a natural map $X_{\sgns}\times_V X_{\sgns'}\to G/B\times G/B$, which over each $G$-orbit is a $G$-equivariant vector bundle of the form $\tilde{H}_w=\{(gB,gwB, x) \mid x\in gV_{\sgns}\cap gwV_{\sgns'}\}.$ Letting $w$ range over $W$ gives all the distinct $G$-orbits $H_w=\{(gB,gwB) \mid g\in G\}$ on $G/B\times G/B$.
	We can divide $ G/B\times G/B$ into subvarieties $Z_m$ given by the union of the $G$-orbits of dimension $m$, and consider the corresponding partition of $X_{\sgns}\times_V X_{\sgns'}$  by the preimages $\tilde{Z}_m$ of these. Considering the filtration by $\bigcup_{k\leq m}\tilde{Z}_k$, we have a spectral sequence computing the equivariant Borel-Moore homology of $X_{\sgns}\times_V X_{\sgns'}$ whose $E^1$-page is given by 
	\[E^1_{p,q}=H_{p+q}^{BM,G}(\bigcup_{m\leq p}\tilde{Z}_m,\bigcup_{m\leq p-1}\tilde{Z}_m)\cong H_{p+q}^{BM,G}(\tilde{Z}_p).\] This sequence collapses at the $E^1$ page for parity reasons: we have \[H^{BM,G}_{p+q}(\tilde{H}_w)\cong H^{BM,B\cap wBw^{-1}}_{p+q}( V_{\sgns}\cap wV_{\sgns'})\] and  this is only non-zero for $p+q$ even since $B\cap wBw^{-1}$ is homotopy equivalent to its Levi complement $T$, which is reductive.  In fact, the equivariant Borel-Moore homology $H^{BM,B\cap wBw^{-1}}_{p+q}( V_{\sgns}\cap wV_{\sgns'})$ is free of rank 1 as a module over $H^*(BT)$, generated by the fundamental class.  Note that this shows that $H_{p+q}^{BM,G}(\tilde{Z}_p)$ is free as a $H^*(BG)$ module.  Since the equivariant cohomology of a point $H^*_B(pt)$ is pure of weight 0 (when accounting for homological shifts) and Tate, the same is true of $H_{p+q}^{BM,G}(\tilde{Z}_p)$.
The same follows for $H^{BM,G}_*(\bbX_P)$ using the spectral sequence above and the fact that any extension of pure Tate mixed Hodge modules of a given weight is again pure Tate of the same weight.  
\end{proof}
In more informal terms, this means that we lose nothing by thinking of this Hodge structure as a second grading on $\algA_{P}$, which coincides with the homological grading, which we call the {\bf Hodge grading}. It might seem odd to have another name for this grading, but we will be interested in bigraded modules for this pair of gradings, on which the two gradings will typically not coincide.  

More precisely, recall that a {\bf differential-graded-graded algebra} (dgg-algebra) is an algebra with a $\Z^2$ grading, equipped with a differential whose degree is $(1,0)$; that is, it is a dg-algebra with an additional grading in which the differential is homogeneous.  A dgg-module over a dgg-algebra is defined similarly.  
 
The Hodge grading makes $\algA_{P}$ into a differential-graded-graded algebra. A dgg-module over $\algA_{P}$ can also be thought of as a complex of graded $\algA_{P}$-modules by sending an element of grading $(r,s)$ to one of internal grading $s$ and homological grading $r-s$.  This is chosen so that the action of $\algA_{P}$ preserves the new homological grading, while keeping the differential degree 1 in this grading.  

Certain constructions become easier if we replace $\algA_{P}$ by the unique Morita equivalent basic graded algebra $\plusA_P$ of \cref{cor:pos-grading}.  
Note that $\algA_P$ only depends up to Morita equivalence on the image of $P$ in $I'$, and thus $\plusA_P$ only depends on this image.  In particular, the grading on $\plusA_P$ is non-negative, and $(\plusA_P)_0$ is semi-simple, and isomorphic to the sum of one copy of each simple module.  
\notation{$\plusA_P,\plusA_P^!$}{The unique basic graded algebra Morita equivalent to $\algA_{P}$ and its quadratic dual.} 

This Hodge structure has an important consequence for the structure of the category $\DVG$.  Thus far, we have only discussed $\Ext^\bullet
 (L_P,L_P)^{\op}$ as a usual graded algebra, but in fact it is the cohomology of a canonical (up to quasi-equivalence) dgg-algebra which we will denote $\Hom^{\bullet,\bullet}_{\DVG}(L,L)$.  
 
 First note that in the category of mixed Hodge modules, we can obtain a {\bf mixed Hodge dg-algebra}, that is, a dg-algebra internal to the abelian tensor category of mixed Hodge structures (on vector spaces). Since mixed Hodge modules don't have enough projectives as an abelian category, this is most canonically done by considering the derived category as the dg-quotient of the dg-category of all complexes by the subcategory of exact complexes as described by Drinfeld \cite[\S 3.1]{drinfeldDGQuotients2004}.  Next, we can apply the Deligne splitting to each term in this dg-algebra.  The Deligne splitting of any mixed Hodge structure gives us a bigraded vector space, but since all Hodge structures appearing in our story are Tate, we can simplify by collapsing to a single weight grading.
 
 The Deligne splitting applied to this mixed Hodge dg-algebra gives a dgg-algebra $\Hom^{\bullet,\bullet}_{\DVG}(L,L)$ by \cite[Prop. 5.4]{ciriciFilteredAinfinity2022}, with the first grading the usual homological grading and the second given by the weight grading. The result {\it loc.\ cit.} also shows that the $A_{\infty}$-structure on the cohomology of this $dgg$-algebra is homogeneous of degree 0 in the weight grading.  In particular:
 \begin{theorem}\label{thm:formal}
 	If the Hodge structure on $\algA_{P}$ is pure of weight 0, then the dgg-algebra $\Hom^{\bullet,\bullet}_{\DVG}(L,L)$ is formal, that is, quasi-isomorphic to its cohomology as a dgg-algebra.  
 \end{theorem}
In fact,  the construction of such a quasi-isomorphism is given in \cite[Prop. 4]{schnurerEquivariantSheaves2011}: the subalgebra spanned by elements of degree $(k,\ell)$ with $\ell> k$ and closed elements of degree $(k,k)$ is a subalgebra quasi-isomorphic to the full algebra, with the cohomology algebra as a quotient.

\begin{remark}\label{rmk:Hodge-dg-formal}
	This argument does {\it not} show that the corresponding Hodge dg-algebra is formal.  Carlson, Clemens, and Morgan \cite{carlsonMixedHodge1981} show that there are K\"ahler manifolds whose cohomology is not formal as a Hodge dg-algebra, while of course, the corresponding dgg-algebra is formal by the Hodge theorem, as shown in \cite{deligneRealHomotopy1975}.  It seems likely to the author that in our case, the Hodge structure is in fact formal, due to the presence of so many nice properties (parity vanishing, in particular) in $H^{BM,G}_*(\bbX_P)$, but proving this will require the intervention of someone with greater expertise in Hodge theory.  
\end{remark}

\subsection{Quantum Hamiltonian reduction}
\label{sec:quantum-hamiltonian}

Now, we wish to pass from the category $\DVG$ to one which is closer to the Higgs branch itself.  Morally, we would wish to consider the category of modules over a quantization of the GIT quotient $\Higgs_{H,\xi}$, but as discussed in the introduction, for many pairs $(G,V)$, this GIT quotient is badly behaved.  

We will need a bit of preparation to describe the subcategory of interest to us.  Consider the left ideal $J_{\xi'}=\sum_{X\in \fg} D_V(\mu_q(X)-\xi'(X))$. For a given $\xi'\colon \fg\to \C$, we can define the functor \[\redu_{\xi'}(M)=\{m\in M \mid \mu_q(X)m=\xi'(X)m \forall X\in \fg\}=\Hom(D_V/J_{\xi'},M).\]

Since the $G$-equivariant structure on $M$ is strong, $\redu_0(M)=M^G$, and $\redu_{k\xi}(M)=M^{G,k\xi}$, the space of semi-invariants for this character. Note that $\redu_{\xi'}(M)$ is naturally a module over the Hamiltonian reduction $\AHiggs_{\xi'}=(D_V/J_{\xi'})^G$.\notation{$J_{\xi'},\AHiggs_{\xi'}$}{The left ideal $\sum_{X\in\fg}D_V(\mu_q(X)-\xi'(X))$ and the quantum Hamiltonian reduction $(D_V/J_{\xi'})^G$.}

\begin{definition}\label{def:red}
Let $\red(M)=\oplus_{k\geq 0}\redu_{k\xi}(M)$ considered as a module over the $\Z$-algebra 
\[Z=\oplus_{m\geq k\geq 0}(D_V/J_{k\xi})^{G,(m-k)\xi}. \]    
The right adjoint $\red_*$ and the left adjoint $\red_!$\notation{$\redu_{\xi'},\red$}{The reduction functors $\redu_{\xi'}(M)=\Hom(D_V/J_{\xi'},M)$ and $\red(M)=\oplus_{k\geq0}\redu_{k\xi}(M)$, with adjoints $\red_*,\red_!$.} of this functor can be constructed by considering Hom and tensor products, respectively, with the semi-invariants of $D_V/{}_{\xi}J$ for the right ideal ${}_{\xi}J=\sum_{X\in \fg}(\mu_q(X)-\xi(X))D_V$ as in \cite[\S 5]{BLPWquant}.
\end{definition}

 Let $\DVG^{\ured}$ be the full subcategory of objects such that for some integer $m$, the reduction $\redu_{k\xi}(M)=0$ for all $k> m$; that is, those $M$ where $\red(M)$ is bounded in the sense of \cite[\S 5]{BLPWquant}.
\notation{$\DVG^{\ured}$}{The full subcategory of objects with $\redu_{k\xi}(M)=0$ for all $k\gg 0$.}
\notation{$\Dg$}{The quotient $\DVG/\DVG^{\ured}$.}
\begin{definition}\label{def:Dg}
	Let $\Dg=\Dg(G,V)$ be the quotient $\DVG/\DVG^{\ured}$. 
	Since $\DVG^{\ured}$ is a Serre subcategory, we can also write the (dg-enhanced) derived category $D^b(\Dg(G,V))$ as a dg-quotient $D^b(\DVG)/D^b(\DVG^{\ured})$.  
\end{definition}

\begin{lemma}
	The functor $\red$ induces an equivalence \[\DVG/\DVG^{\ured}\cong Z\mmod/Z\mmod_{\bd}\] to the quotient of the category of $Z$-modules by the subcategory of bounded modules.
\end{lemma}
\begin{proof}
	Consider $\mathbf{D}=\oplus_{k\geq 0}D_V/J_{k\xi}$ as a right $Z$-module.  We can write the functor $\red$ as the Hom-space $\Hom_{D_V}(\mathbf{D},-)$.  
	Note that by definition, $\Hom_{D_V}(\mathbf{D},-)$ sends $\DVG^{\ured}$ to $Z\mmod_{\bd}$, and so induces a functor $\DVG/\DVG^{\ured}\to  Z\mmod/Z\mmod_{\bd}$.  
	
	We wish to show that this is invertible.  To do this, we consider the tensor product $\red_!(N)=\mathbf{D}\otimes_Z N$, which is the left adjoint to $\Hom_{D_V}(\mathbf{D},-)=\oplus_{k\geq 0}\redu_{k\xi}(-)$.

	For a $Z$-module $N$ and $D_V$-module $M$, we have natural maps:
 \[\iota \colon N\to \Hom_{D_V}(\mathbf{D},\mathbf{D}\otimes_{Z} N)\qquad \epsilon\colon \mathbf{D}\otimes_Z \Hom_{D_V}(\mathbf{D},M)\to M\] given by the unit and counit of the usual adjunction.
	 
	 	{\bf The kernel of $\iota$ is bounded:} There are finitely many elements $n_i\in N_{m_i}$ which generate $N$ and thus $\mathbf{D}\otimes_{Z} N$ as well.  Let $j$ be the maximal degree of these generators.  By the Noetherian property, $N$ is finitely presented in terms of the relations $\sum_i r_{pi}n_i$ for $p=1,\dots, s$.  The module $\mathbf{D}\otimes_{Z} N$ is generated by the same generators, subject to the same relations.  Let $j'$ be the maximum of the degrees of these relations.
		
		If $n\in N_k$ is in the kernel of this map, then we must have that $1\otimes n=0\in \mathbf{D}\otimes_{Z} N$.  Thus, one must be able to write $1\otimes n=\sum_{p,i}a_pr_{pi}\otimes  n_i$ for some $a_p\in D_V$.  We can replace $a_p$ by their projection $a'_p$ to the semi-invariants, since $r_{pi}, n$ and $n_i$ are themselves semi-invariant. Thus, if $k\geq j'$, the element $a'_p$ is semi-invariant for a positive multiple of $\xi$, so we can move it over the tensor and find $n=\sum_{p,i}a'_pr_{pi}n_i=0$.  Thus, the kernel of $\iota$ is bounded above by $j'$.
		
		{\bf The cokernel of the map $\iota$ is bounded:} On the other hand, the elements in degree $k$ of the cokernel of $\iota$ are sums $a=\sum a_i \otimes n_{i}$. As above, we can assume that $a_i$ are semi-invariants; if $k\geq j$, then they are all semi-invariant for non-negative multiples of $\xi$, and so $a=\iota (\sum a_in_i)$.  Thus, the cokernel is bounded.  
		
{\bf Completing the proof:}	The functor $\mathbf{D}\otimes_{Z}-$ sends $Z\mmod_{\bd}$ to $\DVG^{\ured}$ and thus induces a functor $Z\mmod/Z\mmod_{\bd}\to \DVG/\DVG^{\ured}$.  This functor is fully faithful by adjunction.  
			Thus, this functor is an equivalence if and only if any object in $\DVG$ whose image is in $Z\mmod_{\bd}$ lies in $\DVG^{\ured}$, but this is just the definition.
This completes the proof.
\end{proof}
  
Following the proof of \cite[Th. 5.8]{BLPWquant}, we can interpret the quotient $\DVG/\DVG^{\ured}$ as the modules over the sheaf of algebras on $\fM_{H,\xi}$ defined by quantum Hamiltonian reduction as in \cite[\S 3.4]{BLPWquant}.  This is the sense in which this gives a ``geometric category'' but we will not use this interpretation extensively. Note that if the quotient is badly behaved, this sheaf of algebras may not be a quantization of the structure sheaf in the sense of \cite{BK04a}.
This endows an object in $\DVG/\DVG^{\ured}$ with an important structure: its {\bf support} and more generally {\bf characteristic cycle}.
\begin{itemize}
	\item This is thought of most naturally as the support of the corresponding sheaf on $\fM_{H,\xi}$. 
	\item Readers more comfortable with projective geometry can follow the approach of Gordon-Stafford \cite{GS}, and take the associated graded of a good filtration on the $Z$-module $\red(M)$, arrive at a module over the projective coordinate ring of $\fM_{H,\xi}$ (i.e. the semi-invariants) and localize this to a coherent sheaf.  
	\item Probably most familiar to readers familiar with D-modules, we can take the usual singular support for a D-module in $\DVG$; the singular support of any strongly equivariant D-module will lie in $\mu^{-1}(0)$, and will always contain the preimage of the support as defined above. If $G$ acts freely on $\mu^{-1}(0)$, then the intersection of the singular support with the semi-stable locus will exactly be the preimage of this support, but in general, we cannot rule out that it is larger.
\end{itemize}    

\notation{$\DVG^{\uns}$}{The subcategory of $D$-modules in $\DVG$ whose singular support is contained in the unstable locus $\mu^{-1}(0)^{\uns}$.  }
	A related category $\DVG^{\uns}\subset \DVG$ is the subcategory of $D$-modules whose singular support is contained in the unstable locus $\mu^{-1}(0)^{\uns}$.   
In \cite{mcgertyDerivedEquivalence2014}, McGerty and Nevins use $\DVG/\DVG^{\uns}$ as their version of the reduced category, and in \cite[Prop. 4.9]{mcgertyDerivedEquivalence2014}, they show that if the action of $G$ on the semi-stable locus $\mu^{-1}(0)^{\mathsf{ss}}$ is free, the moment map is flat, and $\Higgs_{H,\xi}\to \Higgs_H$ is a symplectic resolution, then this category can be interpreted as a category of sheaves over a quantization of $\Higgs_{H,\xi}$.  This latter set of conditions is needed to ensure that the quantum Hamiltonian reduction will be a quantization of the structure sheaf, and in this case, we can see that we recover McGerty and Nevins' result: 
\begin{lemma}
We have a containment $\DVG^{\uns}\subset \DVG^{\ured}$.  
	If the action of $G$ on the semi-stable locus $\mu^{-1}(0)^{\mathsf{ss}}$ is free, then $\DVG^{\uns}=\DVG^{\ured}$.  
\end{lemma}
\begin{proof}
{\bf Containment $\DVG^{\uns}\subset \DVG^{\ured}$:} Choose any good filtration on $M$.  If $M\in \DVG^{\uns}$, we have that any element of the associated graded is killed by some power of the ideal generated by $\C[\mu^{-1}(0)]^{G,k\xi}$ for $k>0$.  If we consider the semi-invariants $\oplus_{k\in \Z}\gr M^{G,k\xi}$ as a module over $\oplus_{k\in \Z}\C[\mu^{-1}(0)]^{G,k\xi}$, it will be finitely generated, since $\gr M$ is finitely generated and $G$ is reductive.  Thus, if $m'$ is maximal such that $\gr M^{G,m'\xi}$ contains a generator, and $m''$ is maximal such that an element of $\C[\mu^{-1}(0)]^{G,m''\xi}$ has non-zero action on $\gr M$, then $\redu_{k\xi}(M)=0$ for all $k> m=m'+m''$.

{\bf Opposite containment:} On the other hand, assume that the action on the semi-stable set is free. If $M\notin \DVG^{\uns}$, there is an element of $M^{G,k\xi}$ for some $k$ whose image in the associated graded of $M$, considered as a coherent sheaf on $\mu^{-1}(0)$, is non-zero on a stable orbit.
Furthermore, since this point is stable, there are functions in $\C[\mu^{-1}(0)]^{G,k\xi}$ for infinitely many $k\geq 0$ which do not vanish on this point.  This shows that $\redu_{k\xi}(M)$ has non-zero associated graded for infinitely many $k$.  
\end{proof}
One more question worth addressing is that of localization.  In terms of the $Z$-algebras above, this is the question:
\begin{itemize}
	\item is there an integer $k$ such that the functor $\redu_{k\xi}\colon \DVG \to \AHiggs_{k\xi}\mmod $ induces an equivalence $\Dg\cong \AHiggs_{k\xi}\mmod $?
\end{itemize}

If the action of $G$ on the semi-stable locus $\mu^{-1}(0)^{\mathsf{ss}}$ is free, the moment map is flat, and $\Higgs_{H,\xi}\to \Higgs_H$ is a symplectic resolution, then the existence of such a parameter follows from \cite[Cor. B.1]{BLPWquant}.  It seems likely that this will hold for all $(V,G)$, but we have not investigated this point in detail.

Finding the precise values of $k$ where this equivalence holds is a much more subtle point.  See \cite[\S 1]{losevLocalizationTheorems2021} for an excellent summary of the state of the art on these questions.

\subsection{Category \texorpdfstring{$\cO$}{O}}
\label{sec:category-co}

We will be interested in a subcategory $\cOg\subset \Dg$ which plays the role of a category $\cO$.  As in \cite{BLPWgco}, this definition depends on a choice of $\Cx$-action on the quotient $\Higgs_{H,\xi}$.  

In this paper, we will specifically consider $\Cx$-actions which factor through the group $\tF$ 
which acts naturally on the Higgs branch $\Higgs_{H,\xi}$
for every character $\xi$. Of course, such an action is exactly the choice of a flavor $\flav\colon \bT \to \tF$.  Note that unlike in \cite{BLPWgco}, such an action will have weight 1 on the symplectic form, not weight 0 like the Hamiltonian actions considered in that paper.  

The group $\bT$ acts on the $G$-invariant functions on $\mu^{-1}(0)$.

  \notation{$\preO$}{The subcategory in $\DVG$ of objects $M$ which have a good filtration such that each $G$-invariant function on $\mu^{-1}(0)$ with positive $\bT$-weight acts trivially on $\gr M$.}
  \notation{$\cOg$}{The image of $\preO$ in the quotient $\Dg$.}
\begin{definition}\label{def:Og}
  Let $\preO\subset \DVG$ be the full subcategory of objects $M$ which have a good filtration such that each $G$-invariant function on $\mu^{-1}(0)$ with positive $\bT$-weight acts trivially on $\gr M$.  Let $\cOg\subset\Dg$ be the image of this category in the quotient.    
\end{definition}
The category $\cOg$ is what we call $\OHiggs$ in the introduction.

Here and below, $\bS$ denotes the copy of $\mathbb{G}_m$ acting on $\mu^{-1}(0)$ by
scaling, which induces the usual homological grading on $\C[\mu^{-1}(0)]$; we use the same
notation, as in \cite{BLPWquant}, for the corresponding action on the Coulomb branch in
\S\ref{sec:deformed-category}.

\begin{lemma}
  For a cocharacter $\flav_0\colon \bT\to F=\No/G$, the category 
  $\cOg$ for $\flav_0 $ in \cite[Def. 3.15]{BLPWgco} is the same as
  that of Definition \ref{def:Og} for the pointwise product of
  $\flav_0^\ell$  and the action
  of $\bS$ for $\ell\gg 0$.
\end{lemma}
\begin{proof}
This follows immediately from \cite[Prop. 3.18]{BLPWgco}: the $G$-invariant
functions with positive weight under this pointwise product for $\ell
\gg 0$ are those where $\flav_0$ has positive weight or where $\flav_0$ has weight 0 and
$\bS$ has positive weight.  Thus, these are precisely
the functions in the ideal $J$ defined in \cite[\S 3.1]{BLPWgco}.
\end{proof}

By the same argument as  \cite[Thm. 2.18]{Webqui}, we have that:
\begin{lemma}
	The D-module $L_{\Ipflav{\flav}}$ lies in $\preO$.
\end{lemma}

\begin{definition}
  For a given character $\xi$, we call a sign vector $\sgns$ {\bf weakly unsteady} if there is a non-zero cocharacter $\gamma\in \ft_{\R}$ with
  $\langle \xi,\gamma\rangle \geq  0$, and
  $(T^*V)_\gamma\supset (T^*V)_\sgns$, and {\bf unsteady} if there exists such a $\gamma$ with $\langle \xi,\gamma\rangle >  0$.
\end{definition}

For our purposes, we wish to have a more user-friendly
characterization of instability. \begin{lemma}\label{lem:unsteady-unbounded}
  If $C_\sgns\neq \emptyset$, then the sign vector $\sgns$ is unsteady
  if and only if $C_{\sgns}$ is $\xi$-unbounded, and weakly unsteady if and only if $C_{\sgns}$ is not strongly $\xi$-bounded. 
\end{lemma}
\begin{proof}
We have that $\langle \xi,\gamma\rangle > 0$ if and only if $\xi$ does not attain a maximum on any ray parallel to $\gamma$. Furthermore, we have that $(T^*V)_\gamma\supset
  (T^*V)_\sgns$ if and only if $C_{ \sgns}$ contains a ray parallel to
  $\gamma$.  By a standard result of linear programming, $C_{ \sgns}$ is $\xi$-unbounded if and only if $\xi$ does not attain a maximum on some ray in the chamber.  
  
  On the other hand, if $\langle \xi,\gamma\rangle \geq  0$, then $\xi$ might attain a maximum on a ray parallel to $\gamma$, but only by being constant on the ray.  Thus we have that $C_{\sgns}$ is not strongly $\xi$-bounded. 
\end{proof}

\subsection{Character sheaves}
\label{sec:character-sheaves}

We now define a category of $G$-equivariant D-modules on $V$ which are analogous to character sheaves or D-modules on Lie algebras.  Let the {\bf nilcone} $\mathcal{N}$ of $\mu^{-1}(0)$ be the set of elements which are GIT unstable for the action of $\tTo$, with respect to the character $\nu\colon \tTo\to \Cx$ corresponding to the action of $\tTo$ on the line spanned by the symplectic form.  That is, the elements $v\in T^*V$ where there is a cocharacter $\beta\colon \Cx\to \tTo$ such that the limit $\lim_{t\to 0}\beta(t)v$ exists and $\nu(\beta(t))=t^n$ for $n>0$.\notation{$\mathcal{N}$}{The nilcone: the locus in $\mu^{-1}(0)$ which is GIT unstable for $\tTo$ and the symplectic character $\nu$.}

This space is naturally stratified by the Kirwan-Ness stratification.  That is, every point $v\in \mathcal{N}$ has a unique choice of $\beta$ as above with minimal norm.  Following \cite[\S 2.1]{mcgertyMorseDecomposition2014}, we consider the fixed points and points with limits
\begin{align}
	\mathbb{V}_{\beta}&=V^{\beta(\Cx)}  &Y_{\beta}&=V^{\beta\geq 0}\\
	T^*\mathbb{V}_{\beta}&=(T^*V)^{\beta(\Cx)}
\end{align}
where $V^{\beta\geq 0}$ denotes the set of $v$ such that $\lim_{t\to 0}\beta(t)v$ exists; this is the space written $V_{\beta}$ in {\it loc.\ cit.}, which we rename to avoid a clash with the weight spaces $V_i$.
Note that $\mathbb{V}_{\beta}$ is the same space as in \S\ref{sec:KN}.  Although the cocharacters $\beta$ appearing in the definition of $\mathcal{N}$ are integral, the elements of $\KNstrat$ are in general only real, so from here on we allow $\beta$ to be a real cocharacter; as discussed in \S\ref{sec:lifts-chambers}, all three spaces above and the groups $G_{\beta},P_{\beta}$ depend only on the signs of the weights $\varphi_i(\beta)$, so this is harmless.
Of course, $\mathbb{V}_{\beta},T^*\mathbb{V}_{\beta}$ are modules over the Levi subgroup $G_{\beta}$ centralizing $\beta$ (again as in \S\ref{sec:KN}), and $Y_{\beta}$ is a module over $P_{\beta}$, the parabolic in $G$ whose Lie algebra is the sum of non-negative weight spaces for $\beta$.

We thus have maps 
\[V \overset{p}\leftarrow G\times^{P_{\beta}}Y_{\beta}\overset{q}\to G\times^{G_{\beta}}\mathbb{V}_{\beta}\]

\begin{definition}
	The slanted induction functor $\Ind_{\beta}\colon \mathcal{D}(\mathbb{V}_{\beta}/G_{\beta})\to \DVG$ is \[\Ind_{\beta}(M)=p_*q^*M[\dim Y_{\beta}-\dim \mathbb{V}_{\beta}-\dim G/P_{\beta}].\]  The slanted restriction functor $\Res_{\beta}\colon \DVG\to \mathcal{D}(\mathbb{V}_{\beta}/G_{\beta})$ is \[\Res_{\beta}(M)= q_*p^*M[\dim G/P_{\beta}+\dim Y_{\beta}-\dim V].\]
\end{definition}
In both cases the shift is the relative dimension of the map we pull back along, computed for the quotient stacks: for $q\colon [Y_{\beta}/P_{\beta}]\to [\mathbb{V}_{\beta}/G_{\beta}]$ this is $(\dim Y_{\beta}-\dim P_{\beta})-(\dim \mathbb{V}_{\beta}-\dim G_{\beta})=\dim Y_{\beta}-\dim \mathbb{V}_{\beta}-\dim G/P_{\beta}$, and for $p\colon [Y_{\beta}/P_{\beta}]\to [V/G]$ it is $\dim G/P_{\beta}+\dim Y_{\beta}-\dim V$.  Since $q$ is smooth and $p$ is proper, this normalization makes $\Ind_{\beta}$ commute with Verdier duality.
This is a generalization of the notion of ``spiral induction'' introduced by Lusztig and Yun \cite[\S 4]{lusztigGradedLie2017}.\notation{$\Ind_{\beta},\Res_{\beta}$}{The slanted induction and restriction functors between $\mathcal{D}(\mathbb{V}_{\beta}/G_{\beta})$ and $\DVG$.}

We call a regular simple D-module $M$ with singular support in $\mathcal{N}$ {\bf cuspidal} if $\Res_{\beta}(M)=0$ for any $\beta$ with $(G_{\beta},\mathbb{V}_{\beta})\neq (G,V)$.
\begin{conj}\label{cuspidal}
	Every regular simple D-module $M$ with support in $\mathcal{N}$ is a summand of $\Ind_{\beta}(M')$ for a sheaf $M'$ which is cuspidal for some $\beta$. If $M$ is such a summand for two $\beta,\beta'$ then for some $g\in G$, we have $G_{\beta}=gG_{\beta'}g^{-1}$ and $\mathbb{V}_{\beta}=g\mathbb{V}_{\beta'}$.  
\end{conj}
This is proven for the adjoint representation of a semi-simple Lie algebra in \cite[Th. 0.6]{lusztigGradedLie2017}.
We have thus far had no luck in proving this conjecture in full generality.  
It will be useful to us to prove a weaker version of this result, however:
\begin{lemma}\label{lem:unstable-induce}
	If the singular support of a simple D-module $M$ is contained in the unstable locus of character $\nu$, then $M$ is a summand of $\Ind_{\beta}(M')$ for some $M'$ and some $\beta$ with $\langle \nu,\beta \rangle >0$.
\end{lemma}
\begin{proof}
	Let $Y\subset V$ be the usual support of $M$.  The conormal bundle of the smooth locus $Y^{\circ}$ of $Y$ lies in the support of $M$ and thus in the unstable locus.  By \cite[Th. 6]{kempfInstabilityInvariant2018} a generic point of this conormal has an attached optimal cocharacter $\beta$ satisfying $\langle \nu,\beta \rangle >0$, and for this $\beta$, we must have $Y=\overline{G\cdot Y_{\beta}}$.  In this case, the map $G\times^{P_{\beta}}Y_{\beta}\to Y$ is generically an isomorphism.   
	
		This shows that $M$ is a summand of the induction $\Ind_{\beta}(M')$ where $M'$ is the intermediate extension of the local system on $Y^{\circ}\cap \mathbb{V}_{\beta}$ induced by the restriction of $M$.
	\end{proof}
Thus, we will need to give a name to versions of it which we can prove as special cases, which we will use as hypotheses for some of our later results.
The strongest
of these is:
\begin{itemize}
\item [$(\dagger)$] \hypertarget
{dagger} {Each} simple module in $\preO$ is a summand of a shift of $L$,
  every simple with unstable support is a summand of
  $L_\sgns$ for $\sgns$ unsteady, and every weakly unsteady $\sgns$ is unsteady.  
\end{itemize}
Note that whether $(\dagger)$ holds depends on the choice of $P$.  If
it holds for some choice of $P$, then it holds for $P=I'_\flav$.  
A slightly weaker assumption is:
\begin{itemize}
\item [$(\dagger')$] \hypertarget
{daggerprime} {Each} simple module $M$ in $\preO$ such that $\Ext^\bullet(L,M)\neq 0$ is a summand of a shift of $L$,
  and every such simple with unstable support is a summand of
  $L_\sgns$ for $\sgns$ unsteady.\notation{$(\dagger),(\dagger')$}{The standing hypotheses that the simples in $\preO$ are summands of $L$, assumed for most later results.}
\end{itemize}
\begin{proposition}
	If Conjecture \ref{cuspidal} holds for $(V,G)$, then $(\dagger')$ holds for $P=I'_\flav$.  If, in addition, there are no cuspidal D-modules except in the case where $\mathbb{V}_{\beta}=0$ and every weakly unsteady sign vector $\sgns$ is unsteady, then $(\dagger)$ holds for $P=I'_\flav$.
\end{proposition}
\begin{proof}
	By assumption, $M$ is a summand of $\Ind_{\beta}(M')$ for some cuspidal $M'$.  Thus, if the Ext-group $\Ext^\bullet(L,M)$ is non-zero, then $\Ext(L_{\sgns'},\Ind_{\beta}(M'))\neq 0$ for some summand $L_{\sgns'}$ of $L$ whose associated cocharacter $\beta'=\beta_{\sgns'}$ satisfies $\mathbb{V}_{\beta'}=0$.  Since $L_{\sgns'}$ is $\Ind_{\beta'}$ of the structure sheaf of $\mathbb{V}_{\beta'}$ up to homological shift, adjunction identifies this Ext-group, up to shift, with $\Res_{\beta'}(\Ind_{\beta}(M'))$, which is zero by the cuspidality of $M'$ unless $\mathbb{V}_{\beta}=0$ as well.  In this case, we find that $M$ is a summand of $L$.  
	
	Now, assume that $M$ has unstable support and is a summand of $L$.  By Lemma \ref{lem:unstable-induce}, we have that $M$ is a summand of $\Ind_{\beta'}(M')$ where $\langle \nu,\beta' \rangle >0$.  By the uniqueness of cuspidal datum in Conjecture \ref{cuspidal}, we must have that $M'$ is a summand of $\Ind_{\beta''}(M'')$ where $M''$ is the skyscraper sheaf of $\mathbb{V}_{\beta''}=0$ for some $\beta''$.
Thus, $M$ is a summand of $\Ind_{\beta'}(\Ind_{\beta''}(M''))\cong \Ind_{\beta}(M'')$ where $\beta=k\beta'+\beta''$ for some large $k\in \Z_{>0}$.  For $k\gg 0$, we have $\langle \nu,\beta \rangle >0$, so this shows that $M$ is a summand of $L_\sgns$ for $\sgns$ unsteady.
\end{proof}
It's shown that 
\hyperlink{dagger}{$(\dagger)$} holds for the ADE quiver case in \cite[Th. 5.1]{Webqui} and in the affine type A case \cite[Th. 5.11]{Webqui}.   As discussed in \cite[\S 4.3]{Webqui}, it would follow in the general quiver case if we knew a theorem of Ginzburg and Baranovsky which was communicated privately to the author, but has not yet appeared in published form.

\begin{theorem}\label{th:hypertoric-cuspidal}
	If $G$ is abelian, then Conjecture \ref{cuspidal} holds. 
\end{theorem}
\begin{proof}
	In the abelian case, the group $\To$ is just the full subgroup of diagonal matrices in some basis, and the nilcone $\mathcal{N}$ is the union of the conormal subvarieties for the coordinate subspaces in $V$ (that is, the subspaces which can be written as a sum of $V_i$'s).  A perverse sheaf on $V$ will have singular support in this variety if it is the perverse extension of a local system on a coordinate space $V_S=\sum_{i\in S} V_i$ minus the hyperplanes where any coordinate vanishes.  For each $i\in S$, we have the monodromy $\alpha_i\in \Cx$ around this hyperplane; since the fundamental group of this complement is $\Z^{|S|}$, generated by loops around these hyperplanes, the irreducible local systems are classified by $\alpha_i$, and there is a local system for each choice of $\alpha_i$.  Let $M$ be the perverse extension of this local system.  We can divide $S=S_1\cup S_{2}$ where $i\in S_1$ if $\alpha_i=1$, and $i\in S_2$ if $\alpha_i\neq 1$. 
	
	The sheaf $M$ is cuspidal if its pushforward by the projection $V_{S}\to V_{S\setminus\{i\}}$ is trivial for all $i$.  This pushforward is zero if and only if $\al_i\neq 1$.  Thus, $M$ is cuspidal if and only if $S=S_2$.
	
	On the other hand, if $S_1\neq \emptyset$, then we can choose $\beta$ to have weight 1 on $V_i$ for $i\in S_1$ and weight 0 on $V_j$ for $j\in S_2$, and write $M=\Ind_{\beta}(M')$, where $M'$ is the local system on $V_{S_2}$ with monodromy around the hyperplanes in $S_2$ unchanged, and obviously there is no other cuspidal simple that $M$ is the induction of.
	\end{proof}
	
This shows that \hyperlink{daggerprime}{$(\dagger')$} holds in the general hypertoric case.  On the other hand, there are fewer cuspidals than the above discussion might lead you to expect if one considers $G$-equivariant sheaves.   

First, consider the case where  $V'$ has a dense torus orbit, which necessarily consists of the elements with non-zero coordinates in the coordinates induced by any weight basis.  Any simple D-module with singular support in $\mathcal{N}$ whose support in $V'$ is dense must be the intermediate extension of a strongly equivariant local system on this torus.  Equivariant local systems are classified by representations of the stabilizer of a point in the dense orbit, and the strong equivariance condition requires that the Lie algebra of the stabilizer acts trivially, i.e. the representation factors through the component group of the stabilizer.  This stabilizer is the Pontryagin dual of the quotient of the weight lattice of $T$ by the weights in $V'$, and so the representations of the component group are isomorphic to the torsion in this quotient.  

If the GIT quotient $\fM_{H,\xi}$ is a smooth resolution of $\Higgs_H$ for some $\xi$, then this implies that the action of $T$ on $V$ is unimodular, which simply means that if $V'\subset V$ is an invariant subspace with a dense torus orbit, the stabilizer of that orbit is connected (see \cite[3.2 \& 3.3]{bielawskiGeometryTopology2000}).  
\begin{theorem}\label{th:hypertoric-dagger}
	Any simple object in $\preO$ for $G$ abelian is of the form $M=\Ind_{\gamma}(M')$ where $\gamma\in \ft_{\To,1,\R}$ and the action of $T$ on $\mathbb{V}_{\gamma}$ has a dense orbit.
\end{theorem}
By the discussion above, this implies that there are no cuspidals and  \hyperlink{dagger}{$(\dagger)$} holds if $\fM_{H,\xi}$ is a smooth hypertoric variety.   
\begin{proof}
Let $U\subset [1,d]$ be the set of $i$ such that $\alpha_i\neq 1$.  We know from Theorem \ref{th:hypertoric-cuspidal} that $M$ is induced from a cuspidal sheaf on the subspace $V_U$ where only the coordinates in $U$ are non-zero.
	We can always Fourier transform so that the zero section is a component of the singular support, i.e. $M$ has dense support.  We must be able to choose $\gamma$ so that $\varphi^{\operatorname{mid}}_i(\gamma)> 0$ for all $i$.  
	Furthermore, if the monodromy around a hyperplane is non-trivial, then Fourier transforming to the conormal, we still have a sheaf with dense support, and so we must have a choice of $\gamma'$ with opposite sign on $\varphi^{\operatorname{mid}}_{i}$.  More generally, different choices of $\gamma$ must realize all possible signs $\varphi^{\operatorname{mid}}_{i}$ in $\ft_1$.
	
	This is only possible if the weights $\varphi_{i}$ are all linearly independent in $\ft^*$, since a relation between them would forbid at least one pattern of signs (by Farkas' lemma, again).  This shows that the action of $T$ on $V_U$ has a dense orbit.  
\end{proof}

\subsection{An algebraic description of category \texorpdfstring{$\cO$}{O}}
\label{sec:algebraic-O}
Now we turn to understanding the category $\cOg$ when the hypotheses \hyperlink{dagger}{$(\dagger)$} and  \hyperlink{daggerprime}{$(\dagger')$} hold.  
\notation{$\ideal_{\xi}$}{The ideal in $\algA_{P}$ generated by the idempotents $e(\sgns)$ with $C_{\sgns}$ not strongly $\xi$-bounded.}
\begin{definition}
	Let $\ideal_{\xi}\subset \algA_{P}$ be the ideal generated by the idempotents $e(\sgns)$ with $C_{\sgns}$ not strongly $\xi$-bounded.
\end{definition}
\begin{remark}\label{rem:steady}
	This ideal is very close to the ideal $\ideal_{>0}$ defined in Section \ref{sec:KN}, but that ideal is generated by the idempotents $e(\sgns)$ for $\sgns$ which are $\xi$-unbounded; that is, it does not include $e(\sgns)$ for $\sgns$ which are weakly $\xi$-bounded.  Of course, if $(\dagger')$ holds, then these ideals coincide.
\end{remark}
This allows us to prove one of the key steps on the way to Theorem \ref{th:A}:
\begin{theorem}\label{thm:O-fully-faithful}
	If $(\dagger')$ holds, then $\RHom^*(\red(L),\red(L))^{\op}\cong \algA_{\xi}=\algA_{I'}/\ideal_{\xi}$ as a formal dg-algebra.  That is, we have a commutative diagram of functors:
	\begin{equation*}
		\begin{tikzcd}[column sep=1in]
			A\dgmod \ar{r}{\algA_{\xi}\Lotimes_{A}-}\ar{d}[']{L\Lotimes_{A}-} & 	 \algA_{\xi}\dgmod \ar{d}{\red(L)\Lotimes_{\algA_{\xi}}-}\\
			\dpreO \ar{r}[']{\red} & \dOg 
		\end{tikzcd}
	\end{equation*}
		with the vertical functors fully faithful.
\end{theorem}
\begin{proof}
{\bf Proof that $\RHom^*(\red(L),\red(L))^{\op}\cong \algA_{\xi}$: }
We have an isomorphism of $A$-modules \[M=\RHom^*(L,\red_*(\red(L)))\cong\algA_{\xi}.\]  Since $(\dagger')$ holds, the object $\red_*(\red(L))$ is uniquely characterized by:
\begin{enumerate}
	\item $\RHom(L_{\sgns},\red_*(\red(L)))=0$ for $\sgns$ unsteady.
	\item There is a map $L\to \red_*(\red(L))$ whose cone is killed by $\red$.   
\end{enumerate}
This means that $M$ is uniquely characterized by:
\begin{enumerate}
	\item $e(\sgns)M=0$ for $\sgns$ unsteady.
	\item There is a map $A\to M$ whose cone is killed by $\RHom_A(-,N)$ for any $N$ the inflation of an $\algA_{\xi}$-module.
\end{enumerate}
The module $\algA_{\xi}$ obviously satisfies (1).  For (2), take the quotient map $\algA\to \algA_{\xi}$; the cone of this map is $\ideal_{\xi}[1]$, which as discussed in \cref{rem:steady} is the same as $\ideal_{>0}$.  Applying $\RHom_A(-,N)$ to the triangle $\ideal_{\xi}\to \algA\to \algA_{\xi}$, we see that $\RHom_A(\ideal_{\xi},N)=0$ precisely when the map $\RHom_A(\algA_{\xi},N)\to \RHom_A(\algA,N)$ is an isomorphism.  If $N$ is the inflation of an $\algA_{\xi}$-module, this holds because both sides are isomorphic to $N$: on the one hand $\RHom_A(\algA,N)=N$, and on the other $\RHom_A(\algA_{\xi},N)\cong \RHom_{\algA_{\xi}}(\algA_{\xi},N)=N$ by Lemma \ref{lem:stratification}.  This shows that $\algA_{\xi}$ satisfies (2).

{\bf The vertical maps are fully faithful:} This follows if we have an induced isomorphism on $\RHom$'s for a generating object.  This follows for the left-hand map with $L$ by Proposition \ref{prop:ext-algebra}, and for the right-hand map with $\red(L)$ by the isomorphism $\RHom^*(\red(L),\red(L))^{\op}\cong \algA_{\xi}$.

{\bf The diagram commutes:}
	Again, it's enough to check this commutation on $A$ as a generating object.  Going right and then down gives $A\mapsto \algA_{\xi}\mapsto \red(L)$, while going down and then right gives $A\mapsto L\mapsto \red(L)$, so the diagram commutes.  
\end{proof}
In this case, the dg-category $\dOg$ is a quotient of the dg-category $\dpreO$ by the subcategory generated by unsteady objects.

We can define a graded lift of the image of this functor in the category $\cO$. We consider $A_{\xi}$ as a formal pure dgg-algebra.  By the full faithfulness of \cref{thm:O-fully-faithful}, the category of dgg-modules over this algebra provides a graded lift of category $\cO$.

\begin{definition}\label{def:tOg}
	A {\bf grading} on an object $M\in \cOg$ is a choice of a dgg-module $N$ over $A_{\xi}$ and an isomorphism $M\cong \red(L)\Lotimes_{\algA_{\xi}}N$ and similarly for $\preO$ and the dgg-algebra $A$. Let $\tcOg$ (resp.\ $\widetilde{\preO}$) be the category of graded objects in $\cOg$ (resp.\ ${\preO}$) with morphisms given by morphisms of dgg-modules.  
\end{definition}    
\notation{$\tcOg,\widetilde{\preO}$}{The graded lifts of $\cOg$ and ${\preO}$ induced by Theorem \ref{thm:O-fully-faithful}.}

\begin{remark}
	If the Hodge dg-algebra lifting $\algA_{\xi}$ is formal, we can give a more geometric definition of a grading as a Hodge, or perhaps more naturally twistor, structure on the D-module $\red_!M$, the image of $M$ under the left adjoint.  There can be many different choices of Hodge or twistor structure that give the same dgg-module, since non-Tate structures exist.  Formality would allow us to define a ``most Tate'' structure corresponding to any dgg-module, but since this formality is uncertain, we leave the details of this approach to another time.  
\end{remark}

Theorem \ref{thm:O-fully-faithful} is a ``Koszul dual'' description of the category $ \cOg$, in that we have described the Ext algebra of simples.  We can also give a dual description in terms of projectives.  In this case, we let $\plusA$ be the positively graded algebra from Corollary \ref{cor:pos-grading} and $\plusA^!$ its Koszul dual.  The category of graded modules over $\plusA^!$ is equivalent to the category of linear projective complexes of $\plusA$.  Of course, this equivalence is realized by a projective object $\mathsf{P}$ whose endomorphisms are $\plusA^!$.  Let $P=L\Lotimes_{\plusA}\mathsf{P}$ be the corresponding projective in $\preO$.  Let $\mathsf{P}_{\xi},P_{\xi}$ be the corresponding projectives in $\LPC(\plusA_{\xi})$ and $\cOg$.  

Consider the idempotent $e_\xi\in \plusA_0$ whose image in $\algA_0$ is the sum of the primitive idempotents attached to the simple D-modules which appear as summands of $L_{\sgns}$ for $\sgns$ unsteady. Let $e_{\xi}'=1-e_{\xi}$ be the complementary idempotent.  Note that we can use the same symbols to denote idempotents in the quadratic dual $\plusA^!$. Note that we have an obvious map of the quadratic dual $\plusA_{\xi}^!\to e_{\xi}'\plusA^! e_{\xi}'$.

\begin{theorem}\label{th:O-ff}
	If $(\dagger')$ holds, then the map above induces an isomorphism $\plusA_{\xi}^!=e_{\xi}'\plusA^! e_{\xi}'$.  Furthermore, we have a commutative diagram of functors:
	\begin{equation*}
		\begin{tikzcd}[column sep=1in]
			\plusA^!\mmod \ar{r}{e_{\xi}'-}\ar{d}[']{P\otimes_{\plusA^!}-} & 	 \plusA_{\xi}^!\mmod \ar{d}{P_{\xi}\otimes_{\plusA^!_{\xi}}-}\\
			\preO \ar{r}[']{\red} & \cOg 
		\end{tikzcd}
	\end{equation*}
		with the vertical functors fully faithful.  This induces equivalences $\plusA_{\xi}^!\gmod\cong \tcOg$ and $\plusA^!\gmod\cong \widetilde{\preO}$.
\end{theorem}
\begin{proof}
The vertical functors are fully faithful since the functor $L\Lotimes_A-$ is fully faithful. Thus, we have $\End(P)=\plusA^!$ and $\End(P_{\xi})=\plusA_{\xi}^!$.  

Since $\red$ is an exact functor, its left adjoint sends projectives to projectives, and so $\red_!(P_{\xi})$ is a projective object in $\preO$, and its head is the sum of the simple modules which are not killed by $\red$.  Thus, $\red_!(P_{\xi})$ is a summand of $P$, which is the projective cover of the sum of all simples, and in fact, it is isomorphic to the image of $e'_{\xi}$.  Thus, 
\[\plusA^!_{\xi}=\Hom(P_{\xi},P_{\xi})=\Hom(\red_!(P_{\xi}),\red_!(P_{\xi}))=e'_{\xi}\plusA^!e'_{\xi}.\qedhere\]
\end{proof}

	Let us discuss how these results apply in the hypertoric and quiver cases.  In the hypertoric case, we recover the results of \cite{BLPWtorico}, though from the Koszul dual perspective, i.e. the ring $\algA_\xi^!$ is denoted $A(\flav,\xi)$ in \cite{BLPWtorico}, since we are computing the endomorphisms of projectives in that paper.

	In the quiver case, we have already computed the algebra $\algA_P$ as a weighted KLR algebra.  We can identify the unsteady sign vectors $\sgns$ with unsteady idempotents in the sense of \cite[Def. 2.21]{WebwKLR}, as we explain below.  The function $\xi$ must be a linear combination of the determinant characters on the factors $\mathfrak{gl}_{v_i}$ with some weights $\xi_i$.  We are working with unframed quivers, so the copy of $\C$ given by scalar matrices for a single fixed scalar $\la$ on each vertex acts trivially, and so we must choose $\xi$ to vanish on this subgroup, so $\sum_{i}\xi_iv_i=0$.    

  As discussed before, we can cover the framed case by using the Crawley-Boevey trick.  In particular, the condition $\sum_{i}\xi_iv_i=0$ in this case becomes the requirement that $\xi_{\infty}= - \sum_{i\neq \infty}\xi_iv_i$. See \cite[\S 3.1-2]{WebwKLR} for more details on the connection between the Crawley-Boevey trick and the steadied quotient.  The papers \cite{KTWWYO,kamnitzerLieAlgebra2024}, written after the first version of this paper, discuss this category $\cO$ and its connection to weighted KLR algebras in considerably greater detail, and a reader interested in this case should refer there.
	
		This translates into a function $c\colon \vertex\to \C_+$ (called a {\bf charge} in \cite{WebwKLR}) sending $j\mapsto \xi_j+\imath\in \C $, where we use $\imath=\sqrt{-1}$ to avoid confusion with the use of $i$ as an index.
		\begin{theorem}\label{thm:steadied}
			Under the isomorphism $\algA_P\cong W^{\vartheta}_{P}$ of \cref{thm:wKLR}, the ideal $\ideal_{>0}$ coincides with the kernel of the quotient map $W^{\vartheta}_{P}\to \overline{W}^{\vartheta}_{P}$ from the weighted KLR algebra to its steadied quotient with respect to the charge fixed above.  In particular, if $(\dagger')$ holds, then this kernel is $\ideal_{\xi}$ (\cref{rem:steady}), so the steadied quotient is $\algA_{\xi}$.
		\end{theorem}
		\begin{proof}
	Both $\ideal_{>0}$ and the kernel of $W^{\vartheta}_{P}\to \overline{W}^{\vartheta}_{P}$ are generated by idempotents of the form $e(\sgns)$: the former by those with $\sgns$ $\xi$-unbounded, equivalently unsteady (\cref{lem:unsteady-unbounded}), and the latter by the unsteady idempotents of \cite[Def. 2.21]{WebwKLR}.  It thus suffices to prove, for a single sign vector $\sgns$, that $\sgns$ is unsteady if and only if $e(\sgns)$ is unsteady in the sense of \cite[Def. 2.21]{WebwKLR}.  Unwinding the first notion, $\sgns$ is unsteady exactly when there is a nonzero cocharacter $\gamma$ with
	\[\langle \xi,\gamma\rangle >0 \qquad\text{and}\qquad (T^*V)_\gamma\supset (T^*V)_\sgns.\]

	We first set up some bookkeeping attached to such a $\gamma$.  The equivalence class of the loading is unchanged when we add the value $\gamma_{i,k}$ of $\gamma$ on the $k$th basis vector of $\C^{v_i}$ to the $x$-coordinate of the corresponding point; after adding $q\gamma_{i,k}$ for $q\gg 0$, the points separate into groups according to their $\gamma$-value.  For each $r\in \Z$, let $\Xi(r)$ be the sum of the factors $\xi_i$ over the points with $\gamma$-value $r$.  Only finitely many $\Xi(r)$ are nonzero, and
	\[\sum_{r\in \Z}\Xi(r)=\sum_i\xi_iv_i=0, \qquad \sum_{r\in \Z} r\,\Xi(r)=\langle \xi,\gamma\rangle. \]
	Write $T(r)=\sum_{s\geq r}\Xi(s)$ for the tail sums.  Since $\sum_r\Xi(r)=0$, we have $T(r)=0$ both for $r$ below the least $\gamma$-value and for $r$ above the greatest, so $T$ is finitely supported, and summation by parts gives $\sum_{r\in\Z}T(r)=\sum_{r\in \Z} r\,\Xi(r)=\langle \xi,\gamma\rangle$.

	{\bf Unsteady sign vectors give unsteady idempotents:}  Let $\sgns$ be unsteady, with $\gamma$ as above so that $\langle \xi,\gamma\rangle>0$.  Then $\sum_r T(r)=\langle\xi,\gamma\rangle>0$, so $T(r)>0$ for some $r$; fix such an $r$ and split the points of the loading into
	\[S_1=\{\text{points with }\gamma\text{-value}<r\},\qquad S_2=\{\text{points with }\gamma\text{-value}\geq r\}.\]
	Taking $q\to\infty$, the strands of $S_1$ and their ghosts lie entirely to the left of those of $S_2$ and their ghosts.  Using $\sum_{s<r}\Xi(s)=-T(r)$, the charges are
	\[c(\wt(S_1))=-T(r)+\imath\cdot \#S_1,\qquad c(\wt(S_2))=T(r)+\imath\cdot \#S_2.\]
	Since $T(r)>0$, the former has negative real part and the latter positive real part, so $c(\wt(S_1))$ has the greater argument and $\wt(S_1)>_c\wt(S_2)$.  Thus $e(\sgns)$ is unsteady.

	{\bf Unsteady idempotents come from unsteady sign vectors:}  Conversely, suppose $e(\sgns)$ is unsteady: using the notation of \cite{WebwKLR}, the loading splits into $S_1$ and $S_2$, with all strands of $S_1$ and their ghosts to the left of all strands of $S_2$ and their ghosts, and the sums of the roots labeling the strands in the two parts $\wt(S_1)$ and $\wt(S_2)$, are related by the partial order defined by the argument of $c(\wt(S_i))$ in the complex plane: we have $\wt(S_1)>_c\wt(S_2)$.  Choose $\gamma$ constant on each part, with $\gamma\equiv 0$ on $S_1$ and $\gamma\equiv 1$ on $S_2$.  Since $\sum_i\xi_iv_i=0$, the sum $c(\wt(S_1))+c(\wt(S_2))=\imath(\#S_1+\#S_2)$ is purely imaginary, so the real parts of $c(\wt(S_1))$ and $c(\wt(S_2))$ have opposite sign.  As $\wt(S_1)>_c\wt(S_2)$, the left-hand set has the greater argument, so $\Re c(\wt(S_1))<0<\Re c(\wt(S_2))$ and
	\[\langle \xi,\gamma\rangle=\Re c(\wt(S_2))=-\Re c(\wt(S_1))>0.\]
	The left--right separation guarantees $(T^*V)_\gamma\supset (T^*V)_\sgns$, so $\sgns$ is unsteady.
 	\end{proof}

\section{The Coulomb side}
\label{sec:coulomb}

\subsection{Coulomb branch preliminaries}
\label{sec:coulomb-prelim}

The Coulomb side of our correspondence is given by a remarkable recent
construction of Braverman, Finkelberg, and Nakajima
\cite{NaCoulomb,BFN}. As we mentioned in the introduction, a more
algebraic-minded reader could ignore this geometric construction and
take Theorem \ref{thm:BFN-pres} as a definition.  We'll wish to
modify this construction somewhat, so let us describe it in some
detail.  As before, let $G$ be a connected reductive algebraic group over $\C$, with
$G((t)), G[[t]]$ its points over $\C((t)), \C[[t]]$. For a fixed
Borel $B\subset G$, we let $\Iwahori$ be the associated Iwahori subgroup
\[\Iwahori=\big\{g(t)\in G[[t]]\mid g(0)\in B\big\}\subset G[[t]].\]  The {\bf affine flag variety} $\AF=G((t))/\Iwahori$ is
just the quotient by this Iwahori. 
\notation{$\Iwahori,\AF$}{The standard Iwahori subgroup and its space of cosets, the affine flag variety.}

Let $V$ be the $G$-representation fixed in the previous section, and
$U\subset V((t))$ a subspace invariant under $\Iwahori$.  We equip $V((t))$
with a loop $\Cx$-action such that $vt^a$ has weight $a$.  As in Section \ref{sec:higgs}, we'll consider the group $\tG\cong G\times \Cx$ which acts on $V((t))$ by $(g,s)\cdot vt^a=\flav_0(s)gv(st)^a$.
This is
compatible with the standard loop action on $G((t))$. 

We'll be interested in the infinite-dimensional vector bundle on $\AF$
given by \[\VB_U:=G((t)) \times^{\Iwahori} U.\] 
\notation{$\VB_U,\VB_{\acham}$}{The infinite-dimensional vector bundle $\VB_U=G((t)) \times^{\Iwahori} U$ or $\VB_{\acham}=G((t)) \times^{\Iwahori} U_{\acham}$.} Note that we have a
natural $G((t))$-equivariant action map $\VB_U\to V((t))$. 
\nc{\wtG}{\widetilde{G((t))}}
\begin{definition}
  The {\bf flag BFN space} is the fiber product
  $\VB_{V[[t]]}\times_{V((t))}\VB_{V[[t]]}$. 
\end{definition}

We'll consider this as a set of triples
$\VB_{V[[t]]}\times_{V((t))}\VB_{V[[t]]}\subset V((t))\times \AF
\times \AF$.
Recall that conjugation defines a surjection $\No\to \operatorname{Inn}(G)\cong G/Z(G)$ with kernel $C$, and that $Z(G)((t))\subset \Iwahori$ acts trivially on $\AF$.  Thus, we can define an action of $\No((t))$ on $V((t))\times \AF
\times \AF$, acting on the first term through $\No\to GL(V)$ and on the second two terms through
this quotient. Furthermore, we have a natural action of the loop
rotation $\Cx$ which commutes past this in the usual way, so we have
an action of $\No((t))\rtimes\Cx$.  The group $\tNo=\No\times \Cx$ has a natural
inclusion in $\No((t))\rtimes\Cx$ induced by the inclusion $\No\hookrightarrow
\No((t))$ on the first factor and the identity on the factors of $\Cx$.  This induces a further inclusion $\tG\subset \tNo \hookrightarrow \No((t))\rtimes\Cx$.

Let
$\wtG$ be the subgroup of $\No((t))\rtimes
\Cx$ generated by $G((t))$ and the image of
$\tilde{G}$.  

We'll want to consider the equivariant homology $H_*^{BM,
  \wtG}(\VB_{V[[t]]}\times_{V((t))}\VB_{V[[t]]})$.  Defining this
properly is a finicky technical issue, since the space
$\VB_{V[[t]]}\times_{V((t))}\VB_{V[[t]]}$ can be thought of as a
union of affine spaces which are both infinite-dimensional and infinite-codimensional, making it hard to define their degree in
homology.  First, we note that it is technically more convenient to
consider the space \[{}_{V[[t]]}\VB_{V[[t]]}=\left\{(g\Iwahori,v(t))\in \AF\times V[[t]]\mid
g^{-1}\cdot v(t)\in V[[t]]\right\}.\] 
\notation{${}_{U}\dVB_{U'}$}{${}_{U}\dVB_{U'}=\{(g\Iwahori,v(t))\in \AF\times U'\mid g^{-1}\cdot v(t)\in U\}$ and ${}_{\acham}\dVB_{\acham'}={}_{U_\acham}\dVB_{U_{\acham'}}$}
This is the intersection of the trivial bundle $ \AF\times V[[t]]$ with the image of the associated bundle $G((t))\times^{\Iwahori} V[[t]]$ in $\AF\times V((t))$.
Basic properties of equivariant homology lead us to expect that
\[H_*^{BM,
  \wtG}(\VB_{V[[t]]}\times_{V((t))}\VB_{V[[t]]}) \cong
H_*^{BM,T_{\tG}}({}_{V[[t]]}\VB_{V[[t]]});\] we will use this as a
definition of the left-hand side.  The preimage in ${}_{V[[t]]}\VB_{V[[t]]}$ of a Schubert cell in $\AF$ is
a cofinite dimensional affine subbundle of $\VB_{V[[t]]}$; thus, using both
the dimension of the Schubert cell and the codimension of the affine
bundle, we can make sense of the difference between the dimensions of
these cells.  With a bit
more work, this allows us to make precise the notion of this homology,
as in \cite[\S 2(ii)]{BFN}.  For our purposes, we can use their
construction as a black-box, only knowing that basic properties of
pushforward and pullback operate as expected.

\begin{definition}\label{def:ICB}
  The Iwahori Coulomb branch algebra $\efA$ is the equivariant Borel-Moore homology
  $H_*^{BM, \wtG}(\VB_{V[[t]]}\times_{V((t))}\VB_{V[[t]]})$.  
\end{definition}
\notation{$\efA,\Asph$}{The Iwahori and original BFN quantized Coulomb branch algebras (\cref{def:ICB}).}
An important special case of this algebra has also been considered in
\cite[\S 4]{BEF}, when the representation is of quiver type (as
discussed in Section \ref{sec:examples}).   As usual, we let $h$ be
the equivariant parameter corresponding to the character $\nu$.
Note that this algebra contains a copy of $\Cth=S[h]\cong \K[\tilde{\ft}]$, the
coordinate ring of $\tilde{\ft}$, embedded as
$H_*^{BM,\widetilde{G((t))}}(\VB_{V[[t]]})\cong H_*^{\tilde{\Iwahori}}(*)$. The algebra $\EuScript{A}$  also possesses a natural
action on this cohomology ring.  

The original BFN quantum Coulomb branch algebra $\Asph$ is
defined in essentially the same way, using
$\EuScript{Y}_{V[[t]]}:=G((t)) \times^{G[[t]]} V[[t]]$.  Pullback by the
natural map $\VB_{V[[t]]}\to \EuScript{Y}_{V[[t]]}$ defines a
homomorphism $\EuScript{A}^{\operatorname{sph}}\to \EuScript{A}$.
\begin{theorem}\label{th:Morita}
  The algebras $\Asph$ and $\efA$
  are Morita equivalent.  In fact, the latter is a matrix algebra over the former of rank $\# W$.  
\end{theorem}
We can geometrically realize the $\Asph\operatorname{-}\efA$- and $\efA\operatorname{-}\Asph$-bimodules of this Morita equivalence as the homologies
\begin{align}
	H_*^{BM, \wtG}(\EuScript{Y}_{V[[t]]}\times_{V((t))}\VB_{V[[t]]}) &\cong H_*^{BM,\tG}({}_{V[[t]]}\VB_{V[[t]]})\notag \\   H_*^{BM, \wtG}(\VB_{V[[t]]}\times_{V((t))}\EuScript{Y}_{V[[t]]}) &\cong H_*^{BM, T_{\tG}}({}_{V[[t]]}\EuScript{Y}_{V[[t]]}). \label{eq:BFN-bimod}
\end{align} 
\begin{proof}
 Consider a fiber bundle
  $p\colon X\to Y$ such that the pushforward of $\K_X$ to $Y$ is a sum of shifted 
  constant sheaves: $p_*\K_X\cong \oplus_{i\in \Z}\K_Y[-i]^{\oplus k_i}$.  In this case, the rank $k_i$ is given by the $i$th Betti number of the fiber.
  Thus, for any map $q\colon Y\to Z$, we have that $q_*p_*\K_X\cong \oplus_{i\in \Z}q_*\K_Y[-i]^{\oplus k_i}.$
  Then the convolution algebra \[H^*(X\times_Z X)\cong \Ext^*(q_*p_*\K_X,q_*p_*\K_X)\cong\Ext^*(q_*\oplus_{i\in \Z}\K_Y[-i]^{\oplus k_i},q_*\oplus_{i\in \Z}\K_Y[-i]^{\oplus k_i}) \] is a matrix algebra over $H^*(Y\times_Z Y)\cong \Ext^*(q_*\K_Y,q_*\K_Y)$ of rank $\sum_{i\in \Z} k_i$, with the grading shifted to account for the homological shifts.  In particular, $H^*(X\times_Z X)$ and $ H^*(Y\times_Z Y)$ are Morita equivalent via the bimodules $H^*(X\times_Z Y)$ and $H^*(Y\times_Z X)$.

We have a natural homomorphism $H_*^{BM, \wtG}(\VB_{V[[t]]}\times_{\EuScript{Y}_{V[[t]]}}\VB_{V[[t]]})\to\EuScript{A}$. The map $\VB_{V[[t]]}\to \EuScript{Y}_{V[[t]]}$ is a fiber bundle with fiber $G/B$, and is the pullback of the fiber bundle $\AF\to G((t))/G[[t]]$ over the affine Grassmannian.  Since this pushforward is equivariant for $G((t))$ and the group $G[[t]]$ is connected,  it must be a sum of constant sheaves.  Those readers who are made nervous by the indirectness in the definition of $H_*^{BM, \wtG}$ from \cite{BFN} for these infinite-dimensional vector bundles can reassure themselves by checking that this fiber bundle property holds modulo any power of $t$ as well.
    
Thus, the convolution algebra $H_*^{BM,\wtG}(\VB_{V[[t]]}\times_{\EuScript{Y}_{V[[t]]}}\VB_{V[[t]]})$ is a matrix algebra over $H_*^{BM, \wtG}(\EuScript{Y}_{V[[t]]})\cong \Cth$ of rank $\# W$. As a consequence, we can write the identity in this algebra as a sum of $\# W$ orthogonal and isomorphic idempotents $e_1,\dots, e_{\#W}$; we can assume that the image of $e_1$ in $H_*^{BM,\wtG}(\VB_{V[[t]]})$ is the image under pullback of $H_*^{BM,\wtG}(\EuScript{Y}_{V[[t]]})$.  The image $e$ of $e_1$ in $\EuScript{A}$ is an idempotent such that $\EuScript{A}e \EuScript{A}=\EuScript{A}$ and $\EuScript{A}^{\operatorname{sph}} \cong e\EuScript{A}e$, thus these algebras are Morita equivalent.
\end{proof}

Let $\EuScript{A}_\ab$ be
the BFN Steinberg algebra for the pair $(T,V)$. This is the equivariant homology of ${}_{V[[t]]}\VB^{\ab}_{V[[t]]}=\left\{(g,v(t))\in T((t))/T[[t]]\times V[[t]]\mid
g^{-1} v(t)\in V[[t]]\right\}$, which we can view as a subspace of ${}_{V[[t]]}\VB_{V[[t]]}$.  By \cite[Lem. 5.5]{BFN}, pushforward induces a left action of this algebra on the homology group \cref{eq:BFN-bimod}.  Since $\efA$ is the full endomorphism algebra of that right module structure, this implies that:
\begin{lemma}
  Pushforward induces an injective algebra map that makes the diagram
	\begin{equation*}
	\begin{tikzcd}
		\EuScript{A}_\ab\ar[r]\ar[d] &\EuScript{A}\ar[d] \\
	H_*^{BM,T_{\tG}}({}_{V[[t]]}\VB^{\ab}_{V[[t]]})\ar[r]&H_*^{BM,T_{\tG}}({}_{V[[t]]}\VB_{V[[t]]})
	\end{tikzcd}		
	\end{equation*}
commute.
\end{lemma}

\subsection{The extended category}
\label{sec:extended-category}

While the Coulomb branch is our focus, it is easier to study it in a
larger context: there is an extended category in which it appears as
the endomorphisms of one object.  This extended category has a simpler set of relations and thus is easier to get a handle on than the Coulomb branch on its own.

Before giving this definition, let us say a word or two about motivation.  The Coulomb branch naturally arises in a 3-dimensional quantum field theory as an algebra of local operators of a topological twist. This is the Hilbert space of $S^2$ in the twist, as shown by usual cut-and-glue techniques.

In physical terms, our extended category considers these local
operators as the endomorphisms of a trivial line defect and studies
such operators in the context of homomorphisms to other line defects.  Line
defects in these categories are discussed in more detail in the
work of Dimofte, Garner, Geracie, and Hilburn \cite{DGGH} and
forthcoming work of Hilburn and Yoo \cite{HiYoII}.

\begin{definition}
  Let $\ft_{1,\tNo}=\{\psi\mid d\nu(\psi)=1\}\subset \ft_{\tNo,\R}$ be
  the set of real cocharacters which act with weight 1 on the loop
  parameter $t$.  Note that $\ft_{1,\R}\subset \ft_{1,\tNo}$; we will also be interested in the coset $\fttau=\second+\ft_{\R}\subset \ft_{1,\tNo}$.
\end{definition}

Given any $\acham\in \ft_{1,\tNo}$, we can consider the induced action on the
vector space $V((t))$.  
\notation{$\Iwahori_{\acham}$}{Parahoric subgroup whose Lie algebra is non-negative weight
spaces for the adjoint action of $\acham$ on $\fg((t))$ (\cref{def:Uacham}).}
\notation{$U_{\acham}$}{The subspace in $V((t))$ of elements of weight $\geq -1/2$ under $\acham$.}
\begin{definition}\label{def:Uacham}
Let $\Iwahori_\acham$ be the subgroup whose Lie algebra is the sum of non-negative weight
spaces for the adjoint action of $\acham$ on $\fg((t))$. This only depends on the
alcove in which $\acham$ lies, i.e., which chamber of the arrangement
given by the hyperplanes $\{\alpha(\acham)=n\mid \al\in \rootsD, n\in \Z\}$ contains
$\acham$; the subgroup $\Iwahori_\acham$ is an Iwahori if $\acham$ does not
lie on any of these hyperplanes. 
	
Let $\Ueta_\acham$ be the subspace of elements of weight $\geq -1/2$ under $\acham$.  This subspace is closed under the action of
$\Iwahori_\acham$.  This depends only on the vector $\Ba$ such that
$\acham\in \AC_{\Ba}$, as defined in \eqref{eq:aff-cham}.
\end{definition}

We call $\acham$ {\bf unexceptional} if it does not lie on the hyperplanes
$\{\varphimid_i^{\operatorname{mid}} (\acham)=n\mid n\in
\Z\}$ and {\bf generic} if it is unexceptional and does not lie on any
of the hyperplanes $\{\alpha(\acham)=n\mid \alpha\in \rootsD, n\in\Z\}$. We call the hyperplanes defined above that generic points avoid the {\bf unrolled hyperplane arrangement}.  Any unexceptional point $\acham$ has a neighborhood in the classical topology on which $U_{\acham'}=U_{\acham}$; this neighborhood necessarily contains a generic point.
\begin{example}
  We illustrate this arrangement in the case of our running example:
  \[\tikz[very thick,scale=1.4]{ \fill [gray!40!white] (0.16,0.16) --
    (0.16,-.84) -- (-.84,-.84) -- (-.84,.16)--cycle; \draw
    (1.16,2.5)-- (1.16,-2.5) node[scale=.5, at
    start,above]{$\varphi_1^{\operatorname{mid}}=2$}; \draw (2.5,1.16)-- (-2.5,1.16)
    node[scale=.5, at
    start,right]{$\varphi_2^{\operatorname{mid}}=2$};
\draw (2.16,2.5)-- (2.16,-2.5)
    node[scale=.5, at
    start,above]{$\varphi_1^{\operatorname{mid}}=3$}; \draw (2.5,2.16)-- (-2.5,2.16)
    node[scale=.5, at
    start,right]{$\varphi_2^{\operatorname{mid}}=3$};
\draw (0.16,2.5)-- (0.16,-2.5)
    node[scale=.5, at
    start,above]{$\varphi_1^{\operatorname{mid}}=1$}; \draw (2.5,.16)-- (-2.5,.16)
    node[scale=.5, at
    start,right]{$\varphi_2^{\operatorname{mid}}=1$}; \draw (-.84,2.5)-- (-.84,-2.5)
    node[scale=.5, at
    start,above]{$\varphi_1^{\operatorname{mid}}=0$}; \draw (2.5,-.84)-- (-2.5,-.84)
    node[scale=.5, at
    start,right]{$\varphi_2^{\operatorname{mid}}=0$}; \draw (-1.84,2.5)-- (-1.84,-2.5)
    node[scale=.5, at
    start,above]{$\varphi_1^{\operatorname{mid}}=-1$}; \draw (2.5,-1.84)-- (-2.5,-1.84)
    node[scale=.5, at
    start,right]{$\varphi_2^{\operatorname{mid}}=-1$}; \draw[dotted] (-2.5,-2.5) --
    node[scale=.5, right,at
    end]{$\alpha=0$}(2.5,2.5); \draw[dotted] (-2.5,-1.5) --
    node[scale=.5, below left,at
    start]{$\alpha=1$}(1.5,2.5); \draw[dotted] (-1.5,-2.5) --
    node[scale=.5, above right,at
    end]{$\alpha=-1$}(2.5,1.5); \draw[dotted] (-2.5,-0.5) --
    node[scale=.5, below left,at
    start]{$\alpha=2$}(0.5,2.5); \draw[dotted] (-0.5,-2.5) --
    node[scale=.5, above right,at
    end]{$\alpha=-2$}(2.5,0.5); \draw[dotted] (-2.5,.5) --
    node[scale=.5, below left,at
    start]{$\alpha=3$}(-.5,2.5); \draw[dotted] (.5,-2.5) --
    node[scale=.5, above right,at
    end]{$\alpha=-3$}(2.5,-.5); \draw[dotted] (-2.5,1.5) --
    node[scale=.5, below left,at
    start]{$\alpha=4$}(-1.5,2.5); \draw[dotted] (1.5,-2.5) --
    node[scale=.5, above right,at end]{$\alpha=-4$}(2.5,-1.5); }\]
  The spaces $\Ueta_\acham$ which arise will be of the form:
  \[U_\acham=\{(f_1t^{a},f_2t^{b},f_3t^{a},f_4t^{b})\in \C^4((t))\mid
  f_i\in \C[[t]]\}\]
  for $a,b\in\Z$.  
  The region where $a=b=0$ is shaded in the diagram above.
\end{example}

For each generic $\acham\in \ft_{1,\tNo}$, we can
consider
$\VB_{\acham}:=\VB_{U_\acham}:=G((t))\times^{\Iwahori_{\acham}}U_{\acham}$,
the associated vector bundle.  
The space $ \ft_{1, \tNo}$ has a natural adjoint action of
$\widehat {W}=N_{G((t))}(T_{G})/T_{G}$. In this case,
 we have $U_{w\cdot \acham}=w\cdot U_{\acham}$.  Note that since
$\second(s)t^{\la}\second(s^{-1})=(st)^\la$, we have that \[t^\lambda
\eta t^{-\lambda}=\eta-\lambda.\]

The ring $\Cth$ carries an
action of the extended affine Weyl group induced by the
identification with $H_*^{\Iwahori\times \Cx}(*)\cong \Sym(\ft^*_{\C}\oplus \C\cdot h)$. On this space,
the subgroup $\ft_\Z\subset \widehat {W}$ acts by translation times $h$ and $W\subset \widehat {W}$ acts as usual.  
We let \[{}_{\acham}\dVB_{\acham'}=\left\{(g,v(t))\in G((t))\times U_{\acham'}\mid g^{-1}\cdot v(t)\in U_{\acham}\right\}/\Iwahori_{\acham}.\]
As with ${}_{V[[t]]}\dVB_{V[[t]]}$, we can interpret this as the intersection of the trivial bundle $G((t))/\Iwahori_{\acham}\times U_{\acham'}$ with the associated bundle $G((t))\times^{\Iwahori_{\acham}}U_{\acham}$.  
\notation{$\mathscr{B}$}{The extended BFN category (\cref{def:B}).}
\begin{definition}\label{def:B}
  Let the {\bf extended BFN category} $\scrB$ be the category whose
  objects are unexceptional cocharacters $\acham\in \ft_{1, \tNo}$,
  with morphisms given by
  \begin{equation*}
    \Hom_{\mathscr{B}}(\acham,\acham')=H_*^{BM, \wtG}(\VB_{\acham}\times_{V((t))}\VB_{\acham'})\\
    \cong H_*^{BM, T_{\tG}}\left({}_{\acham}\VB_{\acham'}\right).
  \end{equation*}
  Let $\mathscr{B}_{\second}$ be the subcategory where we consider only objects in $\fttau$.
\end{definition}

  \begin{remark}\label{rmk:QFT}
  Without an enormous digression into quantum field theory, let us try to put this construction into a framework more familiar in that context.  We can consider the D-module of functions on $\VB_{\acham}$, and push this forward to form a $G((t))$-equivariant D-module on $V((t))$.  Readers who prefer stacks can interpret this as the pushforward along the map $U_\eta/\Iwahori_\eta\to V((t))/G((t))$.  We can interpret the space $V[[t]]/G[[t]]$ as the moduli space of $G$-principal bundles on a formal disk with a holomorphic section of the associated bundle for $V$.

    If $\eta$ is in the fundamental alcove, the space $U_\eta/\Iwahori_\eta$ has a similar interpretation as the space of $G$-principal bundles which have a Borel reduction at the origin and a meromorphic section whose zero or pole at the origin is controlled by $\eta$.  That is, the Borel reduction fixes a filtration on the fiber over the origin, such that the coefficient of the pole of order $k$ is concentrated in the elements of weight $\la$ such that $\langle \la,\eta\rangle \geq k-\frac 12$.  Note that while individual weight spaces in this fiber are only well-defined up to the action of the Borel, the full direct sum is $B$-invariant.
    
As in \cite{CG97}, we can interpret $H_*^{BM, \wtG}(\VB_{\acham}\times_{V((t))}\VB_{\acham'})$ as the Ext group of the D-modules constructed by the pushforward of functions via the map $U_{\eta}/\Iwahori_\eta\to V((t))/G((t))$, and convolution as the Yoneda product in this Ext group.  This quotient $V((t))/G((t))$ can be interpreted as the loop space of the stack $V/G$; it is argued in \cite[(1.4)]{DGGH} that the D-modules on this space should be interpreted as line defects in the associated TQFT.
  \end{remark}
As before, this homology is defined using the techniques in \cite[\S
2(ii)]{BFN}.  
\begin{remark}
In several theorems below, we consider only generic points in this
category.  These are easier to work with for presenting endomorphisms,
and there is very little loss of generality.  As
discussed above, for each unexceptional point $\acham\in \ft_{1,\R}$,
there is a nearby generic point $\acham'\in \ft_{1,\R}$ with
$\Ueta_{\acham'}=\Ueta_{\acham}$. Theorem \ref{th:Morita} can be extended with
the same proof to show that $\Hom(\acham,\acham')$ gives a Morita
equivalence between $\End(\acham)$ and $\End(\acham').$  This shows
that after passing to the Karoubi envelope, the category with all
unexceptional objects and the subcategory with only generic $\eta$ are equivalent.  
\end{remark}

Note that $\Ueta_{\second}=V[[t]]$.  Thus, there is a nearby generic element $\zero\in \ft_{\second}$ in the fundamental alcove such that $U_\zero=V[[t]]$.  In this case, $\Iwahori_\zero$ is the standard Iwahori, so
  \begin{equation}\label{eq:A-B}
    \Asph=\Hom_{\mathscr{B}}(\second,\second)\qquad \efA=\Hom_{{\mathscr{B}}}(\zero,\zero).
\end{equation} Thus, this extended category encodes the structure of
both of these algebras and more.

Furthermore, the category of representations of $\EuScript{A}$ (or
equivalently, $\EuScript{A}^{\operatorname{sph}}$) is
closely related to that of $\mathscr{B}$.  Let $M$ be a representation
of $\mathscr{B}$, that is, a functor from $\mathscr{B}$ to the
category of $\K$-vector spaces.  The vector space $N:=M(\zero)$ has an
induced $\EuScript{A}$-module structure.  Since $\Hom(\acham,\zero)$
and $\Hom(\zero,\acham)$ are finitely generated as
$\EuScript{A}$-modules, this functor preserves finite generation, and
is in fact a quotient functor, with left adjoint given by
\[N\mapsto
(\mathscr{B}\otimes_{\EuScript{A}}N)(\acham):=\Hom(\acham,\zero)\otimes_{\EuScript{A}}N.\]

Note that there is a natural subcategory $\mathscr{B}_{\ab}$  (with the same objects),
where the morphisms are given by
\[\Hom_{\ab}(\acham,\acham') \cong H_*^{BM, T\times \Cx}\left(\left\{(g,v(t))\in T((t))\times \Ueta_{\acham}\mid
g\cdot v(t)\in \Ueta_{\acham'}\right\}/T[[t]]\right).\]  The inclusion is
induced by pushforward in homology. 
 
\subsection{A presentation of the extended category}
\label{sec:pres-extend-categ}

In this section, we will define several morphisms in the category $\scrB$ by explicit homology classes.  All of these classes are defined by subspaces in ${}_{\acham}\VB_{\acham'}$, each of which is the preimage of the closure of an $\Iwahori_{\eta}$-orbit in $G((t))/\Iwahori_{\eta'}$ or of a $T$-fixed point. Thus, each gives a well-defined Borel-Moore homology class.  

Note that any lift of $w\in \widehat{W}$ to $G((t))$
induces an isomorphism $\VB_{\acham}\cong \VB_{w\cdot \acham}$ which intertwines the action maps to $V((t))$, given
by $(g,v(t))\mapsto (gw^{-1},w\cdot v(t))$.  We denote the homology
class of the graph of this isomorphism by $\yw_w$ (this class
is independent of the choice of lift).  As a cycle in ${}_{\acham}\dVB_{w\cdot \acham}$, this is given by
\begin{equation}
\yw_w=[\big\{(w\Iwahori_\acham,v(t),w\cdot v(t))\mid v(t)\in
\Ueta_\acham\big\}/\Iwahori_\acham]\label{eq:y-def}
\end{equation}
\notation{$\yw_w$}{The morphism in $\mathscr{B}$ defined by the graph of an affine Weyl group element \eqref{eq:y-def}.}
Consider two generic cocharacters $\acham,\acham'\in \ft_{1,\R}$. Let
\begin{equation}
 {r(\acham,\acham')}=\left[\{(e,v(t)) \in T((t))\times \Ueta_{\acham'}\mid v(t)\in
\Ueta_{\acham}\}/T[[t]]\right]\in \Hom_{\ab}(\acham',\acham) .\label{eq:r-def}
\end{equation}
\notation{$r(\acham,\acham')$}{The morphism \eqref{eq:r-def}.}
If $\Iwahori_{\acham}=\Iwahori_{\acham'}$ (that is, the chambers are in the same alcove), this is sent to the class in $\Hom(\acham',\acham)$ of the space 
\[Y(\acham,\acham')=\{(e\Iwahori_\acham,v(t)) \in G((t))\times \Ueta_{\acham}\mid v(t)\in \Ueta_{\acham'}\}/\Iwahori_{\acham}\] 
but this is not the case for $\acham,\acham'$ in different alcoves.  We also have the morphism $y_{\zeta}\in \Hom_{\ab}(\acham,\acham-\zeta)$ for $\zeta\in\ft_\Z$ (thought of as a translation in the extended affine Weyl group); this is just the graph of multiplication by $t^\zeta$. The element denoted $r^\la$ in \cite{BFN} is represented in our notation by $y_{-\la}r(-\la+\zero,\zero)=r(\zero,\la+\zero)y_{-\la}$, which is an endomorphism of $\zero$; the two expressions agree by \eqref{eq:conjugate1}.
\notation{$\Phi(\acham,\acham')$}{The product of the terms $\varphip^+_i-nh$ over pairs $(i,n) \in [1,d]\times \Z$ such that the inequalities $\varphi_i(\acham)>n-\frac 12, \varphi_i(\acham')<n-\frac 12$ hold (\cref{def:Phi}). }
\begin{definition}\label{def:Phi}
Let $\Phi(\acham,\acham')$ be the product of the terms $\varphip^+_i-nh$ over pairs $(i,n) \in [1,d]\times \Z$ such that the inequalities
  \[\varphi_i(\acham)>n-\frac 12 \qquad \varphi_i(\acham')<n-\frac 12 \] 
hold.  Note that we could write this using \eqref{eq:varphi-mid} as
  \[\varphimid_i^{\operatorname{mid}} (\acham)>n\qquad \varphi_i^{\operatorname{mid}}(\acham')<n, \] 
and similarly with the inequalities (\ref{pmp}-\ref{mpm}) below.
  
  \notation{$\Phi(\acham,\acham',\acham'')$}{The product of the terms
  $\varphi^+_i-nh$ over pairs $(i,n)\in [1,d]\times \Z$ such that \eqref{pmp} or \eqref{mpm} (\cref{def:Phi}).} 
Let $\Phi(\acham,\acham',\acham'')$ be the product of the terms $\varphi^+_i-nh$ over pairs $(i,n)\in [1,d]\times \Z$ such that we have the inequalities \newseq
  \[\subeqn\label{pmp}\varphi_i(\acham'')>n-\frac 12 \qquad \varphi_i(\acham')<n-\frac 12 \qquad \varphi_i(\acham)>n-\frac 12 \] or the inequalities
  \[\subeqn\label{mpm}\varphi_i(\acham'')<n-\frac 12 \qquad \varphi_i(\acham')>n-\frac 12\qquad \varphi_i(\acham)<n-\frac 12. \] 
 These terms correspond to the hyperplanes that a path $\acham\to\acham'\to \acham''$ must cross twice. 
\end{definition}

 Note that if $h$ is specialized to $0$, then we just get each weight $\varphi_i$ raised to a power given by the number of corresponding unrolled hyperplanes crossed.
  \begin{remark}
Since it has been absorbed into the notation, we should emphasize that we
still have an underlying fixed flavor $\flav$, used to define
$\tilde{G}$.  This flavor is implicit in the definition of
$\varphi^+_i$ as elements of $S_h$, and thus in the
products $\Phi(-,-,-)$; this is the {\bf only} place it will appear in
our presentation of the BFN category.
  \end{remark}

\begin{proposition}\label{prop:abelian}
The morphisms $\Hom_{\ab}(\acham',\acham)$ between two generic cocharacters $\acham,\acham'$ have a basis over $\Cth$ of the form
$\yw_{\zeta} \cdot r(\acham+\zeta,\acham')$ for $\zeta\in
\ft_\Z$, with the relations in the category
$\scrB_{\ab}$ generated by:
\newseq\begin{align*}\subeqn\label{eq:coweight1}
y_{\zeta}\cdot y_{\zeta'}&=y_{\zeta+\zeta'}\\
\subeqn\label{eq:conjugate1}
  y_{\zeta}\cdot r(\acham,\acham')\cdot y_{-\zeta}&=r(\acham-\zeta,\acham'-\zeta) \\
\subeqn\label{eq:dot-commute}
\mu \cdot  r(\acham,\acham') &= r(\acham,\acham')\cdot \mu   \\
\subeqn\label{eq:weyl1}
  y_{\zeta}\cdot\mu\cdot y_{-\zeta}&=\mu+h \langle \zeta,\mu\rangle \\
\subeqn\label{eq:wall-cross1}
r(\acham,\acham') r(\acham'',\acham''')&=
\delta_{\acham',\acham''}\Phi(\acham,\acham',\acham''') r(\acham,\acham''')
\end{align*}{}
for arbitrary $\zeta,\zeta'\in \ft_{\Z},\mu\in \ft_{\tG}^*$ and generic cocharacters $\acham,\acham',\acham'',\acham''' $.
\end{proposition}
\begin{proof}
In this case, the closed points of $T((t))/T[[t]]$ are the set $t^{\zeta}$ for $\zeta\in \ft_{\Z}$, and so $\Hom_{\ab}(\acham',\acham)$ is the direct sum of the Borel-Moore homology of the preimages of these points.  Each point contributes a rank-one free module over $H^*_{T_{\tG}}(pt)=\Cth$, which is generated by the fundamental class $\yw_{\zeta} \cdot r(\acham+\zeta,\acham')$.  Thus, these classes give a basis as claimed.
  Equation \eqref{eq:coweight1} is just the fact that $t^{\zeta}t^{\zeta'}=t^{\zeta+\zeta'}.$  
  Equation \eqref{eq:conjugate1} is the fact that 
  \[t^{\zeta}Y(\acham,\acham')t^{-\zeta}=\{(e,v(t)) \in T((t))\times t^{\zeta}\Ueta_{\acham}\mid t^{-\zeta}v(t)\in
\Ueta_{\acham'}\}/T[[t]]=Y(\acham-\zeta,\acham'-\zeta),\]
using $t^{\zeta}\Ueta_{\acham}=\Ueta_{\acham-\zeta}$, which is the group-level form of $t^{\zeta}\acham t^{-\zeta}=\acham-\zeta$.
Equations \eqref{eq:dot-commute} and \eqref{eq:weyl1} both follow from fixing a representation of $T_{\tG}$, and comparing the Chern classes of the associated line bundles on $\EuScript{F}$ pulled back to ${}_{\acham}\dVB_{\acham'}$ with the trivial line bundle with the induced equivariant structure.  On $Y(\acham,\acham')$, this is obvious, since we are just considering the fiber over the identity coset in $\EuScript{F}$, which shows \eqref{eq:dot-commute}.  For  \eqref{eq:weyl1}, we must make a more careful comparison. The argument is the same as \cite[Lemma 3.20]{BFN}: the associated line bundle pulled back from $\EuScript{F}$ is $T$-equivariantly trivial on each component, but the loop $\Cx$ acts with weight $\langle \zeta,\mu\rangle $ on the component of $t^\zeta$.

Finally, the proof of \eqref{eq:wall-cross1} is the same as that of \cite[Sec. 4(i--iii)]{BFN}, although we need to correct a mistake in their statement: \cite[(4.7)]{BFN} should read
\[r^\la r^\mu =\prod_{i=1}^n A_i(\la,\mu)\cdot r^{\la+\mu}.\]  The product $\Phi(\acham,\acham',\acham'')$ is the product of the Euler classes of the associated bundles for $U_{\acham}\cap U_{\acham''}/(U_{\acham}\cap U_{\acham'}\cap U_{\acham''})$ (which corresponds to the factors satisfying (\ref{pmp})) and $(U_{\acham}+  U_{\acham'}+ U_{\acham''})/(U_{\acham}+ U_{\acham''})$ (which corresponds to the factors satisfying (\ref{mpm})). The former is spanned by the vectors of the form $t^{-n}v$ where $v$ is a weight vector of weight $\varphi_i$ that satisfies 
\[\varphi_i^+(\eta)-n\geq -1/2\qquad \varphi_i^+(\eta')-n\leq -1/2\qquad \varphi_i^+(\eta'')-n\geq -1/2\]
which are equivalent to (\ref{pmp}), and similarly the latter to \[\varphi_i^+(\eta)-n\leq -1/2\qquad \varphi_i^+(\eta')-n\geq -1/2\qquad \varphi_i^+(\eta'')-n\leq -1/2\]  which are equivalent to (\ref{mpm}).  Since the Euler class of the corresponding line bundle is $\varphi_i^+-nh$, the product of these Euler classes is $\Phi(\eta,\eta',\eta'')$.
\end{proof}
If we draw $r(\acham',\acham)$ as a straight line path in $\ft_{1,\tNo}$, and thus
compositions of these elements as piecewise linear paths, with the
unrolled arrangement drawn in, we can visualize the relation
(\ref{eq:wall-cross1}) as saying that when we remove two crossings of
the hyperplane $\varphi_i^{\operatorname{mid}}(\acham)=n$ from the path, we do so at
the cost of multiplying by $\varphi_i^+-nh$.  We can thus represent
elements of $\Hom_{\ab}(\acham,\acham)$ as paths that start at
$\acham$ and go to any other chamber of the form $\acham-\zeta$ (we
implicitly follow these with translation $y_\zeta$).  Composition of
two paths $p$ and $q$ is thus accomplished by translating $p$ so that its
start matches the end of $q$, and then straightening using the
relation (\ref{eq:wall-cross1}).

\begin{example}
  In our running example, let us fix $\acham=\second$.  We let $\xi_1,\xi_2$
  be the usual coordinate cocharacters of the diagonal $2\times 2$
  matrices.  The algebra
  $\EuScript{A}_{\ab}=\Hom_{\ab}(\second,\second)$ is generated over $\Cth$ by 
\[w_1= y_{\xi_1} \cdot r(\second+\xi_1,\second)\qquad w_2= y_{\xi_2} \cdot
r(\second+\xi_2,\second)\qquad z_1= y_{-\xi_1} \cdot r(\second-\xi_1,\second)\qquad z_2= y_{-\xi_2} \cdot
r(\second-\xi_2,\second)\] with the relations
\[[w_1,w_2]=[z_1,z_2]=[z_1,w_2]=[z_2,w_1]=0\]
\[ z_1w_1=\gamma_1(\gamma_1-2h)\qquad w_1z_1=(\gamma_1+h)(\gamma_1-h)\]
\[ z_2w_2=\gamma_2(\gamma_2-2h)\qquad
w_2z_2=(\gamma_2+h)(\gamma_2-h)\]
since 
\[\varphi_1^+=\gamma_1-h\qquad \varphi_2^+=\gamma_2-h\qquad \varphi_3^+=\gamma_1+h\qquad \varphi_4^+=\gamma_2+h.\]
\excise{In terms of our path description:
\[\tikz[very thick,scale=1.6]{
\draw (1.16,1.5)-- (1.16,-2.5) node[scale=.5, at start,above]{$\varphi_1=5/3$}; \draw (-1.16,1.5)--
(-1.16,-2.5) node[scale=.5, at end,below]{$\varphi_3=-2/3$};
\draw (1.5,1.16)-- (-2.5,1.16) node[scale=.5, at start,right]{$\varphi_2=5/3$}; \draw (1.5,-1.16)--
(-2.5,-1.16) node[scale=.5, at end,left]{$\varphi_4=-2/3$}; \draw (-.16,1.5)--
(-.16,-2.5) node[scale=.5, at end,below]{$\varphi_3=1/3$};\draw (1.5,-.16)--
(-2.5,-.16) node[scale=.5, at end,left]{$\varphi_4=1/3$}; 
\draw (0.16,1.5)-- (0.16,-2.5) node[scale=.5, at start,above]{$\varphi_1=2/3$}; \draw (.84,1.5)--
(.84,-2.5) node[scale=.5, at end,below]{$\varphi_3=4/3$};
\draw (1.5,.16)-- (-2.5,.16) node[scale=.5, at start,right]{$\varphi_2=2/3$}; \draw (1.5,.84)--
(-2.5,.84) node[scale=.5, at end,left]{$\varphi_4=4/3$}; 
\draw (-.84,1.5)-- (-.84,-2.5) node[scale=.5, at start,above]{$\varphi_1=-1/3$}; 
\draw (1.5,-.84)-- (-2.5,-.84) node[scale=.5, at start,right]{$\varphi_2=-1/3$}; 
\draw (-1.84,1.5)-- (-1.84,-2.5) node[scale=.5, at start,above]{$\varphi_1=-4/3$}; \draw (-2.16,1.5)--
(-2.16,-2.5) node[scale=.5, at end,below]{$\varphi_3=-5/3$};
\draw (1.5,-1.84)-- (-2.5,-1.84) node[scale=.5, at start,right]{$\varphi_2=-4/3$}; \draw (1.5,-2.16)--
(-2.5,-2.16) node[scale=.5, at end,left]{$\varphi_4=-5/3$}; 
\draw[dashed,->] (-.5,-.5) --(.4,-.5) node[ at end,right]{$w_2$};
\draw[dashed,->] (-.5,-.5) --(-.5,.4) node[ at end,above]{$w_1$};
\draw[dashed,->] (-.5,-.5) --(-1.4,-.5) node[ at end,left]{$z_2$};
\draw[dashed,->] (-.5,-.5) --(-.5,-1.4) node[ at end,below]{$z_1$};
}\]}
\end{example}

Now, we turn to generalizing this presentation to the nonabelian
case. Let $\hatD$ denote the set of real affine roots of $G$; that
is $\hatD={\rootsD} +\Z\delta$.  We can easily check that the relations
(\ref{eq:coweight1}--\ref{eq:weyl1}) hold in $\scrB$  for all elements
of the extended affine Weyl group:
\newseq\begin{align*}
\subeqn\label{eq:coweight2}
y_w\cdot y_{w'}&=y_{ww'}\\
\subeqn\label{eq:conjugate2}
  y_w r(\acham',\acham) y_w^{-1}&=r(w\cdot \acham',w\cdot\acham) \\
\subeqn\label{eq:weyl2}
  y_w \mu y_w^{-1}&=w\cdot \mu
\end{align*}
Finally, if $\al(\acham)=n$ for some finite root $\al$ but no other
weights or roots vanish, then we can
make this generic in two different ways: $\acham_\pm:=\acham\pm \epsilon
\al^\vee$ for some small $\epsilon>0$. Let $\Iwahori_{\pm}$ be the corresponding Iwahoris.  
These two Iwahoris together generate a parahoric $\Iwahori_\acham$; this is also generated by one of $\Iwahori_{\pm}$ and the root $SL_2$ for $\al-n\delta$.  
The identity coset $e\Iwahori_{\pm}$ in $G((t))/\Iwahori_{\pm}$ thus lies in an orbit $X$ of $\Iwahori_\acham$ which is isomorphic to $\mathbb{P}^1\cong \Iwahori_\acham/\Iwahori_{\pm}$.  Note that for $\ep\ll 1$, we have $U_\acham=U_{\acham_\pm }$. 
Since $U_{\acham}$ is invariant under $\acham$, the preimage of $X$ in ${}_{{\acham}}\dVB_{{\acham}}={}_{{\acham_{\pm}}}\VB_{\acham_{\mp}}$ is precisely $X\times U_{\acham}\cong \Iwahori_\acham\times^{\Iwahori_{\pm}}U_{\acham}$.  Let $ {u_\al(\acham)}=[\overline{
X\times U_{\acham}}]\in \Hom(\acham_\pm,\acham_\mp)$.

We also consider the BGG-Demazure operators for an affine root of the form $\al-n\delta$. Note that on $\ft_{1,\R}$, the equation $\al(\acham)=n$ is the vanishing set of $\al-n\delta$.  Since we consider a loop group that is not centrally extended but does include the loop action, we cannot see the difference between such affine roots, but we do distinguish the coroots.  Thus, we can define the affine reflections
\[s_{\al-n\delta}\cdot \mu=\mu-\al^\vee(\mu)(\al-nh) \qquad \partial_{\al-n\delta}(f)=\frac{s_{\al-n\delta}f-f}{\al-nh}.\]
\begin{theorem}\label{thm:BFN-pres}
  The morphisms in the extended BFN category are generated by
  \begin{enumerate}
  \item $\yw_w$ for $w\in \widehat{W}$, 
  \item $ {r(\acham,\acham')}$ for $\acham,\acham'\in \ft_{1,\tNo}$ generic,
  \item the polynomials in $\Cth$,
  \item $ {u_{\al'}(\acham)}=\Bpsi_{\al-n\delta}(\acham)$ for $\acham_\pm$ affine chambers adjacent across $\al'(\acham)=0$ for $\al'\in \hatD$ an affine root (i.e. $\al'=\al-n\delta$ for some finite root $\al$).
  \end{enumerate}
  This category has a faithful polynomial representation in which each object $\acham$ is assigned to $H_*^{BM,\wtG}(\VB_{\acham})\cong \Cth\cdot [\VB_{\acham}]$, and the generators above act by:
  \newseq \begin{align*}
\subeqn\label{eq:ract}  {r(\acham,\acham')}\cdot f [\VB_{\acham'}]&=\Phi(\acham,\acham') f\cdot [\VB_{\acham}]\\
\subeqn\label{eq:psiact} {u_{\al}}  \cdot f[\VB_{\acham_{\pm}}]&=\partial_{\al}(f)\cdot [\VB_{\acham_{\mp}}]\\ 
\subeqn\label{eq:wact}\yw_w\cdot f[\VB_{\acham}]&=(w\cdot f)[\VB_{w\cdot \acham}]\\
\subeqn\label{eq:muact} \mu \cdot f [\VB_{\acham}]&=\mu f\cdot [\VB_{\acham}]
  \end{align*}
The relations between these operators are given by
\begin{enumerate}[wide]
	\item the relations (\ref{eq:coweight1}--\ref{eq:weyl2}), which are the image of relations in the abelianized category
	\item the relations:
\newseq \begin{align*}
 \subeqn\label{eq:psi2}\Bpsi_{\al}^2&=0\\
 \subeqn\label{eq:psi}\underbrace{\Bpsi_{\al}\Bpsi_{s_\al \beta}\Bpsi_{s_\al s_{\beta}\al}\cdots}_{m_{\al\be}} &=\underbrace{\Bpsi_{\beta}\Bpsi_{s_{\beta}\al}\Bpsi_{s_{\beta}s_\al\beta}\cdots}_{m_{\al\be}}\\
\subeqn\label{eq:psiconjugate} y_w\Bpsi_{\al}y_{w^{-1}}&=\Bpsi_{w\cdot \al}\\
\subeqn\label{eq:psipoly} \Bpsi_{\al} \mu-(s_{\al}\cdot\mu)\Bpsi_{\al} &=r(\eta_{\mp},\eta_{\pm})\partial_{\al}(\mu) \end{align*} 
whenever these morphisms are well-defined
\item If we have a codimension 2 intersection of root and matter hyperplanes, then chambers around it will be as shown below:
   \begin{equation*}
      \begin{tikzpicture}[very thick]
        \draw[dotted] (-2,0) -- node [at
        start,left]{$\al$}(2,0); \draw (2,-1)-- node[below right, at
        start]{$H_{i_{m-1}}$} (-2,1); \draw (-2,-1)-- node[below left, at
        start]{$H_{i_1}$} (2,1); 
\draw (1,-2)-- node[below, at
        start]{$H_{i_{m-2}}$} (-1,2); \draw (-1,-2)-- node[below, at
        start]{$H_{i_2}$} (1,2); 
\node at (1.7,-.5)
        {${\acham'_+}$}; \node at (-1.7,-.5) {${\acham''_+}$};
\node at (1.7,.5)
        {${\acham'_-}$}; \node at (-1.7,.5){${\acham''_-}$};
\node at (1.3,-1.3)
        {${\acham}$}; 
\node at (-1.3,1.3)
        {${\acham'''}$}; 
\node at (0,-1.3){$\cdots$}; 
\node at (0,1.3){$\cdots$};
      \end{tikzpicture}
    \end{equation*} 
That is, we will have two pairs $\acham'_\pm$ and $\acham''_\pm$ which are opposite across $\al(\acham)=0$ and adjacent to the  intersection.  We consider a pair of chambers
$\acham,\acham'''$ which differ by a $180^\circ$ rotation around the
corresponding codimension 2 subspace.  We have the following relation:
\begin{multline*}
\subeqn\label{eq:triple}
r(\acham''',\acham'_-)\Bpsi_{\al} r(\acham'_+,\acham)
-r(\acham''',\acham''_-)\Bpsi_{\al}
r(\acham''_+,\acham)\\=\partial_\al\left(\Phi(\acham'_+,\acham) \cdot s_{\al}\Phi(\acham,\acham'_-)\right)
r(\acham''',s_\al\acham) s_\al.
\end{multline*}
\end{enumerate} 
\end{theorem}

As in the abelian case, we can represent morphisms in our category by
paths $\pi\colon [0,1]\to \ft_{1,\R}$ which are suitably generic. Let
$\beta_1,\dots, \beta_k$ be the list of affine roots ($\beta_i\in
\hatD$) whose hyperplanes the path $\pi$ crosses in order, and let
$\acham^{(i)}_\pm$ be generic points on $\pi$ on the positive and
negative side of the hyperplane for $\beta_i$.
\begin{definition}
 \label{def:r-tilde} Let $\acham=\pi(0)$
and $\acham'=\pi(1)$.  Consider the morphism
\begin{equation}\label{eq:tilde-r}
	\tilde{r}_\pi=r(\acham',\acham_\pm^{(k)})\Bpsi_{\beta_k}r(\acham_\mp^{(k)},\acham_\pm^{(k-1)})\Bpsi_{\beta_{k-1}}\cdots
\Bpsi_{\beta_{1}}r(\acham_\mp^{(1)},\acham).
\end{equation}
\end{definition}
\notation{$\tilde{r}_\pi,\tilde{r}(\acham',\acham)$}{The morphism associated to a path by \eqref{eq:tilde-r}.}

For a given $\acham',\acham$, choose a fixed path $\pi$ crossing a minimal
number of hyperplanes between them, and let
$\tilde{r}(\acham',\acham)=\tilde{r}_\pi$ for this path.  We can represent the relations of \cref{thm:BFN-pres} locally in terms of this path.  For example, \eqref{eq:dot-commute} and \eqref{eq:wall-cross1} can be represented as:
\begin{equation*}\subeqn \label{eq:graphical1Coulomb} \tikz[baseline,very thick,scale=1.2 ]{\draw (-1,0) --(1,0); 
\draw[dashed,->] (-1,-.5) to [out=0,in=180] (0,.5) to [out=0,in=180]
(1,-.5);}=\tikz[baseline,very thick,scale=1.2]{\draw (-1,0) --(1,0); 
\draw[dashed,->] (-1,-.5) to [out=0,in=180] node[at
end,fill=white,draw, thick,solid, rounded corners=3pt, scale=.8]{$\varphi_i^+-nh$}(0,-.5) to [out=0,in=180]
(1,-.5);}
\qquad  \tikz[baseline,very thick ]{\draw (-1,0) --(1,0); 
\draw[dashed,->] (-.9,1) to  node[pos=.25,fill=white,draw, thick,solid, rounded corners=3pt]{$f$}
(.9,-1);}=\tikz[baseline,very thick]{\draw (-1,0) --(1,0); 
\draw[dashed,->] (-.9,1) to node[pos=.75,fill=white,draw, thick,solid, rounded corners=3pt]{$f$}
(.9,-1);}\end{equation*}
while 
(\ref{eq:psi2}) and (\ref{eq:psipoly}), with the dotted line given by
the Coxeter hyperplane $\al=0$ for an affine root $\al\in \hatD$ become
\begin{equation*}\subeqn \label{eq:graphical2Coulomb}\tikz[baseline,very thick ]{\draw[dotted] (-1,0) --(1,0); 
\draw[dashed,->] (-1,-.5) to [out=0,in=180] (0,.4) to[out=0,in=180]
(1,-.5);}=0\qquad  \tikz[baseline,very thick,scale=1.5 ]{\draw[dotted]
(-1,0) --(1,0); \draw[dashed,->] (-1,-.8) to node[pos=.25, fill=white,draw, thick,solid, rounded corners=3pt,scale=.8]{$f$} 
(1,.8);}-\tikz[baseline,very thick ,scale=1.5 ]{\draw[dotted] (-1,0) --(1,0); \draw[dashed,->] (-1,-.8) to node[pos=.75, fill=white,draw, thick,solid, rounded corners=3pt,scale=.8]{$s_\al f$}
(1,.8);}= \tikz[baseline,very thick ,scale=1.5]{\draw[dotted] (-1,0) --(1,0); \draw[dashed,->] (-1,-.8) to [out=0,in=180] node[at end, fill=white,draw, thick,solid, rounded corners=3pt,scale=.8]{$\partial_{\al}(f)$} (0,-.6) to
[out=0,in=180]
(1,-.8); \draw[green!50!black, thick, loosely dashed,<->] (1.1,-.7)
to[out=70,in=-70] (1.1,.7)}\end{equation*}
where the green arrow at the right side of the last diagram indicates
applying $s_\al$. We will not carefully write out the graphical version
of (\ref{eq:triple}), but it is similar in structure to
(\ref{eq:graphical3}).

\begin{proof}[Proof of Theorem \ref{thm:BFN-pres}]
{\bf The formulas (\ref{eq:ract}--\ref{eq:muact}) define an action}:  The verification of the action is straightforward:
  \begin{enumerate}
  \item The action of the affine Weyl group is that induced on $\Cth$ by the conjugation action of the torus, which is the action we defined before.  
  \item The action of $ {r(\acham,\acham')}$ is multiplication by the Euler class of $U_{\eta}/(U_{\eta'}\cap U_{\eta})$ (as in the proof of \cite[Thm 4.1]{BFN}).  That Euler class is the product of the weights of this space, which is exactly $\Phi(\eta,\eta')$.
  \item This holds essentially by definition.
  \item The effect of $u_{\alpha}$ is the same as the pushforward $\VB_{\acham_{\pm}}\to G((t))\times^{\Iwahori_{\acham}}U_{\acham}$, which is a bundle with fiber $\mathbb{P}^1=\Iwahori_{\acham}/\Iwahori_{\acham_{\pm}}$ followed by pullback to $ \VB_{\acham_{\mp}}$.  Thus, the calculation reduces to a standard calculation for $GL_2$-equivariant integration over $\mathbb{P}^1$.  See, for example, \cite[Prop. 2.23]{varagnoloCanonicalBases2011} or \cite[Prop. 3.4]{SWschur}.
  \end{enumerate}
{\bf The action is faithful}:  We have a map
\[\zeta\colon H_*^{BM,\tilde{T}}({}_{\acham}\VB_{\acham'})\to H_*^{BM,\tilde{T}}(\VB_{\acham'})\cong H_*^{BM,\tilde{T}}(\AF)\] analogous to the map $\mathbf{z}^*$ defined in \cite[\S 5(iv)]{BFN}.  This is an algebra homomorphism by the argument of \cite[Lem. 5.11]{BFN}: 
the pushforward map \[\oplus_{\acham'}H_*^{BM,\tilde{T}}({}_{\acham}\VB_{\acham'})\to \oplus_{\acham'}H_*^{BM,\tilde{T}}(\VB_{\acham'})\] is a map of right modules, and sends $1\mapsto [e\Iwahori\times U_{\acham}]$, which in turn maps to $1=[e\Iwahori]\in H_*^{BM,\tilde{T}}(\AF)$.  Thus, \[\zeta(ab)=\zeta(1\cdot a\cdot b)=\zeta(1)\zeta(a)\zeta(b)=\zeta(a)\zeta(b).\] 
The same argument shows that this map intertwines the action of (\ref{eq:ract}--\ref{eq:muact}) with that of $H_*^{BM,\tilde{T}}(\AF)$ on $H_*^{BM,\tilde{T}}(G((t)))\cong \Cth$.  

This latter action is the polynomial representation of the degenerate affine Hecke algebra, which is faithful; we can also see this by localization to $T$-fixed points.  Thus, it suffices to check that $\zeta$ is injective. As in \cref{lem:Stein-facts}(5), this follows because ${}_{\acham}\VB_{\acham'}$ is equivariantly formal for $\tilde{T}$ and $({}_{\acham}\VB_{\acham'})^{\tilde{T}}=G\times^{\Iwahori}V^T =\VB_{\acham'}^{\tilde{T}}$; the analogous result for Coulomb branches is \cite[Lem. 5.13]{BFN}.

{\bf The relations (\ref{eq:coweight1}--\ref{eq:triple}) hold:}  Given the representation and its faithfulness, the reader can readily verify that the relations (\ref{eq:coweight1}--\ref{eq:triple}) are satisfied.   The most interesting of these relations is (\ref{eq:triple}), so let us verify this relation in more detail.  The action of the LHS in the polynomial rep on a polynomial $f$ is:
  \begin{multline}\label{eq:LHS}
  \Phi(\acham''',\acham'_\pm)\partial_{\al} (\Phi(\acham'_\mp,\acham)f) -\Phi(\acham''',\acham''_\pm)\partial_{\al}(\Phi(\acham''_\mp,\acham)f)\\
=\Phi(\acham''',\acham'_\pm)\Phi(\acham'_\mp,\acham)\partial_{\al} (f)+\Phi(\acham''',\acham'_\pm)\partial_{\al}(\Phi(\acham'_\mp,\acham))f^{s_\al}\\-\Phi(\acham''',\acham''_\pm)\Phi(\acham''_\mp,\acham)\partial_{\al}(f)-\Phi(\acham''',\acham''_\pm)\partial_{\al}(\Phi(\acham''_\mp,\acham))f^{s_\al}.
  \end{multline}
Since $\acham'$ and $\acham''$ are both on the minimal-length paths,
neither is separated from both $\acham'''$ and $\acham$ by any
given unrolled hyperplane.  Thus, we have that 
\[\Phi(\acham''',\acham'_\pm)\Phi(\acham'_\mp,\acham) =\Phi(\acham''',\acham''_\pm)\Phi(\acham''_\mp,\acham) =\Phi(\acham''',\acham).\]
It follows that the first positive and negative terms in \eqref{eq:LHS} cancel, and we
obtain the RHS of (\ref{eq:triple}).  This confirms the relation.

{\bf The relations (\ref{eq:coweight1}--\ref{eq:triple}) are complete}: Now, we need to show that no other relations are needed to present the category.  Using the action of the elements of $\widehat{W}$, we can reduce to the case where $\acham$ and $\acham'$ are in the same alcove.  The space $\Hom(\acham,\acham')$ has a filtration by the length of the relative position of the two affine flags.  Let $\Hom^{\leq w}(\acham,\acham')$ be the homology classes supported on the pairs of relative distance $\leq w$.  By basic algebraic topology, $\Hom^{\leq w}(\acham,\acham')/\Hom^{< w}(\acham,\acham')$ is a free module of rank $1$ over $\Cth$, since this space is isomorphic to the $\Iwahori$-equivariant Borel-Moore homology of an (infinite-dimensional) affine space.

We will prove that 
\begin{itemize}
\item [$(*)$] the $\Cth$-module  $\Hom^{\leq w}(\acham,\acham')/\Hom^{< w}(\acham,\acham')$ is generated by the
  element $\tilde{r}(\acham',w\acham)w$.  
\end{itemize}
The element $\tilde{r}(\acham',w\acham)w$ is the pushforward of the
fundamental class by the map
\begin{multline*}
  Y(\acham',\acham_\pm^{(k)})\times^{\Iwahori_{\acham_\pm^{(k)}}}\Iwahori_{\acham^{(k)}}\times^{\Iwahori_{\acham_\mp^{(k)}}}Y(\acham_\mp^{(k)},\acham_\pm^{(k-1)})\times^{\Iwahori_{\acham_\pm^{(k-1)}}}\Iwahori_{\acham^{(k-1)}}\times^{\Iwahori_{\acham_\mp^{(k-1)}}}\\
\cdots
  \times^{\Iwahori_{\acham_\pm^{(1)}}}\Iwahori_{\acham^{(1)}}\times^{\Iwahori_{\acham_\mp^{(1)}}}Y(\acham_\mp^{(1)},\acham)\to
  \VB_{\acham'}\times_{V((t))}\VB_{\acham}.
\end{multline*}
This map is an isomorphism on the set of affine flags of relative position $w$, since the map \[\Iwahori_{\acham_\pm^{(k)}}\Iwahori_{\acham_\pm^{(k-1)}}\times^{\Iwahori_{\acham_\pm^{(k-1)}}}\Iwahori_{\acham_\pm^{(k-1)}}\Iwahori_{\acham_\pm^{(k-2)}}\times^{\Iwahori_{\acham_\pm^{(k-2)}}}\cdots \times^{\Iwahori_{\acham_\pm^{(2)}}} \Iwahori_{\acham_\pm^{(2)}}\Iwahori_{\acham_\pm^{(1)}}\to \Iwahori_{\acham_\pm^{(k)}}\Iwahori_{\acham_\pm^{(1)}}\] is an isomorphism by standard results in Bruhat theory.    
Thus, the elements $\tilde{r}(\acham',w\acham)w$ give a free basis of the associated graded for this filtration.  This implies that they are a basis of the original module; in particular, this implies that the elements (1-4) above are generators.

On the other hand, we can also easily show that the relations displayed are enough to bring any element into the form of a sum of elements $\tilde{r}(\acham',w\acham)w$. We can pull all elements of the Weyl group to the right using (\ref{eq:conjugate2}, \ref{eq:weyl2}, \ref{eq:psiconjugate}), all elements of $\Cth$ to the right using the relation (\ref{eq:psipoly}), and rewrite any crossing of a Coxeter hyperplane by $r(-,-)$ using the relation $r(\acham_\pm,\acham_\mp)=s_{\al}-\al\Bpsi_{\al}$, which can be verified using the polynomial representation.  This shows that these relations suffice, since there can be no further relations within our basis.
\end{proof}

Note that in the course of this proof, we have shown that:
\begin{corollary}\label{cor:Coulomb-basis}
  The elements $\tilde{r}(\acham',w\acham)w$ for $w\in \widehat{W}$ are
  a basis of $\Hom_{\mathscr{B}}(\acham,\acham')$ as a right module
  (or left module) over the ring $\Cth$.  
\end{corollary}

We can extend this to the case where $\acham$ and $\acham'$ are not
generic but merely unexceptional.  First, consider the case where
$\acham$ and $\acham'$ are not separated by any matter hyperplanes and
lie in closed faces of the Coxeter complex with non-trivial
intersection.  In this case, $U_{\acham}=U_{\acham'}$ and the
parahorics $\Iwahori_{\acham}$ and $\Iwahori_{\acham'}$ generate a
larger parahoric $\Iwahori$ (corresponding to their intersection as a face in the
Coxeter complex).  In this case, we let
\[\tilde{r}(\acham',\acham)=\left[\overline{\big\{\big((g \Iwahori_{\acham'},u), (g\Iwahori_{\acham},u)\big)\in \VB_{\acham'}\times \VB_{\acham} \mid g\in G((t)), u\in U_{\acham}=U_{\acham'}\big\}}\right].\]
Note that it is a standard result of Schubert calculus that if
$\acham$ and $\acham'$ are generic, this agrees with Definition
\ref{def:r-tilde}.  With this formula in hand, we can rewrite Definition
\ref{def:r-tilde} as
\begin{equation}
\tilde{r}(\acham',\acham)=\tilde{r}(\acham',\acham_p)
  \tilde{r}(\acham_p,\acham_{p-1}) \cdots
  \tilde{r}(\acham_1,\acham)\label{eq:r-tilde-compose}
\end{equation}
where $\acham_i=t_i\acham + (1-t_i)\acham'$ for $0<t_1<\cdots <t_p<1$ real numbers such
that any consecutive pair $\acham_i$ and $\acham_{i+1}$ are either in
the same alcove or are not separated by any matter hyperplanes and
lie in closed faces of the Coxeter complex with non-trivial
intersection.  Equation \eqref{eq:r-tilde-compose} defines
$\tilde{r}(\acham',\acham)$ for every unexceptional pair and is
easily extended to any piecewise linear path with unexceptional
vertices.

Unlike the generic case, the morphisms $\tilde{r}(\acham',w\acham)w$
are not distinct, but only depend on the double coset of $w$ with
respect to the stabilizers $\widehat{W}_{\acham}$ and
$\widehat{W}_{\acham'}$ of $\acham$ and $\acham'$ in $\widehat{W}$.
Furthermore, in order to obtain a basis, it's not enough to multiply
on the left or right with polynomials (since we can only multiply by
the invariants $S_h^{\widehat{W}_{\acham}}$ and $S_h^{\widehat{W}_{\acham'}}$);
instead we have to ``dress'' these morphisms with polynomials forming
a basis of $S_h^{\widehat{W}_{\acham'}\cap
  w^{-1}\widehat{W}_{\acham}w}$ by inserting these at a generic point
in the path.  The resulting morphism depends on the point of insertion.

In the case where $\acham=\acham'=\zero$, we are thus taking double cosets
for the finite Weyl group inside the affine, so each double coset
contains a unique translation by a (anti)dominant coweight.  Thus, the
basis obtained is the basis of dressed monopole operators discussed in \cite{cremonesiCoulombBranch2014}:
 \[\mathbbm{m}_{\la}(f)=y_{-\la}\tilde{r}(\zero-\la, \zero-\la\epsilon) f \tilde{r}(\zero-\la \epsilon,\zero).\]   As mentioned before, this is not a truly canonical definition, since we could choose other insertion points for $f$.
 We thus obtain that:
 \begin{proposition}\label{prop:dressed-basis}
As $\la$ runs over integral dominant coweights, and $f$ over any fixed basis of $S_h^{W_\la}$, the dressed monopoles $\mathbbm{m}_{\la}(f)$ give a basis of $\Asph$.
 \end{proposition}
While the existence of such a basis is clear from the degeneration of \cite[\S 6]{BFN}, we could not find an explicit construction of one for a general Coulomb branch.

\excise{ In the case when $\acham$ is not
unexceptional, the subgroup $\Iwahori_\acham$ is a parahoric larger than a
Iwahori.  A straight line path $\pi$ from $\acham_0$ to $\acham_1$ has
generic value at all but finitely many points, and thus in particular
at $\acham_0'=\pi(\epsilon)$ and $\acham_1'= \pi(1-\epsilon)$ for
$\epsilon$ a small real number, satisfying
$U_{\acham_0}=U_{\acham_0'}$ and $U_{\acham_1}=U_{\acham_1'}$. We let
$\tilde{r}(\acham_0',\acham_0)$ be the graph of the map
$\VB_{\acham_0'}\to \VB_{\acham_0}$ induced by the inclusion
$\Iwahori_{\acham_0'}\to \Iwahori_{\acham_0}$, and similarly with
$\tilde{r}(\acham_1,\acham_1')$ with the factors reversed.  We then
let
\[\tilde{r}(\acham_1,\acham_0)=\tilde{r}(\acham_1,\acham_1') \tilde{r}(\acham_1',\acham_0')\tilde{r}(\acham_0',\acham_0).\]
It is slightly complicated to use this to construct a basis over the
field $\K$; we omit this here.}

\subsection{The deformed category and Hamiltonian reduction}
\label{sec:deformed-category}

Having fixed $\gaugeG$ and $\matterV$, we have defined a larger group $\No$ acting on $V$, and we can also consider the Coulomb branch for this larger group.  It is more convenient to consider the subgroup $\To\subset \No$; note that the tori $T_{\No}=T_{\To}$ coincide by definition.

Consider the affine Grassmannian of $\To$; this has a natural map to the affine Grassmannian of $T_F$, which we identify with the coweight lattice $X(F)\cong \pi_1(F)$.  Alternatively, we can think of this as induced by the tropicalization homomorphism $\mathsf{q}\colon \To((t))\to T_F((t))\to X(F)$.
If $T_F=\mathbb{G}_m$, then the second map is the homomorphism sending a Laurent power series to its order of vanishing at $t=0$.

\notation{$\EuScript{A}_{\To}$}{The quantum Coulomb branch of the
action of $\To$ on  $V$.}
Let $\EuScript{A}_{\To}$ denote the quantum Coulomb branch of the
action of $\To$ on  $V$.  Since the affine Grassmannian is the
disjoint union of the preimages of the points in $X(F)$, we obtain a
grading of $\EuScript{A}_{\To}$ as a vector space; we can equivalently
think of this as an action of the Pontryagin dual $T_F^\vee\cong
\Hom(X(F),\Cx)$.  The algebra $\EuScript{A}_{\To}$ contains a copy
of $S^{\To}_h\cong H^*_{T_{\tilde{\To}}}(pt)$, the equivariant cohomology of a point for $T_{\tilde{\To}}\cong T_{\To}\times \Cx$.  
\begin{lemma}[\mbox{\cite[3(vii)(d)]{BFN}}]\label{lem:q-mm}
The action of $T_F^\vee$ on $\EuScript{A}_{\To}$ is by algebra
automorphisms, and after specializing $h=1$ has a non-commutative moment map induced by the
natural map $U(\ft_F^\vee)\cong \C[\ft_F]\hookrightarrow
S^{\To}_1\cong \C[\ft_{\To}]\to \EuScript{A}_{\To}$.
\end{lemma}
Note that the isomorphism $S^{\To}_1\cong \C[\ft_{\To}]$ depends on a choice of splitting $T_{\tilde{\To}}\cong T_{\To}\times \Cx$.
This paper and \cite{BFN} make different choices; we let $\Cx$ act on
$V$ trivially, and in \cite{BFN}, it acts with weight
$-\nicefrac{1}2$.  However, these different choices all lead to non-commutative moment maps for the same action.

Applying quantum Hamiltonian reduction gives us a standard package of
a deformation and collection of bimodules attached to it.  If we
consider the invariants $\EuScript{A}_{\To}^{T_F^\vee}$, then the
image of $U(\ft_F^\vee)$ is central in these invariants, and we obtain
a family over the base $\ft_F$ which is essentially
$\EuScript{A}$ with its flavor parameters considered as formal
elements of the base ring $U(\ft_F^\vee)$.

The additional ingredient we obtain from this approach is that for
each character $\chi\colon T_F^\vee\to \mathbb{G}_m$, the
semi-invariants of $\EuScript{A}_{\To}^{\chi}$ transforming according
to this character form a bimodule over $
\EuScript{A}_{\To}^{T_F^\vee}$.  While $U(\ft_F^\vee)$ is central in
$\EuScript{A}_{\To}^{T_F^\vee}$, the non-commutative moment map
condition guarantees that the left and right actions on
$\EuScript{A}_{\To}^{\chi}$ differ by a shift by $h\chi$.  Thus, if we
specialize our flavor parameters to scalars, we will obtain a bimodule
relating two different choices of flavor.  We will show below that in the setting where it makes sense to compare them, these agree with bimodules that have played
an important role in the representation theory of symplectic
singularities, and induce {\bf twisting} or {\bf wall-crossing}
functors, as discussed in \cite[\S 6]{BLPWquant}.

We can easily extend all of these structures to the category
$\scrB$.  
\notation{$\Bdefr$}{The deformed extended BFN category (\cref{def:Bdefr}).}
\begin{definition}\label{def:Bdefr}
  The {\bf deformed extended BFN category}  $\Bdefr$ is the category with the
  same objects as $\mathscr{B}$ and 
  \begin{equation*}
\Hom(\acham,\acham')=H_*^{BM, \To((t))\times \Cx}(\VB_{\acham}\times_{V((t))}\VB_{\acham'}) \cong H_*^{BM,T_{\tilde{\To}}}({}_{\acham}\dVB_{\acham'}).
  \end{equation*}
\end{definition}
The results above, such as Theorem \ref{thm:BFN-pres}, carry over in a
straightforward way to this category.  The only difference is that we
must interpret the products of weights $\Phi(-,-)$ as weights of
$ T_{\tilde{\To}}$, and rather than an action of $\Cth$, we have one of
$S^{\To}_h$.  

The category $\mathscr{B}^{\To}$ has a natural action of
the torus $(T_F)^\vee$ on the morphisms between any two objects
with $ {r(-,-)}$, $\K[\tilde{\ft}_{\To}]$, and $\widehat{W}$ having weight 0, and translations by elements of $\ft_{\Z,\To}$ having the obvious action.
The classes of weight $\chi$ under this action (which is a coweight of $F$) correspond to homology classes concentrated on the components of the affine flag variety whose
corresponding loop has a homotopy class mapping to $\chi$ under the map
$\pi_1(\To)\to X(F)$.  Furthermore, this action is induced by the same
non-commutative moment map induced by the inclusion of $S^{F}_h$
into the endomorphism ring of each object.

We can construct a faithful functor $\Bdefr\to \mathscr{B}^{\To}$ to the extended BFN category for
the group $\To$ acting on $V$.  It is the subcategory where we only
consider cocharacters in $\ft_{1,\R}$ as objects and only allow ourselves
to use $\ft_\Z$ in the extended affine Weyl group, rather than all of
$(\ft_{\To})_\Z$. These are precisely the morphisms that are invariant under $T_F^\vee$.  Thus:
\begin{lemma}
  The deformed extended BFN category $\Bdefr$ is equivalent to the subcategory of
  $\mathscr{B}^{\To}$ equipped with only $T_F^\vee$-invariant
  morphisms.
\end{lemma}
Of course, if we instead fix $\chi\in X(F)$ and look only at
morphisms with this weight, we obtain a $\mathscr{B}^{\defr}\operatorname{-}\mathscr{B}^{\defr}$
bimodule, which we denote $\mathscr{T}(\chi)$.  

As we discussed in the case of a single algebra, this bimodule has
different left and right actions of $S^F_h$, and thus if we impose a
single flavor, we will arrive at a $\mathscr{B}_{\flav+\chi}\operatorname{-}\mathscr{B}_{\flav}$-bimodule for extended categories associated to flavors whose difference is $\chi$.  
\begin{definition}\label{def:T}
Let ${}_{\flav+\chi}\scrT{}_{\flav}$ be the  $\mathscr{B}_{\flav+\chi}\operatorname{-}\mathscr{B}_{\flav}$ bimodule which is the quotient of $\mathscr{T}(\chi)$ by $I(\ft_{1,\R})$ acting on the right or $I(\ft_{1,\R}+\chi)$ acting on the left.  

Let ${}_{\flav+\chi}\Twist_{\flav}={}_{\flav+\chi}\mathscr{T}{}_{\flav}(\zero,\zero)$ be the corresponding bimodule over $\EuScript{A}_{\flav+\chi}$ and $\EuScript{A}_{\flav}$.  
\end{definition}
\notation{${}_{\flav+\chi}\Twist_{\flav}$}{The twisting bimodule (\cref{def:T}).}

As discussed above, a notion of twisting bimodules is defined in \cite[Def. 6.21]{BLPWquant}.  The framework of that paper (conical symplectic resolutions) does not apply to all Coulomb branches.  The usual homological grading makes $\C[\fM]$ into a graded algebra; the group $\bS$ introduced in \cref{sec:deformed-category} acts on $\fM$ according to this grading. Since the equivariant parameter $h$ has degree 2, this means that the induced Poisson structure has degree $-2$ in this grading.  However,  many Coulomb branches are not conical with respect to this grading (there are non-constant functions of non-positive degree).  In physics, theories where all non-constant homogeneous functions on $\fM$ have positive degree are called {\it good} and those where this property fails {\it bad} or {\it ugly}.  Similarly, many Coulomb branches do not possess a symplectic resolution, though as discussed earlier, they are now known to have symplectic singularities by \cite{bellamyCoulombBranches2023}.

In \cite[Prop. 3.25]{BFN}, Braverman-Finkelberg-Nakajima define a partial resolution of $\fM$ as a GIT quotient, using the cocharacter $\gamma \colon \Cx\to T_F$ as stability parameter.  If $\fM^{\gamma}$ is a symplectic resolution of singularities, then we call it a BFN resolution of $\fM$.     
\begin{lemma}\label{lem:bimodule-match} Assume that the homological grading on  $\C[\fM]$ is conical (i.e. it corresponds to a good theory) and $\fM$ has a BFN resolution.  
	The bimodule ${}_{\flav+\chi}\Twist_{\flav}$ is isomorphic to the bimodule denoted by the same symbol in \cite[Def. 6.21]{BLPWquant}.
\end{lemma}
\begin{proof}
	Both of these bimodules ${}_{\flav+\chi}T_{\flav}$, as defined above or in \cite{BLPWquant} are specializations of the quantization of a deformation of the Coulomb branch $\Coulomb_C$.  The only difference is that in \cite{BLPWquant}, we assumed that we were using the universal deformation of this singularity (and that this deformation was generically smooth), and in this paper we use the universal flavor deformation $\fM^Q/T_F^\vee$.  This deformation has the property of being smooth and universal in many interesting cases, but we cannot assume it is in general.   
	
   By \cite[Thm. 5.1]{namikawaPoissonDeformations2011}, the functor of deforming $\fM$ as a Poisson variety is unobstructed.  As discussed in \cite[\S 5.4]{namikawaPoissonDeformations2011}, the conical structure shows that $\fM$ has a universal Poisson deformation $\mathscr{M}$ over a polynomial ring $\C[y_1,\dots, y_r]$ ($B$ in the notation of \cite[\S 5]{namikawaPoissonDeformations2011}), and the flavor deformation over the base $\ft_{F}$ induced by a ring homomorphism $\C[y_1,\dots, y_r]\to U(\ft_F^\vee)$.  From deformation theory, we only obtain this result after completion, but we can obtain varieties of finite type over $\K$ by passing to spaces of $\bS$-finite vectors.  
      
  As above, we consider the family $\tilde{\mathscr{M}}$ defined over the base $\ft_F$ as the GIT quotient with respect to $\gamma$  of the Coulomb branch $\fM^Q$ by the action of $T_F^\vee$, with the map to $\ft_F$ being the residual moment map.  
  By assumption, there is a cocharacter $\gamma$ such that this is a resolution of $\mathscr{M}$.  The fiber over 0 in this family is the symplectic resolution $\tilde{\fM}$ of $\fM$.  
  The cocharacter $\chi$, thought of as a character of $T_F^\vee$, thus defines an associated line bundle $\mathscr{L}_{\chi}$ on $\mathscr{\tilde{M}}$ and its restriction $\mathcal{L}_{\chi}$ to $\tilde{\fM}$.  
  We can consider the line $\mathbb{A}^1\subset \ft_F$ spanned by $\gamma$, and as in \cite{BLPWgco}, it is useful to consider the 1-dimensional family $\mathscr{\tilde M}_{\gamma}$ inside $\mathscr{\tilde M}$ defined over this line, with the pullback line bundle $\mathscr{L}_{\chi}^{(\gamma)}$.  We can also construct this family by reducing the Coulomb branch $\fM^{\tG}$ for $(\tG,V)$, where we use $\flav_0=\gamma$.   By \cite[Lemma 2.5]{KalDEQ}, all fibers of this family other than $\tilde{\fM}$ are smooth and affine.

In the proof of \cite[Prop. 6.26]{BLPWquant}, we show that the bimodule defined in \cite[Def. 6.21]{BLPWquant} can also be constructed by considering the $\bS$-invariant sections of the unique quantization of the line bundle $\mathscr{L}_{\chi}^{(\gamma)}$ (constructed in \cite[Prop. 5.2]{BLPWquant}), and specializing the deformation parameters as we do in Definition \ref{def:T}.  
On the other hand, we can construct this quantized line bundle by considering the quantization of the structure sheaf on the Coulomb branch $\fM^{\tG}$, pushing forward its restriction to the stable locus to $\mathscr{\tilde M}_{\gamma}$ and taking semi-invariants for $\gamma$.  This defines a map from ${}_{\flav+\chi}\Twist_{\flav}$ to the sections of this quantized line bundle; after passing to associated graded, it becomes the natural map from semi-invariants in $\K[\fM^{\tG}]$ to $\Gamma(\mathscr{\tilde M}_{\gamma},\mathscr{L}_{\chi}^{(\gamma)})$.

Thus, to complete the proof, we need only establish that this map is an isomorphism.  The issue is that $\Gamma(\mathscr{\tilde M}_{\gamma},\mathscr{L}_{\chi}^{(\gamma)})$ is equal to the semi-invariants of the function ring of the semi-stable locus, which in general might be larger than $\K[\fM^{\tG}]$.  Note that the unstable locus in $\fM^{\tG}$ must lie only in the fiber over $0\in \mathbb{A}^1$ (since the other fibers are smooth and affine) and is a proper subvariety in this fiber.  Thus, it has codimension $\geq 2$ in $\fM^{\tG}$.  Since $\fM^{\tG}$ is normal by \cite[Prop. 6.12]{BFN}, this means that the functions on the semi-stable locus are just $\K[\fM^{\tG}]$.  This establishes the desired isomorphism.
\end{proof}

\begin{remark}\label{rmk:Kirwan}
	The reader will note that we used the universal deformation of the Coulomb branch and will likely have wondered how this relates to the deformation arising from $\Bdefr$.  This will happen when a version of Kirwan surjectivity holds: the natural map $\ft_F\to H^2(\tilde{\fM};\C)$ is surjective.  In the cases of greatest interest to us, such as hypertoric varieties and affine type A quiver varieties, this map is indeed surjective.  Thus, these deformations coincide, but it is unclear how generally this will hold. 
\end{remark}

\subsection{Representation theory}
\label{sec:repr-theory}

Throughout this section, we specialize $h=1$; we let
$\Cft=\Cth/(h-1)\cong \K[\ft_{1,\K}]$.  Furthermore, we assume that $\K$ has characteristic 0.
\begin{definition}
  We call a finitely generated $\mathscr{B}$-module $M$ (resp. $\EuScript{A}$-module $N$) a {\bf weight module} if for every $\acham$, the  $\Cft$-module  $M(\acham)$ (resp. $N$) is locally finite with finite-dimensional generalized weight spaces.
\end{definition}
Obviously, if $M$ is a weight module, then $N=M(\zero)$ is as well.  The left adjoint functor $ \mathscr{B}\otimes_{\mathscr{A}}-$ also sends weight modules to weight modules, since the adjoint action of $\Cft$ on $\Hom(\acham,\zero)$ is semi-simple with eigenspaces finitely generated over $\Cft$.

For each $\upsilon\in \ft_{1,\K},\acham\in \ft_{1,\R}$, we can consider the functor
$\Wei_{\upsilon,\acham}\colon \mathscr{B}\mmod_W\to \K\mmod$ defined by
\begin{equation}
  \label{eq:W-def}
  \Wei_{\upsilon,\acham}(M)=\{m\in M(\acham)\mid \mathfrak{m}_{\upsilon}^Nm=0 \text{ for } N\gg 0\},
\end{equation}
\notation{$\Wei_{\upsilon,\acham}$}{The weight functor \eqref{eq:W-def}.}
where $\mathfrak{m}_{\upsilon}$ for $\upsilon\in \ft_{1,\K}$ is the corresponding
maximal ideal in $\Cft$.  
These functors are exact and prorepresentable. If we let
$\EuScript{A}_{\acham}:=\Hom(\acham,\acham)$, then they are represented
by the projective
limit \[P_{\upsilon,\acham}:=\varprojlim\mathscr{B}\otimes_{\EuScript{A}_{\acham}}\left(\EuScript{A}_{\acham}\big/\EuScript{A}_{\acham}\mathfrak{m}_{\upsilon}^N\right)\]
as $N\to \infty$. Note that this is now a module which carries a topology compatible with the action of $\EuScript{A}_{\acham}$.

Thus, as in
\cite[Prop. 3.5.6]{MVdB}, we can present the category of weight modules as the set of modules, continuous in the discrete topology,
over the topological algebra $\End(\oplus P_{\upsilon,\acham})$ of continuous module endomorphisms of this sum.  Since this is a highly infinite sum, it is perhaps more convenient to package these Hom spaces into the morphisms of a category:
\notation{$\scrBhat$}{The completed category of weight modules over $\mathscr{B}$.}
\begin{definition}
  Let $\scrBhat$ be the category whose objects are the
  set $\EuScript{J}$ of pairs of generic $\acham\in \ft_{1,\R}$ and any
  $\upsilon\in \ft_{1,\K}$, with morphisms defined by
  \begin{multline*}
\Hom_{\widehat{\mathscr{B}}}((\acham',\upsilon'),(\acham,\upsilon))=\Hom(P_{\upsilon',\acham'},P_{\upsilon,\acham})\\=\varprojlim
  \Hom_{{\mathscr{B}}}(\acham',\acham)/(\mathfrak{m}_{\upsilon}^N
  \Hom_{{\mathscr{B}}}(\acham',\acham)+\Hom_{{\mathscr{B}}}(\acham',\acham)\mathfrak{m}_{\upsilon'}^N).      
  \end{multline*}
We let $\scrBhatup_{\upsilon'}$ be the subcategory where we only
allow objects $(\eta,\upsilon)$ with $\eta\in \fttau, \upsilon\in \upsilon'+\ft_\Z$. 
\end{definition}
We let $\Cft^\upsilon=\varprojlim \Cft/\mathfrak{m}_\upsilon^{N}$; this ring naturally acts by endomorphisms on $(\acham,\upsilon)$ in $\scrBhat$ for each $\acham$.

\begin{lemma}
  The morphism space $\Hom_{\widehat{\mathscr{B}}}\big((\acham',\upsilon'),(\acham,\upsilon)\big)$ is freely spanned as a left module over $S_1^\upsilon$ by $w\cdot  {\tilde{r}(w^{-1}\acham,\acham')}$ for $w\in \widehat{W}$ satisfying $w\cdot
\upsilon'=\upsilon$.
\end{lemma}
In particular, if
  $\upsilon\notin \widehat{W}\cdot \upsilon'$, then $\Hom_{\widehat{\mathscr{B}}}\big((\acham',\upsilon'),(\acham,\upsilon)\big)=0$.
  \begin{proof}
  Consider the quotient $\Hom_{{\mathscr{B}}}(\acham',\acham)/\mathfrak{m}_{\upsilon}^N
  \Hom_{{\mathscr{B}}}(\acham',\acham)$; since under the left action of $S_1$, the identity of $\acham$ has weight $\upsilon$, an element $f\in \Hom(\acham', \acham)$ such that
$\mathfrak{m}_{\upsilon}f\supset f\mathfrak{m}_{\upsilon'}^N$ for some $N$ maps to an element killed by the right action of $\mathfrak{m}_{\upsilon'}^N$.

Now, note that the space of morphisms in $\mathscr{B}$ has a filtration by the number of root hyperplanes crossed by the corresponding path (and elements of $\widehat{W}$ have filtration degree 0).  This is closely related to the filtration of \cite[\S 6(i)]{BFN}, though it is slightly different, since it corresponds to $\Iwahori$-orbits instead of $G[[t]]$ orbits.  Note that we have $ w{\tilde{r}(w^{-1}\acham,\acham')}f=w(f)\cdot w{\tilde{r}(w^{-1}\acham,\acham')}$ modulo lower order terms in this filtration for $f\in \Symt_1$.  Thus, we have that if $w\cdot
\upsilon'\neq\upsilon$, then the projection of $w{\tilde{r}(w^{-1}\acham,\acham')}$ to  $\Hom_{{\mathscr{B}}}(\acham',\acham)/\mathfrak{m}_{\upsilon}^N
  \Hom_{{\mathscr{B}}}(\acham',\acham)$ is in the span of lower order terms in the filtration.  This shows that the elements with $w\cdot
\upsilon'=\upsilon$ must span.

We can see that this span is free by considering the action on the completion of the faithful polynomial representation defined by \crefrange{eq:ract}{eq:muact}.  The action of $w{\tilde{r}(w^{-1}\acham,\acham')}$ gives a map $\Cft^\upsilon\to \Cft^{\upsilon'}$ which is a rational function times the natural action $w$, plus lower order terms, so there can be no linear relation between these elements.
  \end{proof}

It might seem more natural to consider the larger category where we
allow $\upsilon\in \widehat{W}\cdot \upsilon'$, but the resulting
categories are equivalent, since
$(\acham, \upsilon)\cong (w\acham,w\upsilon)$ for $w\in W$ in the
finite Weyl group, and $\ft_{\second}$ is invariant under
$\widehat{W}$.  

The results above establish:
\begin{lemma}\label{lem:weight-complete}
  The category of weight modules over $\mathscr{B}$ is equivalent to
  the category of representations of  $\scrBhat$ in the
  category of finite-dimensional vector spaces. 
  The category of weight modules over $\mathscr{B}_{\second}$ with weights in $\widehat{W}\cdot \upsilon'$ is equivalent to
  the category of representations of  $\scrBhatup_{\upsilon'}$ in the
  category of finite-dimensional vector spaces.
\end{lemma}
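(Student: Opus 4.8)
The plan is to assemble the pieces established above into the standard description of a category of weight modules as the representation category of an explicit $\Hom$-category, in the style of \cite{MVdB}; the lemma is essentially a repackaging, so the work is in setting up the two functors and checking one finiteness point.

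First I would build the forward functor. Given a weight module $M$, define a representation $\widehat M$ of $\widehat{\mathscr{B}}$ by $\widehat M(\acham,\upsilon)=W_{\upsilon,\acham}(M)$; this is finite-dimensional by the weight-module hypothesis. A morphism $f\in\Hom_{\mathscr{B}}(\acham',\acham)$ gives a map $M(\acham')\to M(\acham)$, and composing with the projection onto the generalized $\upsilon$-weight space and restricting to the generalized $\upsilon'$-weight space yields a linear map $W_{\upsilon',\acham'}(M)\to W_{\upsilon,\acham}(M)$ which, because $M(\acham)$ is locally finite over $\Cft$, depends only on the class of $f$ in $\Hom_{\widehat{\mathscr{B}}}((\acham',\upsilon'),(\acham,\upsilon))$ — indeed the quotient defining the latter kills exactly the part of $f$ that does not carry the $\upsilon'$-weight into the $\upsilon$-weight. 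This produces a functor from weight modules over $\mathscr{B}$ to finite-dimensional representations of $\widehat{\mathscr{B}}$, and it is the functor implicit in ``we can present the category of weight modules as modules over $\End(\oplus P_{\upsilon,\acham})$,'' once one matches $\Hom_{\widehat{\mathscr{B}}}((\acham',\upsilon'),(\acham,\upsilon))$ with $\Hom(P_{\upsilon,\acham},P_{\upsilon',\acham'})$ as computed just above.

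For the quasi-inverse, send a representation $N$ of $\widehat{\mathscr{B}}$ to the $\mathscr{B}$-module with $M(\acham)=\bigoplus_{\upsilon\in\ft_1}N(\acham,\upsilon)$, a locally finite $\Cft$-module with finite-dimensional generalized weight spaces, on which $f\in\Hom_{\mathscr{B}}(\acham',\acham)$ acts on the $\upsilon'$-summand by the collection of maps $N(\acham',\upsilon')\to N(\acham,\upsilon)$ furnished by the images of $f$ in the various $\Hom_{\widehat{\mathscr{B}}}((\acham',\upsilon'),(\acham,\upsilon))$. The one substantive point — and the step I expect to be the main obstacle — is that this sum over $\upsilon$ is finite, i.e. that $f$ moves and ``smears'' $\Cft$-weights only boundedly: this is where the presentation of $\mathscr{B}$ from Theorem \ref{thm:BFN-pres} and the support filtration by relative position of affine flags are used, since $y_w$ shifts the weight by $w\in\widehat{W}$, the generators $r(-,-)$, $\Bpsi_\al$ and the polynomials preserve the coset $\upsilon+\ft_\Z$ and act within a bounded window, and any fixed $f$ is supported on a finite set of relative positions. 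The relations of Theorem \ref{thm:BFN-pres} are then inherited coordinate-wise, so $M$ is a weight module. That the two functors are mutually quasi-inverse is now formal: $M\mapsto\widehat M\mapsto\bigoplus_\upsilon W_{\upsilon,\acham}(M)$ recovers $M(\acham)$ because a weight module is by definition the direct sum of its generalized weight spaces, and the other composite is the identity by construction; equivalently, $\bigoplus_{(\acham,\upsilon)}P_{\upsilon,\acham}$ is a pro-generator whose endomorphism category is $\widehat{\mathscr{B}}$, which is the content of \cite{MVdB} combined with the $\Hom$-computation above.

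For the second statement, restrict this equivalence to the subcategory of weight modules supported on $\widehat{W}\cdot\upsilon'$; under the equivalence this is the category of representations of the full subcategory of $\widehat{\mathscr{B}}$ on objects $(\acham,\upsilon)$ with $\upsilon\in\widehat{W}\cdot\upsilon'$. Writing $\widehat{W}=\ft_\Z\rtimes W$, the orbit $\widehat{W}\cdot\upsilon'$ is the finite union $\bigcup_{w\in W}(w\upsilon'+\ft_\Z)$, and the isomorphisms $(\acham,\upsilon)\cong(w\acham,w\upsilon)$ for $w$ in the finite Weyl group $W$ identify this full subcategory with the one on objects having $\upsilon\in\upsilon'+\ft_\Z$, namely $\widehat{\mathscr{B}}_{\upsilon'}$; hence the two representation categories agree, as claimed.
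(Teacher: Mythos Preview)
Your proposal is correct and follows essentially the same approach as the paper, which treats the lemma as an immediate consequence of the preceding discussion (``The results above establish:'') --- namely, that the functors $W_{\upsilon,\acham}$ are pro-represented by $P_{\upsilon,\acham}$, that the Hom spaces between these pro-projectives have been computed to coincide with those of $\widehat{\mathscr{B}}$, and that the MVdB-style argument then identifies weight modules with representations of this endomorphism category. Your explicit forward and inverse functors, the finiteness check via the adjoint-semisimplicity of $\Cft$ on $\Hom_{\mathscr{B}}(\acham',\acham)$, and the reduction of the second statement using $(\acham,\upsilon)\cong(w\acham,w\upsilon)$ for $w\in W$ are exactly the details the paper leaves implicit.
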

The category $\widehat{\mathscr{B}}_{\upsilon'}$ contains a subcategory
$\scrAhatup_{\upsilon'}$ given by the objects of the form
$(\zero,\upsilon)$ for $\upsilon \in \upsilon'+\ft_\Z$.  Let $\EuScript{A}\mmod_{\upsilon'}$ denote the
representations of this category, which is equivalent to the
category of weight modules over $\EuScript{A}$ with weights in
$\widehat{W}\cdot \upsilon'$.  

A useful notion for understanding this representation theory is that of {\bf clans}:
\begin{definition}
  Consider the equivalence relation $(\acham, \upsilon)\sim (\acham',\upsilon')$ if the corresponding objects in $\scrBhat$ are isomorphic.  We call the equivalence classes of this relation {\bf clans}.  
\end{definition}
The clans are thus the sets of weight spaces which are naturally isomorphic in every representation.

\section{Higgs and Coulomb}
\label{sec:higgs-coulomb}

\subsection{The isomorphism}
\label{sec:isomorphism}

Assume that $\K$ has characteristic $0$; we are still specializing
$h=1$.  Consider $\rho\in \ft_{1,\K}$.  In this section, we will analyze the category of representations of the Coulomb branch whose weights lie in the $\widehat{W}$ orbit of $\rho$.  
For simplicity, let us assume for now that no element of $\widehat{W}\cdot \rho$ is fixed by any non-trivial length-zero element of $\widehat{W}$.  This holds if $\rho\in \ft_{1,\Z}$, for all $\rho$ if $G$ is simply connected (since in this case, only $1\in \widehat{W}$ has length zero) or $G$ is isomorphic to a product of $GL_m$'s (since in this case, every non-trivial element of length zero in $\widehat{W}$ has no fixed points), but not for any non-trivial quotient of $SL_n$, for example.  We will discuss how to deal with other cases in \cref{rmk:stabilizer}.

We call a weight $\vp_i$ of $V$ or root $\al_i$ of $\fg$  {\bf relevant} if it has integral value on $\rho$, and {\bf irrelevant} if it does not. The relevant roots form the root system of a pseudo-Levi subalgebra $\fg_{\rho}\subset \fg$, with Weyl group $W_{\rho}$, and the sum $V_{\rho}$ of relevant weight spaces carries a $\fg_{\rho}$-action. Let $\relev$ be the set of indices $i$
such that $\varphi_i$ is relevant\notation{$\relev$}{The set of indices
  $i$ such that the weight $\varphi_i$ is relevant.} and $\Delta_{\relev}$ the set of relevant roots.

The stabilizer of $\rho$ in $\widehat{W}$ is generated by the affine reflections it contains, that is, the reflections $s_{\al-\langle \al,\rho\rangle\delta}$ for $\al$ relevant, together with length 0 elements; our assumption on length 0 elements thus means that this stabilizer is isomorphic to $W_{\rho}$ via $s_{\al-\langle \al,\rho\rangle\delta}\mapsto s_\al$.  We will abuse notation and use $W_{\rho}$ for this stabilizer as well---in particular, $\#W_{\rho}$ is the
rank of the Hom-space between any two objects of
$\scrBhatup_{\rho}$.   

To see why this integrality is important, it helps to consider it in terms of the geometry of lattices in $V((t))$.  
The results of this section have a purely algebraic proof, but the sign bookkeeping becomes much clearer when one thinks geometrically.  The action of $\upsilon\in \ft_{1,\Z}$ on a weight vector $v_i$ of weight $\varphi_i$ is given by
\[\upsilon(s)\cdot t^{m}v_i=s^{\langle \varphi_i,\upsilon\rangle+m}\,t^{m}v_i\qquad (s\in \Cx),\]
since $\nu(\upsilon)=1$ means that $\upsilon(s)$ scales $t$ by $s$.

\begin{lemma}\label{lem:invariant-lattice}
  Let $\upsilon\in \ftone_{1,\R}$ and let $\acham\in \ft_{1,\tNo}$ be
  unexceptional.  The space of $\upsilon$-invariant vectors in
  $V_i((t))$ is one-dimensional if $\langle \varphi_i,\upsilon\rangle\in \Z$ and zero-dimensional otherwise. When non-zero, it is spanned by $t^{-\langle \varphi_i,\upsilon\rangle}v_i$ for $v_i\in V_i\setminus \{0\}$. This vector lies in $\Ueta_{\acham}$
  if and only if
  \[\varphi^{\operatorname{mid}}_i(\acham)>\langle \varphi_i,\upsilon\rangle.\]
\end{lemma}
\begin{proof}
  The vector $t^mv_i$ has $\upsilon$-weight $\langle
  \varphi_i,\upsilon\rangle+m\nu(\upsilon)=\langle
  \varphi_i,\upsilon\rangle+m$, so the invariant vector is the one with
  $m=-\langle \varphi_i,\upsilon\rangle$.  Its $\acham$-weight is
  $\varphi_i(\acham)-\langle \varphi_i,\upsilon\rangle$, and by
  \cref{def:Uacham}, the space $\Ueta_{\acham}$ consists of the vectors
  of weight $\geq-\frac12$; since $\nu(\acham)=1$, the resulting
  inequality $\varphi_i(\acham)+\frac 12\geq \langle
  \varphi_i,\upsilon\rangle$ is the one above: equality cannot hold
  because $\acham$ is unexceptional and $\langle
  \varphi_i,\upsilon\rangle$ is an integer.
\end{proof}
Note that $\acham$ and $\upsilon$ enter this inequality on opposite
sides; simultaneous translation of $\acham$ and $\upsilon$ leaves its truth value unchanged.    In
particular, for $\Ueta_{\zero}=V[[t]]$,  there is an $\upsilon$-invariant vector lying in $V[[t]]$ if
and only if $\langle \varphi_i,\upsilon\rangle\in \Z_{\leq 0}$.

Since $\second$ acts trivially on $V$ and commutes with $G$, we have
$\varphi_i(\second)=\al(\second)=0$ while $\nu(\second)=1$.  Thus
\begin{equation}
  \rop{\gamma}=2\second-\gamma\label{eq:op}
\end{equation}
defines an involution of $\ft_{1,\tNo}$ satisfying $\langle
\varphi_i,\rop{\gamma}\rangle=-\langle \varphi_i,\gamma\rangle$ and
$\al(\rop{\gamma})=-\al(\gamma)$.  It carries the lifts of $\flav$ to the
lifts of the flavor $\rop{\flav}=\pi_{\tNo\to \tF}(2\second)-\flav$, and we write
$\rop{\ft}_{1,\dagger}$ for the space of these.  In terms of the splitting
$\flav(t)=(\flav_0(t),t)$, we have $\rop{\flav}(t)=(\flav_0(t)^{-1},t)$, since $\second$
corresponds to $(1,t)$.
\notation{$\rop{\gamma},\rop{\flav}$}{The reflection $\rop{\gamma}=2\second-\gamma$ of
  \eqref{eq:op}, and the flavor $\rop{\flav}$ of which $\rop{\gamma}$ is a lift.}
In these terms, \cref{lem:invariant-lattice} says that the
$\upsilon$-invariant vector of $V_i((t))$ lies in $V[[t]]$ if and only if
$\langle \varphi_i,\rop{\upsilon}\rangle\geq 0$, which is the inequality cutting out
the chambers of \cref{def:chambers} for the flavor $\rop{\flav}$.

With this in hand, the representation theory of the Coulomb branch is
related to the chamber structures introduced in \cref{sec:higgs} by the
following simple observation:
\begin{lemma}\label{lem:same-clan}
  Assume $\rho-\rho'',\rho-\rho'\in \ft_{\Z}$, and let
  \[a'_i=\langle\varphi_i,\rop{(\rho')}\rangle=-\langle\varphi_i,\rho'\rangle,\qquad
  a''_i=\langle\varphi_i,\rop{(\rho'')}\rangle=-\langle\varphi_i,\rho''\rangle\qquad \text{ for $\varphi_i$ relevant}.\]
  If for each relevant $i$, the
  integers $a'_i$ and $a''_i$ are either both non-negative or both
  negative, then $(\zero,\rho')$ and $(\zero,\rho'')\cong (\zero+\rho'-\rho'',\rho')$ are in the same clan.
\end{lemma}
In the language of \cref{lem:invariant-lattice}, the hypothesis says that every weight vector contributes the same number of invariant vectors to $V[[t]]$: a line if $a_i',a_i''\geq 0$, and none if $a_i',a_i''<0$.  
\begin{proof}
  Set $\zeta=\rho'-\rho''\in \ft_{\Z}$.  The translation
  $y_{\zeta}$ shifts both coordinates by $-\zeta$ by
  \eqref{eq:conjugate1} and \eqref{eq:weyl1}, and so gives the
  isomorphism $(\zero+\zeta,\rho')\cong (\zero,\rho'')$.  It thus
  suffices to show that $r(\zero,\zero+\zeta)$ and
  $r(\zero+\zeta,\zero)$ are isomorphisms in $\scrBhat$.  By
  \eqref{eq:wall-cross1}, their composites in either order are
  $\Phi(\zero,\zero+\zeta,\zero)$ and $\Phi(\zero+\zeta,\zero,\zero+\zeta)$,
  each of which is the product of the factors $\varphi_i-n$ over the
  hyperplanes $\varphi^{\operatorname{mid}}_i=n$ separating $\zero$ from
  $\zero+\zeta$.  Such a factor is invertible in the completion at
  $\rho'$ unless $\langle \varphi_i,\rho'\rangle =n$, which forces $i$
  to be relevant.  Since
  $\varphi^{\operatorname{mid}}_i(\zero)=\frac12$ and
  $\varphi^{\operatorname{mid}}_i(\zero+\zeta)=\frac12+\langle
  \varphi_i,\zeta\rangle$, the separating hyperplanes are those with
  $n$ strictly between $\frac 12$ and $\frac 12 -a'_i+a''_i$; so
  $n=\langle \varphi_i,\rho'\rangle=-a'_i$ occurs among them exactly
  when $a'_i\leq -1< 0\leq a''_i$ or $a''_i\leq -1< 0\leq a'_i$,
  which the hypothesis excludes.  Thus every factor is invertible.
\end{proof}

We can describe this with a more explicit connection to the
combinatorics of hyperplanes.  

Let $K_{\rho}$ be the kernel of the action of $G_{\rho}$ on
$V_{\rho}\oplus \fg_{\rho}$, a diagonalizable subgroup of $T$ with Lie
algebra
\[\fk_{\rho}=\{\zeta\in \ft\mid \varphi_i(\zeta)=\al(\zeta)=0 \text{ for all } i\in \relev, \al\in \Delta_{\relev}\},\]
and let $\class{\ft}=\ft/\fk_{\rho}$ be the Lie algebra of the quotient
torus $T/K_{\rho}$, with $\class{\xi}$ denoting the image of
$\xi$\notation{$\class{\xi},\class{\ft}$}{The class of $\xi$ in the Lie
  algebra $\class{\ft}=\ft/\fk_{\rho}$ of $T/K_{\rho}$.}.  Every
relevant weight and root descends to $\class{\ft}$, and these
functionals span $\class{\ft}^*$.  In this section, we will consider
many of the constructions of \cref{sec:higgs} for the pair
$(G_{\rho}/K_{\rho},V_{\rho})$ and the flavor $\rop{\flav}$, rather than for $(G,V)$ and $\flav$.  We will distinguish these by adding an $r$ in front of the symbol from \cref{sec:higgs}. In particular, we consider
the sets $\rI$ and $\rIp$ of sign vectors, the set $\rACs$ of feasible chambers for
the unrolled arrangement and the map
$\rACmap$ sending a vector to its sign vector. 

For $\Ba\in \Z^{\relev}$, the chamber attached to $\Ba$ is
\begin{equation}
\rAC_{\Ba}=\{\Xi \in \class{\rop{\ft}_{1,\R}}\mid a_i<\varphi_i^{\operatorname{mid}} (\Xi)<a_i+1\text{
  for all $i\in \relev$}\}\label{eq:r-aff-cham}
\end{equation}
\notation{$\rAC_{\Ba}$}{The chamber \eqref{eq:r-aff-cham} of the unrolled arrangement for $(G_\rho/K_{\rho},V_\rho)$ and $\rop{\flav}$.}which is exactly \eqref{eq:aff-cham},
with the irrelevant weights omitted and the irrelevant directions
killed.

We will now upgrade this observation to an equivalence of categories.  In order to connect more directly to the path model discussed before, we consider the subcategory of $\scrBhatup_{\rho}$ with objects $(\acham,\rho)$ for $\acham$ generic in $\fttau$;  this is more convenient since we can vary $\acham$ continuously.  We lose no generality in doing this, since as $\acham$ varies over $\widehat{W}\cdot\zero$, we obtain all the objects $(w\cdot \zero, \rho)\cong (\zero, w^{-1}\rho)$, though this change of convention requires some care about signs.  
In fact, we only need translations in the affine Weyl group to cover all objects up to isomorphism, since for $w$ in the finite Weyl group $W$, we have $w\zero\cong \zero$ and so $(\zero,\rho)\cong (\zero,w^{-1}\rho)$.   
Thus, we can consider the coset
\begin{equation}
  \lifts=\rho+\ft_{\Z}.\label{eq:lifts-rho}
\end{equation}
\notation{$\lifts$}{The coset $\rho+\ft_{\Z}$ of \eqref{eq:lifts-rho}.}
Every object of $\scrAhatup_{\rho}$ is then $(\zero,\chi)$ for a unique
$\chi\in \lifts$.  As discussed above, we let the first variable vary
and keep the weight fixed at $\rho$, so we should move the translation
carrying $\rho$ to $\chi$ into that first variable.  This is possible
precisely because $\rho-\chi\in \ft_{\Z}$: by \eqref{eq:conjugate1} and
\eqref{eq:weyl1}, the translation $y_{\rho-\chi}$ shifts both variables
by $\chi-\rho$, and so
\begin{equation}
  (\zero+\rho-\chi,\rho)\cong (\zero,\chi).\label{eq:move-translation}
\end{equation}
The minus sign with which $\chi$ enters the left-hand side is the reflection
\eqref{eq:op}, which is why the constructions of \cref{sec:higgs} appear here for the
flavor $\rop{\flav}$.

We thus define a map
\begin{equation}
  \theta\colon \lifts \to \Z^{\relev}\qquad \theta(\chi)=(\langle \varphi_i,\rop{\chi}\rangle)_{i\in \relev}=(-\langle \varphi_i,\chi\rangle)_{i\in \relev}.\label{eq:theta}
\end{equation}
Since $\rop{\chi}$ is an integral lift of $\rop{\flav}$, we have
$\varphi^{\operatorname{mid}}_i(\rop{\chi})=\theta(\chi)_i+\tfrac12$, so $\theta(\chi)$ is
the label of the chamber \eqref{eq:r-aff-cham} containing $\class{\rop{\chi}}$; in
particular, $\theta(\lifts)\subset \rACs$.  Writing $\Ba=\theta(\chi)$, we thus have
$\rACmap(\Ba)_i=+$ if and only if $a_i\geq0$, that is, if and only if the
$\chi$-invariant vector of $V_i((t))$ lies in $V[[t]]$.  By \cref{lem:same-clan}, this
sign vector is constant on clans.

In principle, we wish to remove $\rho$ from the second coordinate
entirely by subtracting it from both components, but we must be careful
about this, since $\rho$ is valued in $\ft_{1,\K}$ while the
cocharacters we use as objects are real.  What saves us is that the
only information we need from $\rho$ are its integer evaluations $c_i=\langle \varphi_i,\rho\rangle$ for $i\in \relev$ and
$c_\al=\langle \al,\rho\rangle$ for $\al\in \Delta_{\relev}$, which are integers by the definition
of relevance.  Recall also that $\varphi_i(\second)=\al(\second)=0$ and
$\nu(\second)=1$.  Thus, for $\Ba\in \Z^{\relev}$, we can cut out the
corresponding region of $\fttau$ by the same inequalities as in
\eqref{eq:r-aff-cham}, with each label shifted by the integer $c_i$:
\begin{equation}
  \fttauc{\Ba}=\{\acham\in \fttau\mid a_i+c_i<\varphi^{\operatorname{mid}}_i(\acham)<a_i+c_i+1\text{ for all }i\in \relev\}.\label{eq:tau-chamber}
\end{equation}
The shift is forced by \eqref{eq:move-translation}: if $\Ba=\theta(\chi)$, then
\[\varphi^{\operatorname{mid}}_i(\zero+\rho-\chi)=\tfrac12+c_i-\langle
\varphi_i,\chi\rangle=a_i+c_i+\tfrac12,\]
so the cocharacter attached to $\chi$ lies in $\fttauc{\Ba}$. 

The equations prescribed above are linear over $\Q$, so the existence of a solution over the characteristic 0 field $\K$ shows we have a solution over any characteristic 0 field, in particular, over $\R$.  Since the
relevant weights and roots span $\class{\ft}^*$, there is therefore a
unique element $\class{\rhoR}\in \class{\ft_{1,\R}}$ satisfying
\[
  \langle \varphi_i,\rhoR\rangle=c_i\quad (i\in\relev),
  \qquad
  \langle \al,\rhoR\rangle=c_\al\quad (\al\in\Delta_{\relev}).
\]
We can use this to extend from lifts to all real points: the map
$\class{\delta}\mapsto \class{\delta+\rhoR-\second}$ is an affine
bijection $\class{\rop{\ft}_{1,\R}}\to \class{\fttau}$ sending $\class{\rop{\chi}}$ to
$\class{\second+\rhoR-\chi}$.  We have
\[
  \varphi^{\operatorname{mid}}_i(\delta+\rhoR-\second)
  =\varphi^{\operatorname{mid}}_i(\delta)+c_i,
  \qquad
  \al(\delta+\rhoR-\second)=\al(\delta)+c_\al.
\]
Thus, this map translates both the matter and root coordinates, and it
carries $\rAC_{\Ba}$ onto $\fttauc{\Ba}$.

As in \cref{def:ACs}, let $\rACs$ be
the set of feasible vectors, that is, those with
$\rAC_{\Ba}\neq\emptyset$; as observed after \eqref{eq:theta}, $\rACs$ contains the
image of $\theta$.\notation{$\rACs$}{The set of feasible vectors for the unrolled arrangement of $(G_\rho/K_\rho,V_\rho)$ and $\rop\flav$ (\cref{eq:r-aff-cham}).}  As in \cref{sec:ACs}, we write $\rACmap(\Ba)$ for
the sign vector of $\rAC_{\Ba}$, which has $\sigma_i=+$ exactly when $a_i\geq 0$,
and we let $\rIp=\rACmap(\rACs)$ and
$\rI=\rACmap(\theta(\lifts))$.\notation{$\rI,\rIp$}{The sign vectors $\rACmap(\theta(\lifts))$ and $\rACmap(\rACs)$, the analogues of $I,I'$ for $(G_\rho/K_\rho,V_\rho)$ and $\rop\flav$.}
We also regard $\lifts$ as an indexing set with the map
\[
  \lifts\longrightarrow \rI,
  \qquad
  \chi\longmapsto \rACmap(\theta(\chi)).
\]
This defines a category $\Stein_{\lifts}$ as in \cref{sec:variations};
we write $\widehat{\Stein}_{\lifts}$ for its completion with respect to
the grading.
We can then consider the category
$\Stein_{\rACs}$ as discussed in \cref{sec:variations}, and we let
$\widehat{\Stein}_{\rACs}$ be its completion with respect to its
grading.  In this language, \cref{lem:same-clan} reads: if
$\Ba',\Ba''\in \rACs$ with $\rACmap(\Ba')=\rACmap(\Ba'')$, then the
corresponding objects are in the same clan; of course, in this
circumstance $\Ba'$ and $\Ba''$ are isomorphic in $\Stein_{\rACs}$ by
construction.

The map $\theta$ need not be either injective or surjective.  Thus,
there may be $\Ba\in \rACs$ which are not $\theta(\chi)$ for any
$\chi\in\lifts$. These do not have any obvious connection to
the representation theory of the Coulomb branch, but they will still
play a useful role for us as intermediate steps between the vectors
in the image of $\theta$.  For every $\Ba\in \rACs$, we choose a
generic representative $\acham_\Ba\in \fttauc{\Ba}$; if $\Ba$ is in the
image of $\theta$, choose one $\chi_\Ba\in \lifts$ with
$\theta(\chi_\Ba)=\Ba$ and take $\acham_\Ba=\zero+\rho-\chi_\Ba$.  This
is generic since translating the generic cocharacter $\zero$ by an
element of $\ft_{\Z}$ leaves it generic.  Different choices of
$\chi_\Ba$ give isomorphic objects by
\cref{lem:same-clan}.
Since $\acham_{\Ba}$ is generic, it has trivial stabilizer in
$\widehat{W}$.  However, if $w\in
\operatorname{Stab}(\Ba)$, we have that $w\cdot
\acham_\Ba$ and $\acham_\Ba$ are not separated by any matter
hyperplanes.  Thus, the elements $r(w\cdot
\acham_\Ba, \acham_\Ba)$ and $r(
\acham_\Ba, w\cdot\acham_\Ba)=w^{-1}r(w\cdot
\acham_\Ba, \acham_\Ba)$ give
inverse isomorphisms in $\mathscr{B}$.

We can always choose
$\acham_{w\cdot \Ba}$ to be the image of $\acham_{\Ba}$ under some
element of the Weyl group $W_{\rho}$. This implies that $r(\acham_{w\cdot \Ba},w\cdot
\acham_\Ba) $ is always an isomorphism with obvious inverse.
It's immediate from the relations \crefrange{eq:coweight1}{eq:weyl2} that:
\begin{lemma}\label{lem:w-fudge}
  \[r(\acham_{w'w''\cdot \Ba},w'\cdot
\acham_{w''\cdot \Ba}) w' \cdot r(\acham_{w''\cdot \Ba},w''\cdot
\acham_{\Ba}) w'' =r(\acham_{w'w''\cdot \Ba},w'w''\cdot
\acham_{\Ba}) w' w''\]
\end{lemma}

Recall the product $\Phi(\acham,\acham')$ of \cref{def:Phi}.  We let
$\Phi_0(\acham,\acham',\rho)$ be the result of specializing $h=1$ in
$\Phi(\acham,\acham')$ and then discarding those factors $\varphi_i-n$ for
which $\rho$ actually lies on the hyperplane $\varphi_i=n$.  That is,
$\Phi_0(\acham,\acham',\rho)$ is the product of the terms
$\varphi_i-n$  over pairs $(i,n) \in [1,d]\times\Z$ such that we have the inequalities
\begin{equation}
\varphi^{\operatorname{mid}}_i(\acham)>n \qquad \varphi^{\operatorname{mid}}_i(\acham')<n \qquad
\langle\varphi_i,\rho\rangle\neq n.\label{eq:Phi0}
\end{equation}
The first two conditions are precisely those of \cref{def:Phi}, written as
there in terms of $\varphi^{\operatorname{mid}}_i$ using \eqref{eq:varphi-mid};
the third can only fail when $i$ is relevant, and it discards exactly
the factors which are {\em not} invertible in the completion at $\rho$.
Thus $\Phi(\acham,\acham')$ is always the product of
$\Phi_0(\acham,\acham',\rho)$ with a product of factors $\varphi_i-n$
with $\langle \varphi_i,\rho\rangle=n$.  When
$\acham=\acham_{\Ba},\acham'=\acham_{\Bb}$ are the points chosen above,
that second product is the image of $\vpss(\rACmap(\Ba),\rACmap(\Bb))$
under $\mu\mapsto \mu-\langle\mu,\rho\rangle$; this is what makes
\eqref{gamma1} below an equality, and we check it in the proof of
\cref{lem:HC-functor}.

Note that if $\mu\in \ft^*_\K$, then $\mu$ does not have a canonical
extension to $\ft_{1,\K}$, but the expression
$\mu-\langle\mu,\rho\rangle$ is well-defined on $\ft_{1,\K}$, giving the
same answer for every extension of $\mu$ to $\tilde{\ft}_\K$.

\begin{lemma}\label{lem:HC-functor}
 There is a unique functor
  \[\gamma\colon\widehat{\Stein}_{\rACs}\to
  \scrBhatup_\rho\]
  which sends each $\Ba\in \rACs$ to $(\acham_{\Ba},\rho)$, for
  $\acham_{\Ba}$ a generic element of $\fttauc{\Ba}$, and which acts on
  morphisms by
  \newseq
  \begin{align*}
    \gamma( {\wall(\Ba,\Bb)})&= \frac{1}{\Phi_0(\acham_{\Ba},\acham_{\Bb},\rho)} {r(\acham_{\Ba},\acham_{\Bb})}\subeqn \label{gamma1}\\
    \gamma( {\psi_i(\Ba)}) &= s_{\al_i} {u_{\al_i}}\subeqn \label{gamma3}\\
    \gamma (w)&= r(\acham_{w\cdot \Ba},w\cdot \acham_\Ba) \yw_w \subeqn \label{gamma2}\\
    \gamma(\mu)&= \mu-\langle\mu,\rho\rangle \subeqn \label{gamma4}
  \end{align*}  
\end{lemma}
As mentioned in the introduction, these formulas can be explained
geometrically. In particular, $\Phi_0(\acham_{\Ba},\acham_{\Bb},\rho)$
can be interpreted as the Euler class of a normal bundle, just as in
\cite[Proposition 2.4.7]{varagnoloDoubleAffine2010}.

\begin{proof}
To check that this functor is well-defined, we use the natural faithful representation of the category $\scrBhatup_\rho$ sending $(\acham,\upsilon)$ to $\Cft^\upsilon$ using the representation of (\ref{eq:ract}--\ref{eq:muact}).
As in many previous proofs, we will prove the lemma by comparing this with the polynomial representation of $\widehat{\Stein}_{\rACs}$ given in (\ref{Y'-action2a}--\ref{Y'-action2d}).  We consider the completion of this polynomial representation with respect to the grading.

Consider the induced isomorphism $\mathbbm{s}_\rho\colon  \K[[\ft_\K]]\to S_\rho$ by shifting, that is, the image of a linear function $\mu\in \ft^*_\K$ is $\mu-\langle\rho,\mu\rangle $.   Simple calculations show that (\ref{Y'-action2b}) and (\ref{eq:psiact}) match via (\ref{gamma3}), (\ref{Y'-action2c}) and (\ref{eq:wact}) match via (\ref{gamma2}) and Lemma \ref{lem:w-fudge}, and (\ref{Y'-action2d}) and (\ref{eq:muact}) match via (\ref{gamma4}), but perhaps we should say a bit more about (\ref{gamma1}).  
The image of $\vpss(\Ba,\Bb)$ (that is, of $\vpss(\rACmap(\Ba),\rACmap(\Bb))$, in the notation of \eqref{Y'-action2a}) is the product of the linear factors $\varphi_i-\langle \varphi_i,\rho\rangle$ over $i\in \relev$ for which $\rACmap(\Ba)_i=+$ and $\rACmap(\Bb)_i=-$.  This always divides $\Phi(\acham_{\Ba},\acham_{\Bb})$, and the remaining factors are precisely $\Phi_0(\acham_{\Ba},\acham_{\Bb},\rho)$, which shows the compatibility of (\ref{Y'-action2a})  and (\ref{gamma1}) with (\ref{eq:ract}).  This confirms that the functor is well-defined.
\end{proof}

As the formulas above suggest, this isomorphism is essentially induced
by that for the abelianizations.  It can be understood more clearly
if we visualize $\wall(\Ba,\Bb)$ and $r(\acham_{\Ba},\acham_{\Bb})$ as
paths as discussed in Sections \ref{sec:pres-steinb-categ} and
\ref{sec:pres-extend-categ}; we work in the case where $G$ is abelian,
so $\tilde{\wall}=\wall$ and $\tilde{r}=r$. The action of $\wall(\Ba,\Bb)$
depends on whether it crosses certain hyperplanes defined by a single level
set of $\vp_i$ for each $i$; on the other hand, the action of
$r(\acham_{\Ba},\acham_{\Bb})$ plays a similar role with all integral
translates of this level set.  However, these translates have
different linear factors attached to them, and on any given weight
space, at most one of these translates acts non-invertibly.  Note that a non-invertible factor appears in the action of $\wall(\Ba,\Bb)$ exactly when $i\in \relev$ is one of the relevant weights that do not satisfy the hypotheses of \cref{lem:same-clan}, and when it appears, it exactly matches the contribution to the Euler class of the invariant vector of \cref{lem:invariant-lattice}.
The product $\Phi_0(\acham,\acham',\rho)$ precisely captures the
invertible factors.

\begin{theorem}\label{main-iso}
  The functor $\gamma$ is an equivalence $\widehat{\Stein}_{\rACs}\cong
  \scrBhatup_\rho$ which induces an equivalence
  $\widehat{\Stein}_{\lifts}\cong \scrAhatup_{\rho}$ sending
  $\chi\mapsto (\zero, \chi)$, where $\lifts=\rho+\ft_{\Z}$ is indexed
  by chambers via $\theta$.
\end{theorem}

Before discussing the proof, let us discuss a very simple, but
illustrative example:
\begin{example}
  Consider the case of $V=\C$ with $G=\Cx$ acting naturally.  Since $G$ is the full general linear group, choosing a larger $\No$ will simply give a product $\Cx\times \No'$ with $\No'$ acting trivially.  Thus the space $\ft_{1,\R}$ is the operators $\operatorname{diag}(x,-1-x)$ acting on $T^*\C\cong \C^2$.  
In these terms,
$\varphimid_1^{\operatorname{mid}}=x+\nicefrac{1}{2}$.  Thus, we have
chambers $\AC_k$ for each integer $k$, given by the inequalities
$k-\nicefrac{1}{2}< x <k+\nicefrac{1}{2}$.  Representatives of
these chambers are given by $\acham_k$, the point where $x=k$.  

For a fixed  $\rho\in \ft_{1,\R}$, we can consider the category
$\widehat{\mathscr{B}}_\rho$.   If $\varphimid_1^{\operatorname{mid}}(\rho)\notin \Z+\tfrac12$, then all objects in this category are isomorphic, since no weights are relevant and $\class{\ft}$ is trivial.

Thus, we assume that $\varphimid_1^{\operatorname{mid}}(\rho)=\tfrac{1}2-a$ for some integer $a$; equivalently, $\langle \varphi_1,\rho\rangle=-a$, so that $a$ itself is the constant called $a_1$ in \cref{lem:same-clan} for the lift $\rho$.  We can use the unique weight to identify $\ft_{\Z}\cong \Z$, and will abuse notation by identifying these spaces, so $\rho+k$ is the unique element of $\ft_{1,\R}$ with $\varphimid_1^{\operatorname{mid}}(\rho+k)=k-a+\tfrac{1}2$.

Thus, if we want to understand the representation theory of the Coulomb branch, we should take the objects $(\acham_0,\rho-k)$ for $k\in \Z$; for a non-abelian group, we would need to take $\zero$ to trivialize the stabilizer in the Weyl group, but this is not necessary in this case.  By \eqref{eq:move-translation}, this is isomorphic to $(\acham_{k},\rho)$, so we can use the latter as our objects.

From this perspective, our
completion just means that we allow power series in $y+a$ on each
object, where $y\in S_h$ corresponds to the obvious character.

For integers $k,m$, the morphisms $\Hom_{\widehat{\mathscr{B}}_\rho}((\acham_k,\rho),
(\acham_m,\rho))$ are a rank-one module over $\K[[y+a]]$, generated
by $ {r(\acham_m,\acham_k)}$.  Let us consider how the isomorphism of
Theorem \ref{main-iso} behaves in this case.   If $k>m$, then
$\Phi_0(\acham_m,\acham_k,\rho)=1$ in all cases (no integer $n$
satisfies the first two inequalities of \eqref{eq:Phi0}), so
$\gamma(\wall(m,k))=r(m,k)$. 
On the other hand, if $k<m$, then, since $\langle \varphi_1,\rho\rangle=-a$,
\[\Phi_0(\acham_m,\acham_k,\rho)=\prod_{\substack{k< n\leq m\\ n\neq -a}}(y-n).\]
The object $\acham_k$ corresponds to the lift $\rho-k$, for which the
constant of \cref{lem:same-clan} is $-\langle
\varphi_1,\rho-k\rangle=k+a$.
Thus, if $k+a$ and $m+a$ are either both non-negative or both negative,
then no factor is dropped, and we have that $r(m,k)
r(k,m)=\Phi_0(\acham_m,\acham_k,\rho)$,
that is, $\gamma(\wall(m,k)) \gamma(\wall(k,m))=1$, as the relation
(\ref{wall})  requires.

On the other hand, if $k+a$ and $m+a$ have opposite signs, that is, if
$k+a<0\leq m+a$, then \[\gamma(\wall(m,k)) \gamma(\wall(k,m))=\frac{\Phi(\acham_m,\acham_k) }{\Phi_0(\acham_m,\acham_k,\rho) }=y+a,\]
again, following (\ref{wall}), since $\gamma(y)=y+a$.  This
confirms the isomorphism in this case.
\end{example}

\begin{proof}[Proof of Theorem \ref{main-iso}]
To show that the functor $\gamma$ is full, we need to use the fact that the morphism space $\Hom_{\widehat{\mathscr{B}}_\rho}((\acham_{\Ba},\rho),(\acham_{\Bb },\rho))$ is spanned by the elements $\tilde{r}(\acham_{\Bb},w\acham_{\Ba})w$ for each $w\in W_{\rho}$, by \cref{cor:Coulomb-basis}.  Using the elements $w$, we can reduce to showing that $\tilde{r}(\acham_{\Bb},\acham_{\Ba})$ is in this image for all pairs $\Ba,\Bb$.  Let us prove this using induction on the number of root hyperplanes separating $\Ba$ and $\Bb$.

First, note that if there are no such hyperplanes, then \[ {r(\acham_{\Bb},\acham_{\Ba})}=\gamma(\gamma^{-1}(\Phi_0(\acham_{\Bb},\acham_{\Ba},\rho))\wall(\Bb,\Ba))\] lies in the image by \eqref{gamma1}.  
Otherwise, we have
\[ \tilde{r}(\acham_{\Bb},\acham_{\Ba})= r(\acham_{\Bb},\acham_{\Bb'})u_{\alpha}\tilde{r}(\acham_{\Bb'},\acham_{\Ba})\] where $\Bb'\in \rACs$ is the vector whose region $\fttauc{\Bb'}$ contains the point at which the path from $\acham_\Ba$ to $\acham_{\Bb}$ crosses the last root hyperplane.  
 If $\alpha$ is a relevant root of the finite system, then $u_{\alpha}$ is in the image by \eqref{gamma3} and we are done.  If not, then $\alpha(\rho)\neq 0$, so the action of $u_{\alpha}$ can be written as $\frac{1}{\alpha}(s_{\alpha}-1)$.    
 The image in the Hom space $\Hom_{\widehat{\mathscr{B}}_\rho}((\acham_{\Ba},\rho), (\acham_{\Bb},\rho))$ is thus $\frac{1}{\alpha}(s_{\alpha}m_1+m_2)$ where
 \[m_1\in \Hom_{\widehat{\mathscr{B}}_\rho}((\acham_{\Ba},\rho), (s_{\alpha}\acham_{\Bb},s_{\alpha}\rho))\qquad m_2\in \Hom_{\widehat{\mathscr{B}}_\rho}((\acham_{\Ba},\rho), (\acham_{\Bb},\rho))\]
 and $m_2$ is the projection to this Hom space of $r(\acham_{\Bb},\acham_{\Bb'})\tilde{r}(\acham_{\Bb'},\acham_{\Ba})$.  Thus, $m_2$ is in the image by construction.  On the other hand, $s_{\alpha}\acham_{\Bb}$ is separated from $\acham_{\Ba}$ by strictly fewer root hyperplanes than $\acham_{\Bb}$, so $s_{\alpha}m_1$ is in the image by induction.   

Thus, the map
$\Hom_{\widehat{\Stein}_{\rACs}}(\Ba,\Bb)\to \Hom_{\widehat{\mathscr{B}}_\rho}(\acham_{\Ba},\acham_{\Bb})$ is
surjective.  These are both free modules over $\K[[\ft_\K]]$ with rank
equal to $\#W_\rho$, so a surjective
map between them must be an isomorphism.

Finally, if $\Ba=\theta(\chi)$, then \eqref{eq:move-translation} gives
an isomorphism
\[
  (\acham_\Ba,\rho)\cong(\zero,\chi_\Ba),
\]
while \cref{lem:same-clan} gives
$(\zero,\chi_\Ba)\cong(\zero,\chi)$.
Thus, after relabeling along $\chi\mapsto\rACmap(\theta(\chi))$ and
using these isomorphisms, $\gamma$ induces the asserted equivalence
$\widehat{\Stein}_{\lifts}\cong\scrAhatup_\rho$.
\end{proof}

\begin{remark}\label{rmk:stabilizer}
  \cref{lem:HC-functor} holds even if our assumption on length-zero elements fails, but the functor $\gamma$ will no longer be full.  In this case, we have a finite subgroup $Z$ of length 0 elements fixing $\rho$, which acts on $W_{\rho}$ by diagram automorphisms.  We have an action of $Z\ltimes G_{\rho}$ on $V_{\rho}$.  In this case, we need to replace $G$ throughout the construction of the Steinberg category, and instead consider $X_{\sgns}=(Z\ltimes G_{\rho})\times^{B_{\rho}}V_{\rho, \sgns}$, and the $Z\ltimes G_{\rho}$ equivariant homology of the fiber products of these spaces.  
	
		Of course, the homology of the spaces should have a combinatorial description similar to \cref{main-iso}, but this is a sufficiently rare case that we will leave the details of it to another time, or perhaps an ambitious reader.
\end{remark}

As above, $\rI=\rACmap(\theta(\lifts))$ is the set of sign vectors
attached to the weights: by \cref{lem:invariant-lattice}, $\sgns\in
\rI$ if and only if there is a weight $\chi\in \lifts$ whose invariant
vectors in $V_i((t))$ lie in $V[[t]]$ for precisely the indices with
$\sigma_i=+$.  Equivalently, $\sgns\in \rI$ if and only if
$\class{\rop{\chi}}$ lies in the integral chamber $\cs_{\sgns}$ for
$(G_\rho/K_\rho,V_\rho,\rop{\flav})$ for some
$\chi\in\lifts=\rho+\ft_{\Z}$.
The sum of morphisms in the category $\algA_{\rI}$ gives a unital algebra; we'll abuse notation and let the same symbol denote this algebra.  The identity $e(\sgns)$ on $\sgns$  can be
thought of as an idempotent in this algebra.
\begin{corollary}\label{cor:coset}
  The category $\efA\mmod_{\rho}$ of weight modules with weights in $\widehat{W}\cdot \rho$ is equivalent to the category of representations of $\algA_{\rI}$ in finite-dimensional vector spaces where $\Symt$ acts nilpotently; this functor matches the weight space at $\chi\in \lifts$ with the image of the idempotent $e(\rACmap(\theta(\chi)))$.  
\end{corollary}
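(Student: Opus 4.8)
The plan is to chain the equivalence of Theorem~\ref{main-iso} together with the descriptions, obtained earlier, of weight modules over $\EuScript{A}$ and of the Steinberg category; the argument is then essentially bookkeeping. By definition $\EuScript{A}\mmod_\rho$ is the category of finite-dimensional representations of $\widehat{\mathscr{A}}_\rho$, which by the lemma preceding Theorem~\ref{main-iso} is equivalent to the category of weight modules over $\EuScript{A}$ with weights in $\widehat{W}\cdot\rho$, and Theorem~\ref{main-iso} gives $\widehat{\mathscr{A}}_\rho\cong\widehat{\Stein}_X$. So it remains to identify finite-dimensional representations of $\widehat{\Stein}_X$ with finite-dimensional representations of $A_I$ on which $\Symt$ acts nilpotently, compatibly with the idempotents $1_\sgns$.

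First I would reduce $\Stein_X$ to $A_I$ before completing. Since a morphism in $\Stein_X$ depends only on the images of its source and target under $X\to\{+,-\}^d$, the functor from $\Stein_X$ to its full subcategory on one lift per sign vector is fully faithful and essentially surjective onto $\Stein_I$, where $I$ is exactly the set of sign vectors whose chamber $C_{\sgns,1}$ meets $X$; hence $\Stein_X\simeq\Stein_I$, carrying the object attached to a lift to the object attached to its sign vector. Theorem~\ref{th:Steinberg}, restricted from $I'$ to the subset $I$, identifies $\Stein_I$ with $A_I$, and all of this is compatible with passing to the grading completion, so $\widehat{\Stein}_X\cong\widehat{A_I}$. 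Because every morphism space of $A_{I'}$ is a finitely generated free $\Symt$-module (the basis $\tilde{\wall}(\sgns,\sgns',w)$), $\widehat{A_I}$ is obtained from $A_I$ by replacing $\Symt$ with its completion at the augmentation ideal; thus a finite-dimensional $A_I$-module on which $\Symt$ acts nilpotently is automatically a $\widehat{A_I}$-module, and conversely on any finite-dimensional $\widehat{A_I}$-module the topologically nilpotent positive part of $\Symt$ acts nilpotently. Finally, the shift isomorphism $\mathbbm{s}_\rho$ from the proof of Theorem~\ref{main-iso}, which sends $\mu\in\ft^*$ to $\mu-\langle\mu,\rho\rangle$ and matches the completion of $\Symt$ at its augmentation ideal with the completion of $\Cft$ at $\mathfrak{m}_\rho$, is precisely what turns ``$\Symt$ acts nilpotently'' on the $A_I$ side into ``generalized $\Cft$-eigenvalue $\rho$'' on the $\EuScript{A}$ side, i.e.\ into membership in $\EuScript{A}\mmod_\rho$.

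It remains to trace the idempotents through the chain. Under $\Stein_X\simeq\Stein_I$ the object $1_\sgns$ of $A_I$ corresponds to the isomorphism class of any lift with sign vector $\sgns$, which under $\gamma$ and the dictionary $\Ba\mapsto(\xi_\Ba,\rho)\cong(0,\rho-\xi_\Ba)$ of Theorem~\ref{main-iso} is the object $(0,\upsilon)$ of $\widehat{\mathscr{A}}_\rho$ with $\rho-\upsilon$ a generic point of the chamber labelled $\sgns$; and under the equivalence with weight modules this object computes the generalized weight space at $\upsilon$. Hence the image of $1_\sgns$ on a module is the weight space at the weight attached to $C_{\sgns,1}$, as claimed. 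The main obstacle is not any individual step — each is either a quotation of an earlier result or a routine check — but keeping the three completions that occur (the grading completion of $A_I$, the $\mathfrak{m}_\rho$-adic completion $\widehat{\mathscr{A}}_\rho$, and the $\mathfrak{m}_\upsilon$-adic completions defining $\widehat{\mathscr{B}}$) properly aligned through $\gamma$ and $\mathbbm{s}_\rho$, so that ``finite-dimensional over the completion'', ``$\Symt$ nilpotent'', and ``weight $\rho$'' genuinely coincide, together with pinning down the exact normalization of the weight-to-chamber dictionary.
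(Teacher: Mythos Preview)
Your proposal is correct and is exactly the intended deduction: the paper states this as an immediate corollary of Theorem~\ref{main-iso} with no separate proof, and what you have written is precisely the bookkeeping one performs to unwind that theorem---passing from $\widehat{\mathscr{A}}_\rho$ to $\widehat{\Stein}_X$, then collapsing $\Stein_X$ to $\Stein_I\cong A_I$ via the sign-vector map, and finally matching the grading completion of $A_I$ with the $\mathfrak{m}_\rho$-adic completion through the shift $\mathbbm{s}_\rho$. Your tracking of the idempotents via the dictionary $(\xi_{\Ba},\rho)\cong(0,\rho-\xi_{\Ba})$ is also the intended route.
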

\notation{$\efA\mmod_{\rho}$}{The category of weight modules with
  weights in $\widehat{W}\cdot \rho$.}
  \notation{$\widetilde{\efA\mmod}_{\rho}$}{The graded lift of $\efA\mmod_{\rho}$ (\cref{def:tO-Coulomb}).}
As discussed in the introduction, this result is very powerful in
understanding the representation theory of the Coulomb branch in its
own terms; unspooling its consequences will require a
series of papers; the case of the Cherednik algebra of $G(\ell, 1, n)$
is covered in \cite{Webalt}, of ADE type quivers in \cite{KTWWYO,kamnitzerLieAlgebra2024}, and
its consequences for Gelfand-Tsetlin modules in \cite{WebGT}.

This isomorphism also allows us to define a graded lift:
\begin{definition}\label{def:tO-Coulomb}
	 Let $\widetilde{\efA\mmod}_{\rho}$ be the category of pairs $(M,g)$ of a 
weight module $M$ and a grading $g$ on each of its weight spaces
such that ${\Stein}_{\rACs}$ acts homogeneously.
\end{definition}

This is the point of contact with the categories named in the introduction.  Suppose
$\rho\in \ftone_{1,\Z}$ is an integral lift.  The subgroup $\ft_\Z\subset \widehat{W}$ acts on
$\ftone_{1,\Z}$ by translation, and this action is simply transitive, since
$\ftone_{1,\Z}$ is a torsor over $\ft_{\Z}$; thus $\widehat{W}\cdot \rho=\ftone_{1,\Z}$ is
the whole set of integral weights.  For such a $\rho$, the category $\efA\mmod_{\rho}$ of
\cref{cor:coset} is therefore the category of {\em all} integral weight modules, which is
what we called $\mathscr{W}_{\operatorname{Coulomb}}$ in the introduction, and
$\widetilde{\efA\mmod}_{\rho}$ is its graded lift
$\mathscr{\tilde W}_{\operatorname{Coulomb}}$.  

Corollary \ref{cor:pos-grading} shows that this category has a strong positive grading property; that is, it is mixed humorous.  It is a bit awkward to talk about projective objects in this graded lift, since these correspond to pro-weight modules, but we can apply \cite[Lemma 2.4]{websterCanonicalBases2015} to instead relate simple modules to the dual canonical basis:
\begin{corollary}\label{cor:dual-canonical}
  The classes of the self-dual simple modules in $K^0(\widetilde{\efA\mmod}_{\rho})$ are the dual canonical basis of the induced pre-canonical structure as defined in \cite[\S 2]{websterCanonicalBases2015}. 
\end{corollary}

We can also extend this isomorphism to the bimodule
${}_{\flav+\chi}\scrT_{\flav}$.  This has a natural completion
${}_{\flav+\chi}\widehat{\mathscr{T}}_{\flav}$ to a bimodule over the
categories $\scrBhat$ associated to the flavors
$\flav+\chi$ and $\flav$.  Applying Theorem \ref{main-iso} to the action
of $\To$ on $V$, and using $\rop{(\flav+\chi)}=\rop{\flav}-\chi$, we find an isomorphism:
\begin{corollary}\label{cor:bimodule-iso}
  ${}_{\Ifflav{\rop{\flav}-\chi}}\widehat{A}_{\Ifflav{\rop{\flav}}}\cong {}_{\flav+\chi}\widehat{\mathscr{T}}_{\flav}$.
\end{corollary}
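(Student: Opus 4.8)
The plan is to deduce this from Theorem \ref{main-iso} applied not to $G$ but to the larger group $GT_H$ acting on $V$, together with the identification (the Lemma above) of the deformed extended BFN category as the $(T_H/T)^\vee$-invariant part of $\mathscr{B}(GT_H)$. Fix the base point $\rho\in\ft_1$ and a lift $\bar\nu\in X(T_H)$ of $\nu$, so that $\ft_1$ and $\ft_1+\nu$ are the slices of $(\ft_H)_1$ attached to the flavors $\phi$ and $\phi+\nu$. Running the construction of $\gamma$ for $GT_H$ --- so that the arrangement $\ACs$ is now the unrolled arrangement on $(\ft_H)_\ep$ and $\Symt$ is replaced by $\Sym((\ft_H)^*)$ --- Theorem \ref{main-iso} gives an equivalence $\gamma\colon\widehat{\Stein}_{\ACs}^{GT_H}\cong\widehat{\mathscr{B}}_\rho^{GT_H}$. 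Both sides carry an $X(T_H/T)$-grading: on the Coulomb side a morphism has weight $\nu$ when its class is concentrated on the components of $\AF$ whose loop maps to $\nu$ under $\pi_1(\AF)\to X(T_H/T)$ (as in the discussion preceding the Lemma), and on the Higgs side the weight is read off from which $X(T_H/T)$-translate of the $G$-arrangement the relevant chambers of $\ACs$ cover. Since all four formulae \eqref{gamma1}--\eqref{gamma4} defining $\gamma$ are homogeneous for this grading --- $r(-,-)$, the $y_w$ and the image of $\Sym((\ft_H/\ft)^*)$ having weight $0$, and $\gamma$ sending each object to one lying over the matching translate --- the equivalence $\gamma$ is graded, hence restricts to an isomorphism of weight-$\nu$ pieces.

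I would then match the weight-$\nu$ piece with the desired bimodule on each side. On the Coulomb side, the weight-$\nu$ piece of $\widehat{\mathscr{B}}_\rho^{GT_H}$ is by definition $\widehat{\mathscr{T}}(\nu)$, and imposing $I(\ft_1)$ on the right (equivalently $I(\ft_1+\nu)$ on the left, by the observation recorded just before the Definition of ${}_{\phi+\nu}\mathscr{T}_\phi$) produces ${}_{\phi+\nu}\widehat{\mathscr{T}}_\phi$ as a bimodule over $\widehat{\mathscr{B}}_{\phi+\nu}$ and $\widehat{\mathscr{B}}_\phi$; and these two categories are the images under the $GT_H$-version of $\gamma$ of the corresponding quotients of $\widehat{\Stein}_{\ACs}^{GT_H}$, which are $\widehat{\Stein}_{\ACs}$ for the flavors $\phi+\nu$ and $\phi$, equivalently $\widehat{A}_{I_{\phi+\nu}}$ and $\widehat{A}_{I_\phi}$ after restricting to feasible objects. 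On the Higgs side, imposing the same two ideals on $\widehat{\Stein}_{\ACs}^{GT_H}$ cuts $\Sym((\ft_H)^*)$ down to $\Symt$ and restricts the feasible chambers of the $(\ft_H)$-arrangement to those meeting the slice $\ft_1$ (resp. $\ft_1+\nu$), i.e. to the sign vectors in $I_\phi$ (resp. $I_{\phi+\nu}$); the weight-$\nu$ piece is then exactly the bimodule ${}_{I_{\phi+\nu}}\widehat{A}_{I_\phi}$ of Definition \ref{def:bimod}, since its $(\sgns',\sgns)$-component is by construction the restriction of $\Hom_{A_{I'}}(\sgns,\sgns')$. Combining these identifications with the graded equivalence $\gamma$ yields ${}_{I_{\phi+\nu}}\widehat{A}_{I_\phi}\cong{}_{\phi+\nu}\widehat{\mathscr{T}}_\phi$.

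The step I expect to be the main obstacle is the compatibility of these two ``slice-and-quotient'' operations: that restricting the $GT_H$-Steinberg (or BFN) category to a prescribed slice $\ft_1\subset(\ft_H)_1$ is realized by precisely the quotient by $I(\ft_1)$ that defines ${}_{\phi+\nu}\mathscr{T}_\phi$, and that this is compatible on the nose with $\gamma$. The key computation is that the linear factors $\varphi_i^+-nh$ (resp. $\varphi_i^+-\langle\varphi_i,\rho\rangle$) occurring in $\Phi(-,-)$, $\Phi(-,-,-)$ and $\Phi_0(-,-,\rho)$, together with the matter hyperplanes $\varphi_i(\acham)=n+\ep_i$ defining the chambers, all behave correctly under restriction to the slice --- which is immediate from $\varphi_i(\acham)=\varphi_i(\acham')-\ep_i$ and the definition of $\Phi_0$ already used in the proof of Theorem \ref{main-iso} --- together with the homogeneity of $\gamma$ noted above, so that the weight-$\nu$ pieces correspond. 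Compatibility with the completions is then automatic, since on both sides these are the same topologies $\gamma$ is declared to match in Theorem \ref{main-iso}.
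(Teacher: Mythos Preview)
Your proposal is correct and follows essentially the same approach as the paper: the paper's argument is the single sentence ``Applying Theorem \ref{main-iso} to the action of $GT_H$ on $V$, we find an isomorphism,'' and you have unpacked exactly this, tracking the $X(T_H/T)$-grading and the quotients by $I(\ft_1)$ in more detail than the paper supplies.
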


\subsection{Koszul duality}
\label{sec:Koszul}

Assume that $A$ is an algebra over a field $\K$ graded by the non-negative integers with $\algA_0$ finite-dimensional and semi-simple.     The {\bf Koszul dual} of $A$ is, by definition, the algebra $A^!\cong T_{\algA_0}\algA_1^*/R^\perp$ where $R\subset \algA_1\otimes_{\algA_0}\algA_1$ is the space of quadratic relations, the kernel of the map to $\algA_2$.   
The representation category of $A^!$ is equivalent to the abelian category $\LPC(A)$ of linear complexes of projectives over $A$.\notation{$A^!,\LPC(A)$}{The Koszul dual $T_{\algA_0}\algA_1^*/R^{\perp}$ of a graded algebra $A$, and the category of linear complexes of projectives.}  If an abelian category is equivalent to the modules over an algebra $A$ as above, then the Koszul dual of the category is the category of representations of $A^!$.  

Fix a flavor $\flav$.  Recall from \cref{sec:dual-canonical-bases} that we defined a pushforward:
 \[L_{\Ifflav{\rop{\flav}}}=\bigoplus_{\sgns\in \Ifflav{\rop{\flav}}}(p_{\sgns})_*\mathfrak{S}_{X_{\sgns}}.\]
Note that the sum is over $\Ifflav{\rop{\flav}}$, that is, over the chambers containing an
integral lift of $\rop{\flav}$, as required by \cref{cor:coset}; this is a smaller sum than
the one over $I'_{\rop{\flav}}$ used earlier.
Combining Theorems \ref{th:O-ff} \& \ref{main-iso} shows that:
\begin{proposition}
  The Koszul dual of the category $\widetilde{\efA\mmod}_{\rho}$ is
  the category $\widetilde{\preO}$ for the flavor $\rop{\flav}$.
\end{proposition}

\begin{remark}
  In general, $D^b(\preO)$ may not be a full subcategory of the derived category of the abelian category $\DVG$, so $\Ext^\bullet_{\DVG}(L_{\Ifflav{\rop{\flav}}},L_{\Ifflav{\rop{\flav}}})^{\op}\cong
  \algA_{\Ifflav{\rop{\flav}}}$ is not the same as the Ext-algebra in the subcategory $\langle L_{\Ifflav{\rop{\flav}}}\rangle $.
For example, in the ``pure gauge field'' case of $V=0$, we find that $\algA_{\rI}$ is the cohomology of $B G_{\rho}$, which we can think of as symmetric functions on $\ft_\K$ for the action of the integral Weyl group $W_{\rho}$.  The algebra $\EuScript{A}$ in this case is just the smash product $\Cth\rtimes W$, and the subcategory $\EuScript{A}\mmod_{\rho}$ corresponds to the modules which are the sum of their weight spaces over $\Cft$ for the $W$-translates of $\rho$.  

The $\rho$-weight space has a natural action of the stabilizer $W_\rho$, and considering the $W_\rho$ invariants defines a functor from $\EuScript{A}\mmod_{\rho}$ to $H^*(BG_{\rho})$-modules where we let $H^*(BG_{\rho})\cong \K[\ft_\K]^{W_\rho}$ act by the $\rho$-shifted action.  This gives the equivalence induced by Theorem \ref{main-iso}.  

In this case, since $H^*(BG_{\rho})$ has no elements of degree 1, the only linear complexes over this ring are those with trivial differentials.  Thus, the category $\langle L_{\Ifflav{\rop{\flav}}}\rangle $ is equivalent to the category of vector spaces.
\end{remark}
\begin{example}
As before, consider the case of $V=\C$ with $G=\Cx$ acting naturally and flavor $\flav$ giving weight $-a$ on $\C$ and $a-1$ on its dual space.  In this case $\Cft\cong \C[t]$ with $t$ the natural cocharacter.  The algebra $\efA=\Asph$ has generators $r^+$ and $r^-$ with 
\[r^-r^+=t -a+1 \qquad \qquad r^+r^-=t-a.\]  
Note that the elements $r^\pm$ give an isomorphism between the $k$ and $k-1$ weight spaces unless $k=a$.  The action of $t-a$ on the $k$ weight space $\Wei_k(M)$ has semi-simple part given by the scalar $k-a$ and nilpotent part $t-k$.  Thus, if the weights of $t$ are not in $a+\Z$, then all the weight spaces are isomorphic, and we are equivalent to the pure gauge situation, since the only weight of the representation is irrelevant.  

If we take weight spaces of the form $a+\Z$, then we have the picture below:
\[\tikz[very thick, ->, xscale=3]{\node[fill=black,circle,inner sep=2pt, label=below:$a-2$, outer sep=2.5pt] (a) at (0,0){}; 
\node[fill=black,circle,inner sep=2pt, label=below:$a-1$, outer sep=2.5pt] (b) at (1,0){}; 
\node[fill=black,circle,inner sep=2pt, label=below:$a$, outer sep=2.5pt] (c) at (2,0){};
\node[fill=black,circle,inner sep=2pt, label=below:$a+1$, outer sep=2.5pt] (d) at (3,0){};
\draw (a) to[out=30,in=150] node[above, midway]{$r^+$} (b);
\draw (b) to[out=30,in=150] node[above, midway]{$r^+$} (c);
\draw (c) to[out=30,in=150] node[above, midway]{$r^+$} (d);
\draw (b) to[out=-150,in=-30] node[below, midway]{$r^-$} (a);
\draw (c) to[out=-150,in=-30] node[below, midway]{$r^-$} (b);
\draw (d) to[out=-150,in=-30] node[below, midway]{$r^-$} (c);
}\] We argued above that the composition around any loop except that connecting $a$ and $a-1$ is an isomorphism, so
there are two
isomorphism classes, represented by $a$ and $a-1$. Thinking of taking this weight space as a functor, the elements $r^\pm$
give morphisms in both directions between them, with the composition
in either direction acting by the nilpotent part of $t$.  Thus, we
obtain the completed path algebra of an oriented 2-cycle as
$\End(P_{0,a}\oplus P_{0,a-1})$.  The Koszulity of this path algebra
is easily verified directly (since every simple has a length 2
linear projective resolution).

Since this path algebra has no quadratic relations, its quadratic dual is given by imposing all (two) possible quadratic relations: it is the
path algebra of an oriented 2-cycle with all length-2 paths set to 0.
This is the endomorphism ring of the projective generator in the
category of strongly $\Cx$-equivariant $D$-modules on $\mathbb{A}^1$ generated by the functions and
the $\delta$-functions at the origin. The two indecomposable
projective $D$-modules in this category are
$D_{\mathbb{A}^1}/D_{\mathbb{A}^1}(z\frac{\partial}{\partial z})$ and
$D_{\mathbb{A}^1}/D_{\mathbb{A}^1}(\frac{\partial}{\partial z}z)$;
their sum has the desired endomorphism algebra. The untruncated path
algebra appears as the Ext-algebra of the sum of simple $D$-modules
$D_{\mathbb{A}^1}/D_{\mathbb{A}^1}z\oplus
D_{\mathbb{A}^1}/D_{\mathbb{A}^1}\frac{\partial}{\partial z}$.  
\end{example}

\begin{definition}\label{def:Coulomb-O}
Let $\cO_{\rho}$ be the intersection of
$\EuScript{A}\mmod_{\rho}$ with category $\cO$ for a fixed $\xi\in
(\fg^*)^G\cong (\ft^*)^W\subset S_1$, that is, the modules such that the eigenspaces for $\xi$ are
finite-dimensional and the spectrum is bounded above in real part. 

This is the category we called $\OCoulomb$ in the introduction.\notation{$\cO_{\rho}$}{The intersection of $\efA\mmod_{\rho}$ with category $\cO$ for a character $\xi$; the category $\OCoulomb$ (\cref{def:Coulomb-O}).}
\end{definition}
The category $\cOa$ as defined in \cite[Def. 3.10]{BLPWgco} will be the direct sum of the categories $\cO_{\rho}$ for all $\widehat{W}$-orbits.  Note that this category can only be nonempty if the weights of $V_{\rho}$ span $\ft^*$ since otherwise, no chamber will be strongly $\xi$-bounded.  In particular, for generic $\rho$, $V_{\rho}$ will be trivial.

Applying the equivalence of
Corollary \ref{cor:coset}, we can describe $\cO_{\rho}$ as a
subcategory of modules over $\algA_{\rI}$.
Recall that we defined an ideal $\ideal_{\xi}$ generated by
$e(\sgns)$ such that $C_{\sgns}$ is not strongly $\xi$-bounded.
\begin{theorem}\label{thm:Coulomb-O}
  The category $\cO_{\rho}$ is equivalent to the category of modules over the quotient $A^{-\xi}_{\rI}=A_{\rI}/\ideal_{-\xi}$.\notation{$A^{\xi}_{P},{}_{P'}^{}A^{\xi}_{P}$}{The quotient $A_P/\ideal_{\xi}$ and the corresponding quotient of the bimodule ${}_{P'}A_P$.}
\end{theorem}
\begin{proof}
Because of the minus sign in the labelling \eqref{eq:theta}, the behavior of the idempotent $e(\sgns)$ corresponds to the weight space for weights in the clan $-C_{\sgns}$.

Since these weights form a clan, the corresponding weight spaces are all isomorphic. Thus, if $-C_{\sgns}$ is not strongly $\xi$-bounded and any of these weight spaces are non-zero, then the corresponding object is not in $\cO$: if $\xi$ attains no maximum on $-C_{\sgns}$, then the spectrum of $\xi$ is not bounded above, while if the maximum is attained but only on an unbounded face, then the corresponding eigenspace of $\xi$ is infinite-dimensional, since this face contains infinitely many weights from the clan.  Thus, indeed, in this case
$e(\sgns)$ acts trivially on the $A_{\rI}$-module corresponding to an
object in $\cO$.  That is, any module coming from category $\cO$
factors through the quotient $A^{-\xi}_{\rI}$.

On the other hand, if a module does factor through this quotient, its
weight diagram is a union of strongly $\xi$-bounded chambers.  This shows that any level set of $\xi$ on each of these chambers is bounded and therefore has finitely many points in $\widehat{W}\cdot \rho$.  
Since there are only finitely many such chambers, we find
that $\xi$ has a maximum on the weight diagram, and there are only finitely many integral points in this
weight diagram for each fixed value of $\xi$.  Thus, the corresponding
module lies in category $\cO$.
\end{proof}

\begin{remark}
  An important special case is when $\xi=0$. In this case, $\OCoulomb$ is the category of finite-dimensional modules in $\EuScript{A}\mmod_{\rho}$.  Of course, the function $0$ achieves its maximum everywhere, and a chamber is strongly $\xi$-bounded if and only if the chamber is itself bounded.  The ideal $\ideal_{0}$ is thus generated by $e(\sgns)$ for all the chambers which are not strongly $\xi$-bounded, since the corresponding clans are infinite and must have trivial weight space for any finite-dimensional representation.  
\end{remark}

Since $\ideal_{-\xi}$ is a homogeneous ideal, this induces a graded lift $\tilde{\cO}_{\rho}$ of this
category, defined as modules in $\cO_{\rho}$ endowed with a
grading on which the induced action of $\Stein_{\rACs}$ is
homogeneous.

Thus, combining Theorems \ref{th:O-ff} and Theorem
\ref{thm:Coulomb-O}, we have:  
\begin{theorem}\label{thm:Koszul-duality}
  If \hyperlink{daggerprime}{$(\dagger')$} holds, then the category $\tcO_{\rho}$ for the character $\xi$ and
  flavor $\flav$ is equivalent to the principal block of $\tcOg^!$ for the flavor
  $\rop{\flav}$ on $\fM_{-\xi}=T^*V_{\rho}/\!\!/\!\!/\!\!/_{\!-\xi} G_{\rho}$ for the
  integral quantization.

  If \hyperlink{dagger}{$(\dagger)$} holds, then the Koszul dual of
  the category $\tcO_{\rho}$ for the character $\xi$ is
  equivalent to $\tcOg^!$ for the flavor $\rop{\flav}$ on
  $\fM_{-\xi}=T^*V_{\rho}/\!\!/\!\!/\!\!/_{\!-\xi} G_{\rho}$ for the integral
  quantization.
\end{theorem}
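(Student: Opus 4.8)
The plan is to transport $\tcO_{\EuScript{I}}$ across the combinatorial equivalence of Theorem \ref{main-iso} and then read off its Koszul dual, feeding the result into the Higgs-side statement of Theorem \ref{th:O-ff}. First I would fix the identification coming from the corollary to Theorem \ref{main-iso}: $\EuScript{A}\mmod_{\EuScript{I}}$ is equivalent to finite-dimensional modules over the algebra $A_I$ on which $\Symt$ acts nilpotently, the weight space attached to a feasible $\sgns$ matching the image of the idempotent $e(\sgns)$, and the action of $\Stein_{\ACs}$ grades $A_I$ in such a way that this becomes an equivalence between the graded lift of $\EuScript{A}\mmod_{\EuScript{I}}$ and $A_I\operatorname{-gmod}$, the grading being of the type (nonnegative, with semisimple degree-zero part spanned by the $e(\sgns)$, as in the weighted KLR picture of Example \ref{ex:KLR}) to which the Koszul-duality formalism of the previous section applies. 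Under this, $\tcO_{\EuScript{I}}$ is the full subcategory of modules all of whose composition factors are simples $L_\sgns$ with $\xi$ attaining a maximum on a bounded subset of $C_\sgns$; by Lemma \ref{lem:unsteady-unbounded} these are exactly the steady $\sgns$, and — since $e(\sgns)$ acts as an exact projection — a graded $A_I$-module has this property precisely when every $e(\sgns)$ with $\sgns$ unsteady annihilates it, i.e. when the ideal $\ideal$ they generate acts as zero. Hence $\tcO_{\EuScript{I}}\cong (A_I/\ideal)\operatorname{-gmod}$ as graded categories.

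Next I would take Koszul duals; this is unconditional, since $A_I/\ideal$ inherits from $A_I$ the structure of a nonnegatively graded algebra with semisimple degree-zero part. By the formalism recalled in the Koszul-duality section, the Koszul dual of $\tcO_{\EuScript{I}}\cong(A_I/\ideal)\operatorname{-gmod}$ is $(A_I/\ideal)^!\operatorname{-gmod}\cong\LCP(A_I/\ideal)$. The same conclusion arises in the form that matches the Higgs picture: combining Theorems \ref{th:MHM-LPC} and \ref{main-iso} identifies the Koszul dual of $\EuScript{A}\mmod_{\EuScript{I}}$ with $\MHM_M\cong\LCP(A_I)$, whose simple objects are again indexed by the $\sgns$, and passing from $\EuScript{A}\mmod_{\EuScript{I}}$ to the Serre subcategory $\tcO_{\EuScript{I}}$ corresponds under the duality to passing from $\MHM_M$ to its Serre quotient by the subcategory generated by the $L_\sgns$ with $\sgns$ unsteady; that quotient is exactly $\LCP(A_I/\ideal)$.

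Finally I would run Theorem \ref{th:O-ff} for the Levi $L=L_{\EuScript{I}}$ acting on $V_\fl=V_{\EuScript{I}}$ with the given $\xi$ and $\phi$: the functor $\red(L)\otimes_{A_I/\ideal}-\colon\LCP(A_I/\ideal)\to\tcOg$ realizes $\LCP(A_I/\ideal)$ as the graded lift of the geometric category $\cO$ on $\fM_\xi=T^*V_\fl/\!\!/\!\!/\!\!/_{\!\xi}L$ for the integral quantization — fully faithfully, hence onto a block (a union of blocks in general), under $(\dagger')$, and as an equivalence under $(\dagger)$. Composing with the previous paragraph identifies the Koszul dual of $\tcO_{\EuScript{I}}$ with (a block of, resp. all of) this category, which is the assertion. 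I do not expect any single step to be a genuine obstacle, since the substantive content is already contained in Theorems \ref{th:MHM-LPC}, \ref{main-iso} and \ref{th:O-ff}; the point requiring care is purely bookkeeping — verifying that \emph{unsteady sign vector}, \emph{indecomposable summand $L_\sgns$ with $\xi$ unbounded above on $C_\sgns$}, and \emph{vertex quotiented out in forming $A_I/\ideal$} all name the same set of objects on the nose, which is precisely what Lemma \ref{lem:unsteady-unbounded} and the corollary to Theorem \ref{main-iso} supply — together with checking that the passage between the completed categories $\widehat{\Stein}_{\ACs}$, $\widehat{\mathscr{B}}_\rho$ of Theorem \ref{main-iso} and the finite-dimensional graded algebra $A_I$ used for the Koszul-duality manipulations is carried out consistently, so that the grading on $\tcO_{\EuScript{I}}$ is exactly the one inherited from $\Stein_{\ACs}$.
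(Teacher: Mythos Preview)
Your proposal is correct and follows essentially the same approach as the paper: the paper's argument (contained in the paragraph immediately preceding the theorem) identifies $\tcO_{\EuScript{I}}$ as the subcategory of $\EuScript{A}\mmod_{\EuScript{I}}$ killed by the idempotents $e(\sgns)$ with $\xi$ unbounded above on $C_\sgns$, invokes Lemma \ref{lem:unsteady-unbounded} to match these with the unsteady sign vectors, observes that Koszul duality carries this Serre subcategory to the corresponding Serre quotient of $\MHM_M\cong\LCP(A_I)$, and then appeals to Theorem \ref{th:O-ff}. Your write-up expands each of these steps with more explicit bookkeeping (the graded structure, the passage through $A_I/\ideal$, the role of the completion), but the skeleton is identical.
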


\subsection{Twisting and shuffling functors}
\label{sec:twist-shuffl-funct}

Throughout this section, for simplicity we assume \hyperlink{dagger}{$(\dagger)$}
holds; in particular, it applies to the ADE/$\hat{A}$ quiver and smooth hypertoric cases.  
Recall that the categories $\OCoulomb$ and $\OHiggs$ are each endowed with actions of two collections of functors:
twisting and shuffling functors.  We refer the reader to
\cite[\S 6.4]{BLPWquant} and \cite[\S 8.1-2]{BLPWgco} for a more detailed discussion of these functors.  
In this paper, we will only consider pure shuffling and
twisting functors for simplicity; a more detailed discussion of the impure functors would require incorporating the Namikawa Weyl group of a Higgs branch.  

Since these functors involve changing the flavor $\flav$ and stability parameter $\xi$ of category $\cO$, we will incorporate this into the notation and write:
\begin{itemize}
	\item $\cO^{\flav}_{\xi}$ for the subcategory $\cOg$ of $\Dg$ of \cref{def:Og} for the flavor $\flav$.
	\item  $\cO_{\flav}^{\xi}$ for the category $\cO$ of modules over $\efA$ with weights lying in $\ft_{1,\Z}$, with quantization parameter $\flav$ compatible with the $\Cx$-action induced by $\xi$.
\end{itemize} 
In both cases, we let $D(\cO^{\flav}_{\xi}), D(\cO_{\flav}^{\xi})$ denote the subcategory of the ambient bounded derived category whose cohomology lies in this subcategory.\notation{$\cO^{\flav}_{\xi},\cO_{\flav}^{\xi}$}{The Higgs and Coulomb categories $\cO$ with their flavor and stability parameters made explicit.}

Before going into details, let us make a few comments about the philosophy of these functors: both shuffling and twisting can be thought of as composing the inclusion of category $\cO$ into a larger category, followed by the adjoint to this inclusion into category $\cO$ for different data.  
\begin{itemize}
  \item In the case of shuffling, we consider categories $\cO$ for two different flavors.
	\item In the case of twisting, we can consider the modules over the Morita context \[\begin{bmatrix}
		\EuScript{A}_{\flav} & {}_{\flav}\Twist_{\flav'}\\
		{}_{\flav'}\Twist_{\flav} & \EuScript{A}_{\flav'}
	\end{bmatrix}\]
	with inclusion of $\EuScript{A}_{\flav}$-modules and $\EuScript{A}_{\flav'}$-modules as the modules of column vectors
	\[M\mapsto \begin{bmatrix}
		M \\ {}_{\flav'}\Twist_{\flav}\Lotimes_{\EuScript{A}_{\flav}} M
	\end{bmatrix}\qquad\qquad N\mapsto \begin{bmatrix}
	 {}_{\flav}\Twist_{\flav'}\Lotimes_{\EuScript{A}_{\flav'}} N \\ N
	\end{bmatrix}\]
\end{itemize}
In both cases, the Koszul duality we have constructed is compatible with a large enough category containing both categories $\cO$ so that the interchange of shuffling and twisting functors is automatic from the Koszul duality on the larger category.  

Let us describe the form these functors take in the case of Higgs and Coulomb categories $\cO$.  Throughout the description below, we let $\star\in \{!,*\}$.
On $\fM_H$:
  \begin{itemize}
\item 
The version of pure twisting functors we use here is generated by functors
  $\red^{\xi'}\circ \red^{\xi}_{\star}\colon D(\cO^{\flav}_{\xi})\to D(\cO^{\flav}_{\xi'})$
  composing the reduction functor $\red^{\xi'}\colon D(p\cO^{\flav})\to D(\cO^{\flav}_{\xi'})$ to the category $\cO$ on $\fM_{\xi'}$ with the left or right adjoint of this functor for $\xi$.
  
  In order to compare with the definition \cite[\S 8.1]{BLPWgco}, we need to assume that for some $k$, the functor $\redu_{k\xi}$ is an equivalence from $\cO^{\flav}_{\xi}$ to category $\cOa$ for $\AHiggs_{k\xi}$, and similarly with $\xi'$ replacing $\xi$.  In this case, $\redu_{k\xi'}\circ   \redu_{k\xi,\star}\colon D^-(\AHiggs_{k\xi}\mmod) \to D^-(\AHiggs_{k\xi'}\mmod)$ is easily seen to be isomorphic to the functor $\redu_{k\xi'}(D_V/J_{k\xi})\Lotimes-$.
In the case where the moment map is flat and $\fM_{H,\xi}\to \fM_{H}$ is a symplectic resolution, $\redu_{k\xi},\redu_{k\xi'}$ are equivalences for $k\gg 0$ by \cite[Cor. B.1]{BLPWquant}, and  $\redu_{k\xi'}(D_V/J_{k\xi})\Lotimes-$ is the twisting functor for $k\gg 0$ by \cite[Lem. 6.28]{BLPWquant}.
  See \cite[(4.10)]{BLet} and \cite[\S 4.4]{Webqui} for further discussion of this comparison.
  
  Note that if $\Higgs_{H,\xi},\Higgs_{H,\xi'}$ are both symplectic resolutions and the moment map is flat, then this functor will be an equivalence by \cite[Cor. 6.32]{BLPWquant}.  
\item  The pure shuffling functors are generated by composing the inclusion functor $i^\flav$ of $\cO^{\flav}_{\xi}$ into $D^b(\Dg)$ with its left or right adjoint $ i^{\flav'}_\star$, as defined in \cite[\S 8.2]{BLPWgco}.
\end{itemize}

On $\fM_C$: 
\begin{itemize}
	\item The pure twisting functors are generated by tensor product with 
  ${}_{\flav'} \Twist_{\flav}$ for $\flav$ and $\flav'$ both generic
  flavors and the adjoints of these functors; this is the definition given in \cite[\S 8.1]{BLPWgco} when $(G,V)$ is good and a BFN resolution exists, since we have already compared the bimodules in \cref{lem:bimodule-match}.  Again, \cite[Cor. 6.32]{BLPWquant} implies that this is an equivalence of categories for generic $\flav,\flav'$.  
\item  The pure shuffling functors are generated by composing the inclusion functor $i^\xi$ of $\OCoulomb$ into $\EuScript{A}\mmod$ with its left or right adjoint $i^\xi_{\star}$ in the derived category (i.e. the derived functor of taking the largest quotient or submodule in category $\cO$), as above.  
\end{itemize}
\begin{theorem}
  The Koszul duality of Theorem \ref{thm:Koszul-duality} switches pure twisting and shuffling functors matching $\red^{-\xi'}\circ \red_*^{-\xi}$ with $i^{\xi'}_!\circ i^{\xi}$ and ${}_{\rop{(\flav')}}\Twist_{\rop{\flav}}\Lotimes_{\EuScript{A}_{\rop{\flav}}} -$ with $i^{\flav'}_*\circ i^{\flav}$.
\end{theorem}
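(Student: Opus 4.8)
The strategy is to reduce the statement entirely to the combinatorial/categorical side, where everything has been made explicit, and then to match functors by their effect on projectives. Recall that under Theorems~\ref{thm:Koszul-duality}, \ref{main-iso} and \ref{th:O-ff}, both $\tcO_{\EuScript{I}}$ and $\tcOg$ (the latter for the flavor $\phi$ on $\fM_\xi$) are identified with categories of graded modules over explicitly presented algebras: $\tcO_{\EuScript{I}}$ with the linear-projective-complex category $\LCP(A_Q/\ideal)$-type description coming from $\widehat{\Stein}_{\ACs}$, and $\tcOg$ with $\MHM_M$, i.e.\ $\LPC(A_{I}/\ideal)$. The Koszul functor sends the projective $P_\sgns \in \tcO_{\EuScript{I}}$ to the linear projective complex representing the simple $L_\sgns$, and vice versa. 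So the first step is to write down, on each of the four families of functors, exactly what happens to the relevant (pro)projective objects in terms of the idempotents $1_\sgns$ and the bimodules ${}_{P'}A_P$ of Definition~\ref{def:bimod}, and then observe that Koszul duality interchanges ``tensoring with a bimodule of projectives'' with ``tensoring with the Koszul-dual bimodule of linear complexes,'' which is a formal consequence of the $\LPC$/$A^!$ correspondence (\cite{MOS}).

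Concretely, the plan is as follows. First I would express each pure twisting functor on $\fM_C$, namely ${}_{\phi'}T_\phi\otimes_{\EuScript{A}_\phi}-$, via Corollary~\ref{cor:bimodule-iso}: after completion it is tensoring with ${}_{I_{\phi'}}\widehat A_{I_\phi}$, i.e.\ the bimodule ${}_{I_{\phi'}}A_{I_\phi}$ of Definition~\ref{def:bimod} obtained by changing the set of feasible sign vectors according to the new flavor. Second, I would identify the pure shuffling functor $i^{\phi'}_*\circ i^{\phi}$ on the Higgs side: $i^\phi$ is the inclusion $\dOg \hookrightarrow D^b(\cD(V/G)\mmod)$ and $i^{\phi'}_*$ its right adjoint, the derived functor taking the largest submodule in category $\cO$ for the flavor $\phi'$; on the $\MHM$ side (using Theorem~\ref{th:MHM-LPC}) this is exactly the operation of passing from $\MHM_M$ for $\phi$ to $\MHM_M$ for $\phi'$, which on the level of $\LPC$-categories is tensoring with the Koszul dual of ${}_{I_{\phi'}}A_{I_\phi}$ — the changes of flavor correspond, on one side, to enlarging/shrinking the idempotent quotient, and on the other to the Koszul-dual (quotient/sub) operation, because Koszul duality swaps quotient functors by idempotent-generated ideals with the corresponding sub/quotient functors. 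Third, the same two-step matching applies to $\red^{\xi'}\circ\red^\xi_*$ versus $i^{\xi'}_!\circ i^\xi$: by Theorems~\ref{th:D-ff} and \ref{th:O-ff} the reduction $\red_\xi$ and its adjoints correspond, under our dictionary, to quotienting by the ideal $\ideal$ generated by unsteady sign vectors and its (left/right) adjoint inclusions, while on the Coulomb side $i^\xi$ is the inclusion $\OCoulomb\hookrightarrow \EuScript{A}\mmod_{\EuScript{I}}$ with $i^{\xi'}_!$ its left adjoint; Lemma~\ref{lem:unsteady-unbounded} is exactly what makes the ``unsteady'' ideal on the Higgs side agree with the ``$\xi$ unbounded on $C'_\sgns$'' truncation on the Coulomb side. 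So in each case both functors become, after transport of structure, the pair (idempotent-ideal quotient, its adjoint), one on $A$-modules and the other on $A^!$-modules, and these are Koszul-dual by construction.

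The remaining work, and the main obstacle, is the bookkeeping of \emph{which} adjoint (left $!$ versus right $*$) is matched with which, and keeping the homological shifts straight so that a genuine equivalence of \emph{graded} derived categories (Theorem~\ref{th:B}) is produced rather than one that is off by a grading shift or by passing to opposite categories. Koszul duality contravariantly exchanges $\RHom$ and $\otimes$, hence exchanges left and right adjoints and also exchanges ``largest submodule in $\cO$'' with ``largest quotient in $\cO$''; one must check that the geometric normalizations of twisting (via ${}_{\phi'}T_\phi$, which is a genuine left-module tensor, not its derived dual) and of shuffling (via $i^\star$, derived adjoints) are compatible with this swap. I would handle this by fixing, once and for all, the normalization in which $\gamma$ (Theorem~\ref{main-iso}) sends $\Symt$ acting to the $\rho$-shifted action and the grading is the internal grading of $A_{I'}$, then computing the action of each functor on the distinguished projective-generator and its Koszul image, and reading off the $!$/$*$ assignment from the direction of the unit/counit maps. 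Since every functor in sight has been reduced to a bimodule tensor product over an explicitly presented algebra, once the normalizations are pinned down the verification is a finite check on generators; the content is entirely in that normalization, together with invoking \hyperlink{dagger}{$(\dagger)$} to know that the Higgs-side $\red(L)\otimes-$ functors are honest equivalences so that the derived adjoints exist and are computed by the expected bimodules.
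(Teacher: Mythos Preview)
Your proposal is correct and follows essentially the same route as the paper: reduce both sides to modules over the explicit Steinberg algebras $A_{I_\phi}$ and their quotients $A^\xi_{I_\phi}=A_{I_\phi}/\ideal_\xi$, identify the $\phi$-twisting bimodule via Corollary~\ref{cor:bimodule-iso} and the $\xi$-dependence via the idempotent ideal $\ideal_\xi$ (using Lemma~\ref{lem:unsteady-unbounded}), and then observe that the functors on the two sides are given by the \emph{same} bimodule tensor product, hence match under the equivalence of Theorem~\ref{thm:Koszul-duality}. The paper is slightly more direct than your write-up: rather than invoking an abstract ``Koszul duality swaps idempotent quotients with their dual sub/quotient operations'' principle, it simply computes that $\red^{\xi'}\circ\red^\xi_*$ and $i^{\xi'}_!\circ i^\xi$ are both intertwined with $A^{\xi'}_{I_\phi}\overset{L}{\otimes}_{A_{I_\phi}}-$, and that $i^{\phi'}_*\circ i^\phi$ and ${}_{\phi'}T_\phi\otimes_{\EuScript{A}_\phi}-$ are both intertwined with ${}_{I_{\phi'}}A^\xi_{I_\phi}\overset{L}{\otimes}_{A^\xi_{I_\phi}}-$, so the $!$/$*$ bookkeeping you worry about falls out of the explicit identification rather than requiring a separate normalization argument.
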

\begin{proof}
The proof of this fact is roughly the same as in
\cite[8.24]{BLPWtorico}.  The shuffling functors come from the inclusion
of a subcategory followed by projection back to it, and the twisting
functors come from projection to a quotient category followed by its
adjoint inclusion; these
naturally interchange under Koszul duality.

Now, let us be more precise. Given two subsets $P,P'\subset I'$, we define the $\algA_{P'}\operatorname{-}\algA_{P}$-bimodule ${}_{P'}A_{P}$ (or similarly a
  $\Stein_{P'}\operatorname{-}\Stein_{P}$-bimodule ${}_{P'}\Stein_{P}$)
  by simply associating to the pair $(p',p)\in P'\times P$ the vector space $\Hom_{A_{I'}}(p,p')$.\notation{${}_{P'}A_{P}$}{The $\algA_{P'}$-$\algA_{P}$-bimodule sending $(p',p)$ to $\Hom_{A_{I'}}(p,p')$.}
This extends in an obvious way to $P,P'$ simply mapping to $I'$ (or
to $K'$, etc.).

Let \[A^\xi_P:=A_P/\ideal_\xi\qquad
{}_{P'}^{}A^\xi_{P}:={}_{P'}A_P/(\ideal_\xi\,{}_{P'}^{}A_{P}+{}_{P'}^{}A_{P}\,\ideal_\xi).\]
\begin{itemize}
\item Under the equivalence of $D(p\cO^{\flav})$ to $A_{\Ifflav{\flav}}\dgmod$ and $D(\cO_{\xi}^{\flav})$ with $A^{\xi}_{\Ifflav{\flav}}\dgmod$, the functor $\red_*$ is intertwined with inflation of an $A^{\xi}_{\Ifflav{\flav}} $-module to an $A_{\Ifflav{\flav}}$-module, and thus $\red$ with its left adjoint $A^{\xi}_{\Ifflav{\flav}}\Lotimes_{A_{\Ifflav{\flav}}}-$. Since $\red^{-\xi'}\circ \red^{-\xi}_*$ is an equivalence, its left and right adjoints agree and  $\red^{-\xi'}\circ \red^{-\xi}_*$ is intertwined with $\RHom_{A_{\Ifflav{\flav}}}(A^{-\xi'}_{\Ifflav{\flav}},-)$.  
  \item  The categories $\cO_{\rho}$ for different choices of $\xi$ are equivalent to the modules over $A^{-\xi}$, and the inclusion $i^\xi$ corresponds to the pullback of $A^{-\xi}_{\Ifflav{\flav}}$-modules to $A_{\Ifflav{\flav}}$-modules by the quotient map.  Thus, the shuffling functors $i^{\xi'}_!\circ i^{\xi}$ are intertwined with $A^{-\xi'}_{\Ifflav{\flav}}\Lotimes_{A_{\Ifflav{\flav}}}-$ and $i^{\xi'}_*\circ i^{\xi}$ with its adjoint.
\end{itemize}
This shows the first desired match of functors.
\begin{itemize}
\item  We have an equivalence of the subcategory $\langle \red(L_{I})\rangle\subset \Dg$ with $A^{\xi}_{I}\dgmod$ by the same argument as \cref{thm:O-fully-faithful}.   The shuffling functors are determined by taking Ext of $\red(M)$ and $\red(M')$ for $M\in \cO^{\flav}_{\xi}, M'\in \cO^{\flav'}_{\xi}$. From the equivalence above, we find that
\begin{align*}
\Ext^\bullet\big(\red(L_{\Ifflav{\flav'}}),i_*^{\flav'}\circ i^\flav (\red(L_{\Ifflav{\flav}}))\big)&\cong 
\Ext^\bullet\big(\red(L_{\Ifflav{\flav'}}),\red(L_{\Ifflav{\flav}})\big) \\
&\cong {}_{\Ifflav{\flav'}}^{}A_{\Ifflav{\flav}}^{\xi}.
\end{align*}

Thus, $i_*^{\flav'}\circ i^\flav$ corresponds to
\[{}_{\Ifflav{\flav'}}^{}A_{\Ifflav{\flav}}^{\xi}\Lotimes_{A^{\xi}_{\Ifflav{\flav}}}-\]
and $i_!^{\flav'}\circ i^\flav$ to
\[\RHom_{A^{\xi}_{\Ifflav{\flav}} }\big({}_{\Ifflav{\flav}}^{}A_{\Ifflav{\flav'}}^{\xi},-\big).\]
\item Under the isomorphism of Theorem \ref{main-iso}, the tensor product with  ${}_{\rop{(\flav')}} \Twist_{\rop{\flav}}$
corresponds to
  ${}_{\Ifflav{\flav'}}^{}A_{\Ifflav{\flav}}^{\xi}\Lotimes_{A^{\xi}_{\Ifflav{\flav}}}-$.
\end{itemize}
This shows the second desired match.
\end{proof} 
\ifanindex
\bigskip
\IndexOfNotation
\else\printnomenclature
\fi 

{\renewcommand{\markboth}[2]{}\printbibliography}
\end{document}